\documentclass[11pt,a4paper,reqno,a4paper]{amsart}
\usepackage{amsmath,amssymb,amsthm, comment,graphicx}
\usepackage{tikz}
\usepackage{fancyhdr}
\usepackage{epsfig}
\usepackage{mathrsfs}
\textwidth=15.8cm
\textheight=22.2cm
\hoffset=-1.7cm
\voffset=0.3cm
\newtheorem{theorem}{Theorem}[section]

\newtheorem{lemma}{Lemma}[section]

\newtheorem{proposition}{Proposition}[section]
\newtheorem{corollary}{Corollary}[section]

\numberwithin{equation}{section}
\newcommand{\dd}{\mathrm{d}}

\makeatletter \@addtoreset{figure}{section} \makeatother

%\renewcommand\figurename{Fig.}
%\usepackage{refcheck}
%\UseRawInputEncoding

\begin{document}
\title[Hypersonic Similarity for Potential Flow with Large Data]
{Convergence Rate of the Hypersonic Similarity for Two-Dimensional Steady Potential Flows with Large Data}

\author{Gui-Qiang G. Chen}
\address{ Mathematical Institute, University of Oxford,
 Radcliffe Observatory Quarter, Woodstock Road, Oxford, OX2 6GG, UK;
 School of Mathematical Sciences, Fudan University, Shanghai 200433, China}
 \email{\tt  chengq@maths.ox.ac.uk}

\author{Jie Kuang}
\address{ Innovation Academy for Precision
Measurement Science and Technology, and Wuhan Institute of Physics and Mathematics,
Chinese Academy of Sciences, Wuhan 430071, China}
\email{\tt jkuang@apm.ac.cn}

\author{Wei Xiang}
\address{Department of Mathematics
City University of Hong Kong
Kowloon, Hong Kong, China}
\email{\tt  weixiang@cityu.edu.hk}

\author{Yongqian Zhang}
\address{School of Mathematical Sciences, Fudan University, Shanghai 200433, China}
\email{\tt  yongqianz@fudan.edu.cn}

\keywords{Hypersonic similarity, optimal convergence rate, large data, straight wedge, isothermal hypersonic small disturbance equations,
BV solutions, standard Riemann semigroup.}

\subjclass[2010]{35B07, 35B20, 35D30; 76J20, 76L99, 76N10}
\date{\today}

\begin{abstract}
We establish the optimal convergence rate of the hypersonic similarity
for two-dimensional steady potential flows with {\it large data} past over a straight wedge
in the $BV\cap L^1$ framework, provided that the total variation of the large data multiplied
by $\gamma-1+\frac{a_{\infty}^2}{M_\infty^2}$
is uniformly bounded with respect to the adiabatic exponent $\gamma>1$, the Mach number $M_\infty$ of the incoming steady flow,
and the hypersonic similarity parameter $a_\infty$.
Our main approach in this paper is first to establish the Standard Riemann Semigroup
of the initial-boundary value problem
for the isothermal hypersonic small disturbance equations with large data
and then to compare the Riemann solutions between two systems with boundary locally case by case.
Based on them, we derive the global $L^1$--estimate between the two solutions by employing the Standard Riemann Semigroup
and the local $L^1$--estimates.
We further construct an example to show that the convergence rate is optimal.
\end{abstract}
\maketitle

\tableofcontents

\section{Introduction and Main Theorems}\setcounter{equation}{0}

We are concerned with the optimal convergence rate of the hypersonic similarity
for two-dimensional steady potential flows with {\it large data} past over a straight wedge
in the $BV\cap L^1$ framework.
In gas dynamics, hypersonic flows are the flows with a large Mach number (at least larger than five).
One of the important properties of the hypersonic flows is the hypersonic similarity, which was first developed
by Tsien in \cite{Tsien} for the two-dimensional potential flow
and the three-dimensional axis-symmetric steady potential flow in 1940s.
The convergence without a rate of the hypersonic similarity
for two-dimensional steady potential flows past over a straight wedge
was rigorously verified in \cite{Kuang-Xiang-Zhang-1}.
In this paper, we further develop mathematical analysis
on the hypersonic similarity for steady hypersonic potential flow over a two-dimensional straight wedge with
large data (see Fig. \ref{fig1.1}) to establish the {\it optimal} convergence rate rigorously.

\vspace{5pt}
\begin{figure}[ht]
\begin{center}
\begin{tikzpicture}[scale=0.8]
\draw [line width=0.05cm] (-2,1) --(2.5,2);
\draw [line width=0.05cm] (-2,1) --(2.5,0);
\draw [line width=0.02cm][red] (-2,1) --(1.8,2.7);
\draw [line width=0.02cm][red] (-2,1) --(1.8,-0.7);
\draw [thin][->](-4.5,1.8)--(-2.2,1.8);
\draw [thin][->](-4.5,1.3)--(-2.2,1.3);
\draw [thin][->](-4.5,0.8)--(-2.2,0.8);
\draw [thin][->](-4.5,0.3)--(-2.2,0.3);
\draw [thin](-1.5,0.9)--(-1.2,1.2);
\draw [thin](-1.3,0.85)--(-0.9,1.25);
\draw [thin](-1.1,0.80)--(-0.6,1.3);
\draw [thin](-0.9,0.75)--(-0.3,1.36);
\draw [thin](-0.6,0.7)--(0.2,1.5);
\draw [thin](-0.3,0.6)--(0.7,1.6);
\draw [thin](0,0.55)--(1.15,1.70);
\draw [thin](0.3,0.5)--(1.5,1.75);
\draw [thin](0.6,0.40)--(2.1,1.9);
\draw [thin](0.9,0.35)--(2.2,1.65);
\draw [thin](1.3,0.3)--(2.5,1.5);
\draw [thin](1.7,0.20)--(2.5,1);

\node at (2.1,2.8) {$\textrm{Shock}$};
\node at (2.1,-0.8) {$\textrm{Shock}$};
\node at (-3.5,2.2) {${\textrm{M}_{\infty}}\geq5$};
\end{tikzpicture}		
\end{center}
\caption{Hypersonic flow over a two-dimensional slender straight wedge}\label{fig1.1}
\end{figure}
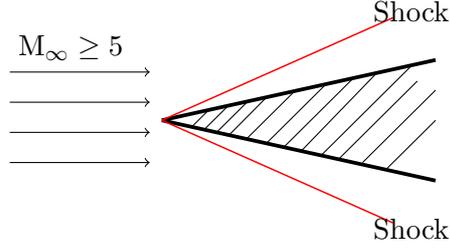

Physically, the law of the hypersonic similarity is also called the
Van Dyke similarity law \cite{Dyke},
which states that, for the steady flow around a slender wedge,
the flow structures are similar under some scaling if the Mach number of the incoming flow is sufficiently large.
More precisely, after scaling, the governed equations of the flow with the same hypersonic similarity parameter
can be approximated by the same hypersonic small-disturbance system.

Mathematically,
consider a uniform hypersonic flow with velocity $(u_{\infty}, 0)$ past over a two-dimensional straight wedge
with boundaries $\bar{y}=\pm\tau b_{0}\bar{x}$, for a fixed constant $b_{0}$ and a sufficiently small parameter $\tau>0$.
The two-dimensional steady isentropic irrotational Euler flows are governed by the following equations:
\begin{equation}\label{eq:1.1}
\begin{cases}
\partial_{\bar{x}}(\bar{\rho} \bar{u})+\partial_{\bar{y}}(\bar{\rho} \bar{v})=0,\\[3pt]
\partial_{\bar{x}}\bar{v}-\partial_{\bar{y}}\bar{u}=0,
\end{cases}
\end{equation}
where density $\bar{\rho}$ and velocity $(\bar{u},\bar{v})$ satisfy the following Bernoulli law:
\begin{equation}\label{eq:1.2}
\frac{1}{2}(\bar{u}^2+\bar{v}^2)+\frac{\bar{\rho}^{\gamma-1}}{\gamma-1}=\frac{1}{2}u_{\infty}^2+\frac{\rho_{\infty}^{\gamma-1}}{\gamma-1}.
\end{equation}
Due to the symmetry of the initial-boundary value problem, we constrain ourself to consider the lower half-plane
in $\mathbb{R}^{2}$ with wedge boundary $\bar{y}=\tau b_{0}\bar{x}$ for $b_0<0$ (see Fig. \ref{fig1.2}).
Then, along the wedge boundary, the flow satisfies the impermeable slip boundary condition:
\begin{equation}\label{eq:1.3}
(\bar{u},\bar{v})\cdot \mathbf{n}=0,
\end{equation}
where $\mathbf{n}=\frac{(\tau b_{0},-1)}{\sqrt{1+\tau^2b^2_{0}}}$ is the {unit} inner normal of the wedge boundary.

\vspace{5pt}
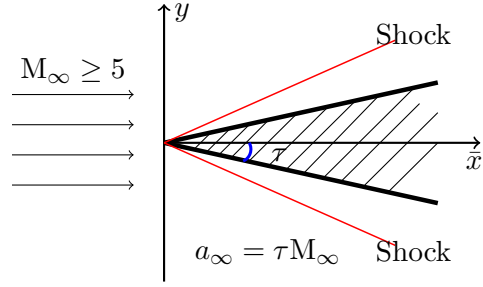
\begin{figure}[ht]
\begin{center}
\begin{tikzpicture}[scale=0.8]
\draw [thick][->](-2,1)--(3.2,1);
\draw [thick][->](-2,-1.3)--(-2,3.3);
\draw [line width=0.06cm] (-2,1) --(2.5,2);
\draw [line width=0.06cm] (-2,1) --(2.5,0);
\draw [line width=0.02cm][red] (-2,1) --(1.8,2.7);
\draw [line width=0.02cm][red] (-2,1) --(1.8,-0.7);
\draw [thin][->](-4.5,1.8)--(-2.5,1.8);
\draw [thin][->](-4.5,1.3)--(-2.5,1.3);
\draw [thin][->](-4.5,0.8)--(-2.5,0.8);
\draw [thin][->](-4.5,0.3)--(-2.5,0.3);
\draw [thin](-1.5,0.9)--(-1.2,1.2);
\draw [thin](-1.3,0.85)--(-0.9,1.25);
\draw [thin](-1.1,0.80)--(-0.6,1.3);
\draw [thin](-0.9,0.75)--(-0.3,1.36);
\draw [thin](-0.6,0.7)--(0.2,1.5);
\draw [thin](-0.3,0.6)--(0.7,1.6);
\draw [thin](0,0.55)--(1.15,1.70);
\draw [thin](0.3,0.5)--(1.5,1.75);
\draw [thin](0.6,0.40)--(2.1,1.9);
\draw [thin](0.9,0.35)--(2.2,1.65);
\draw [thin](1.3,0.3)--(2.5,1.5);
\draw [thin](1.7,0.20)--(2.5,1);
\draw [line width=0.04cm][blue] (-0.6,1)to [out=-60, in=20](-0.7,0.7);
\node at (2.1,2.8) {$\textrm{Shock}$};
\node at (2.1,-0.8) {$\textrm{Shock}$};
\node at (-3.5,2.2) {${\textrm{M}_{\infty}}\geq5$};
\node at (-0.1,0.8) {$\tau$};
\node at (3.1,0.7) {$\bar{x}$};
\node at (-1.7,3.2) {$\bar{y}$};
\node at (-0.3, -0.8) {$a_{\infty}=\tau {\textrm{M}_{\infty}}$};
\end{tikzpicture}		
\end{center}
\caption{Hypersonic similarity law}\label{fig1.2}
\end{figure}

Define the hypersonic similarity parameter:
\begin{eqnarray}\label{eq:1.4}
a_{\infty}:=\tau {\textrm{M}_{\infty}}=\tau u_{\infty}\rho^{-\frac{\gamma-1}{2}}_{\infty}.
\end{eqnarray}
Following the arguments in \cite{Anderson, Tsien}, we define the scaling:
\begin{eqnarray}\label{eq:1.5}
\bar{x}=x,\,\,\, \bar{y}=\tau y,\,\,\, \bar{u}=u_{\infty}(1+\tau^2 u),\,\,\,
\bar{v}=u_{\infty}\tau v,\,\,\, \bar{\rho}=\rho_{\infty}\rho,
\end{eqnarray}
where $\rho_{\infty}=\lim_{\bar{y}\rightarrow -\infty} \bar{\rho}(\bar{y})$ so that $\lim_{y\rightarrow-\infty}\rho(y)=1$.

Substituting \eqref{eq:1.5} into \eqref{eq:1.1}--\eqref{eq:1.2}, we obtain
\begin{eqnarray}\label{eq:1.6}
\begin{cases}
\partial_{x}(\rho(1+\tau^2 u))+\partial_{y}(\rho v)=0,\\[3pt]
\partial_{x}v-\partial_{y}u=0,
\end{cases}
\end{eqnarray}
and 
\begin{eqnarray}\label{eq:1.7}
u+\frac{1}{2}(v^2+\tau^2u^2)+\frac{\rho^{\gamma-1}-1}{(\gamma-1)a^2_{\infty}}=0.
\end{eqnarray}

Now 
the fluid domain and its boundaries (see Fig. \ref{fig1.3}) are given by
\begin{eqnarray*}
\Omega_{\textrm{w}}=\{(x,y)\,:\,x>0, y<b_{0}x \}
\end{eqnarray*}
and
\begin{eqnarray*}
\Gamma_{\textrm{w}}=\{(x,y)\,:\, x>0, y=b_{0}x \}, \quad {\Sigma_{0}}=\{(x,y)\,:\, x=0, y<0\}.
\end{eqnarray*}
Let $\mathbf{n}_{\textrm{w}}=\frac{(b_{0},-1)}{\sqrt{1+b^{2}_{0}}}$ be the unit inner normal vector
of $\Gamma_{\textrm{w}}$. Then
the boundary condition \eqref{eq:1.3} becomes
\begin{eqnarray}\label{eq:1.8}
(1+\tau^2 u^2, v)\cdot \mathbf{n}_{\textrm{w}}=0 \qquad \mbox{ on $\Gamma_{\textrm{w}}$}.
\end{eqnarray}
In addition, we impose the initial data on $\Sigma_{0}$ as
\begin{eqnarray}\label{eq:1.9}
(\rho, u, v)=(\rho_{0}, \textrm{u}_{0}, v_{0})(y)\qquad\,\, \mbox{on $\Sigma_{0}$},
\end{eqnarray}
where $\rho_{0}, \textrm{u}_{0}$, and $v_{0}$ satisfy \eqref{eq:1.7}.

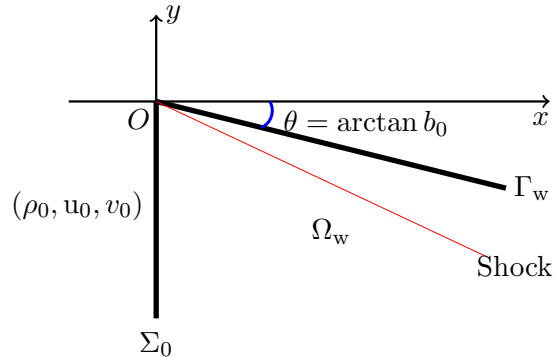
\begin{figure}[ht]
\begin{center}
\begin{tikzpicture}[scale=1.15]
\draw [line width=0.03cm][->] (-2,0) --(3.5,0);
\draw [line width=0.03cm][->] (-1,-2.5) --(-1,1);
\draw [line width=0.07cm](-1,0) --(3,-1);
\draw [line width=0.07cm](-1,0) --(-1,-2.5);
\draw [line width=0.01cm][red] (-1,0) --(2.8,-1.8);
\draw [line width=0.04cm][blue] (0.3,0)to [out=-60, in=20](0.2,-0.3);
\node at (3.4,-0.2) {$x$};
\node at (-0.8,1) {$y$};
\node at (-1.2,-0.2) {$O$};
\node at (3.3,-1) {$\Gamma_{\textrm{w}}$};
\node at (3.1,-1.9) {$\textrm{Shock}$};
\node at (-1.0,-2.8) {$\Sigma_{0}$};
\node at (1.4,-0.25) {$\theta=\arctan b_{0}$};
\node at (1.0,-1.5) {$\Omega_{\textrm{w}}$};
\node at (-1.9,-1.2) {$(\rho_{0}, \textrm{u}_{0},v_{0})$};
\end{tikzpicture}
\end{center}
\caption{The initial-boundary value problem \eqref{eq:1.6}--\eqref{eq:1.9}}\label{fig1.3}
\end{figure}

Mathematically, the hypersonic similarity means that,
for a fixed hypersonic similarity parameter $a_{\infty}$,
the structure of the solution of \eqref{eq:1.6}--\eqref{eq:1.9} is persistent if {$\textrm{M}_{\infty}$} is large (\emph{i.e.}, $\tau$ is small).
In practice, when {$\textrm{M}_{\infty}$} is sufficiently large, $\gamma$ is expected to be near $1$.
{Therefore}, if the hypersonic similarity is {valid}, when $\tau$ and $\gamma-1$ are sufficiently small, the solution of
the initial-boundary value problem \eqref{eq:1.6}--\eqref{eq:1.9} should be approximated
by the problem via taking {$\tau=0$} and $\gamma=1$:
\begin{equation}\label{eq:1.10}
\begin{cases}
\partial_{x}\rho+\partial_{y}(\rho v)=0,\\[3pt]
\partial_{x}v-\partial_{y}u=0,\\[3pt]
u+\frac{1}{2}v^2+\frac{\ln \rho}{a^2_{\infty}}=0,
\end{cases}
\end{equation}
with the boundary condition:
\begin{eqnarray}\label{eq:1.11}
v=b_0 \qquad \mbox{on $\Gamma_{\textrm{w}}$},
\end{eqnarray}
and the initial data:
\begin{eqnarray}\label{eq:1.12}
(\rho, u,v)=(\rho_{0}, u_{0},v_{0})(y) \qquad\,\, \mbox{on $\Sigma_{0}$},
\end{eqnarray}
where $\rho_0$ and $(u_{0}, v_{0})$ satisfy $\eqref{eq:1.10}_{3}$.

System \eqref{eq:1.10} is called the hypersonic small-disturbance system.
The hypersonic similarity was established in  \cite{Kuang-Xiang-Zhang-1}
by proving the existence of global entropy solutions
of problem \eqref{eq:1.6}--\eqref{eq:1.9} with large data,
provided that $(\gamma-1+\tau^2)(T.V.\{(\rho_{0},v_{0});\Sigma_0\}+|b_{0}|)<\infty$,
and then showing that the solutions converge pointwise to the solution of
problem \eqref{eq:1.10}--\eqref{eq:1.12}
as $\gamma-1+\tau^2$ tends to zero.
Therefore, a next natural question is what the convergence rate
with respect to parameter $\gamma-1+\tau^2$ should be.
The main purpose of this paper is to establish the optimal convergence rate of the solutions of
 problem \eqref{eq:1.6}--\eqref{eq:1.9} to
the solution of problem \eqref{eq:1.10}--\eqref{eq:1.12} in $L^1$
as $\gamma-1+\tau^2\rightarrow0$
with large initial data.
To this end, we set
\begin{eqnarray}\label{eq:1.13a}
\boldsymbol{\mu}=(\epsilon, \tau^2):=(\gamma-1, \tau^2).
\end{eqnarray}
{Denoted by $(\rho^{(\boldsymbol{\mu})}, u^{(\boldsymbol{\mu})}, v^{(\boldsymbol{\mu})})$}
the solution of problem \eqref{eq:1.6}--\eqref{eq:1.9},
and {denoted by} $(\rho, u,v)$ the solution of problem \eqref{eq:1.10}--\eqref{eq:1.12} (\emph{i.e.}, corresponding to the case: $\boldsymbol{\mu}=\boldsymbol{0}$).
Since the flow moves from the left to the right, then $1+\tau^2 u^{(\boldsymbol{\mu})}>0$ so that
$u^{(\boldsymbol{\mu})}$ can be solved from equation
\eqref{eq:1.7}:
\begin{eqnarray}\label{eq:1.13}
u^{(\boldsymbol{\mu})}=\frac{1}{\tau^2}\Big(\sqrt{1-\tau^2 B^{(\epsilon)}(\rho^{(\boldsymbol{\mu})},v^{(\boldsymbol{\mu})},\epsilon)}-1\Big),
\end{eqnarray}
where $B^{(\epsilon)}(\rho,v,\epsilon)$ is given by
\begin{eqnarray}\label{eq:1.14}
B^{(\epsilon)}(\rho,v,\epsilon):=\frac{2\big(\rho^{\epsilon}-1\big)}{a^{2}_{\infty}\epsilon}
+ v^2.
\end{eqnarray}

Substituting \eqref{eq:1.13} with \eqref{eq:1.14} into equations \eqref{eq:1.6},
we reformulate problem \eqref{eq:1.6}--\eqref{eq:1.9} as
\begin{eqnarray}\label{eq:1.15}
\begin{cases}
\partial_{x}\big(\rho^{(\boldsymbol{\mu})}\sqrt{1-\tau^2 B^{(\epsilon)}(\rho^{(\boldsymbol{\mu})},v^{(\boldsymbol{\mu})},\epsilon)}\big)
+\partial_{y}(\rho^{(\boldsymbol{\mu})} v^{(\boldsymbol{\mu})})=0
&\qquad \mbox{in $\Omega_{\textrm{w}}$},\\[4pt]
\partial_{x}v^{(\boldsymbol{\mu})}-\partial_{y}\Big(\frac{\sqrt{1-\tau^2 B^{(\epsilon)}(\rho^{(\boldsymbol{\mu})},v^{(\boldsymbol{\mu})},\epsilon)}-1}{\tau^2}\Big)=0
&\qquad \mbox{in $\Omega_{\textrm{w}}$},
\end{cases}
\end{eqnarray}
together with the initial condition:
\begin{eqnarray}\label{eq:1.16}
(\rho^{(\boldsymbol{\mu})}, v^{(\boldsymbol{\mu})})=(\rho_{0},v_{0})(y) \qquad \mbox{on $\Sigma_{0}$},
\end{eqnarray}
and the boundary condition:
\begin{eqnarray}\label{eq:1.17}
\Big(\sqrt{1-\tau^2 B^{(\epsilon)}(\rho^{(\boldsymbol{\mu})},v^{(\boldsymbol{\mu})},\epsilon)},
v^{(\boldsymbol{\mu})}\Big)\cdot \mathbf{n}_{\textrm{w}}=0
\qquad \mbox{on $\Gamma_{\textrm{w}}$}.
\end{eqnarray}

Similarly, from the third equation of \eqref{eq:1.10}, we obtain
\begin{equation}\label{eq:1.18}
u=-\frac{1}{2}v^2-\frac{\ln \rho}{a^{2}_{\infty}}.
\end{equation}
Then, substituting \eqref{eq:1.18} into the other two equations of \eqref{eq:1.10},
we reformulate problem \eqref{eq:1.10}--\eqref{eq:1.12} as
\begin{equation}\label{eq:1.19}
\begin{cases}
\partial_{x}\rho+\partial_{y}(\rho v)=0 &\qquad \mbox{in $\Omega_{\textrm{w}}$},\\[3pt]
\partial_{x}v+\partial_{y}\big(\frac{1}{2}v^2+\frac{\ln \rho}{a^{2}_{\infty}}\big)=0&\qquad \mbox{in $\Omega_{\textrm{w}}$},
\end{cases}
\end{equation}
with the initial condition:
\begin{equation}\label{eq:1.20}
(\rho, v)=(\rho_{0}, v_{0})(y) \qquad\,\, \mbox{on $\Sigma_{0}$},
\end{equation}
and the boundary condition:
\begin{equation}\label{eq:1.21}
v=b_0 \qquad\,\, \mbox{on $\Gamma_{\textrm{w}}$}.
\end{equation}

Our main results in this paper are stated as follows:
\begin{theorem}[Main Theorem]\label{thm:1.1}
Assume that there exist $\rho^*>\rho_*>0$ so that $\rho_0\in[\rho_{*},\rho^*]$.
Assume that $(\rho_{0}-1, v_{0})\in (L^{1}\cap BV)(\Sigma_{0})$.
Moreover, assume that there exists $C_{0}>0$ independent of $\boldsymbol{\mu}$ such that
$$
\|\boldsymbol{\mu}\|\big(T.V.\{(\rho_0, v_0); \Sigma_{0}\}+|b_0|\big)<C_{0}
$$
for $\|\boldsymbol{\mu}\|:=\epsilon+\tau^2$.
Let $(\rho^{(\boldsymbol{\mu})}, v^{(\boldsymbol{\mu})})$ and $(\rho, v)$ be the entropy solutions
of problem \eqref{eq:1.15}--\eqref{eq:1.17} and problem \eqref{eq:1.19}--\eqref{eq:1.21}, respectively.
Then there exists $\boldsymbol{\mu}_{0}=(\epsilon_0, \tau^{2}_{0})$ with $\epsilon_0=\gamma_0-1>0$
and $\tau_0>0$ such that, when $\|\boldsymbol{\mu}\|<\|\boldsymbol{\mu}_0\|:=\epsilon_0+\tau^2_0$,
\begin{eqnarray}\label{eq:1.22}
\|({\rho^{(\boldsymbol{\mu})}}-\rho, {v^{(\boldsymbol{\mu})}}-v)\|_{L^{1}((-\infty, b_{0}x))}
\leq C x\|\boldsymbol{\mu}\|
\qquad \mbox{for every $x>0$},
\end{eqnarray}
where
$C>0$ is independent on $\boldsymbol{\mu}$ and $x$.
{Moreover, the convergence rate for $\boldsymbol{\mu}$ in \eqref{eq:1.22} is optimal.}
\end{theorem}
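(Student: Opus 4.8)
The plan is to prove the $L^1$ estimate \eqref{eq:1.22} by combining two ingredients: the Standard Riemann Semigroup (SRS) $S_t$ associated with the limiting system \eqref{eq:1.19}--\eqref{eq:1.21} (which I would construct, with uniform Lipschitz constant in $L^1$, using the Glimm/front-tracking machinery adapted to the initial-boundary value problem with the slip boundary on $\Gamma_{\textrm{w}}$), and a local error analysis showing that the family $(\rho^{(\boldsymbol{\mu})}, v^{(\boldsymbol{\mu})})$ is an approximate semigroup trajectory for $S$ with error $O(\|\boldsymbol{\mu}\|)$ per unit $x$. The key identity is the classical one: for a Lipschitz semigroup $S$ and a Lipschitz-in-$x$ map $x\mapsto W(x)$,
\begin{eqnarray*}
\|W(x)-S_x W(0)\|_{L^1}\le L\int_0^x \liminf_{h\to 0+}\frac{\|W(\xi+h)-S_h W(\xi)\|_{L^1}}{h}\,\dd\xi,
\end{eqnarray*}
so it suffices to bound the integrand pointwise in $\xi$ by $C\|\boldsymbol{\mu}\|$.

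The main work is therefore the local comparison. First I would fix $\xi>0$ and freeze $W(\xi)=(\rho^{(\boldsymbol{\mu})},v^{(\boldsymbol{\mu})})(\xi,\cdot)$ as Riemann/Cauchy data for the limiting system on the strip $[\xi,\xi+h]$; on the other side, $(\rho^{(\boldsymbol{\mu})},v^{(\boldsymbol{\mu})})$ itself solves the $\boldsymbol{\mu}$-system \eqref{eq:1.15} with the same data. I would then compare the two evolutions \emph{locally, case by case}, exactly as announced in the abstract: (i) in regions where both solutions are smooth, the difference of the two flux functions is $O(\|\boldsymbol{\mu}\|)$ — one checks from \eqref{eq:1.14} that $B^{(\epsilon)}(\rho,v,\epsilon)=v^2+\frac{2(\rho^\epsilon-1)}{a_\infty^2\epsilon}= v^2+\frac{2\ln\rho}{a_\infty^2}+O(\epsilon)$ and $\frac{1-\sqrt{1-\tau^2 B}}{\tau^2}=\frac12 B+O(\tau^2)$, so the $\boldsymbol{\mu}$-fluxes in \eqref{eq:1.15} converge to the fluxes of \eqref{eq:1.19} with error $O(\epsilon+\tau^2)=O(\|\boldsymbol{\mu}\|)$, uniformly on the compact state space $[\rho_*,\rho^*]\times(\text{bounded }v)$; (ii) near an interior shock front, one compares the Rankine--Hugoniot speeds and states of the two systems and shows the discrepancy in position/strength contributes $O(\|\boldsymbol{\mu}\|)$ to the $L^1$ difference over a layer of width $h$; (iii) along the boundary $\Gamma_{\textrm{w}}$ one compares the reflected Riemann problems, noting that \eqref{eq:1.17} reduces to $v^{(\boldsymbol{\mu})}=b_0\sqrt{1-\tau^2 B^{(\epsilon)}}$, which differs from \eqref{eq:1.21} by $O(\tau^2)=O(\|\boldsymbol{\mu}\|)$. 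Summing these local contributions with the Glimm-type interaction bounds (which is where the smallness hypothesis $\|\boldsymbol{\mu}\|(T.V.+|b_0|)<C_0$ enters, guaranteeing uniform BV bounds and hence finitely many fronts with controlled total strength) yields the integrand bound $C\|\boldsymbol{\mu}\|$, and the semigroup estimate then gives $\|W(x)-S_xW(0)\|_{L^1}\le Cx\|\boldsymbol{\mu}\|$. Finally, since $W(0)=(\rho_0,v_0)=$ the data of the limiting problem, $S_xW(0)=(\rho,v)(x,\cdot)$, which is \eqref{eq:1.22}.

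For the optimality assertion I would construct an explicit example: take piecewise-constant data producing a single shock (or a shock-rarefaction pair) for both systems, so that the two solutions can be computed in closed form; then the $O(\epsilon)$ and $O(\tau^2)$ discrepancies in the Bernoulli relation and in the flux produce a genuinely first-order-in-$\|\boldsymbol{\mu}\|$ difference in the shock location, whence $\|(\rho^{(\boldsymbol{\mu})}-\rho,v^{(\boldsymbol{\mu})}-v)\|_{L^1((-\infty,b_0x))}\ge c\,x\,\|\boldsymbol{\mu}\|$ for some $c>0$ and all small $\boldsymbol{\mu}$, matching \eqref{eq:1.22} up to the constant.

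I expect the main obstacle to be twofold: constructing the SRS for the \emph{boundary} problem with \emph{large} data — the uniform Lipschitz constant and well-posedness require careful Glimm-interaction and boundary-reflection estimates that stay uniform as $\gamma\to 1$, $\tau\to 0$ (this is precisely why the hypothesis couples $\|\boldsymbol{\mu}\|$ with $T.V.+|b_0|$) — and, within the local comparison, handling the boundary case (iii) and the interaction of fronts with $\Gamma_{\textrm{w}}$, where the geometry of the wedge and the $\boldsymbol{\mu}$-dependent slip condition \eqref{eq:1.17} must be tracked simultaneously. The interior smooth and shock cases (i)--(ii) are comparatively routine perturbation estimates once the flux expansions above are in hand.
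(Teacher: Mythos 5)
Your overall strategy coincides with the paper's: build a Standard Riemann Semigroup with a Lipschitz error formula for the limiting initial-boundary value problem \eqref{eq:1.19}--\eqref{eq:1.21}, bound the local-in-$x$ error of the $\boldsymbol{\mu}$-flow by $O(\|\boldsymbol{\mu}\|)$ through a case-by-case comparison of Riemann solvers (interior fronts and boundary reflections, the latter using that \eqref{eq:1.17} perturbs \eqref{eq:1.21} at order $\|\boldsymbol{\mu}\|$), integrate via the semigroup formula, and prove optimality with an explicit single-shock boundary Riemann example. Even the optimality construction matches: the paper takes $b_0=0$ and a boundary Riemann datum generating one $1$-shock for each system, and computes a genuinely first-order shift in shock speed and strength, yielding a lower bound of the form $c\,\|\boldsymbol{\mu}\|\delta x$. (The paper obtains the SRS by an affine change of variables straightening $\Gamma_{\rm w}$ so that the Colombo--Risebro quarter-plane result applies directly, rather than redoing interaction estimates; that difference is immaterial.)

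The one step that does not work as written is inserting the exact entropy solution $W(\xi)=(\rho^{(\boldsymbol{\mu})},v^{(\boldsymbol{\mu})})(\xi,\cdot)$ into the error formula and estimating the integrand by splitting into ``smooth regions'' and ``isolated shock fronts.'' A large-data $BV$ entropy solution of \eqref{eq:1.15}--\eqref{eq:1.17} carries no such piecewise-smooth structure, and the semigroup formula (Proposition \ref{prop:3.3}) is stated for Lipschitz maps with finitely many wave fronts; uniform $BV$ bounds alone do not provide ``finitely many fronts.'' The fix, which is exactly what the paper does, is to run the comparison on the wave-front-tracking approximations $U^{(\boldsymbol{\mu}),\nu}$ of the $\boldsymbol{\mu}$-system, whose existence with uniform bounds for large data (via Asakura's path decomposition, Proposition \ref{prop:3.1}) is itself a substantive ingredient. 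Then every discontinuity is a jump --- a shock, a rarefaction fan of strength $\lesssim \nu^{-1}$, or a non-physical front --- and the local estimates (Lemmas \ref{lem:4.1}--\ref{lem:4.3}) give per-front errors of size $O\big((\|\boldsymbol{\mu}\|+\nu^{-1}+2^{-\nu})\big)(|\alpha-1|+1)h$ plus $O(2^{-\nu})h$ from the non-physical fronts; summing over fronts uses the uniform total-variation bound, and the desired estimate $\|U^{(\boldsymbol{\mu})}(x,\cdot)-U(x,\cdot)\|_{L^1}\le Cx\|\boldsymbol{\mu}\|$ only follows after the limit $\nu\to\infty$ (the $J_1+J_2+J_3$ triangle-inequality argument of \S 4.2). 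With that substitution --- approximations first, exact solution in the limit --- your outline is the paper's proof.
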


With Theorem \ref{thm:1.1} in hand, we can further show the convergence rate between the entropy solutions
$(\rho^{(\boldsymbol{\mu})},u^{(\boldsymbol{\mu})}, v^{(\boldsymbol{\mu})})$
of problems \eqref{eq:1.6}--\eqref{eq:1.9} and the entropy solution $(\rho, u,v)$ of problem \eqref{eq:1.10}--\eqref{eq:1.12} below.

\begin{theorem}\label{coro:1.1}
Under the assumptions in {\rm Theorem \ref{thm:1.1}},
let $(\rho^{(\boldsymbol{\mu})},u^{(\boldsymbol{\mu})},v^{(\boldsymbol{\mu})})$ and $(\rho, u,v)$
be the entropy solutions of problem \eqref{eq:1.6}--\eqref{eq:1.9}
and problem \eqref{eq:1.10}--\eqref{eq:1.12}, respectively.
Let {$\boldsymbol{\mu}_{0}$} be defined in {\rm Theorem \ref{thm:1.1}}.
Then, for any $\|\boldsymbol{\mu}\|<\|\boldsymbol{\mu}_0\|$,
the following optimal convergence rate holds{\rm :}
\begin{eqnarray}\label{eq:1.23}
\|(\rho^{(\boldsymbol{\mu})}-\rho,u^{(\boldsymbol{\mu})}-u, v^{(\boldsymbol{\mu})}-v)\|_{L^{1}((-\infty, b_{0}x))}\leq C (1+ x)\|\boldsymbol{\mu}\|
\qquad\,\, \mbox{for every $x>0$},
\end{eqnarray}
where $C>0$ is a constant independent of $\boldsymbol{\mu}$ and $x$.
\end{theorem}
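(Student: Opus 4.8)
To prove Theorem~\ref{coro:1.1}, the plan is to deduce it from Theorem~\ref{thm:1.1} by writing $u^{(\boldsymbol{\mu})}$ and $u$ as explicit functions of $(\rho^{(\boldsymbol{\mu})},v^{(\boldsymbol{\mu})})$ and $(\rho,v)$ through the Bernoulli relations \eqref{eq:1.13}--\eqref{eq:1.14} and \eqref{eq:1.18} and then controlling the resulting pointwise difference in $L^1$. Set
\[
\Phi^{(\boldsymbol{\mu})}(\rho,v):=\frac{\sqrt{1-\tau^2 B^{(\epsilon)}(\rho,v,\epsilon)}-1}{\tau^2},
\qquad
\Phi^{(\boldsymbol{0})}(\rho,v):=-\tfrac12 v^2-\tfrac{\ln\rho}{a_\infty^2},
\]
so that $u^{(\boldsymbol{\mu})}=\Phi^{(\boldsymbol{\mu})}(\rho^{(\boldsymbol{\mu})},v^{(\boldsymbol{\mu})})$ and $u=\Phi^{(\boldsymbol{0})}(\rho,v)$, and split
\[
u^{(\boldsymbol{\mu})}-u=\big(\Phi^{(\boldsymbol{\mu})}-\Phi^{(\boldsymbol{0})}\big)(\rho^{(\boldsymbol{\mu})},v^{(\boldsymbol{\mu})})
+\Big(\Phi^{(\boldsymbol{0})}(\rho^{(\boldsymbol{\mu})},v^{(\boldsymbol{\mu})})-\Phi^{(\boldsymbol{0})}(\rho,v)\Big)=:I+II.
\]
For the term $II$ I would use that, by the uniform a priori bounds underlying the Standard Riemann Semigroup, $\rho^{(\boldsymbol{\mu})}$ and $\rho$ stay in a fixed compact subinterval of $(0,\infty)$ while $v^{(\boldsymbol{\mu})}$ and $v$ stay bounded, uniformly for $\|\boldsymbol{\mu}\|<\|\boldsymbol{\mu}_0\|$; on that set $\Phi^{(\boldsymbol{0})}$ is Lipschitz, so $|II|\le C\big(|\rho^{(\boldsymbol{\mu})}-\rho|+|v^{(\boldsymbol{\mu})}-v|\big)$, hence $\|II\|_{L^1((-\infty,b_0x))}\le Cx\|\boldsymbol{\mu}\|$ directly from \eqref{eq:1.22}.

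The heart of the argument is the term $I$. Here I would Taylor expand in $\boldsymbol{\mu}$: from $\sqrt{1-s}=1-\tfrac s2+O(s^2)$ one gets $\Phi^{(\boldsymbol{\mu})}(\rho,v)+\tfrac12 B^{(\epsilon)}(\rho,v,\epsilon)=\tau^2\,[B^{(\epsilon)}(\rho,v,\epsilon)]^2\,H_1$, and from $\tfrac{\rho^{\epsilon}-1}{\epsilon}=\ln\rho+\tfrac{\epsilon}{2}(\ln\rho)^2+O(\epsilon^2)$ one gets $\Phi^{(\boldsymbol{0})}(\rho,v)+\tfrac12 B^{(\epsilon)}(\rho,v,\epsilon)=\epsilon\,(\ln\rho)^2\,H_2$, with $H_1,H_2$ bounded on the relevant compact set; subtracting,
\[
\big(\Phi^{(\boldsymbol{\mu})}-\Phi^{(\boldsymbol{0})}\big)(\rho,v)=\tau^2\,[B^{(\epsilon)}(\rho,v,\epsilon)]^2\,H_1-\epsilon\,(\ln\rho)^2\,H_2 .
\]
The key observation is that both terms on the right vanish at the far-field state $(\rho,v)=(1,0)$, because $B^{(\epsilon)}(1,0,\epsilon)=0$ and $\ln 1=0$; combined with the elementary bounds $|B^{(\epsilon)}(\rho,v,\epsilon)|\le C(|\rho-1|+v^2)$ and $(\ln\rho)^2\le C|\rho-1|$ on the compact set, evaluating at $(\rho^{(\boldsymbol{\mu})},v^{(\boldsymbol{\mu})})$ gives the pointwise estimate $|I|\le C\|\boldsymbol{\mu}\|\big(|\rho^{(\boldsymbol{\mu})}-1|+|v^{(\boldsymbol{\mu})}|\big)$. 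Integrating over $(-\infty,b_0x)$ and invoking the uniform linear $L^1$ growth bound
\[
\big\|\rho^{(\boldsymbol{\mu})}(x,\cdot)-1\big\|_{L^1((-\infty,b_0x))}+\big\|v^{(\boldsymbol{\mu})}(x,\cdot)\big\|_{L^1((-\infty,b_0x))}\le C(1+x),
\]
valid for all $x>0$ and all $\|\boldsymbol{\mu}\|<\|\boldsymbol{\mu}_0\|$, we obtain $\|I\|_{L^1((-\infty,b_0x))}\le C(1+x)\|\boldsymbol{\mu}\|$. Adding the estimates for $I$ and $II$ together with the bound $\|\rho^{(\boldsymbol{\mu})}-\rho\|_{L^1}+\|v^{(\boldsymbol{\mu})}-v\|_{L^1}\le Cx\|\boldsymbol{\mu}\|$ from \eqref{eq:1.22} then yields \eqref{eq:1.23}; the constant term $C\|\boldsymbol{\mu}\|$ (i.e.\ the reason $1+x$ rather than $x$ appears) reflects that, unlike in Theorem~\ref{thm:1.1}, $u^{(\boldsymbol{\mu})}-u$ does not vanish at $x=0$, its $L^1$ size there being controlled by $\|\boldsymbol{\mu}\|\,\|(\rho_0-1,v_0)\|_{L^1(\Sigma_0)}$.

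The step I expect to require the most care is the uniform-in-$\boldsymbol{\mu}$ linear $L^1$ growth estimate $\|(\rho^{(\boldsymbol{\mu})}(x,\cdot)-1,v^{(\boldsymbol{\mu})}(x,\cdot))\|_{L^1((-\infty,b_0x))}\le C(1+x)$: this is not needed for Theorem~\ref{thm:1.1} itself (there the two solutions share the same data, so their difference starts from zero), so it must be read off from the a priori bounds constructed in the earlier sections — either from the Glimm-type functional controlling $T.V.$ and the $L^1$ mass simultaneously, or by comparing $(\rho^{(\boldsymbol{\mu})},v^{(\boldsymbol{\mu})})$ with the explicit background solution of the straight-wedge problem (uniform incoming flow, a single oblique shock, and the uniform post-shock state), whose restriction to $(-\infty,b_0x)$ has $L^1$ norm manifestly $O(1+x)$. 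The hypothesis $\|\boldsymbol{\mu}\|\big(T.V.\{(\rho_0,v_0);\Sigma_0\}+|b_0|\big)<C_0$ is precisely what keeps this growth constant uniform in $\boldsymbol{\mu}$; once it is in hand, everything else reduces to the elementary Taylor expansions above and to Theorem~\ref{thm:1.1}.
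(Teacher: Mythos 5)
Your proposal is correct and follows essentially the same route as the paper: the paper also expresses $u^{(\boldsymbol{\mu})}-u$ through the Bernoulli maps, adds and subtracts a cross term (it uses $\Psi(U,\boldsymbol{\mu})$ evaluated at the limit solution where you use $\Phi^{(\boldsymbol{0})}$ evaluated at $(\rho^{(\boldsymbol{\mu})},v^{(\boldsymbol{\mu})})$), controls the state-difference part by Lipschitz bounds plus Theorem \ref{thm:1.1}, and controls the parameter part by a Taylor expansion in $\boldsymbol{\mu}$ whose coefficients vanish at the background state $(1,0)$. The step you flag as delicate---the uniform-in-$\boldsymbol{\mu}$ linear $L^1$ growth of $(\rho^{(\boldsymbol{\mu})}-1,v^{(\boldsymbol{\mu})})$---is in fact directly available from estimate \eqref{eq:3.4} of Proposition \ref{prop:3.1} combined with the assumption $(\rho_0-1,v_0)\in L^1(\Sigma_0)$, so no additional argument is required there.
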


To complete the proof, our main strategy is to further develop the methods used
in \cite{Chen-Christoforou-Zhang-1, Chen-Christoforou-Zhang-2, Chen-Xiang-Zhang} for the Cauchy problem
into the initial-boundary value problem with large data by requiring that one of the two problems can generate
a Standard Riemann Semigroup, denoted by {\emph{SRS}}, while the other admits approximate solutions
constructed by the wave-front tracking scheme.

Since there is no theory on the \emph{SRS} for the initial-boundary value problem
in general, we identify an affine transformation to transfer the initial-boundary value
problem \eqref{eq:1.19}--\eqref{eq:1.21} to be in a quarter region with the unchanged equations \eqref{eq:1.19}.
Then we can apply the results in \cite{Colombo-Risebro} to show that the transformed problem admits a unique \emph{SRS}.
After that, by applying the inverse transformation, we can establish the $L^{1}$-stability
and the existence of the \emph{SRS} of the initial-boundary value problem \eqref{eq:1.19}--\eqref{eq:1.21}.
Moreover, a new semigroup formula is also established.

On the other hand, for the initial-boundary value problem \eqref{eq:1.15}--\eqref{eq:1.17} with large data,
by employing the path decomposition technique developed in \cite{Asakura} and following the argument
in \cite{Kuang-Xiang-Zhang-1}, we can also construct the approximate solutions via the wave-front tracking scheme.
Based on them, employing the new semigroup formula obtained in this paper,
we establish the global $L^1$--difference estimate between two approximate solutions
and obtain estimate \eqref{eq:1.22} by taking the corresponding limits.
{Finally, we construct a simple example to illustrate that the convergence rate obtained
in Theorem \ref{thm:1.1} is optimal.}

We remark that, recently, the law of the hypersonic similarity without a convergence rate was rigorously justified
for the steady potential flow past a straight wedge with large data in \cite{Kuang-Xiang-Zhang-1}
and the optimal convergence rate for {\it small data} was obtained in \cite{Kuang-Xiang-Zhang-2} over a Lipschitz curved wedge,
as well as for the full Euler flows with small data in \cite{Chen-Kuang-Xiang-Zhang}.
Meanwhile, a similar but different problem on the hypersonic limit was considered in \cite{Qu-Wang-Yuan, Qu-Yuan-Zhao}
as the Mach number of the upcoming flow $\textrm{M}_{\infty}$ tends to infinity
with the obstacle being fixed, for which the Radon measure valued solutions were constructed
as the limit of the solutions of the steady full Euler flows past a two-dimensional obstacle.

There are also some results on the existence of global entropy solutions with large data in $BV$ for
the one-dimensional gas dynamics equations in Lagrange coordinates;
see \cite{Nishida-Smoller-1, Nishida-Smoller-2,Temple} for more details.
There are also some
results on the steady supersonic flow problems
that involve the structural stability of shock waves,
rarefaction waves, and contact discontinuities;
see \cite{Chen-Kuang-Zhang,Chen-Kuang-Zhang2,Chen-Li,Chen-Zhang-Zhu-1, Courant-Friedrichs, Zhang-1, Zhang-2}
and the references cited therein. See also \cite{ChenGironSchulz,ChenSlemrodWang}.

The remaining context of this paper is organized as follows:
In \S 2, we study the elementary wave curves for systems \eqref{eq:1.15}
and \eqref{eq:1.19} globally, and then compare the Riemann solvers between these two systems with a boundary.
In \S 3, we construct the approximate solutions of the initial-boundary value problem \eqref{eq:1.15}--\eqref{eq:1.17}
via the wave-front tracking scheme and then establish the $L^1$-stability estimates and the {properties}
of the Standard Riemann Semigroup (\emph{SRS}) for the initial-boundary value problem \eqref{eq:1.19}--\eqref{eq:1.21}.
Based on them, a new semigroup formula is derived. In \S 4, we complete the proof of Theorem \ref{thm:1.1}
by first establishing the local $L^1$-estimates between two approximate solutions and then applying the semigroup formula
and the properties of the approximate solutions.
Finally, we present an example to illustrate that the convergence rate obtained in Theorem \ref{thm:1.1} is optimal.
In \S 5, we complete the proof of Theorem \ref{coro:1.1}.
{In the appendix, we show some basic estimates, which are used for establishing the optimal convergence rate in \S4.3.}

\section{Riemann Solvers for Systems \eqref{eq:1.15} and \eqref{eq:1.19}}\setcounter{equation}{0}
In this section, we construct the elementary wave curves for system \eqref{eq:1.15} and system \eqref{eq:1.19}, respectively.
Then we make the comparison of the Riemann solvers with a boundary
between the initial-boundary value problems \eqref{eq:1.15}--\eqref{eq:1.17} and
\eqref{eq:1.19}--\eqref{eq:1.21}.

\subsection{Wave curves for system \eqref{eq:1.15}}
For simplicity,
we rewrite $(\rho^{(\boldsymbol{\mu})},v^{(\boldsymbol{\mu})})$ as $(\rho,v)$
and $B^{(\epsilon)}(\rho^{(\boldsymbol{\mu})},v^{(\boldsymbol{\mu})},\epsilon)$ as $B^{(\epsilon)}$.
Denote $U:=(\rho, v)^{\top}$.
Then, by direct computation, the characteristic polynomial of system \eqref{eq:1.15} is
\begin{equation}\label{eq:2.1}
\big(1-\tau^2(B^{(\epsilon)}+a^{-2}_{\infty}\rho^{\epsilon})\big)(\lambda^{(\boldsymbol{\mu})})^2
-2v\sqrt{1-\tau^2B^{(\epsilon)}}\lambda^{(\boldsymbol{\mu})}+v^2-a^{-2}_{\infty}\rho^{\epsilon}=0.
\end{equation}
It admits two roots that are the two eigenvalues of system \eqref{eq:1.15}:
\begin{equation}\label{eq:2.2}
\lambda^{(\boldsymbol{\mu})}_{j}(U,\boldsymbol{\mu})
=\frac{a^2_{\infty}v\sqrt{1-\tau^2B^{(\epsilon)}}
  +(-1)^{j}\rho^{\frac{\epsilon}{2}}\sqrt{a^2_{\infty}-\tau^2\epsilon^{-1}\big((\epsilon+2)\rho^{\epsilon}-2\big)}}
{a^{2}_{\infty}\big(1-\tau^2(B^{(\epsilon)}+a^{-2}_{\infty}\rho^{\epsilon})\big)} \qquad\,\, \mbox{for $j=1,2$}.
\end{equation}
The corresponding right-eigenvectors are
\begin{equation}\label{eq:2.3}
{\boldsymbol{\rm{r}}}^{(\boldsymbol{\mu})}_{j}(U, \boldsymbol{\mu})
=(-1)^{j}(\rho, \, \frac{\rho^{\epsilon}}{a^2_{\infty}(\sqrt{1-\tau^2B^{(\epsilon)}}\lambda^{(\boldsymbol{\mu})}_{j}-v)})^{\top}
\qquad \mbox{for $j=1,2$}.
\end{equation}

\begin{lemma}\label{lem:2.1}
For any $U\in D=\{(\rho,v): \rho\in (\rho_{*}, \rho^{*}),\ |v|<K \}$ with constants $\rho^{*}>\rho_{*}>0$ and $K>0$
independent of $\boldsymbol{\mu}$, then
\begin{align}
&\lambda^{(\boldsymbol{\mu})}_{j}(U,\boldsymbol{\mu})\Big|_{\boldsymbol{\mu}=\boldsymbol{0}}
=v+ (-1)^{j}a_{\infty}^{-1} \qquad\,\, &&\mbox{for $j=1,2$}, \label{eq:2.4}\\[1pt]
&{\boldsymbol{\rm{r}}}^{(\boldsymbol{\mu})}_{j}(U, \boldsymbol{\mu})\Big|_{\boldsymbol{\mu}=\boldsymbol{0}}
=((-1)^{j}\rho,\, {a}_{\infty}^{-1})^{\top}
\qquad &&\mbox{for $j=1,2$}. \label{eq:2.5}
\end{align}
\end{lemma}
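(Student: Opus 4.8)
The plan is to verify the two limiting identities \eqref{eq:2.4}--\eqref{eq:2.5} by direct substitution of $\boldsymbol{\mu}=\boldsymbol{0}$, i.e. $\epsilon\to 0$ and $\tau^2\to 0$, into the explicit formulas \eqref{eq:2.2}--\eqref{eq:2.3}, after first establishing the elementary limits of the auxiliary quantities that appear there. The only genuinely non-routine point is the behaviour of $B^{(\epsilon)}$ and of the combination $\epsilon^{-1}((\epsilon+2)\rho^{\epsilon}-2)$ as $\epsilon\to 0$, since both involve the indeterminate factor $\tfrac{\rho^{\epsilon}-1}{\epsilon}$.

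First I would record that, for $U$ ranging over the compact set $\overline{D}$ with $\rho\in[\rho_*,\rho^*]$, we have $\rho^{\epsilon}=e^{\epsilon\ln\rho}\to 1$ and $\tfrac{\rho^{\epsilon}-1}{\epsilon}\to\ln\rho$ uniformly as $\epsilon\to 0$, hence by \eqref{eq:1.14},
\begin{equation*}
B^{(\epsilon)}(\rho,v,\epsilon)=\frac{2(\rho^{\epsilon}-1)}{a_\infty^2\epsilon}+v^2\ \longrightarrow\ \frac{2\ln\rho}{a_\infty^2}+v^2
\end{equation*}
which stays bounded on $\overline{D}$; in particular $\tau^2 B^{(\epsilon)}\to 0$ and $\sqrt{1-\tau^2 B^{(\epsilon)}}\to 1$ as $\boldsymbol{\mu}\to\boldsymbol{0}$. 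Likewise $\tau^2(B^{(\epsilon)}+a_\infty^{-2}\rho^{\epsilon})\to 0$, so the denominator $a_\infty^2(1-\tau^2(B^{(\epsilon)}+a_\infty^{-2}\rho^{\epsilon}))\to a_\infty^2$. For the radical inside the numerator of \eqref{eq:2.2}, I would write $(\epsilon+2)\rho^{\epsilon}-2=2(\rho^{\epsilon}-1)+\epsilon\rho^{\epsilon}$, so that $\epsilon^{-1}((\epsilon+2)\rho^{\epsilon}-2)=2\cdot\tfrac{\rho^{\epsilon}-1}{\epsilon}+\rho^{\epsilon}$ remains bounded as $\epsilon\to 0$; multiplying by $\tau^2$ and letting $\boldsymbol{\mu}\to\boldsymbol{0}$ kills this term, leaving $\sqrt{a_\infty^2-\tau^2\epsilon^{-1}((\epsilon+2)\rho^{\epsilon}-2)}\to a_\infty$, and the prefactor $\rho^{\epsilon/2}\to 1$.

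Combining these limits in \eqref{eq:2.2} gives
\begin{equation*}
\lambda^{(\boldsymbol{\mu})}_j(U,\boldsymbol{\mu})\big|_{\boldsymbol{\mu}=\boldsymbol{0}}
=\frac{a_\infty^2 v\cdot 1+(-1)^j\cdot 1\cdot a_\infty}{a_\infty^2}=v+(-1)^j a_\infty^{-1},
\end{equation*}
which is \eqref{eq:2.4}. Then, substituting this value together with $\sqrt{1-\tau^2 B^{(\epsilon)}}\to 1$ and $\rho^{\epsilon}\to 1$ into \eqref{eq:2.3}, the second component becomes $\dfrac{1}{a_\infty^2(\lambda^{(\boldsymbol{\mu})}_j-v)}=\dfrac{1}{a_\infty^2\cdot(-1)^j a_\infty^{-1}}=(-1)^j a_\infty^{-1}$, so
\begin{equation*}
{\boldsymbol{\rm r}}^{(\boldsymbol{\mu})}_j(U,\boldsymbol{\mu})\big|_{\boldsymbol{\mu}=\boldsymbol{0}}
=(-1)^j\big(\rho,\ (-1)^j a_\infty^{-1}\big)^{\top}=\big((-1)^j\rho,\ a_\infty^{-1}\big)^{\top},
\end{equation*}
which is \eqref{eq:2.5}. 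The main thing to be careful about, and the only place where one must not simply "set $\epsilon=0$" naively, is the $\tfrac{\rho^{\epsilon}-1}{\epsilon}$ cancellation described above; once that is handled, the rest is continuity of elementary functions on the compact set $\overline{D}$, and the fact that the denominator $a_\infty^2(1-\tau^2(B^{(\epsilon)}+a_\infty^{-2}\rho^{\epsilon}))$ is bounded away from $0$ uniformly on $\overline{D}$ for $\|\boldsymbol{\mu}\|$ small (which also guarantees the eigenvalues and eigenvectors are well defined there).
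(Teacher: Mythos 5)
Your proof is correct: the limits $\tfrac{\rho^{\epsilon}-1}{\epsilon}\to\ln\rho$, $\tau^2B^{(\epsilon)}\to 0$, and the rewriting $\epsilon^{-1}((\epsilon+2)\rho^{\epsilon}-2)=2\tfrac{\rho^{\epsilon}-1}{\epsilon}+\rho^{\epsilon}$ are exactly what is needed to pass to $\boldsymbol{\mu}=\boldsymbol{0}$ in \eqref{eq:2.2}--\eqref{eq:2.3}, and the resulting values agree with \eqref{eq:2.4}--\eqref{eq:2.5}. The paper omits the proof of Lemma \ref{lem:2.1}, treating it as a direct computation, and your argument is precisely that computation carried out carefully (including the uniformity on $\bar D$), so it matches the intended approach.
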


Since $a_{\infty}>0$, we deduce from Lemma \ref{lem:2.1} that system \eqref{eq:1.15} is strictly hyperbolic
for any $U\in D$ if $\epsilon>0$
and $\tau>0$ are sufficiently small.
Moreover, we have

\begin{lemma}\label{lem:2.2}
For any $U\in D$ with $D$ defined in {\rm Lemma \ref{lem:2.1}},
there exists a constant vector $\bar{\boldsymbol{\mu}}_0=(\bar{\epsilon}_0,\bar{\tau}^2_0)$
with $\bar{\epsilon}_0>0$ and $\bar{\tau}_0>0$ such that, for $\|\boldsymbol{\mu}\|\leq \|\bar{\boldsymbol{\mu}}_0\|$,
\begin{eqnarray}\label{eq:2.6}
\nabla_{U}\lambda^{(\boldsymbol{\mu})}_{j}(U,\boldsymbol{\mu})\cdot {\boldsymbol{\rm{r}}}^{(\boldsymbol{\mu})}_{j}(U,\boldsymbol{\mu})>0
\qquad \mbox{for $j=1,2$},
\end{eqnarray}
where $\|\boldsymbol{\mu}\|=\epsilon+\tau^2$ and  $\|\boldsymbol{\bar{\mu}}_0\|=\bar{\epsilon}_0+\bar{\tau}^2_0$.
\end{lemma}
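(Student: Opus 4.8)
The statement to prove is Lemma \ref{lem:2.2}: genuine nonlinearity of both characteristic fields of system \eqref{eq:1.15} for small $\boldsymbol{\mu}$.

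\medskip

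The plan is to verify \eqref{eq:2.6} by a perturbation argument around $\boldsymbol{\mu}=\boldsymbol{0}$, using the explicit expressions \eqref{eq:2.2}--\eqref{eq:2.3} together with the limiting formulas \eqref{eq:2.4}--\eqref{eq:2.5} from Lemma \ref{lem:2.1}. First I would compute the quantity $\nabla_U\lambda^{(\boldsymbol{\mu})}_j\cdot{\boldsymbol{\rm r}}^{(\boldsymbol{\mu})}_j$ at $\boldsymbol{\mu}=\boldsymbol{0}$. Since, by \eqref{eq:2.4}, $\lambda^{(\boldsymbol{0})}_j=v+(-1)^j a_\infty^{-1}$ depends only on $v$ (not on $\rho$), we have $\nabla_U\lambda^{(\boldsymbol{0})}_j=(0,1)^\top$; and by \eqref{eq:2.5}, ${\boldsymbol{\rm r}}^{(\boldsymbol{0})}_j=((-1)^j\rho, a_\infty^{-1})^\top$. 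Hence
\begin{equation*}
\nabla_U\lambda^{(\boldsymbol{\mu})}_j\cdot{\boldsymbol{\rm r}}^{(\boldsymbol{\mu})}_j\Big|_{\boldsymbol{\mu}=\boldsymbol{0}}
= a_\infty^{-1}>0 \qquad\mbox{for $j=1,2$}.
\end{equation*}

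\medskip

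Next I would argue continuity in $\boldsymbol{\mu}$. From \eqref{eq:1.14}, $B^{(\epsilon)}=\frac{2(\rho^\epsilon-1)}{a_\infty^2\epsilon}+v^2$ is a smooth function of $(\rho,v,\epsilon)$ on $D\times[0,\bar\epsilon]$ (with the removable singularity at $\epsilon=0$ giving $B^{(0)}=\frac{2\ln\rho}{a_\infty^2}+v^2$), and it is bounded there uniformly in $\boldsymbol{\mu}$ because $\rho$ ranges over the compact interval $[\rho_*,\rho^*]$ and $|v|<K$. Consequently the radicands in \eqref{eq:2.2}, namely $1-\tau^2 B^{(\epsilon)}$, $a_\infty^2-\tau^2\epsilon^{-1}((\epsilon+2)\rho^\epsilon-2)$, and the denominator $1-\tau^2(B^{(\epsilon)}+a_\infty^{-2}\rho^\epsilon)$, are all bounded away from zero (uniformly on $D$) once $\|\boldsymbol{\mu}\|$ is small enough — the second radicand tends to $a_\infty^2-2\tau^2\ln\rho\to a_\infty^2>0$ as $\boldsymbol{\mu}\to\boldsymbol{0}$, after again resolving the $\epsilon^{-1}$ singularity via $\epsilon^{-1}((\epsilon+2)\rho^\epsilon-2)\to 2\ln\rho+1$... wait, more precisely $\epsilon^{-1}((\epsilon+2)\rho^\epsilon-2)$ is smooth in $\epsilon$ near $0$ with value $2\ln\rho + 1$ at $\epsilon=0$, hence bounded on $D$. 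Therefore $\lambda^{(\boldsymbol{\mu})}_j$ and ${\boldsymbol{\rm r}}^{(\boldsymbol{\mu})}_j$ are $C^1$ functions of $U$, jointly continuous in $(U,\boldsymbol{\mu})$, on $\bar D\times\{\|\boldsymbol{\mu}\|\le\|\bar{\boldsymbol{\mu}}_0'\|\}$ for a suitable first guess $\bar{\boldsymbol{\mu}}_0'$, and their $U$-derivatives are likewise jointly continuous. It follows that $(U,\boldsymbol{\mu})\mapsto\nabla_U\lambda^{(\boldsymbol{\mu})}_j(U,\boldsymbol{\mu})\cdot{\boldsymbol{\rm r}}^{(\boldsymbol{\mu})}_j(U,\boldsymbol{\mu})$ is continuous on the compact set $\bar D\times\{\|\boldsymbol{\mu}\|\le\|\bar{\boldsymbol{\mu}}_0'\|\}$.

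\medskip

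Finally I would conclude by a compactness/uniform-continuity argument: since the function above equals $a_\infty^{-1}>0$ at every point of the compact slice $\bar D\times\{\boldsymbol{0}\}$, and is uniformly continuous on the compact product, there is $\|\bar{\boldsymbol{\mu}}_0\|=\bar\epsilon_0+\bar\tau_0^2>0$ small enough that it remains $\ge\frac{1}{2}a_\infty^{-1}>0$ for all $U\in D$ and all $\|\boldsymbol{\mu}\|\le\|\bar{\boldsymbol{\mu}}_0\|$, which is exactly \eqref{eq:2.6}. The main obstacle — really the only delicate point — is the bookkeeping around the apparent $\epsilon^{-1}$ singularities in $B^{(\epsilon)}$ and in the second radicand of \eqref{eq:2.2}: one must check that each such expression extends smoothly (with uniformly bounded derivatives on $D$) across $\epsilon=0$, for which the elementary expansion $\rho^\epsilon = 1+\epsilon\ln\rho+O(\epsilon^2)$, uniform for $\rho$ in the compact interval $[\rho_*,\rho^*]$, suffices. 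Once that uniform regularity is in place, the genuine nonlinearity for small $\boldsymbol{\mu}$ is an immediate perturbation of the clean limit $a_\infty^{-1}>0$. One should also verify strict hyperbolicity is preserved on the same neighborhood (so that the eigen-decomposition is well-defined), but that was already noted just before the lemma as a consequence of Lemma \ref{lem:2.1} and the same continuity argument.
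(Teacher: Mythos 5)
Your proposal is correct, and it shares the paper's basic skeleton (the value $a_\infty^{-1}>0$ at $\boldsymbol{\mu}=\boldsymbol{0}$ plus smallness of $\boldsymbol{\mu}$), but the execution is genuinely different. The paper never interchanges the $U$-gradient with the limit $\boldsymbol{\mu}\to\boldsymbol{0}$: it differentiates the characteristic polynomial \eqref{eq:2.1} implicitly to get closed-form expressions \eqref{eq:2.8}--\eqref{eq:2.9} for $\partial_\rho\lambda^{(\boldsymbol{\mu})}_1$ and $\partial_v\lambda^{(\boldsymbol{\mu})}_1$, assembles an explicit formula for $\nabla_U\lambda^{(\boldsymbol{\mu})}_j\cdot{\boldsymbol{\rm r}}^{(\boldsymbol{\mu})}_j$ valid for all small $\boldsymbol{\mu}$, and only then evaluates at $\boldsymbol{\mu}=\boldsymbol{0}$ via Lemma \ref{lem:2.1}, the final smallness of $\bar{\boldsymbol{\mu}}_0$ being the same continuity step you invoke. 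You instead differentiate only the limiting eigenvalue $v+(-1)^ja_\infty^{-1}$ (which trivially gives $(0,1)^{\top}$ and hence $a_\infty^{-1}$) and shift all the work into justifying that $\nabla_U\lambda^{(\boldsymbol{\mu})}_j$ is jointly continuous in $(U,\boldsymbol{\mu})$ up to $\epsilon=0$ — i.e.\ that the gradient of the limit is the limit of the gradients — which you do correctly by observing the removable $\epsilon^{-1}$ singularities (via $\rho^\epsilon=1+\epsilon\ln\rho+O(\epsilon^2)$ uniformly on $[\rho_*,\rho^*]$) and that the radicands and the denominator in \eqref{eq:2.2} stay away from zero on $\bar D$ for small $\|\boldsymbol{\mu}\|$, followed by compactness of $\bar D\times\{\|\boldsymbol{\mu}\|\le\|\bar{\boldsymbol{\mu}}_0'\|\}$. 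Your route is shorter and computation-free, at the price of making the interchange-of-limits/regularity step explicit; the paper's route yields usable explicit formulas for the derivatives at general small $\boldsymbol{\mu}$, which is information your argument does not produce. The only blemish is the momentary misstatement of $\lim_{\epsilon\to0}\epsilon^{-1}\big((\epsilon+2)\rho^\epsilon-2\big)$, which you corrected to $2\ln\rho+1$ and which in any case is irrelevant to the conclusion since it is multiplied by $\tau^2$.
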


\begin{proof}
For $j=1$, taking the derivatives on both sides of \eqref{eq:2.1} with respect to $\rho$ to obtain
\begin{eqnarray}\label{eq:2.7}
&&
2\Big(\big(1-\tau^2(B^{(\epsilon)}+a^{-2}_{\infty}\rho^{\epsilon})\big)\lambda^{(\boldsymbol{\mu})}_1-v\sqrt{1-\tau^2B^{(\epsilon)}}\Big)
\frac{\partial \lambda^{(\boldsymbol{\mu})}_1}{\partial \rho}\nonumber\\[3pt]
&&\,\,+a^{-2}_{\infty}\big((2v-(\epsilon+2)\lambda^{(\boldsymbol{\mu})}_1)\lambda^{(\boldsymbol{\mu})}_1\tau^2-\epsilon\big)
\rho^{\epsilon-1}=0.
\end{eqnarray}
Substituting \eqref{eq:2.2} for $j=1$ into \eqref{eq:2.7}, we deduce
\begin{eqnarray}\label{eq:2.8}
\frac{\partial \lambda^{(\boldsymbol{\mu})}_1}{\partial \rho}
=\frac{\big(2v-(\epsilon+2)\lambda^{(\boldsymbol{\mu})}_1\big)\lambda^{(\boldsymbol{\mu})}_1\tau^2-\epsilon}
{2\rho^{1-\frac{\epsilon}{2}}\sqrt{a^2_{\infty}-\tau^2\epsilon^{-1}\big((\epsilon+2)\rho^{\epsilon}-2\big)}}.
\end{eqnarray}
Similarly, we also obtain
\begin{eqnarray}\label{eq:2.9}
\frac{\partial \lambda^{(\boldsymbol{\mu})}_1}{\partial v}
=-\frac{a^2_{\infty}\Big(\big(\sqrt{1-\tau^2B^{(\epsilon)}}-2\tau^2v^2\big)\lambda^{(\boldsymbol{\mu})}_1-v
+\tau^2v(\lambda^{(\boldsymbol{\mu})}_1)^2\Big)}
{2\rho^{\frac{\epsilon}{2}}\sqrt{a^2_{\infty}-\tau^2\epsilon^{-1}\big((\epsilon+2)\rho^{\epsilon}-2\big)}}.
\end{eqnarray}

It follows from \eqref{eq:2.3} and  \eqref{eq:2.8}--\eqref{eq:2.9} that
\begin{small}
\begin{align*}
&\quad \nabla_{U}\lambda^{(\boldsymbol{\mu})}_{1}(U,\boldsymbol{\mu})\cdot {\textbf{r}}^{(\boldsymbol{\mu})}_{1}(U,\boldsymbol{\mu})\\[3pt]
&=\frac{2\big((\sqrt{1-\tau^2B^{(\epsilon)}}-2\tau^2v)\lambda^{(\boldsymbol{\mu})}_1-v+\tau^2 v(\lambda^{(\boldsymbol{\mu})}_{1})^2\big)
  -\big(\sqrt{1-\tau^2B^{(\epsilon)}}\lambda^{(\boldsymbol{\mu})}_1-v\big)
\big((2v-(\epsilon+2)\lambda^{(\boldsymbol{\mu})}_{1})\lambda^{(\boldsymbol{\mu})}_{1}\tau^2-\epsilon\big)}
{2\rho^{-\frac{\epsilon}{2}}\sqrt{a^2_{\infty}-\tau^2\epsilon^{-1}\big((\epsilon+2)\rho^{\epsilon}-2\big)}
\big(\sqrt{1-\tau^2B^{(\epsilon)}}\lambda^{(\boldsymbol{\mu})}_1-v\big)},
\end{align*}
\end{small}
which, by Lemma \ref{lem:2.1}, implies that
\begin{align*}
\nabla_{U}\lambda^{(\boldsymbol{\mu})}_{1}(U,\boldsymbol{\mu})\cdot {\textbf{r}}^{(\boldsymbol{\mu})}_{1}(U,\boldsymbol{\mu})
\Big|_{\boldsymbol{\mu}=\boldsymbol{0}}=a_{\infty}^{-1}>0.
\end{align*}
\smallskip

In the same way, for $j=2$, we have
\begin{small}
\begin{align*}
\begin{split}
&\nabla_{U}\lambda^{(\boldsymbol{\mu})}_{2}(U,\boldsymbol{\mu})\cdot {\textbf{r}}^{(\boldsymbol{\mu})}_{2}(U,\boldsymbol{\mu})\\[3pt]
&=\frac{2\big((\sqrt{1-\tau^2B^{(\epsilon)}}-2\tau^2v)\lambda^{(\boldsymbol{\mu})}_2-v+\tau^2 v(\lambda^{(\boldsymbol{\mu})}_{2})^2\big)
  -(\sqrt{1-\tau^2B^{(\epsilon)}}\lambda^{(\boldsymbol{\mu})}_2-v)
 \big((2v-(\epsilon+2)\lambda^{(\boldsymbol{\mu})}_{2})\lambda^{(\boldsymbol{\mu})}_{2}\tau^2-\epsilon\big)}
{2\rho^{-\frac{\epsilon}{2}}\sqrt{a^2_{\infty}-\tau^2\epsilon^{-1}((\epsilon+2)\rho^{\epsilon}-2)}
(\sqrt{1-\tau^2B^{(\epsilon)}}\lambda^{(\boldsymbol{\mu})}_2-v)}.
\end{split}
\end{align*}
\end{small}
Then, by Lemma \ref{lem:2.1} again, we obtain
\begin{align*}
\nabla_{U}\lambda^{(\boldsymbol{\mu})}_{2}(U,\boldsymbol{\mu})\cdot {\textbf{r}}^{(\boldsymbol{\mu})}_{2}(U,\boldsymbol{\mu})
\Big|_{\boldsymbol{\mu}=\boldsymbol{0}}=a_{\infty}^{-1}>0.
\end{align*}

Therefore, we can choose $\boldsymbol{\bar{\mu}}_0=(\bar{\epsilon}_0, \bar{\tau}^2_0)$
with small $\bar{\epsilon}_0>0$ and $\bar{\tau}_0>0$ such that, when
$\|\boldsymbol{\mu}\|\leq \|\boldsymbol{\bar{\mu}}_0\|$,
$\nabla_{U}\lambda^{(\boldsymbol{\mu})}_{2}(U,\boldsymbol{\mu})\cdot {\textbf{r}}^{(\boldsymbol{\mu})}_{2}(U,\boldsymbol{\mu})>0$ for $j=1,2$
and $U\in D$. This completes the proof.
\end{proof}

Lemma \ref{lem:2.2} implies that both characteristic fields of system \eqref{eq:1.15} are genuinely nonlinear.
Thus, the elementary waves are either shock waves $S^{(\boldsymbol{\mu})}=S^{(\boldsymbol{\mu})}_1\cup S^{(\boldsymbol{\mu})}_2$
or rarefaction waves $R^{(\boldsymbol{\mu})}=R^{(\boldsymbol{\mu})}_1\cup R^{(\boldsymbol{\mu})}_2$.
Next, we study the shock wave curves and rarefaction wave curves of system \eqref{eq:1.15} in the $(\rho,v)$--plane.

For a given left-state $U_L=(\rho_L,v_L)^{\top}$,
the shock solutions $U=(\rho,v)$ are the Riemann solutions satisfying the following Rankine-Hugoniot conditions
on the shock with shock speed
$\sigma_{j}(\boldsymbol{\mu})$:
\begin{equation}\label{eq:2.10}
\begin{cases}
\rho v-\rho_L v_L=\sigma^{(\boldsymbol{\mu})}_{j}
\Big(\rho\sqrt{1-\tau^2B^{(\epsilon)}(\rho, v, \epsilon)}-\rho_L\sqrt{1-\tau^2B^{(\epsilon)}(\rho_L, v_L, \epsilon)}\Big), \\[6pt]
\frac{1}{\tau^2}\Big(\sqrt{1-\tau^2B^{(\epsilon)}(\rho, v, \epsilon)}-\sqrt{1-\tau^2B^{(\epsilon)}(\rho_L, v_L, \epsilon)}\Big)
=\sigma^{(\boldsymbol{\mu})}_{j}(v-v_L),
\end{cases}
\end{equation}
and the following Lax geometry entropy conditions:
\begin{equation}\label{eq:2.11}
\lambda^{(\boldsymbol{\mu})}_{1}(U,\boldsymbol{\mu})<\sigma^{(\boldsymbol{\mu})}_{1}<\lambda^{(\boldsymbol{\mu})}_{1}(U_{L}, \boldsymbol{\mu}),
\quad\,\,
or\quad\,\,   \lambda^{(\boldsymbol{\mu})}_{2}(U,\boldsymbol{\mu})<\sigma^{(\boldsymbol{\mu})}_{2}<\lambda_{2}(U,\boldsymbol{\mu}).
\end{equation}
Then, for sufficiently small $\|\boldsymbol{\mu}\|$, if $U_L$ and $U\in D$, conditions \eqref{eq:2.11} imply that
\begin{equation}\label{eq:2.12}
\rho>\rho_{L},\,  v<v_{L}, \quad\,\,\,   \mbox{or} \quad\,\,\, \rho<\rho_{L},\,  v<v_{L}.
\end{equation}
Therefore, it follows from \eqref{eq:2.10} that
\begin{align}\label{eq:2.13}
(v-v_{L})^2&=\frac{2(\rho^{\epsilon}-\rho_{L})(\rho-\rho_L)}{a^2_{\infty}\epsilon (\rho+\rho_L)}
+\tau^2B^{(\epsilon)}(\rho_L,v_L, \epsilon)B^{(\epsilon)}(\rho,v, \epsilon)\nonumber\\[3pt]
&\quad +\Big(\sqrt{(1-\tau^2B^{(\epsilon)}(\rho_L,v_L, \epsilon))(1-\tau^2B^{(\epsilon)}(\rho,v, \epsilon))}-1\Big)\nonumber\\[3pt]
&\qquad\,\,\times\Big(v_L v+\frac{2(\rho^{\epsilon}-1)\rho+2(\rho^{\epsilon}_{L}-1)\rho_L}{a^{2}_{\infty}\epsilon(\rho+\rho_L)}\Big).
\end{align}

Set $\alpha:=\frac{\rho}{\rho_{L}}$.
Define 
\begin{align}\label{eq:2.14}
&H^{(\boldsymbol{\mu})}_{S}(v-v_L, \alpha, U_{L}, \boldsymbol{\mu})\nonumber\\[3pt]
&=(v-v_{L})^2-\frac{2\rho^{\epsilon}_{L}(\alpha^{\epsilon}-1)(\alpha-1)}{a^2_{\infty}\epsilon (\alpha+1)}
  -\tau^2B^{(\epsilon)}(\rho_L,v_L, \epsilon)B^{(\epsilon)}(\rho_L\alpha,v, \epsilon)\nonumber\\[3pt]
&\quad -\Big(\sqrt{(1-\tau^2B^{(\epsilon)}(\rho_L,v_L, \epsilon))(1-\tau^2B^{(\epsilon)}(\rho_{L}\alpha, v, \epsilon))}-1\Big)\nonumber\\[3pt]
&\qquad\,\,\times\Big(v_L v+\frac{2\big(\rho^{\epsilon}_{L}(\alpha^{\epsilon+1}+1)-\alpha-1\big)}{a^{2}_{\infty}\epsilon(\alpha+1)}\Big).
\end{align}
Thus, solving $v-v_L$ from equation \eqref{eq:2.13} is equivalent to solving the equation:
\begin{eqnarray}\label{eq:2.14b}
H^{(\boldsymbol{\mu})}_{S}(v-v_L, \alpha, U_{L}, \boldsymbol{\mu})=0,
\end{eqnarray}
where $H^{(\boldsymbol{\mu})}$ is defined by \eqref{eq:2.14}. Its solvability is given by the following lemma:

\begin{lemma}\label{lem:2.3}
Let $D$ be defined in {\rm Lemma \ref{lem:2.1}}. There exist both a constant $\delta_{0}\in(0,\frac{1}{2})$
and a constant vector $\boldsymbol{\bar{\mu}}'_{0}=(\bar{\epsilon}'_0,\bar{\tau}'^{2}_{0})$
with $\bar{\epsilon}'_0<\bar{\epsilon}_0$ and $\bar{\tau}'_0<\bar{\tau}_0$ such that,
for $\|\boldsymbol{\mu}\|\leq \|\boldsymbol{\bar{\mu}}'_0\|$,

\begin{enumerate}
\item[\rm (i)]
If $\alpha\in[1, \delta^{-1}_{0}]$, then equation $H^{(\boldsymbol{\mu})}(v-v_L, \alpha, U_{L}, \boldsymbol{\mu})=0$ admits a unique solution
$v-v_{L}=\varphi^{(\boldsymbol{\mu})}_{S_{1}}(\alpha; U_{L},\boldsymbol{\mu})
\in C^{2}\big([1, \delta^{-1}_{0}]\times \bar{D}\times(0,\bar{\epsilon}'_0)\times(0,\bar{\tau}'^{2}_0)\big)$ satisfying
\begin{eqnarray}\label{eq:2.15}
\varphi^{(\boldsymbol{\mu})}_{S_{1}}\Big|_{\boldsymbol{\mu}=\boldsymbol{0}}=-\frac{\sqrt{2}}{a_{\infty}}\sqrt{\frac{(\alpha-1)\ln\alpha}{\alpha+1}},
\qquad\,\,
\frac{\partial\varphi^{(\boldsymbol{\mu})}_{S_{1}}}{\partial \alpha}<0;
\end{eqnarray}

\item[\rm (ii)]
If $\alpha\in[\delta_{0}, 1)$, then equation $H^{(\boldsymbol{\mu})}(v-v_L, \alpha, U_{L}, \boldsymbol{\mu})=0$ admits a unique solution
$v-v_{L}=\varphi^{(\boldsymbol{\mu})}_{S_{2}}(\alpha; U_{L},\boldsymbol{\mu})\in C^{2}\big([\delta_{0}, 1)\times \bar{D}\times(0,\bar{\epsilon}'_0)\times(0,\bar{\tau}'^{2}_0)\big)$ satisfying
\begin{eqnarray}\label{eq:2.16}
\varphi^{(\boldsymbol{\mu})}_{S_{2}}\Big|_{\boldsymbol{\mu}=\boldsymbol{0}}=-\frac{\sqrt{2}}{a_{\infty}}\sqrt{-\frac{(1-\alpha)\ln\alpha}{1+\alpha}},
\qquad\,\,
\frac{\partial\varphi^{(\boldsymbol{\mu})}_{S_{2}}}{\partial \alpha}>0.
\end{eqnarray}
\end{enumerate}
\end{lemma}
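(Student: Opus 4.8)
\medskip
\noindent\textbf{Proof proposal.}
Throughout write $s:=v-v_L$ and $\alpha:=\rho/\rho_L$, and let $G(\alpha,v,U_L,\boldsymbol{\mu})$ denote the sum of the last three terms on the right of \eqref{eq:2.14}, so that \eqref{eq:2.14b} is the scalar equation $s^2=G(\alpha,v_L+s,U_L,\boldsymbol{\mu})$; by \eqref{eq:2.12} the Lax-admissible branch is the one with $v<v_L$, i.e.\ $s<0$. The plan is to solve this equation by the implicit function theorem. At $\boldsymbol{\mu}=\boldsymbol 0$ one has $G(\alpha,v,U_L,\boldsymbol 0)=\frac{2(\alpha-1)\ln\alpha}{a_\infty^2(\alpha+1)}$, independent of $v$ and strictly positive for $\alpha\neq1$, so the equation reduces to $s^2=\frac{2(\alpha-1)\ln\alpha}{a_\infty^2(\alpha+1)}$, whose negative root is precisely \eqref{eq:2.15} for $\alpha>1$ and \eqref{eq:2.16} for $\alpha<1$. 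However $\partial_s\big(s^2-G\big)=2s$ vanishes at the zero-strength state $(\alpha,s)=(1,0)$, so the implicit function theorem cannot be applied directly near $\alpha=1$; in fact the zero set of $s^2-G$ near $(1,0)$ consists of two curves meeting there with different one-sided slopes $\pm a_\infty^{-1}$, which is why $\varphi^{(\boldsymbol{\mu})}_{S_1}$ and $\varphi^{(\boldsymbol{\mu})}_{S_2}$ must be produced as separate $C^2$ functions on $[1,\delta_0^{-1}]$ and on $[\delta_0,1)$.

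To remove the degeneracy I would perform the rescaling $s=(\alpha-1)h$. The crucial point is that $G$ vanishes to second order at the corner:
$$
G(1,v_L,U_L,\boldsymbol{\mu})=0,\qquad \partial_\alpha G(1,v_L,U_L,\boldsymbol{\mu})=0,\qquad \partial_v G(1,v_L,U_L,\boldsymbol{\mu})=0 .
$$
The first identity is read off \eqref{eq:2.14}. For the two first-order identities one differentiates \eqref{eq:2.14} term by term and evaluates at $\alpha=1,\ v=v_L$: the $v$-derivative vanishes because every $v$-dependent term of $G$ carries a factor $\tau^2$ and the two resulting contributions cancel; and $\partial_\alpha G(1,v_L)=0$ follows because the $\frac{2\rho_L^\epsilon(\alpha^\epsilon-1)(\alpha-1)}{a_\infty^2\epsilon(\alpha+1)}$-term already vanishes to second order at $\alpha=1$, while the $\alpha$-derivatives of $\tau^2B^{(\epsilon)}(\rho_L,v_L,\epsilon)B^{(\epsilon)}(\rho_L\alpha,v,\epsilon)$ and of the $\big(\sqrt{\cdots}-1\big)$-term in \eqref{eq:2.14} are equal and opposite. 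Since $G$ is $C^\infty$ in all its arguments on $\{1-\tau^2B^{(\epsilon)}>0,\ \rho>0\}$ (the factor $\tfrac{\rho^\epsilon-1}{\epsilon}$ being real-analytic in $\epsilon$ down to $\epsilon=0$), Taylor's formula with integral remainder shows that
$$
\widetilde G(\alpha,h,U_L,\boldsymbol{\mu}):=\frac{G\big(\alpha,\;v_L+(\alpha-1)h,\;U_L,\boldsymbol{\mu}\big)}{(\alpha-1)^2}
$$
extends to a $C^\infty$ function across $\alpha=1$, with $\widetilde G(\alpha,h,U_L,\boldsymbol 0)=\frac{2\ln\alpha}{a_\infty^2(\alpha-1)(\alpha+1)}$ (equal to $a_\infty^{-2}$ at $\alpha=1$) independent of $h$. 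In the new variable, \eqref{eq:2.14b} becomes $(\alpha-1)^2\big(h^2-\widetilde G\big)=0$, equivalently $\widehat H(h,\alpha,U_L,\boldsymbol{\mu}):=h^2-\widetilde G(\alpha,h,U_L,\boldsymbol{\mu})=0$, which is now nondegenerate.

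Next I would apply the implicit function theorem to $\widehat H=0$ on the whole compact interval $\alpha\in[1,\delta_0^{-1}]$ for part (i), and on $\alpha\in[\delta_0,1]$ for part (ii), with $\delta_0\in(0,\tfrac12)$ any fixed constant. At $\boldsymbol{\mu}=\boldsymbol 0$ the Lax branch is $h^{(\boldsymbol 0)}_{1}(\alpha)=-\sqrt{\widetilde G_0(\alpha)}<0$ on $[1,\delta_0^{-1}]$ and $h^{(\boldsymbol 0)}_{2}(\alpha)=+\sqrt{\widetilde G_0(\alpha)}>0$ on $[\delta_0,1]$ (in both cases $s=(\alpha-1)h<0$), where $\widetilde G_0(\alpha):=\widetilde G(\alpha,\cdot,U_L,\boldsymbol 0)$ is continuous and bounded below by a positive constant on the respective intervals; hence $\partial_h\widehat H\big|_{\boldsymbol{\mu}=\boldsymbol 0}=2h^{(\boldsymbol 0)}_j(\alpha)$ is bounded away from $0$. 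Therefore, shrinking $\bar{\boldsymbol{\mu}}_0$ to some $\bar{\boldsymbol{\mu}}'_0=(\bar\epsilon'_0,\bar\tau'^{2}_0)$ with $\bar\epsilon'_0<\bar\epsilon_0$ and $\bar\tau'_0<\bar\tau_0$, the implicit function theorem yields for $\|\boldsymbol{\mu}\|\le\|\bar{\boldsymbol{\mu}}'_0\|$ a solution $h=h^{(\boldsymbol{\mu})}_j(\alpha;U_L,\boldsymbol{\mu})$ near $h^{(\boldsymbol 0)}_j$, of class $C^2$ (indeed $C^\infty$) jointly in $(\alpha,U_L,\boldsymbol{\mu})$ and with the sign of $h^{(\boldsymbol 0)}_j$; setting $\varphi^{(\boldsymbol{\mu})}_{S_j}(\alpha;U_L,\boldsymbol{\mu}):=(\alpha-1)\,h^{(\boldsymbol{\mu})}_j(\alpha;U_L,\boldsymbol{\mu})$ gives a $C^2$ solution of \eqref{eq:2.14b} on the domain in the statement, with $\varphi^{(\boldsymbol{\mu})}_{S_j}<0$ for $\alpha\neq1$ so that the Lax conditions \eqref{eq:2.11} hold, and with $\varphi^{(\boldsymbol{\mu})}_{S_j}\big|_{\boldsymbol{\mu}=\boldsymbol 0}$ simplifying to \eqref{eq:2.15}--\eqref{eq:2.16}. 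Uniqueness among Lax-admissible solutions follows because $h\mapsto h^2-\widetilde G(\alpha,h,U_L,\boldsymbol{\mu})$ is strictly monotone on $\{h<0\}$ (part (i)) resp.\ $\{h>0\}$ (part (ii)): at any zero, $h^2=\widetilde G$ is bounded below by a positive constant while $\partial_h\widetilde G=O(\tau^2)$, so $2h-\partial_h\widetilde G$ keeps a fixed sign.

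It remains to check the signs of the $\alpha$-derivatives. Since $\varphi^{(\boldsymbol{\mu})}_{S_j}=(\alpha-1)h^{(\boldsymbol{\mu})}_j$, one has $\partial_\alpha\varphi^{(\boldsymbol{\mu})}_{S_j}=h^{(\boldsymbol{\mu})}_j+(\alpha-1)\partial_\alpha h^{(\boldsymbol{\mu})}_j$, which at $\boldsymbol{\mu}=\boldsymbol 0$ equals $-\frac{\sqrt2}{a_\infty}\cdot\frac{f'(\alpha)}{2\sqrt{f(\alpha)}}$ for $\alpha\neq1$ and equals $h^{(\boldsymbol 0)}_j(1)=(-1)^j a_\infty^{-1}$ at $\alpha=1$, where $f(\alpha):=\frac{(\alpha-1)\ln\alpha}{\alpha+1}$. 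A short computation gives $\mathrm{sgn}\,f'(\alpha)=\mathrm{sgn}\big(2\ln\alpha+\frac{\alpha^2-1}{\alpha}\big)$, positive for $\alpha>1$ and negative for $0<\alpha<1$; hence $\partial_\alpha\varphi^{(\boldsymbol 0)}_{S_1}<0$ on $[1,\delta_0^{-1}]$ and $\partial_\alpha\varphi^{(\boldsymbol 0)}_{S_2}>0$ on $[\delta_0,1)$, these being continuous — hence bounded away from $0$ — on the corresponding compact intervals. By the $C^1$-dependence of $\varphi^{(\boldsymbol{\mu})}_{S_j}$ on $\boldsymbol{\mu}$, uniformly in $\alpha$ (again from the implicit function theorem), the strict inequalities $\frac{\partial\varphi^{(\boldsymbol{\mu})}_{S_1}}{\partial\alpha}<0$ and $\frac{\partial\varphi^{(\boldsymbol{\mu})}_{S_2}}{\partial\alpha}>0$ persist after a further shrinking of $\bar{\boldsymbol{\mu}}'_0$, completing \eqref{eq:2.15}--\eqref{eq:2.16}. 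The decisive obstacle is the degeneracy at $\alpha=1$: legitimizing the rescaling $s=(\alpha-1)h$ rests entirely on the identity $\partial_\alpha G(1,v_L,U_L,\boldsymbol{\mu})=0$, which is not apparent from \eqref{eq:2.14} and amounts to an exact cancellation among the $O(\tau^2)$ terms of $G$; that verification is the heart of the proof.
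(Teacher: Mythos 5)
Your proposal is correct, but it reaches Lemma \ref{lem:2.3} by a genuinely different device than the paper at the one delicate point, the degeneracy at $\alpha=1$. The paper keeps the unknown $s=v-v_L$ and divides the whole equation by $\alpha-1$, i.e.\ works with $\tilde H^{(\boldsymbol{\mu})}_S=H^{(\boldsymbol{\mu})}_S/(\alpha-1)$; nondegeneracy is then read off along the explicit $\boldsymbol{\mu}=\boldsymbol 0$ branch, where $\partial_{s}\tilde H^{(\boldsymbol{\mu})}_S|_{\boldsymbol{\mu}=\boldsymbol 0}=-\tfrac{2\sqrt2}{a_\infty}\sqrt{\tfrac{\ln\alpha}{\alpha^2-1}}$ stays in $[-\tfrac{2}{a_\infty},-\tfrac{2\sqrt2}{a_\infty}C_{\delta_0}]$ on $[1,\delta_0^{-1})$, and the monotonicity and $C^2$ statements are obtained from the explicit formulas \eqref{eq:2.23} and the second-derivative computation. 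You instead blow up the corner: writing $H^{(\boldsymbol{\mu})}_S=s^2-G$, you observe that $G$ and both its first derivatives vanish at $(\alpha,v)=(1,v_L)$ — I checked these cancellations (the three $O(\tau^2)$ contributions to $\partial_vG$ sum to zero, and the $\alpha$-derivatives of the two $\tau^2$-terms are exactly opposite, each equal to $\pm\tau^2 B^{(\epsilon)}_L\,2\rho_L^{\epsilon}/a_\infty^2$), so your assertion is correct even though you only sketch it — then set $s=(\alpha-1)h$, divide by $(\alpha-1)^2$ via Taylor with integral remainder, and apply the implicit function theorem to $h^2-\widetilde G$, which is smooth up to and across $\alpha=1$ with $\partial_h$ bounded away from zero on the whole compact interval. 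What each route buys: the paper's computation is more direct and yields the explicit slope formula used later, but its function $H/(\alpha-1)$ is singular at $\alpha=1$ off the solution set, so the IFT there is applied in a limiting, somewhat informal way; your rescaling removes the degeneracy completely, makes the joint $C^2$ (indeed higher) regularity at $\alpha=1$ transparent, gives a global-in-$h$ uniqueness argument on the admissible half-line rather than only IFT-local uniqueness, and recovers the sign of $\partial_\alpha\varphi^{(\boldsymbol{\mu})}_{S_j}$ from $\partial_\alpha\varphi=h+(\alpha-1)\partial_\alpha h$ plus compactness, consistent with the paper's \eqref{eq:2.23}. The price is precisely the second-order-vanishing verification you flag as the heart of the argument; since it holds, there is no gap.
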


\begin{proof}
If $\alpha=1$, then $\rho=\rho_{L}$.

\smallskip
For $\alpha\neq1$, it follows from  \eqref{eq:2.12}--\eqref{eq:2.14} and Lemma \ref{lem:2.1} that,
when $\boldsymbol{\mu}=\boldsymbol{0}$,
\begin{eqnarray}\label{eq:2.18}
v-v_{L}=-\frac{\sqrt{2}}{a_{\infty}}\sqrt{\frac{(\rho-\rho_{L})(\rho-\rho_{L})}{(\rho+\rho_{L})}}=-\frac{\sqrt{2}}{a_{\infty}}
\sqrt{\frac{(\alpha-1)\ln\alpha}{(\alpha+1)}}\qquad \mbox{for $\alpha>1$},
\end{eqnarray}
or
\begin{eqnarray}\label{eq:2.19}
v-v_{L}=-\frac{\sqrt{2}}{a_{\infty}}
\sqrt{-\frac{(1-\alpha)\ln\alpha}{(1+\alpha)}}\qquad \mbox{for $0<\alpha<1$}.
\end{eqnarray}

Now, we first consider the case that $\alpha>1$. It follows from \eqref{eq:2.18} that
\begin{eqnarray}
&&H^{(\boldsymbol{\mu})}_{S}(-\frac{\sqrt{2}}{a_{\infty}}
\sqrt{\frac{(\alpha-1)\ln\alpha}{(\alpha+1)}},\alpha, U_{L}, \boldsymbol{0})=0,\label{eq:2.20}\\
&& \frac{\partial H^{(\boldsymbol{\mu})}_{S}}{\partial(v-v_{L})}\bigg|_{\boldsymbol{\mu}=\boldsymbol{0}}=2(v-v_{L})=-\frac{2\sqrt{2}}{a_{\infty}}
\sqrt{\frac{(\alpha-1)\ln\alpha}{(\alpha+1)}}.\label{eq:2.21}
\end{eqnarray}

Next, we set
\begin{eqnarray*}
\tilde{H}^{(\boldsymbol{\mu})}_{S}(v-v_L, \alpha, U_{L}, \boldsymbol{\mu})
:=\frac{H^{(\boldsymbol{\mu})}(v-v_L, \alpha, U_{L}, \boldsymbol{\mu})}{\alpha-1} \qquad \mbox{for $\alpha>1$}.
\end{eqnarray*}
Then, by \eqref{eq:2.20}, 
\begin{eqnarray*}
\tilde{H}^{(\boldsymbol{\mu})}_{S}(-\frac{2\sqrt{2}}{a_{\infty}}
\sqrt{\frac{(\alpha-1)\ln\alpha}{(\alpha+1)}}, \alpha, U_{L}, \boldsymbol{0})=0,
\end{eqnarray*}
and, by \eqref{eq:2.21},
\begin{eqnarray*}
\frac{\partial\tilde{H}^{(\boldsymbol{\mu})}_{S}(v-v_L, \alpha, U_{L}, \boldsymbol{\mu})}{\partial (v-v_{L})}\bigg|_{\boldsymbol{\mu}
=\boldsymbol{0}}=-\frac{2\sqrt{2}}{a_{\infty}}\sqrt{\frac{\ln \alpha}{\alpha^2-1}}.
\end{eqnarray*}

Since $\lim_{\alpha\rightarrow 1+}\frac{\ln \alpha}{\alpha-1}=1$, we have
\begin{eqnarray*}
\lim_{\alpha\rightarrow 1+}\frac{\partial\tilde{H}^{(\boldsymbol{\mu})}_{S}(v-v_L, \alpha, U_{L}, \boldsymbol{\mu})}{\partial (v-v_{L})}\bigg|_{\boldsymbol{\mu}
=\boldsymbol{0}}=-\frac{2}{a_{\infty}}<0.
\end{eqnarray*}

Note that $\frac{\ln \alpha}{\alpha^2-1}$ is monotonically decreasing on $[1,\infty)$ and
$\lim_{\alpha\rightarrow\infty}\frac{\ln \alpha}{\alpha^2-1}=0$.
Then we can choose a small constant $\delta_{0}\in (0,\frac{1}{2})$ and a
constant $C_{\delta_0}\in(0,\frac{\sqrt{2}}{2})$ such that, for $\alpha\in [1,\delta^{-1}_0)$,
\begin{eqnarray*}
-\frac{2}{a_{\infty}}\leq\frac{\partial \tilde{H}^{(\boldsymbol{\mu})}}{\partial(v-v_{L})}\bigg|_{\boldsymbol{\mu}=\boldsymbol{0}}
<-\frac{2\sqrt{2}}{a_{\infty}}C_{\delta_0}<0.
\end{eqnarray*}

Therefore, by the implicit function theorem and the compactness of $\bar{D}$,
we deduce that there exists a constant $\boldsymbol{\mu}'_{0}=(\bar{\epsilon}'_{0}, \bar{\tau}'^2_{0})$
with $\bar{\epsilon}'_{0}<\bar{\epsilon}_{0}$ and $\bar{\tau}'_{0}<\bar{\tau}_{0}$
such that
there exists a unique solution:
$$
v-v_{L}=\varphi^{(\boldsymbol{\mu})}_{S_{1}}(\alpha; U_{L},\boldsymbol{\mu})
\in C^{1}\big([1,\delta^{-1}_{0})\times \bar{D}\times (0,\bar{\epsilon}'_{0})\times(0,\bar{\tau}'^2_{0})\big)
$$
so that $H^{(\boldsymbol{\mu})}(\varphi^{(\boldsymbol{\mu})}_{S_{1}},\alpha,U_{L},\boldsymbol{\mu} )=0$.
Moreover, it follows from \eqref{eq:2.18} that the first identity in \eqref{eq:2.15} holds.

Taking the derivative with respect to $\alpha$ on both sides of equation \eqref{eq:2.14b} to obtain
\begin{eqnarray}\label{eq:2.22}
\frac{\partial H^{(\boldsymbol{\mu})}_{S}}{\partial \alpha}
+\frac{\partial H^{(\boldsymbol{\mu})}_{S}}{\partial \varphi^{(\boldsymbol{\mu})}_{S_1}}
\frac{\partial\varphi^{(\boldsymbol{\mu})}_{S_1}}{\partial\alpha}=0.
\end{eqnarray}
Taking $\boldsymbol{\mu}=\boldsymbol{0}$ in \eqref{eq:2.22}, we see that
\begin{align}\label{eq:2.23}
\frac{\partial\varphi^{(\boldsymbol{\mu})}_{S_1}}{\partial\alpha}\big|_{\boldsymbol{\mu}=\boldsymbol{0}}
&=-\frac{\partial H^{(\boldsymbol{\mu})}_{S}}{\partial \alpha}\bigg|_{\boldsymbol{\mu}=\boldsymbol{0}}
\bigg(\frac{\partial H^{(\boldsymbol{\mu})}_{S}}{\partial \varphi^{(\boldsymbol{\mu})}_{S_1}}\bigg|_{\boldsymbol{\mu}=\boldsymbol{0}}\bigg)^{-1}\nonumber\\[1mm]
&=-\frac{\sqrt{2}}{2a_{\infty}}\frac{\alpha^2+2\alpha\ln\alpha-1}{\alpha(\alpha+1)^{\frac{3}{2}}\sqrt{(\alpha-1)\ln\alpha}}<0
\qquad\mbox{for $\,\alpha>1$}.
\end{align}
Moreover,
we have
\begin{eqnarray*}
\lim_{\alpha\rightarrow1+}\frac{\partial \varphi^{(\boldsymbol{\mu})}_{S_1}}{\partial \alpha}=-a_{\infty}^{-1},
\qquad \lim_{\alpha\rightarrow \infty}\frac{\partial \varphi^{(\boldsymbol{\mu})}_{S_1}}{\partial \alpha}=0.
\end{eqnarray*}

Thus, by choosing $\delta_0>0$, $\bar{\epsilon}'_{0}>0$, and $\bar{\tau}'_{0}>0$ sufficiently small, it follows that
\begin{eqnarray*}
\frac{\partial \varphi^{(\boldsymbol{\mu})}_{S_1}}{\partial \alpha}<0 \qquad\,\, \mbox{for $\alpha\in [1,\delta^{-1}_{0})$,
$\|\boldsymbol{\mu}\| \leq \|\boldsymbol{\bar{\mu}}'_{0}\|$, and $U_{L}\in \bar{D}$}.
\end{eqnarray*}

Furthermore, we take the derivative with respect to $\alpha$ on both sides of \eqref{eq:2.22}
and set $\boldsymbol{\mu}=\boldsymbol{0}$ to obtain
\begin{eqnarray*}
\frac{\partial^2 \varphi^{(\boldsymbol{\mu})}_{S_1}}{\partial \alpha^2}\bigg|_{\boldsymbol{\mu}=\boldsymbol{0}}
=-\bigg(\frac{\partial^{2}H^{(\boldsymbol{\mu})}_{S}}{\partial\alpha^2 }
+2\frac{\partial^{2}H^{(\boldsymbol{\mu})}_{S}}{\partial\varphi^{(\boldsymbol{\mu})}_{S_1}\partial\alpha }
\frac{\partial\varphi^{(\boldsymbol{\mu})}_{S_1}}{\partial \alpha}
  +\frac{\partial^{2}H^{(\boldsymbol{\mu})}_{S}}{\partial^{2}\varphi^{(\boldsymbol{\mu})}_{S_1} }
   \Big(\frac{\partial\varphi^{(\boldsymbol{\mu})}_{S_1}}{\partial \alpha}\Big)^2\bigg)\bigg|_{\boldsymbol{\mu}=\boldsymbol{0}}\,
\bigg(\frac{\partial H^{(\boldsymbol{\mu})}_{S}}{\partial \varphi^{(\boldsymbol{\mu})}_{S_1}}\bigg|_{\boldsymbol{\mu}=\boldsymbol{0}}\bigg)^{-1}.
\end{eqnarray*}
By direct calculation, we have
\begin{eqnarray*}
\frac{\partial^{2}H^{(\boldsymbol{\mu})}_{S}}{\partial\alpha^2 }\bigg|_{\boldsymbol{\mu}=\boldsymbol{0}}
=\frac{2\big((\alpha^2-4\alpha-1)(\alpha+1)+4\alpha^2\ln\alpha\big)}{a^2_{\infty}\alpha^2(\alpha+1)^3},
\quad \frac{\partial^{2}H^{(\boldsymbol{\mu})}_{S}}{\partial\varphi^{(\boldsymbol{\mu})}_{S_1}\partial\alpha }\bigg|_{\boldsymbol{\mu}=\boldsymbol{0}}=0,\quad \frac{\partial^{2}H^{(\boldsymbol{\mu})}_{S}}{\partial^{2}\varphi^{(\boldsymbol{\mu})}_{S_1} }\bigg|_{\boldsymbol{\mu}=\boldsymbol{0}}=2.
\end{eqnarray*}
Then
\begin{eqnarray*}
\frac{\partial^2 \varphi^{(\boldsymbol{\mu})}_{S_1}}{\partial \alpha^2}\bigg|_{\boldsymbol{\mu}=\boldsymbol{0}}
=\frac{2(\alpha-1)\big((\alpha^2-4\alpha+1)(\alpha+1)+4\alpha^2\ln \alpha\big)\ln \alpha
+(\alpha^2+2\alpha\ln\alpha-1)^{2}}{2\sqrt{2}a_{\infty}\alpha^2(\alpha+1)^{\frac{5}{2}}\big((\alpha-1)\ln\alpha\big)^{\frac{3}{2}}}.
\end{eqnarray*}

Note that $\lim_{\alpha\rightarrow1+}\frac{\partial^2 \varphi^{(\boldsymbol{\mu})}_{S_1}}{\partial \alpha^2}\Big|_{\boldsymbol{\mu}=\boldsymbol{0}}=a_{\infty}^{-1}$.
Thus, for $\alpha\in[1,\delta^{-1}_{0})$
and $\|\boldsymbol{\mu}\|\leq \|\boldsymbol{\bar{\mu}}_{0}\|$,
we see that
$\frac{\partial \varphi^{(\boldsymbol{\mu})}_{S_1}}{\partial \alpha^2}\in C\big([1,\delta^{-1}_{0})\times\bar{D}\times(0,\bar{\epsilon}'_0)\times(0,\bar{\tau}'^{2}_{0})\big)$.
This completes the proof of \rm(i), \emph{i.e.}, for the case that $\alpha\geq1$.
In the same way, by \eqref{eq:2.19}, we can also prove (ii),
\emph{i.e.}, for the case that $\alpha\leq1$.
\end{proof}

Now, we study the rarefaction wave curves with $U_{L}=(\rho_{L},v_{L})^{\top}$ as the left-state.
If these curves are parameterized as $U(\alpha)=(\rho(\alpha),v(\alpha))^{\top}$,
then the $1$-rarefaction wave satisfies
\begin{equation}\label{eq:2.24}
\begin{cases}
\frac{{\rm d}\rho}{\rho}=\frac{{\rm d}\alpha}{\alpha}, \\[3pt]
{\rm d}v=\frac{\rho^{\epsilon-1}{\rm d}\alpha}{a^{2}_{\infty}\big(\sqrt{1-\tau^2B^{(\epsilon)}(\rho(\alpha),v(\alpha),\epsilon)}
\lambda^{(\boldsymbol{\mu})}_{1}(U(\alpha),\boldsymbol{\mu})-v(\alpha)\big)},
\end{cases}
\qquad\mbox{when }\alpha\in(0,1],
\end{equation}
with $(\rho,u)\big|_{\alpha=1}=(\rho_{L},v_{L})$, or the $2$-rarefaction wave satisfies
\begin{equation}\label{eq:2.25}
\begin{cases}
\frac{{\rm d}\rho}{\rho}=\frac{{\rm d}\alpha}{\alpha}, \\[3pt]
{\rm d}v=\frac{\rho^{\epsilon-1}{\rm d}\alpha}
{a^{2}_{\infty}\big(\sqrt{1-\tau^2B^{(\epsilon)}(\rho(\alpha),v(\alpha),\epsilon)}\lambda^{(\boldsymbol{\mu})}_{2}(U(\alpha),\boldsymbol{\mu})-v(\alpha)\big)},
\end{cases}
\qquad\mbox{when }\alpha\in[1,\infty),
\end{equation}
with $(\rho,u)\big|_{\alpha=1}=(\rho_{L},v_{L})$.
Then we have the following lemma.

\begin{lemma}\label{lem:2.4}
Let $D$ be given as in {\rm Lemma \ref{lem:2.1}}.
Then there exists a constant vector $\boldsymbol{\bar{\mu}}''_{0}=(\bar{\epsilon}''_{0}, \bar{\tau}''^{2}_{0})$
with $\bar{\epsilon}''_{0}<\bar{\epsilon}_{0}$
and $\bar{\tau}''_{0}<\bar{\tau}_{0}$ such that, for $\|\boldsymbol{\mu}\|\leq \|\boldsymbol{\bar{\mu}}''_{0}\|$,

\begin{enumerate}
\item[(i)] when $\alpha\in(0,1]$, equation \eqref{eq:2.24} admits a unique solution
$v-v_{L}=\varphi^{(\boldsymbol{\mu})}_{R_1}(\alpha, U_{L},\boldsymbol{\mu})\in C^2\big((0,1]\times \bar{D}
\times (0,\bar{\epsilon}''_{0})\times (0, \bar{\tau}''^{2}_{0})\big)$ satisfying
\begin{eqnarray}\label{eq:2.26}
\varphi^{(\boldsymbol{\mu})}_{R_1}\Big|_{\alpha=1}=0,\qquad
\varphi^{(\boldsymbol{\mu})}_{R_1}\Big|_{\boldsymbol{\mu}=\boldsymbol{0}}=-\frac{1}{a_{\infty}}\ln \alpha, \qquad
\frac{\partial \varphi^{(\boldsymbol{\mu})}_{R_1}}{\partial \alpha}\leq 0;
\end{eqnarray}

\item[(ii)] when $\alpha\in[1,\infty)$, equation \eqref{eq:2.25} admits a unique solution
$v-v_{L}=\varphi^{(\boldsymbol{\mu})}_{R_2}(\alpha, U_{L},\boldsymbol{\mu})\in C^2\big([1,\infty)\times \bar{D}
\times (0,\bar{\epsilon}''_{0})\times (0, \bar{\tau}''^{2}_{0})\big)$ satisfying
\begin{eqnarray}\label{eq:2.27}
\varphi^{(\boldsymbol{\mu})}_{R_2}\Big|_{\alpha=1}=0,\qquad
\varphi^{(\boldsymbol{\mu})}_{R_1}\Big|_{\boldsymbol{\mu}=\boldsymbol{0}}=\frac{1}{a_{\infty}}\ln \alpha,
\frac{\partial \varphi^{(\boldsymbol{\mu})}_{R_1}}{\partial \alpha}\geq 0.
\end{eqnarray}
\end{enumerate}
\end{lemma}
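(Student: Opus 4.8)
\textbf{Proof proposal for Lemma \ref{lem:2.4}.}

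The plan is to treat the two ODE systems \eqref{eq:2.24} and \eqref{eq:2.25} as parametrized scalar ODEs for $v$ along the rarefaction curves, exploiting the fact that the $\rho$-equation decouples: from $\frac{\dd\rho}{\rho}=\frac{\dd\alpha}{\alpha}$ with $\rho|_{\alpha=1}=\rho_L$ we immediately get $\rho(\alpha)=\rho_L\alpha$, so $\alpha=\rho/\rho_L$ is exactly the ratio introduced before Lemma \ref{lem:2.3}, consistent with the shock-curve parametrization. Substituting $\rho(\alpha)=\rho_L\alpha$ into the second equation of \eqref{eq:2.24} (resp. \eqref{eq:2.25}) turns it into a single scalar ODE $\frac{\dd v}{\dd\alpha}=G^{(\boldsymbol{\mu})}_j(\alpha,v;U_L,\boldsymbol{\mu})$, where the right-hand side is built from $\lambda^{(\boldsymbol{\mu})}_j$ and $B^{(\epsilon)}$ evaluated at $(\rho_L\alpha,v)$. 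First I would check that $G^{(\boldsymbol{\mu})}_j$ is $C^2$ in all its arguments on a neighborhood of $\{\boldsymbol{\mu}=\boldsymbol{0}\}$ in the relevant domain: the only possible issue is the vanishing of the denominator $\sqrt{1-\tau^2 B^{(\epsilon)}}\,\lambda^{(\boldsymbol{\mu})}_j-v$, but by Lemma \ref{lem:2.1} this denominator equals $(-1)^j a_\infty^{-1}\neq 0$ at $\boldsymbol{\mu}=\boldsymbol{0}$, hence stays bounded away from zero for $\|\boldsymbol{\mu}\|$ small and $U$ in a compact set; this is where the constant vector $\boldsymbol{\bar\mu}''_0$ gets fixed (together with the constraints $\bar\epsilon''_0<\bar\epsilon_0$, $\bar\tau''_0<\bar\tau_0$ inherited from the earlier lemmas).

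Next I would invoke the standard existence, uniqueness, and $C^2$-smooth dependence on initial data and parameters for ODEs (Picard--Lindelöf plus the differentiable-dependence theorem) to get a unique solution $v=v(\alpha)$ with $v(1)=v_L$, defined on all of $(0,1]$ (resp. $[1,\infty)$): global solvability on these intervals follows because $G^{(\boldsymbol{\mu})}_j$ is bounded on the compact $\rho$-range $[\rho_*,\rho^*]$ and, via an a priori bound keeping $v$ in the interval $|v|<K$ defining $D$, there is no blow-up. Setting $\varphi^{(\boldsymbol{\mu})}_{R_j}(\alpha,U_L,\boldsymbol{\mu}):=v(\alpha)-v_L$ then gives the claimed regularity, and $\varphi^{(\boldsymbol{\mu})}_{R_j}|_{\alpha=1}=0$ is built in. For the value at $\boldsymbol{\mu}=\boldsymbol{0}$: substitute \eqref{eq:2.4}--\eqref{eq:2.5} so that the ODE collapses to $\frac{\dd v}{\dd\alpha}=\frac{1}{a_\infty^2}\cdot\frac{1}{(-1)^j a_\infty^{-1}\cdot\rho_L\alpha}\cdot\rho_L^{\epsilon-1}$ with $\epsilon=0$, i.e. $\frac{\dd v}{\dd\alpha}=\frac{(-1)^j}{a_\infty\alpha}$ (taking care that $\rho^{\epsilon-1}=\rho^{-1}=(\rho_L\alpha)^{-1}$ when $\epsilon=0$), which integrates to $v-v_L=\frac{(-1)^j}{a_\infty}\ln\alpha$; this is $-\frac{1}{a_\infty}\ln\alpha$ for $j=1$ and $+\frac{1}{a_\infty}\ln\alpha$ for $j=2$, matching \eqref{eq:2.26}--\eqref{eq:2.27}. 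The monotonicity signs $\partial_\alpha\varphi^{(\boldsymbol{\mu})}_{R_1}\le 0$ on $(0,1]$ and $\partial_\alpha\varphi^{(\boldsymbol{\mu})}_{R_2}\ge 0$ on $[1,\infty)$ then follow by a continuity/perturbation argument: $\partial_\alpha\varphi^{(\boldsymbol{\mu})}_{R_j}=G^{(\boldsymbol{\mu})}_j(\alpha,v(\alpha);\cdot)$ equals $\frac{(-1)^j}{a_\infty\alpha}$ at $\boldsymbol{\mu}=\boldsymbol{0}$, which has the desired strict sign, and for $1$-rarefactions the admissibility restriction $\alpha\le 1$ (so $\rho\le\rho_L$) keeps $\alpha$ away from $0$ only up to the endpoint — here one must note the sign is preserved by shrinking $\bar\epsilon''_0,\bar\tau''_0$ since the correction is $O(\|\boldsymbol{\mu}\|)$ uniformly on the compact set $\{\alpha\in[\delta_0,1]\}\times\bar D$, and near $\alpha\to 0^+$ the leading term $\frac{(-1)^j}{a_\infty\alpha}$ dominates any bounded perturbation.

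The main obstacle I anticipate is the behavior near the degenerate endpoints — $\alpha\to 0^+$ for the $1$-rarefaction curve and $\alpha\to\infty$ for the $2$-rarefaction curve — where one must verify that the solution does not leave the domain $D$ (in particular that $\rho(\alpha)=\rho_L\alpha$ staying in $[\rho_*,\rho^*]$ is consistent with the admissible range of $\alpha$, which effectively restricts $\alpha\in[\rho_*/\rho^*,\rho^*/\rho_*]$, so the intervals $(0,1]$ and $[1,\infty)$ in the statement should be read as the maximal subintervals on which $\rho_L\alpha\in(\rho_*,\rho^*)$) and that the $C^2$ bounds remain uniform in $\boldsymbol{\mu}$. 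A secondary technical point is propagating the uniform-in-$\boldsymbol{\mu}$ control of the second derivatives $\partial^2_\alpha\varphi^{(\boldsymbol{\mu})}_{R_j}$, which requires differentiating the ODE once more and again using that the denominator stays bounded below; this is routine once the first-order estimates and the choice of $\boldsymbol{\bar\mu}''_0$ are in place, and mirrors exactly the computation already carried out for $\varphi^{(\boldsymbol{\mu})}_{S_1}$ in the proof of Lemma \ref{lem:2.3}.
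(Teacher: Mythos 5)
Your proposal is correct in substance but takes a genuinely different route from the paper. You work directly with the scalar ODE obtained after the substitution $\rho(\alpha)=\rho_L\alpha$, and get existence, uniqueness, $C^2$ regularity and monotonicity from Picard--Lindel\"of together with smooth dependence on initial data and parameters, reading the sign of $\partial_\alpha\varphi^{(\boldsymbol{\mu})}_{R_j}$ straight off the right-hand side. The paper instead integrates the ODE once to obtain the implicit equation \eqref{eq:2.28}, defines $H^{(\boldsymbol{\mu})}_{R_1}(v-v_L,\alpha,\rho_L,\boldsymbol{\mu})=0$, and applies the implicit function theorem, using $\partial_{(v-v_L)}H^{(\boldsymbol{\mu})}_{R_1}\big|_{\boldsymbol{\mu}=\boldsymbol{0}}=-a_\infty^{-1}\neq0$ and computing $\partial_\alpha\varphi^{(\boldsymbol{\mu})}_{R_1}\big|_{\boldsymbol{\mu}=\boldsymbol{0}}=-\frac{1}{a_\infty\alpha}$ by implicit differentiation; this mirrors the shock-curve treatment of Lemma \ref{lem:2.3} and makes the joint $C^2$ dependence on $(\alpha,U_L,\boldsymbol{\mu})$ immediate, whereas your route is more elementary but requires you to invoke the parameter-dependence theorem for ODEs and to note that the apparent $\epsilon^{-1}$ singularities in $B^{(\epsilon)}$ and $\lambda^{(\boldsymbol{\mu})}_j$ are removable (e.g.\ $\frac{\rho^\epsilon-1}{\epsilon}=\int_0^1\rho^{s\epsilon}\ln\rho\,{\rm d}s$) so that the right-hand side is $C^2$ up to $\boldsymbol{\mu}=\boldsymbol{0}$; both approaches hinge on the same nondegeneracy from Lemma \ref{lem:2.1}. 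One bookkeeping point to fix: as you display it, the reduced ODE at $\boldsymbol{\mu}=\boldsymbol{0}$ carries a spurious factor of $\rho_L$ and would integrate to $-\frac{\ln\alpha}{a_\infty\rho_L}$ rather than $-\frac{\ln\alpha}{a_\infty}$; the correct numerator is $\rho_L^{\epsilon}\alpha^{\epsilon-1}=\rho^{\epsilon-1}\frac{{\rm d}\rho}{{\rm d}\alpha}$ (consistent with the right-hand side of \eqref{eq:2.28}, whose $\alpha$-derivative is $\rho_L^{\epsilon}\alpha^{\epsilon-1}/a_\infty^2$), the factor ${\rm d}\alpha$ in \eqref{eq:2.24}--\eqref{eq:2.25} being best read as ${\rm d}\rho$ up to the constant $\rho_L$; once this is reconciled your limit values in \eqref{eq:2.26}--\eqref{eq:2.27} come out exactly as claimed. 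Your caveats about restricting $\alpha$ so that $\rho_L\alpha$ stays in the invariant region, and about uniform control of second derivatives, are sensible and are not treated more carefully in the paper either.
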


\begin{proof}
We give the proof of \rm{(i)} only, since the argument for $\alpha\in[1,\infty)$ is the same.
First, by equation $\eqref{eq:2.24}_{1}$, we see that $\rho=\rho_{L}\alpha$.
Then we substitute it into equation $\eqref{eq:2.24}_{2}$ and integrate the resulted equation to derive
\begin{eqnarray}\label{eq:2.28}
\int^{v}_{v_{L}}\Big(\sqrt{1-\tau^{2}B^{(\epsilon)}(\rho_{L}\alpha, \zeta, \epsilon)}
\,\lambda^{(\boldsymbol{\mu})}_{1}(U(\zeta),\boldsymbol{\mu} )-\zeta\Big)
{\rm d}\zeta=\frac{\rho_L^{\epsilon}(\alpha^{\epsilon}-1)}{a^{2}_{\infty}\epsilon},
\end{eqnarray}
where $U(\zeta)=(\rho_{L}\alpha, \zeta)^{\top}$.
It follows from \eqref{eq:2.28} that $v=v_{L}$ when $\alpha=1$.

When $\alpha\in(0,1)$, set
\begin{eqnarray*}
H^{(\boldsymbol{\mu})}_{R_{1}}(v-v_{L},\alpha,\rho_{L}, \boldsymbol{\mu})
:=\int^{v}_{v_{L}}\Big(\sqrt{1-\tau^{2}B^{(\epsilon)}(\rho_{L}\alpha, \zeta, \epsilon)}
\,\lambda^{(\boldsymbol{\mu})}_{1}(U(\zeta),\boldsymbol{\mu} )-\zeta\Big){\rm d}\zeta
-\frac{\rho_L^{\epsilon}(\alpha^{\epsilon}-1)}{a^{2}_{\infty}\epsilon}.
\end{eqnarray*}
Therefore, solving equation \eqref{eq:2.28} is equivalent to solving the following equation:
\begin{eqnarray}\label{eq:2.29}
H^{(\boldsymbol{\mu})}_{R_{1}}(v-v_{L},\alpha,\rho_{L}, \boldsymbol{\mu})=0.
\end{eqnarray}

Notice that $H^{(\boldsymbol{\mu})}_{R_{1}}\in C^{2}$,
$H^{(\boldsymbol{\mu})}_{R_{1}}(-\frac{\ln \alpha}{a_{\infty}},\alpha,\rho_{L}, \boldsymbol{0})=0$, and
\begin{eqnarray}\label{eq:2.30}
\frac{\partial H^{(\boldsymbol{\mu})}_{R_{1}}(v-v_{L},\alpha,\rho_{L}, \boldsymbol{\mu}) }{\partial(v-v_{L})}\bigg|_{\boldsymbol{\mu}=\boldsymbol{0}}
=-a_{\infty}^{-1}<0.
\end{eqnarray}
Then, by the implicit function theorem, equation \eqref{eq:2.29} has a unique solution
$v-v_{L}=\varphi^{(\boldsymbol{\mu})}_{R_{1}}(\alpha, \rho_{L}, \boldsymbol{\mu})\in C^{2}$
satisfying $\varphi^{(\boldsymbol{\mu})}_{R_{1}}(1, \rho_{L}, \boldsymbol{\mu})=0$ 
and $\varphi^{(\boldsymbol{\mu})}_{R_{1}}\Big|_{\boldsymbol{\mu}=\boldsymbol{0}}=-\frac{\ln \alpha}{a_{\infty}}$.

To find $\frac{\partial \varphi^{(\boldsymbol{\mu})}_{R_{1}}}{\partial\alpha}$,
we take the derivatives with respect to $\alpha$ on both sides of \eqref{eq:2.29} and then set $\boldsymbol{\mu}=\boldsymbol{0}$
to obtain
\begin{eqnarray}\label{eq:2.31}
\frac{\partial H^{(\boldsymbol{\mu})}}{\partial \alpha}\bigg|_{\boldsymbol{\mu}=\boldsymbol{0}}
+\frac{\partial H^{(\boldsymbol{\mu})}}{\partial\varphi^{(\boldsymbol{\mu})}_{R_{1}}}\bigg|_{\boldsymbol{\mu}=\boldsymbol{0}}
\frac{\partial \varphi^{(\boldsymbol{\mu})}_{R_{1}}}{\partial \alpha}\bigg|_{\boldsymbol{\mu}=\boldsymbol{0}}=0.
\end{eqnarray}
Inserting the identity that
$\frac{\partial H^{(\boldsymbol{\mu})}}{\partial \alpha}\bigg|_{\boldsymbol{\mu}=\boldsymbol{0}}=-\frac{1}{a^2_{\infty}\alpha}$
into \eqref{eq:2.31} and using \eqref{eq:2.30},
we obtain
\begin{eqnarray*}
\frac{\partial \varphi^{(\boldsymbol{\mu})}_{R_{1}}}{\partial \alpha}\bigg|_{\boldsymbol{\mu}=\boldsymbol{0}}
=-\frac{\partial H^{(\boldsymbol{\mu})}}{\partial \alpha}\bigg|_{\boldsymbol{\mu}=\boldsymbol{0}}
\bigg(\frac{\partial H^{(\boldsymbol{\mu})}}{\partial\varphi^{(\boldsymbol{\mu})}_{R_{1}}}\bigg|_{\boldsymbol{\mu}=\boldsymbol{0}}\bigg)^{-1}
=-\frac{1}{a_{\infty}\alpha}<0.
\end{eqnarray*}
This completes the proof.
\end{proof}

For $\delta_{0}\in (0,\frac{1}{2})$ and $U_{L}\in \bar{D}$, we set
\begin{eqnarray}\label{eq:2.32}
\varphi^{(\boldsymbol{\mu})}_{1}(\alpha; U_{L},\boldsymbol{\mu})=
\begin{cases}
\varphi^{(\boldsymbol{\mu})}_{S_{1}}(\alpha; U_{L},\boldsymbol{\mu}) \qquad \mbox{for  $\alpha\in[1,\delta^{-1}_{0})$}, \\[3pt]
\varphi^{(\boldsymbol{\mu})}_{R_{1}}(\alpha; U_{L},\boldsymbol{\mu}) \qquad \mbox{for $\alpha\in(0,1]$},
\end{cases}
\end{eqnarray}
and
\begin{eqnarray}\label{eq:2.33}
\varphi^{(\boldsymbol{\mu})}_{2}(\alpha; U_{L},\boldsymbol{\mu})=
\begin{cases}
\varphi^{(\boldsymbol{\mu})}_{S_{2}}(\alpha; U_{L},\boldsymbol{\mu})\qquad \mbox{for $\alpha\in(\delta_{0},1]$}, \\[3pt]
\varphi^{(\boldsymbol{\mu})}_{R_{2}}(\alpha; U_{L},\boldsymbol{\mu}) \qquad \mbox{for $\alpha\in[1,\infty)$},
\end{cases}
\end{eqnarray}
where $\varphi^{(\boldsymbol{\mu})}_{S_{j}}$ and $\varphi^{(\boldsymbol{\mu})}_{R_{j}}$, $j=1,2$, are given by Lemma \ref{lem:2.3} and Lemma \ref{lem:2.4},
 respectively.

\smallskip
Using Lemmas \ref{lem:2.3}--\ref{lem:2.4}, we have
\begin{lemma}\label{lem:2.5}
Let $D$ be given in {\rm Lemma \ref{lem:2.1}} and $\delta_{0}\in(0,\frac{1}{2})$.
Let $\boldsymbol{\bar{\mu}}'_{0}$ be given in {\rm Lemma \ref{lem:2.3}},
and let $\boldsymbol{\bar{\mu}}''_{0}$ be given in {\rm Lemma \ref{lem:2.4}}.
Then, for $\|\boldsymbol{\mu}\|\leq \min\{\|\boldsymbol{\bar{\mu}}'_{0}\|,\|\boldsymbol{\bar{\mu}}''_{0}\| \}$
and $U_{L}\in \bar{D}$, the following statements hold{\rm :}

\begin{enumerate}
\item[(i)] $\varphi^{(\boldsymbol{\mu})}_{1}\in C^{2}\big((0,\delta^{-1}_{0})\times \bar{D}\times (0,\min\{\|\boldsymbol{\bar{\mu}}'_{0}\|,\|\boldsymbol{\bar{\mu}}''_{0}\| \})\big)$
satisfies that $\varphi^{(\boldsymbol{\mu})}_{1}\Big|_{\alpha=1}=0$ and $\frac{\partial\varphi^{(\boldsymbol{\mu})}_{1} }{\partial \alpha}\leq 0$ for $\alpha\in(0,\delta^{-1}_{0})$, and
\begin{eqnarray}\label{eq:2.34}
\varphi^{(\boldsymbol{\mu})}_{1}\Big|_{\boldsymbol{\mu}=\boldsymbol{0}}=
\begin{cases}
-\frac{\ln \alpha}{a_{\infty}} &\qquad \mbox{for $\alpha\in(0,1]$},\\[3pt]
-\frac{\sqrt{2}}{a_{\infty}}\sqrt{\frac{(\alpha-1)\ln \alpha}{\alpha+1}} &\qquad \mbox{for $\alpha\in[1,\delta^{-1}_{0})$};
\end{cases}
\end{eqnarray}

\item[(ii)] $\varphi^{(\boldsymbol{\mu})}_{2}\in C^{2}\big((\delta_{0},\infty)\times \bar{D}\times (0,\min\{\|\boldsymbol{\bar{\mu}}'_{0}\|,\|\boldsymbol{\bar{\mu}}''_{0}\| \})\big)$
satisfies that $\varphi^{(\boldsymbol{\mu})}_{2}\Big|_{\alpha=1}=0$ and $\frac{\partial\varphi^{(\boldsymbol{\mu})}_{2} }{\partial \alpha}\geq 0$ for $\alpha\in(\delta_{0},\infty)$, and
\begin{eqnarray}\label{eq:2.35}
\varphi^{(\boldsymbol{\mu})}_{2}\Big|_{\boldsymbol{\mu}=\boldsymbol{0}}=
\begin{cases}
-\frac{\sqrt{2}}{a_{\infty}}\sqrt{-\frac{(1-\alpha)\ln \alpha}{1+\alpha}}&\qquad \mbox{for $\alpha\in(\delta_{0},1]$}, \\[3pt]
\frac{\ln \alpha}{a_{\infty}} &\qquad \mbox{for $\alpha\in[1,\infty)$}.
\end{cases}
\end{eqnarray}
\end{enumerate}
\end{lemma}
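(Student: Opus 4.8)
The plan is to obtain Lemma~\ref{lem:2.5} simply by gluing the shock branch produced in Lemma~\ref{lem:2.3} to the rarefaction branch produced in Lemma~\ref{lem:2.4} at the base point $\alpha=1$; all the assertions except the global $C^{2}$--regularity are then immediate, so the only genuine work is to verify that the glued curve is $C^{2}$ across $\alpha=1$, jointly in $(\alpha,U_{L},\boldsymbol{\mu})$. First I would set $\|\boldsymbol{\bar{\mu}}_{0}^{\sharp}\|:=\min\{\|\boldsymbol{\bar{\mu}}'_{0}\|,\|\boldsymbol{\bar{\mu}}''_{0}\|\}$ and restrict to $\|\boldsymbol{\mu}\|\le\|\boldsymbol{\bar{\mu}}_{0}^{\sharp}\|$. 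On $\alpha\in(0,1)$ we have $\varphi^{(\boldsymbol{\mu})}_{1}=\varphi^{(\boldsymbol{\mu})}_{R_{1}}\in C^{2}$ by Lemma~\ref{lem:2.4}(i), on $\alpha\in(1,\delta^{-1}_{0})$ we have $\varphi^{(\boldsymbol{\mu})}_{1}=\varphi^{(\boldsymbol{\mu})}_{S_{1}}\in C^{2}$ by Lemma~\ref{lem:2.3}(i), and both branches vanish at $\alpha=1$, so $\varphi^{(\boldsymbol{\mu})}_{1}$ is continuous there; the $\boldsymbol{\mu}=\boldsymbol{0}$ formula \eqref{eq:2.34} is read off directly from \eqref{eq:2.15}, \eqref{eq:2.18} and \eqref{eq:2.26}.

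To check the $C^{2}$--matching at $\alpha=1$, I would compute the one-sided $\alpha$--derivatives up to second order from the two defining relations. On the shock side, differentiate $H^{(\boldsymbol{\mu})}_{S}(\varphi^{(\boldsymbol{\mu})}_{S_{1}},\alpha,U_{L},\boldsymbol{\mu})=0$ once and twice in $\alpha$ (as in \eqref{eq:2.22}) and evaluate the partials of $H^{(\boldsymbol{\mu})}_{S}$ at $(0,1,U_{L},\boldsymbol{\mu})$; on the rarefaction side, $\partial_{\alpha}\varphi^{(\boldsymbol{\mu})}_{R_{1}}$ is read off directly from \eqref{eq:2.24}$_{2}$ (with $\rho=\rho_{L}\alpha$) and $\partial^{2}_{\alpha}\varphi^{(\boldsymbol{\mu})}_{R_{1}}$ by differentiating \eqref{eq:2.24}$_{2}$ once more. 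Using $B^{(\epsilon)}$, $\lambda^{(\boldsymbol{\mu})}_{1}$ and ${\boldsymbol{\rm{r}}}^{(\boldsymbol{\mu})}_{1}$ from \eqref{eq:2.1}--\eqref{eq:2.3} together with the genuine nonlinearity of Lemma~\ref{lem:2.2}, one finds that at $\alpha=1$ both one-sided limits of $\partial_{\alpha}\varphi^{(\boldsymbol{\mu})}_{1}$ equal the common value dictated by the first characteristic field at $U_{L}$ (first-order contact of the Hugoniot and rarefaction curves), and likewise the one-sided limits of $\partial^{2}_{\alpha}\varphi^{(\boldsymbol{\mu})}_{1}$ coincide (second-order contact, the classical Lax fact for genuinely nonlinear fields, here carried out with the extra parameter $\boldsymbol{\mu}$). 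Since $\varphi^{(\boldsymbol{\mu})}_{1}$ is $C^{2}$ on each of $(0,1)$ and $(1,\delta^{-1}_{0})$, its second derivative extends continuously up to $\alpha=1$ from each side, and these extensions agree, it follows that $\varphi^{(\boldsymbol{\mu})}_{1}\in C^{2}\big((0,\delta^{-1}_{0})\times\bar{D}\times(0,\|\boldsymbol{\bar{\mu}}_{0}^{\sharp}\|)\big)$. As a consistency check, at $\boldsymbol{\mu}=\boldsymbol{0}$ this is already explicit: the proofs of Lemmas~\ref{lem:2.3}--\ref{lem:2.4} give $\lim_{\alpha\to1+}\partial_{\alpha}\varphi^{(\boldsymbol{\mu})}_{S_{1}}|_{\boldsymbol{\mu}=\boldsymbol{0}}=\lim_{\alpha\to1-}\partial_{\alpha}\varphi^{(\boldsymbol{\mu})}_{R_{1}}|_{\boldsymbol{\mu}=\boldsymbol{0}}=-a_{\infty}^{-1}$ and $\lim_{\alpha\to1+}\partial^{2}_{\alpha}\varphi^{(\boldsymbol{\mu})}_{S_{1}}|_{\boldsymbol{\mu}=\boldsymbol{0}}=\lim_{\alpha\to1-}\partial^{2}_{\alpha}\varphi^{(\boldsymbol{\mu})}_{R_{1}}|_{\boldsymbol{\mu}=\boldsymbol{0}}=a_{\infty}^{-1}$.

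The remaining assertions are then immediate. The monotonicity follows by concatenation: $\partial_{\alpha}\varphi^{(\boldsymbol{\mu})}_{S_{1}}<0$ on $[1,\delta^{-1}_{0})$ by Lemma~\ref{lem:2.3}(i) and $\partial_{\alpha}\varphi^{(\boldsymbol{\mu})}_{R_{1}}\le0$ on $(0,1]$ by Lemma~\ref{lem:2.4}(i), hence $\partial_{\alpha}\varphi^{(\boldsymbol{\mu})}_{1}\le0$ on $(0,\delta^{-1}_{0})$; the value $\varphi^{(\boldsymbol{\mu})}_{1}|_{\alpha=1}=0$ is recorded in both lemmas. The statement for $\varphi^{(\boldsymbol{\mu})}_{2}$ is proved verbatim, replacing parts (i) of Lemmas~\ref{lem:2.3}--\ref{lem:2.4} by parts (ii) and using \eqref{eq:2.16}, \eqref{eq:2.19}, \eqref{eq:2.27} for \eqref{eq:2.35}, with the same threshold $\boldsymbol{\bar{\mu}}_{0}^{\sharp}$.

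I expect the main obstacle to be precisely the second-order matching at $\alpha=1$ with the parameter $\boldsymbol{\mu}$ present: one must show that the (lengthy) expressions for $\partial^{2}_{\alpha}\varphi^{(\boldsymbol{\mu})}_{S_{1}}$ and $\partial^{2}_{\alpha}\varphi^{(\boldsymbol{\mu})}_{R_{1}}$ have equal one-sided limits at $\alpha=1$ uniformly over all admissible $U_{L}\in\bar{D}$ and $\|\boldsymbol{\mu}\|\le\|\boldsymbol{\bar{\mu}}_{0}^{\sharp}\|$, which is exactly the uniform $C^{2}$--contact of the Hugoniot locus and the integral curve of ${\boldsymbol{\rm{r}}}^{(\boldsymbol{\mu})}_{1}$ through $U_{L}$.
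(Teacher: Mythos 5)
Your proposal is correct and follows essentially the same route as the paper, which states Lemma \ref{lem:2.5} as a direct consequence of Lemmas \ref{lem:2.3}--\ref{lem:2.4} without supplying a proof. Your explicit verification of second-order contact of the shock and rarefaction branches at $\alpha=1$ (with the matching one-sided limits $-a_{\infty}^{-1}$ and $a_{\infty}^{-1}$ at $\boldsymbol{\mu}=\boldsymbol{0}$, uniformly in $U_{L}$ and $\boldsymbol{\mu}$) is precisely the detail the paper leaves implicit, and it is the right justification for the claimed global $C^{2}$ regularity across $\alpha=1$.
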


Based on Lemma \ref{lem:2.5}, we define
\begin{eqnarray}
&&\Phi^{(\boldsymbol{\mu})}_{1}(\alpha_1; U_{L}, \boldsymbol{\mu})=\big(\rho_{L}\alpha_{1}, v_{L}+\varphi^{(\boldsymbol{\mu})}_{1}(\alpha_1; U_{L},\boldsymbol{\mu})\big)
\qquad \mbox{for $\alpha_1\in (0,\delta^{-1}_{0})$},\label{eq:2.36}\\[3pt]
&&\Phi^{(\boldsymbol{\mu})}_{2}(\alpha_2; U_{L}, \boldsymbol{\mu})=\big(\rho_{L}\alpha_{2}, v_{L}+\varphi^{(\boldsymbol{\mu})}_{2}(\alpha_2; U_{L},\boldsymbol{\mu})\big)
\qquad \mbox{for $\alpha_2\in (\delta_{0},\infty)$}.\label{eq:2.37}
\end{eqnarray}
Denote
\begin{eqnarray}\label{eq:2.38}
\begin{split}
\Phi^{(\boldsymbol{\mu})}(\boldsymbol{\alpha}; U_{L}, \boldsymbol{\mu}):&=\Phi^{(\boldsymbol{\mu})}_{2}\big(\alpha_2;\Phi^{(\boldsymbol{\mu})}_{1}(\alpha_1; U_{L},\boldsymbol{\mu}),\boldsymbol{\mu}\big)\\[3pt]
&=(\rho_{L}\alpha_{2}\alpha_{1}, v_{L}+\varphi^{(\boldsymbol{\mu})}_{1}(\alpha_1; U_{L},\boldsymbol{\mu})+\varphi^{(\boldsymbol{\mu})}_{2}(\alpha_2;U^{(\boldsymbol{\mu})}_{M},\boldsymbol{\mu}))^{\top},
\end{split}
\end{eqnarray}
where $\boldsymbol{\alpha}=(\alpha_1,\alpha_2)$ and
$U^{(\boldsymbol{\mu})}_{M}=(\rho_{L}\alpha_1, v_{L}+\varphi^{(\boldsymbol{\mu})}_{1}(\alpha_1; U_{L}, \boldsymbol{\mu}))^{\top}$.

Next, we consider the elementary wave curves of system \eqref{eq:1.19} for $U:=(\rho, v)^{\top}$.
The eigenvalues of system \eqref{eq:1.19} are
\begin{eqnarray}\label{eq:2.39}
\lambda_{1}(U)=v-a_{\infty}^{-1}, \qquad \lambda_{2}(U)=v+a_{\infty}^{-1},
\end{eqnarray}
and the corresponding two right-eigenvectors are
\begin{eqnarray}\label{eq:2.40}
\textbf{r}_{1}(U)=(-\rho, a_{\infty}^{-1})^{\top},\qquad \textbf{r}_{2}(U)=(\rho, a_{\infty}^{-1})^{\top}.
\end{eqnarray}

Notice that, for any $U\in D$, by {\rm Lemma \ref{lem:2.1}} and \eqref{eq:2.39}--\eqref{eq:2.40}, we know that
\begin{eqnarray}\label{eq:2.41}
\lambda^{\boldsymbol{\mu}}_{j}(U,\boldsymbol{\mu})\Big|_{\boldsymbol{\mu}=\boldsymbol{0}}=\lambda_{j}(U),\quad
\boldsymbol{\rm{r}}^{(\boldsymbol{\mu})}_{j}(U,\boldsymbol{\mu})\Big|_{\boldsymbol{\mu}=\boldsymbol{0}}=\boldsymbol{\rm{r}}_{j}(U)
\qquad\,\, \mbox{for $j=1,2$}.
\end{eqnarray}

For $U\in D$, system \eqref{eq:1.19} is strictly hyperbolic. Moreover, a direct computation shows that
\begin{eqnarray*}
\nabla_{U}\lambda_{j}(U)\cdot\textbf{r}_{j}(U)=a_{\infty}^{-1}>0 \qquad\,\, \mbox{for $j=1,2$}.
\end{eqnarray*}
It implies that the two characteristics families are genuinely nonlinear in $D$.
Then,
for any two constant states $U, U_{L}\in D$,
we can parameterize the first  elementary wave curve (including 1-shock {${S}_1$} and 1-rarefaction wave {${R}_1$})
and the second elementary wave curve
(including 2-shock {${S}_2$} and 2-rarefaction wave {${R}_2$}) of system \eqref{eq:1.19} which connects $U_{L}$ to $U$ as
\begin{eqnarray}
&&U=\Phi_{1}(\alpha_1; U_{L})=(\rho_{L}\alpha_1, v_{L}+\varphi_{1}(\alpha_1))^{\top},\label{eq:2.42}\\[3pt]
&&U=\Phi_{2}(\alpha_2; U_{L})=(\rho_{L}\alpha_2, v_{L}+\varphi_{2}(\alpha_2))^{\top},\label{eq:2.43}
\end{eqnarray}
respectively, where
\begin{eqnarray}
&&\varphi_{1}(\alpha_1)=
\begin{cases}
-\frac{\ln \alpha_1}{a_{\infty}} &\qquad \mbox{for $\alpha_1\in(0,1]$}, \\[3pt]
-\frac{\sqrt{2}}{a_{\infty}}\sqrt{\frac{(\alpha_1-1)\ln \alpha_1}{\alpha_1+1}} &\qquad \mbox{for $\alpha_1\in[1,\infty)$},
\end{cases}\label{eq:2.44}\\[4pt]
&&\varphi_{2}(\alpha_2)=
\begin{cases}
-\frac{\sqrt{2}}{a_{\infty}}\sqrt{-\frac{(1-\alpha_2)\ln \alpha_2}{1+\alpha_2}} &\quad\,\,\, \mbox{for $\alpha_2\in(0,1]$}, \\[3pt]
\frac{\ln \alpha_2}{a_{\infty}} &\quad\,\,\, \mbox{for $\alpha_2\in[1,\infty)$}.\label{eq:2.45}
\end{cases}
\end{eqnarray}
Finally, we set
\begin{eqnarray}\label{eq:2.46}
\Phi(\boldsymbol{\alpha}; U_{L})=(\rho_{L}\alpha_2\alpha_1, v_{L}+\varphi_{1}(\alpha_1)+\varphi_{2}(\alpha_2))^{\top}
\qquad \mbox{for $\boldsymbol{\alpha}=(\alpha_1,\alpha_2)$}.
\end{eqnarray}

Then, by direct computation, we have

\begin{lemma}\label{lem:2.6}
$\varphi_{k}(\alpha)$, $k=1,2$, satisfy
\begin{enumerate}
\item[(i)]  $\varphi_{k}(\alpha)\in C^2(\mathbb{R}_{+})$ for $k=1,2;$

\smallskip
\item[(ii)] $\varphi'_{1}(\alpha)<0$ and $\varphi'_{2}(\alpha)>0$ for $\alpha\in\mathbb{R}_{+};$

\smallskip
\item[(iii)] $\varphi_{k}(1)=0$ and $\varphi'_{k}(1)=(-1)^{k}a_{\infty}^{-1}$ for $k=1, 2;$

\smallskip
\item[(iv)]
For any $U_{L}\in D$,
\begin{eqnarray}\label{eq:2.47}
\varphi^{(\boldsymbol{\mu})}_{j}\Big|_{ \boldsymbol{\mu}= \boldsymbol{0}}=\varphi_{j},\quad
\frac{\partial \varphi^{(\boldsymbol{\mu})}_{j}}{\partial \alpha}\bigg|_{ \boldsymbol{\mu}= \boldsymbol{0}}
= \varphi'_{j}(\alpha) \qquad\,\, \mbox{for $j=1,2$}.
\end{eqnarray}
\end{enumerate}
\end{lemma}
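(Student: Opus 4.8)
The plan is to establish (i)--(iii) by a direct analysis of the explicit scalar functions $\varphi_1,\varphi_2$ given in \eqref{eq:2.44}--\eqref{eq:2.45}, and to deduce (iv) from the $\boldsymbol{\mu}=\boldsymbol{0}$ limits already recorded in Lemmas \ref{lem:2.3}--\ref{lem:2.5}. The computation is organized by the symmetry
$$\varphi_2(\alpha)=\varphi_1(1/\alpha)\qquad\text{for all }\alpha\in\mathbb{R}_{+},$$
which is checked branch by branch from \eqref{eq:2.44}--\eqref{eq:2.45}: for $\alpha\ge1$ one has $\varphi_1(1/\alpha)=-a_\infty^{-1}\ln(1/\alpha)=a_\infty^{-1}\ln\alpha=\varphi_2(\alpha)$, and for $\alpha\le1$, writing $\beta=1/\alpha\ge1$, one has $\tfrac{(\beta-1)\ln\beta}{\beta+1}=-\tfrac{(1-\alpha)\ln\alpha}{1+\alpha}$, so again $\varphi_1(1/\alpha)=\varphi_2(\alpha)$. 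Since $\iota:\alpha\mapsto 1/\alpha$ is a $C^\infty$ involution of $\mathbb{R}_{+}$ fixing $\alpha=1$ with $\iota'(1)=-1$, every assertion in (i)--(iii) for $\varphi_2$ reduces to the corresponding one for $\varphi_1$ (the reversal of the first-derivative sign at $\alpha=1$ being precisely the factor $(-1)^k$). Hence it suffices to treat $\varphi_1$.

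For (ii) and (iii): on $(0,1]$, $\varphi_1(\alpha)=-a_\infty^{-1}\ln\alpha$, so $\varphi_1(1)=0$, $\varphi_1'(\alpha)=-a_\infty^{-1}\alpha^{-1}<0$, and $\varphi_1'(1^-)=-a_\infty^{-1}$. On $[1,\infty)$, $\varphi_1(\alpha)=-\tfrac{\sqrt2}{a_\infty}\sqrt{g(\alpha)}$ with $g(\alpha):=\tfrac{(\alpha-1)\ln\alpha}{\alpha+1}>0$ for $\alpha>1$, whence $\varphi_1'(\alpha)=-\tfrac{\sqrt2}{2a_\infty}\,g'(\alpha)/\sqrt{g(\alpha)}$; a short computation gives $g'(\alpha)=\tfrac{\alpha^2+2\alpha\ln\alpha-1}{\alpha(\alpha+1)^2}>0$ for $\alpha>1$ (since $\alpha^2-1>0$ and $\ln\alpha>0$), so $\varphi_1'(\alpha)<0$ on $(1,\infty)$ — this is the sign already appearing in \eqref{eq:2.23}. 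Together with the value at $\alpha=1$ supplied by (i) below, this gives (ii)--(iii) for $\varphi_1$, and then for $\varphi_2$ by the symmetry.

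The substantive step is (i), the $C^2$ gluing of the shock and rarefaction branches at $\alpha=1$. Away from $\alpha=1$ each branch is $C^\infty$, so only the junction matters. I would factor $g(\alpha)=(\alpha-1)^2\psi(\alpha)$ with $\psi(\alpha)=\tfrac{\ln\alpha}{\alpha^2-1}$, which extends to a real-analytic, strictly positive function on $\mathbb{R}_{+}$ with $\psi(1)=\tfrac12$ and $\psi'(1)=-\tfrac12$; then on $[1,\infty)$ one has $\varphi_1(\alpha)=-\tfrac{\sqrt2}{a_\infty}(\alpha-1)\sqrt{\psi(\alpha)}$, which is $C^\infty$ up to and including $\alpha=1$ because the prefactor $(\alpha-1)$ absorbs the square-root singularity of $\sqrt{g}$. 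Differentiating this representation as $\alpha\to1^+$ and, separately, differentiating $-a_\infty^{-1}\ln\alpha$ as $\alpha\to1^-$ yields the matching
$$\varphi_1(1^{\pm})=0,\qquad \varphi_1'(1^{\pm})=-a_\infty^{-1},\qquad \varphi_1''(1^{\pm})=a_\infty^{-1},$$
so $\varphi_1\in C^2(\mathbb{R}_{+})$; a third-order expansion shows the two branches already disagree at order $(\alpha-1)^3$, so $C^2$ is sharp and consistent with the statement, and by the symmetry $\varphi_2=\varphi_1\circ\iota\in C^2(\mathbb{R}_{+})$ as well. This second-order matching at the junction point $\alpha=1$ is the only genuinely delicate part of the argument.

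Finally, for (iv): comparing the $\boldsymbol{\mu}=\boldsymbol{0}$ identities \eqref{eq:2.34}--\eqref{eq:2.35} of Lemma \ref{lem:2.5} with \eqref{eq:2.44}--\eqref{eq:2.45} gives $\varphi_j^{(\boldsymbol{\mu})}\big|_{\boldsymbol{\mu}=\boldsymbol{0}}=\varphi_j$ (note the right-hand side no longer depends on $U_L$). Since $\varphi_j^{(\boldsymbol{\mu})}$ is $C^2$ in $(\alpha,U_L,\boldsymbol{\mu})$ jointly by Lemmas \ref{lem:2.3}--\ref{lem:2.4}, the $\alpha$-derivative commutes with the evaluation $\boldsymbol{\mu}=\boldsymbol{0}$, so $\partial_\alpha\varphi_j^{(\boldsymbol{\mu})}\big|_{\boldsymbol{\mu}=\boldsymbol{0}}=\varphi_j'$; equivalently, this is the content of the $\boldsymbol{\mu}=\boldsymbol{0}$ derivative formulas already obtained in \eqref{eq:2.23} (for $\alpha\ge1$) and in the proof of Lemma \ref{lem:2.4} (for $\alpha\le1$), glued continuously at $\alpha=1$ via (i). This completes the plan; the only real obstacle, as noted, is the $C^2$ verification at $\alpha=1$.
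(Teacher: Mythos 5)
Your proposal is correct; the paper itself offers no written proof (Lemma \ref{lem:2.6} is asserted ``by direct computation''), and your argument supplies exactly that computation, with the values $\varphi_1'(1^\pm)=-a_\infty^{-1}$, $\varphi_1''(1^\pm)=a_\infty^{-1}$ and the sign of $g'(\alpha)=\frac{\alpha^2+2\alpha\ln\alpha-1}{\alpha(\alpha+1)^2}$ matching the expressions \eqref{eq:2.23} and the limits recorded in the proofs of Lemmas \ref{lem:2.3}--\ref{lem:2.4}, and with (iv) read off from \eqref{eq:2.34}--\eqref{eq:2.35} as the paper intends. Your two organizing devices --- the involution identity $\varphi_2=\varphi_1\circ\iota$, $\iota(\alpha)=1/\alpha$, which halves the work and explains the factor $(-1)^k$, and the factorization $g(\alpha)=(\alpha-1)^2\psi(\alpha)$ with $\psi(\alpha)=\frac{\ln\alpha}{\alpha^2-1}$, which makes the $C^2$ gluing at $\alpha=1$ transparent --- are a clean way to carry out the verification the paper leaves implicit.
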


\subsection{Comparison of the Riemann solvers between systems \eqref{eq:1.15} and \eqref{eq:1.19}}
In this subsection, we consider the comparison of the Riemann solvers between system \eqref{eq:1.15} and \eqref{eq:1.19}
with/without a boundary.

First, we are concerned with the Riemann problem
for system \eqref{eq:1.15} with $(\tilde{x},\tilde{y})\in \Omega$ and the following data:
\begin{eqnarray}\label{eq:2.48}
U(\tilde{x},y)=
\begin{cases}
U_{R}:=(\rho_{R}, v_{R})^{\top} \quad & \mbox{for $y>\tilde{y}$},\\[3pt]
U_{L}:=(\rho_{L}, v_{L})^{\top} \quad & \mbox{for $y<\tilde{y}$}.
\end{cases}
\end{eqnarray}

\begin{lemma}\label{lem:2.7}
Let domain $D$ be given as in {\rm Lemma \ref{lem:2.1}}.
For any two given constant states $U_{L}, U_{R}\in D$,
there exist small constants $\delta'_{0}\in (\delta_{0}, \frac{1}{2})$ and $\boldsymbol{\bar{\mu}}'_{1}=(\bar{\epsilon}'_{1}, \bar{\tau}'^{2}_{1})$
with $\bar{\epsilon}'_{1}<\min\{\bar{\epsilon}'_{0},\bar{\epsilon}''_{0} \}$ and $\bar{\tau}'_{1}<\min\{\bar{\tau}'_{0}, \bar{\tau}''_{0}\}$ such that,
for $\|\boldsymbol{\mu}\|\leq \|\boldsymbol{\bar{\mu}}'_{1}\|$, the Riemann problem \eqref{eq:1.15} and \eqref{eq:2.48}
admits a unique constant state $U^{(\boldsymbol{\bar{\mu}})}_{M}$ satisfying
\begin{eqnarray}\label{eq:2.49}
U^{(\boldsymbol{\mu})}_{M}=\Phi^{(\boldsymbol{\mu})}_{1}(\alpha_1; U_{L}, \boldsymbol{\mu}), \qquad
U_{R}=\Phi^{(\boldsymbol{\mu})}_{2}(\alpha_2; U^{(\boldsymbol{\mu})}_{M}, \boldsymbol{\mu}),
\end{eqnarray}
where $\alpha_1, \alpha_2\in(\delta'_{0}, \frac{1}{\delta'_{0}})$.
\end{lemma}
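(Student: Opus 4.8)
The plan is to solve the Riemann problem \eqref{eq:1.15} with data \eqref{eq:2.48} by composing the two elementary wave curves $\Phi_1^{(\boldsymbol{\mu})}$ and $\Phi_2^{(\boldsymbol{\mu})}$ constructed in \S2.1 and showing that the composite map $\boldsymbol{\alpha}\mapsto\Phi^{(\boldsymbol{\mu})}(\boldsymbol{\alpha};U_L,\boldsymbol{\mu})$ of \eqref{eq:2.38} is a $C^2$-diffeomorphism from a neighbourhood of $(1,1)$ in $(\delta_0,\delta_0^{-1})^2$ onto a neighbourhood of $U_L$ containing $U_R$, so that the equation $\Phi^{(\boldsymbol{\mu})}(\boldsymbol{\alpha};U_L,\boldsymbol{\mu})=U_R$ has a unique solution $\boldsymbol{\alpha}=(\alpha_1,\alpha_2)$ with the stated range. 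The key point is to use $\boldsymbol{\mu}=\boldsymbol{0}$ as an anchor: at $\boldsymbol{\mu}=\boldsymbol{0}$ the wave curves degenerate to those of the limit system \eqref{eq:1.19}, given explicitly by \eqref{eq:2.44}--\eqref{eq:2.46}, for which the classical Lax theory already gives global solvability on $\bar D$.

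First I would verify solvability for the limit system. By Lemma~\ref{lem:2.6}, $\varphi_1$ is strictly decreasing and $\varphi_2$ is strictly increasing, with $\varphi_k(1)=0$ and $\varphi_k'(1)=(-1)^k a_\infty^{-1}$; hence the Jacobian of $\Phi(\boldsymbol{\alpha};U_L)$ in \eqref{eq:2.46} with respect to $\boldsymbol{\alpha}$ at $\boldsymbol{\alpha}=(1,1)$ is
\[
\det\begin{pmatrix}\rho_L & \rho_L\\[3pt] \varphi_1'(1) & \varphi_2'(1)\end{pmatrix}
=\rho_L\big(\varphi_2'(1)-\varphi_1'(1)\big)=\frac{2\rho_L}{a_\infty}\neq 0 .
\]
More globally, because $\rho_L\alpha_2\alpha_1$ and $v_L+\varphi_1(\alpha_1)+\varphi_2(\alpha_2)$ are strictly monotone in the appropriate variables, the map $\Phi(\cdot;U_L)$ is injective on $\mathbb{R}_+^2$ and, using the explicit asymptotics of $\varphi_k$, its image covers a fixed neighbourhood of $U_L$ uniformly for $U_L\in\bar D$; this pins down a radius within which, for every pair $U_L,U_R\in D$, there is a unique $\boldsymbol{\alpha}^{0}=(\alpha_1^0,\alpha_2^0)$ solving $\Phi(\boldsymbol{\alpha}^0;U_L)=U_R$, and one reads off a $\delta_0'\in(\delta_0,\tfrac12)$ with $\alpha_j^0\in(\delta_0',\delta_0'^{-1})$ from the compactness of $\bar D$.

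Next I would pass from $\boldsymbol{\mu}=\boldsymbol{0}$ to small $\|\boldsymbol{\mu}\|$ by a perturbation argument. Lemmas~\ref{lem:2.3}--\ref{lem:2.5} give $\varphi_j^{(\boldsymbol{\mu})}\in C^2$ on $(\delta_0,\delta_0^{-1})\times\bar D\times(0,\|\boldsymbol{\bar\mu}_0'\|)\times(\cdots)$, jointly continuous in $\boldsymbol{\mu}$, with $\varphi_j^{(\boldsymbol{\mu})}|_{\boldsymbol{\mu}=\boldsymbol{0}}=\varphi_j$ and $\partial_\alpha\varphi_j^{(\boldsymbol{\mu})}|_{\boldsymbol{\mu}=\boldsymbol{0}}=\varphi_j'$; hence $\Phi^{(\boldsymbol{\mu})}(\cdot;U_L,\boldsymbol{\mu})$ and its $\boldsymbol{\alpha}$-Jacobian depend continuously on $\boldsymbol{\mu}$ and coincide with $\Phi(\cdot;U_L)$ at $\boldsymbol{\mu}=\boldsymbol{0}$. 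Since the limiting Jacobian is bounded away from zero on $\bar D$ (equal to $2\rho_L/a_\infty\ge 2\rho_*/a_\infty$ at $\boldsymbol{\alpha}=(1,1)$, and nondegenerate on the relevant compact set by monotonicity), I invoke the implicit function theorem with parameters, applied to $G(\boldsymbol{\alpha},U_L,U_R,\boldsymbol{\mu}):=\Phi^{(\boldsymbol{\mu})}(\boldsymbol{\alpha};U_L,\boldsymbol{\mu})-U_R$ around the solution $(\boldsymbol{\alpha}^0,U_L,U_R,\boldsymbol{0})$; a compactness argument over $\bar D\times\bar D$ then produces a single $\boldsymbol{\bar\mu}_1'=(\bar\epsilon_1',\bar\tau_1'^2)$, with $\bar\epsilon_1'<\min\{\bar\epsilon_0',\bar\epsilon_0''\}$ and $\bar\tau_1'<\min\{\bar\tau_0',\bar\tau_0''\}$, such that for all $\|\boldsymbol{\mu}\|\le\|\boldsymbol{\bar\mu}_1'\|$ and all $U_L,U_R\in D$ the equation $G=0$ has a unique solution $\boldsymbol{\alpha}(\boldsymbol{\mu})$ depending continuously on $\boldsymbol{\mu}$. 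Shrinking $\|\boldsymbol{\bar\mu}_1'\|$ once more keeps $\alpha_j(\boldsymbol{\mu})$ within $(\delta_0',\delta_0'^{-1})$ by continuity, and then $U_M^{(\boldsymbol{\mu})}:=\Phi_1^{(\boldsymbol{\mu})}(\alpha_1;U_L,\boldsymbol{\mu})$ satisfies \eqref{eq:2.49}. Finally, strict hyperbolicity and genuine nonlinearity from Lemmas~\ref{lem:2.1}--\ref{lem:2.2}, together with the Lax conditions \eqref{eq:2.11}--\eqref{eq:2.12} built into the shock curves, guarantee the waves are admissible and are ordered with distinct speeds, so the composite is a genuine Riemann solution; that the intermediate state lies in $D$ for small $\|\boldsymbol{\mu}\|$ again follows by continuity from the $\boldsymbol{\mu}=\boldsymbol{0}$ case.

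The main obstacle I expect is the \emph{uniformity} in the base states $U_L,U_R$ rather than the existence at a single point: one must ensure that the radius of the neighbourhood on which $\Phi(\cdot;U_L)$ is invertible, and the lower bound on the Jacobian, do not degenerate as $U_L$ ranges over $\bar D$ and as $U_R$ approaches the boundary of that neighbourhood — this is where the compactness of $\bar D$ and the explicit monotonicity/asymptotics of $\varphi_1,\varphi_2$ in \eqref{eq:2.44}--\eqref{eq:2.45} do the real work, letting one fix $\delta_0'$ and $\|\boldsymbol{\bar\mu}_1'\|$ once and for all. A secondary technical nuisance is that $\varphi_1^{(\boldsymbol{\mu})}$ is only guaranteed $C^2$ up to $\alpha=1$ from each side (shock branch vs.\ rarefaction branch), so the gluing in \eqref{eq:2.32}--\eqref{eq:2.33} must be checked to be $C^2$ across $\alpha=1$; this is exactly the content of Lemma~\ref{lem:2.5}, which matches the one-sided limits of the first and second derivatives, so the implicit function theorem applies without loss of regularity.
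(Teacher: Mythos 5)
Your proposal is correct and follows essentially the same route as the paper: reduce \eqref{eq:2.49} to the equation $\Phi^{(\boldsymbol{\mu})}(\boldsymbol{\alpha};U_{L},\boldsymbol{\mu})=U_{R}$, solve it at $\boldsymbol{\mu}=\boldsymbol{0}$ using the explicit monotone curves \eqref{eq:2.44}--\eqref{eq:2.45}, note that the $\boldsymbol{\alpha}$-Jacobian $\rho_{L}\big(\alpha_{2}\varphi_{2}'(\alpha_{2})-\alpha_{1}\varphi_{1}'(\alpha_{1})\big)$ is bounded below on $(\delta_{0}',1/\delta_{0}')^{2}\times\bar{D}$ by Lemma \ref{lem:2.6}, and conclude by the implicit function theorem with compactness giving uniform $\delta_{0}'$ and $\boldsymbol{\bar{\mu}}_{1}'$. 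The extra points you flag (uniformity in $U_{L},U_{R}$ and the $C^{2}$ matching of the glued curves at $\alpha=1$ via Lemma \ref{lem:2.5}) are exactly the ingredients the paper relies on.
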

\begin{proof}
To obtain the solution of the Riemann problem \eqref{eq:1.15} and \eqref{eq:2.48}, it suffices
to solve the following equations for $\boldsymbol{\alpha}=(\alpha_1, \alpha_2)$:
\begin{eqnarray*}
U_{R}=\Phi^{(\boldsymbol{\mu})}(\boldsymbol{\alpha}; U_{L}, \boldsymbol{\mu}).
\end{eqnarray*}

More precisely, by \eqref{eq:2.38}, it can be rewritten as
\begin{eqnarray}\label{eq:2.50}
\begin{cases}
\rho_{R}=:\Phi^{(\boldsymbol{\mu}),1}(\boldsymbol{\alpha}; U_{L}, \boldsymbol{\mu})=\rho_{L}\alpha_2\alpha_1,\\[3pt]
v_{R}=:\Phi^{(\boldsymbol{\mu}),2}(\boldsymbol{\alpha}; U_{L}, \boldsymbol{\mu})=v_{L}+\varphi^{(\boldsymbol{\mu})}_{1}(\alpha_1; U_{L},\boldsymbol{\mu})+\varphi^{(\boldsymbol{\mu})}_{2}\big(\alpha_2;\Phi^{(\boldsymbol{\mu})}_{1}(\alpha_1; U_{L}, \boldsymbol{\mu}),\boldsymbol{\mu} \big).
\end{cases}
\end{eqnarray}
For $\boldsymbol{\mu}=\boldsymbol{0}$, by \eqref{eq:2.34}--\eqref{eq:2.35},
equations \eqref{eq:2.50} admits a unique solution $\boldsymbol{\alpha}=(\alpha_1,\alpha_2)$ for $U_{L}, U_{R}\in D$.
Next, by \eqref{eq:2.47},
\begin{eqnarray*}
\det\bigg(\frac{\partial (\Phi^{(\boldsymbol{\mu}),1}, \Phi^{(\boldsymbol{\mu}),2})}{\partial(\alpha_1, \alpha_2)}\Big|_{\boldsymbol{\mu}=\boldsymbol{0}}\bigg)
=\rho_{L}\big(\alpha_{2}\varphi'_{2}(\alpha_{2})-\alpha_{1}\varphi'_{1}(\alpha_{1})\big).
\end{eqnarray*}
Thus, it follows from Lemma \ref{lem:2.6} that there exists a small constant $\delta'_{0}\in (\delta_{0},\frac{1}{2})$
such that, for $\alpha_1, \alpha_2\in(\delta'_{0}, \frac{1}{\delta'_{0}})$ and $\rho_{L}\in D$,
\begin{eqnarray*}
\rho_{L}\big(\alpha_{2}\varphi'_{2}(\alpha_{2})-\alpha_{1}\varphi'_{1}(\alpha_{1})\big)>C_{\delta'_{0}}>0.
\end{eqnarray*}

Then, by the implicit function theorem, there exist constants $\boldsymbol{\bar{\mu}}'_{1}=(\bar{\epsilon}'_{1}, \bar{\tau}'^{2}_{1})$
with $\bar{\epsilon}'_{1}<\min\{\bar{\epsilon}'_{0},\bar{\epsilon}''_{0} \}$
and $\bar{\tau}'_{1}<\min\{\bar{\tau}'_{0}, \bar{\tau}''_{0}\}$ such that,
for $\|\boldsymbol{\mu}\|\leq \|\boldsymbol{\bar{\mu}}'_{1}\|=\bar{\epsilon}'_{1}+\bar{\tau}'^2_{1}$, equations \eqref{eq:2.50} admit a
unique solution $(\alpha_1, \alpha_2)\in (\delta'_{0}, \frac{1}{\delta'_{0}})^2$.
\end{proof}

We now make the comparison of the Riemann solutions between system \eqref{eq:1.15} and system \eqref{eq:1.19}
with the initial-boundary condition \eqref{eq:2.48}.
\begin{proposition}\label{prop:2.1}
For a given number $k=1,2$, assume that two constant states $U_{L}, U_{R}\in D$ satisfy
\begin{eqnarray}\label{eq:2.51}
U_{R}=\Phi(\boldsymbol{\beta}; U_{L}), \qquad U_{R}=\Phi^{(\boldsymbol{\mu})}_{k}(\alpha_{k}; U_{L}, \boldsymbol{\mu}),
\end{eqnarray}
where $\boldsymbol{\beta}=(\beta_1, \beta_2)$, $\alpha_{k}\in (\delta'_{0},\frac{1}{\delta'_{0}})$, and $\delta'_{0}>0$
is given in {\rm Lemma \ref{lem:2.7}}.
Then, for $\|\boldsymbol{\mu}\|\leq \|\boldsymbol{\bar{\mu}}'_{1}\|$,
\begin{eqnarray}\label{eq:2.52}
\beta_{k}=\alpha_{k}+O(1)|\alpha_{k}-1|\|\boldsymbol{\mu}\|,
\qquad \beta_{j}=1+O(1)|\alpha_{k}-1|\|\boldsymbol{\mu}\|,
\end{eqnarray}
where $ j\neq k$, $ j=1,2$, and $\boldsymbol{\bar{\mu}}'_{1}=(\bar{\epsilon}'_{1}, \bar{\tau}'_{1})$ is given
in {\rm Lemma \ref{lem:2.7}}.
Moreover, if $|U_{R}-U_{L}|=\alpha_{NP}$,
then
\begin{eqnarray}\label{eq:2.53}
|\beta_{1}-1|+|\beta_{2}-1|=O(1)\alpha_{NP},
\end{eqnarray}
where the bounds of $O(1)$ are independent on $\boldsymbol{\mu}$.
\end{proposition}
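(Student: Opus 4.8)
The plan is to convert the two representations in \eqref{eq:2.51} into a pair of scalar equations and to exploit that, by \eqref{eq:2.44}--\eqref{eq:2.46}, the wave curves $\varphi_1,\varphi_2$ and the Riemann map $\Phi$ of the limit system \eqref{eq:1.19} are explicit and entirely independent of $\boldsymbol{\mu}$. By symmetry it suffices to treat $k=1$, the case $k=2$ being identical after exchanging the roles of $\varphi_1$ and $\varphi_2$. Comparing the two components of $U_R=\Phi(\boldsymbol{\beta};U_L)=\Phi^{(\boldsymbol{\mu})}_1(\alpha_1;U_L,\boldsymbol{\mu})$ through \eqref{eq:2.36} and \eqref{eq:2.46}, the $\rho$-equation reads $\beta_1\beta_2=\alpha_1$ and the $v$-equation reads
\[
\varphi_1(\beta_1)+\varphi_2(\beta_2)=\varphi^{(\boldsymbol{\mu})}_1(\alpha_1;U_L,\boldsymbol{\mu}).
\]
Since $U_L,U_R\in D$, the $\boldsymbol{\mu}=\boldsymbol{0}$ part of the proof of Lemma \ref{lem:2.7} already guarantees that $\boldsymbol{\beta}$ is uniquely determined and confined to a fixed compact subset of $(\delta'_0,1/\delta'_0)^2$ depending only on $D$, so all constants below may be taken independent of $\boldsymbol{\mu}$.

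The first and most delicate step will be the estimate
\[
E:=\varphi^{(\boldsymbol{\mu})}_1(\alpha_1;U_L,\boldsymbol{\mu})-\varphi_1(\alpha_1)=O(1)\,|\alpha_1-1|\,\|\boldsymbol{\mu}\|,
\]
uniformly for $U_L\in\bar{D}$ and $\alpha_1$ in the above range. Since $\varphi^{(\boldsymbol{\mu})}_1\big|_{\alpha=1}=0=\varphi_1(1)$ by Lemmas \ref{lem:2.3}--\ref{lem:2.5}, I would write $E=\int_1^{\alpha_1}\big(\partial_\alpha\varphi^{(\boldsymbol{\mu})}_1-\varphi_1'\big)(s;U_L,\boldsymbol{\mu})\,\dd s$; by \eqref{eq:2.47} the integrand vanishes identically when $\boldsymbol{\mu}=\boldsymbol{0}$, so, using the joint $C^2$-regularity of $\varphi^{(\boldsymbol{\mu})}_1$ furnished by Lemmas \ref{lem:2.3}--\ref{lem:2.4} together with the compactness of the parameter domain, a first-order Taylor expansion in $\boldsymbol{\mu}$ bounds the integrand by $O(1)\|\boldsymbol{\mu}\|$, and integrating over $[1,\alpha_1]$ yields the claim. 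This is where I expect the real work to lie: the bound on $E$ must be simultaneously linear in $\|\boldsymbol{\mu}\|$ and vanish to first order at $\alpha_1=1$, uniformly in $U_L$, which is exactly what the matching of both values and $\alpha$-derivatives at $\boldsymbol{\mu}=\boldsymbol{0}$ in \eqref{eq:2.47}, combined with the $C^2$-bounds of Lemmas \ref{lem:2.3}--\ref{lem:2.4}, is designed to provide.

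Granting the estimate for $E$, I would eliminate $\beta_2=\alpha_1/\beta_1$ and set $G(\beta_1):=\varphi_1(\beta_1)+\varphi_2(\alpha_1/\beta_1)$, so that the $v$-equation becomes $G(\beta_1)=\varphi_1(\alpha_1)+E=G(\alpha_1)+E$ (using $\varphi_2(1)=0$). By Lemma \ref{lem:2.6}(ii)--(iii) one has $G'(\beta_1)=\varphi_1'(\beta_1)-\alpha_1\beta_1^{-2}\varphi_2'(\alpha_1/\beta_1)<0$, and on the fixed compact range of $(\alpha_1,\beta_1,\rho_L)$ it is bounded away from $0$; hence the mean value theorem gives $\beta_1-\alpha_1=E/G'(\xi)$ for some intermediate $\xi$, so $|\beta_1-\alpha_1|\le O(1)|E|=O(1)|\alpha_1-1|\|\boldsymbol{\mu}\|$, which is the first identity in \eqref{eq:2.52} for $k=1$; the second follows from $\beta_2-1=(\alpha_1-\beta_1)/\beta_1=O(1)|\alpha_1-1|\|\boldsymbol{\mu}\|$ since $\beta_1$ is bounded below. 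The case $k=2$ is handled verbatim with $G$ replaced by $\widetilde G(\beta_2):=\varphi_1(\alpha_2/\beta_2)+\varphi_2(\beta_2)$, whose derivative is positive and bounded away from $0$.

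Finally, for \eqref{eq:2.53}, I would use that $\boldsymbol{\beta}\mapsto\Phi(\boldsymbol{\beta};U_L)$ does not involve $\boldsymbol{\mu}$ at all, is $C^2$ on $(\delta'_0,1/\delta'_0)^2$ with $\Phi((1,1);U_L)=U_L$, and has Jacobian determinant $\rho_L\big(\alpha_2\varphi'_2(\alpha_2)-\alpha_1\varphi'_1(\alpha_1)\big)\ge C_{\delta'_0}>0$ by the computation in the proof of Lemma \ref{lem:2.7}; hence $\Phi(\cdot;U_L)$ has a Lipschitz inverse with $\boldsymbol{\mu}$-independent constant, so $|\beta_1-1|+|\beta_2-1|\le O(1)|U_R-U_L|=O(1)\alpha_{NP}$, which completes the argument.
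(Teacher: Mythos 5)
Your proposal is correct, and it reaches \eqref{eq:2.52}--\eqref{eq:2.53} by a route that is genuinely different in its middle step from the paper's. The reduction is the same: both you and the paper rewrite \eqref{eq:2.51} as the pair $\beta_1\beta_2=\alpha_1$, $\varphi_1(\beta_1)+\varphi_2(\beta_2)=\varphi^{(\boldsymbol{\mu})}_1(\alpha_1;U_L,\boldsymbol{\mu})$ (system \eqref{2.55a}), and both treatments of \eqref{eq:2.53} rest on the nondegeneracy of $\Phi(\cdot;U_L)$ on a compact range (the paper states the two-sided comparability of $\sum_j|\beta_j-1|$ with $|\Phi(\boldsymbol{\beta};U_L)-U_L|$, you phrase it as a Lipschitz inverse via the Jacobian bound from Lemma \ref{lem:2.7} — same content). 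Where you diverge is in how the factor $|\alpha_k-1|\,\|\boldsymbol{\mu}\|$ is produced. The paper applies the implicit function theorem to the $2\times 2$ system in $(\beta_1,\beta_2)$ with parameters $(\alpha_1,\boldsymbol{\mu})$, records the anchor values $\beta_j(1,\boldsymbol{\mu})=1$, $\beta_1(\alpha_1,\boldsymbol{0})=\alpha_1$, $\beta_2(\alpha_1,\boldsymbol{0})=1$, and extracts \eqref{eq:2.52} from a mixed second-difference (Taylor) bound on the implicit functions $\beta_j(\alpha_1,\boldsymbol{\mu})$. You instead push the same two-point-vanishing mechanism one level down, onto the wave curves themselves: using $\varphi^{(\boldsymbol{\mu})}_1(1;\cdot)=0=\varphi_1(1)$ and the derivative matching \eqref{eq:2.47}, you bound the discrepancy $E=\varphi^{(\boldsymbol{\mu})}_1-\varphi_1$ by $O(1)|\alpha_1-1|\|\boldsymbol{\mu}\|$, then eliminate $\beta_2=\alpha_1/\beta_1$ and conclude by the mean value theorem applied to the strictly monotone scalar function $G(\beta_1)=\varphi_1(\beta_1)+\varphi_2(\alpha_1/\beta_1)$, whose derivative is bounded away from zero by Lemma \ref{lem:2.6}. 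Both arguments consume exactly the same ingredients ($C^2$ regularity of $\varphi^{(\boldsymbol{\mu})}_j$ up to $\boldsymbol{\mu}=\boldsymbol{0}$ from Lemmas \ref{lem:2.3}--\ref{lem:2.5}, the matching \eqref{eq:2.47}, and strict monotonicity of $\varphi_1,\varphi_2$); your scalar elimination makes the origin of the product $|\alpha_1-1|\|\boldsymbol{\mu}\|$ more transparent and avoids invoking the implicit function theorem for the perturbed system, while the paper's formulation is the one that transfers verbatim to the companion statements (Corollary \ref{coro:2.1}, Proposition \ref{prop:2.2}) where the same Taylor-in-$(\alpha,\boldsymbol{\mu})$ scheme is reused. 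Note only that your bound on the integrand of $E$ requires the mixed derivative $\partial^2_{\boldsymbol{\mu}\alpha}\varphi^{(\boldsymbol{\mu})}_1$ to be uniformly bounded up to $\boldsymbol{\mu}=\boldsymbol{0}$; this is exactly the regularity the paper itself uses in its Taylor step, so it is not a gap relative to the paper's standard.
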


\begin{proof}
First, consider the following equations that are derived from \eqref{eq:2.51}:
\begin{eqnarray*}
\Phi(\boldsymbol{\beta}; U_{L})=\Phi^{(\boldsymbol{\mu})}_{k}(\alpha_{k}; U_{L}, \boldsymbol{\mu})
\qquad \mbox{for $\boldsymbol{\beta}=(\beta_1,\beta_2)$}.
\end{eqnarray*}

Without loss of the generality, we consider only the case: $k=1$.
By \eqref{eq:2.36} and \eqref{eq:2.42}--\eqref{eq:2.43},
for $U_{L}\in D$, the above equation is equivalent to the following equations:
\begin{equation}\label{2.55a}
\begin{cases}
F_{1}(\beta_1,\beta_{2}, \alpha_1, \boldsymbol{\mu}, U_{L})=0, \\[3pt]
F_{2}(\beta_1,\beta_{2}, \alpha_1, \boldsymbol{\mu}, U_{L})=0,
\end{cases}
\end{equation}
where
\begin{eqnarray*}
\begin{cases}
F_{1}(\beta_1,\beta_{2}, \alpha_1, \boldsymbol{\mu}, U_{L}):=\beta_1\beta_2-\alpha_1, \\[3pt]
F_{2}(\beta_1,\beta_{2}, \alpha_1, \boldsymbol{\mu}, U_{L}):=\varphi_{1}(\beta_1)+\varphi_{2}(\beta_{2})
-\varphi^{(\boldsymbol{\mu})}_{1}(\alpha_1; U_{L}, \boldsymbol{\mu}).
\end{cases}
\end{eqnarray*}

When $\boldsymbol{\mu}=\boldsymbol{0}$, by Lemma \ref{lem:2.6}, system \eqref{2.55a}
has a unique solution $\beta_{1}=\alpha_1$ and $\beta_{2}=1$.
When $\boldsymbol{\mu}\neq\boldsymbol{0}$, by Lemma \ref{lem:2.6},
\begin{eqnarray*}
\det\bigg(\frac{\partial(F_{1}, F_{2})}{\partial(\beta_1,\beta_2)}\bigg)\bigg|_{\boldsymbol{\mu}\neq\boldsymbol{0}, \beta_{1}=\alpha_1, \beta_{2}=1}
=\varphi'_{2}(1)-\alpha_1\varphi'_{1}(\alpha_1)>C'_{\delta'_{0}}>0
\qquad\,\,\mbox{for $\alpha_{1}\in(\delta'_{0}, \frac{1}{\delta'_{0}})$}.
\end{eqnarray*}
Thus, by the implicit function theorem, system \eqref{2.55a} admits a unique solution:
$$
(\beta_{1}, \beta_2)=(\beta_{1}(\alpha_1,\boldsymbol{\mu}), \beta_{2}(\alpha_1,\boldsymbol{\mu}))\in C^{2}.
$$
In addition, by Lemma \ref{lem:2.6},  $\beta_{1}(1,\boldsymbol{\mu})=\beta_{2}(1,\boldsymbol{\mu})=1$,
$\beta_{1}(\alpha_1,0)=\alpha_1$, and $\beta_{2}(\alpha_1,0)=1$.
Then we can apply the Taylor expansion formula to obtain
\begin{eqnarray*}
&&\beta_{1}(\alpha_1, \boldsymbol{\mu})=\beta_{1}(\alpha_1,0)+\beta_{1}(1, \boldsymbol{\mu})-\beta_{1}(1,0)+O(1)|\alpha_1-1|\|\boldsymbol{\mu}\|
=\alpha_1+O(1)|\alpha_1-1|\|\boldsymbol{\mu}\|,\\[3pt]
&&\beta_{2}(\alpha_1, \boldsymbol{\mu})=\beta_{2}(\alpha_1,0)+\beta_{2}(1, \boldsymbol{\mu})-\beta_{2}(1,0)+O(1)|\alpha_1-1|\|\boldsymbol{\mu}\|
=1+O(1)|\alpha_1-1|\|\boldsymbol{\mu}\|,
\end{eqnarray*}
which are estimates \eqref{eq:2.52}.

Since, for $U_{L}, U_{R}\in D$, there exists $C>0$, independent of $\boldsymbol{\mu}$, such that
\begin{eqnarray*}
\frac{1}{C}\sum_{j=1,2}|\beta_{j}-1|\leq |\Phi(\boldsymbol{\beta}; U_{L})-U_{L}|\leq C\sum_{j=1,2}|\beta_{j}-1|,
\end{eqnarray*}
estimate \eqref{eq:2.53} follows immediately.
\end{proof}

Following the proof of Proposition \ref{prop:2.1} above,
we have the following corollary in a direct way, whose proof is omitted.
\begin{corollary}\label{coro:2.1}
Assume that two constant states $U_{L}, U_{R}\in D$ satisfy 
\begin{eqnarray}\label{eq:2.54}
U_{R}=\Phi(\boldsymbol{\beta}; U_{L}), \quad U_{R}=\Phi^{(\boldsymbol{\mu})}(\boldsymbol{\alpha}; U_{L}, \boldsymbol{\mu}) \qquad
\mbox{ for $\boldsymbol{\beta}=(\beta_1, \beta_2)$ and $\boldsymbol{\alpha}=(\alpha_1, \alpha_2)$},
\end{eqnarray}
where $\alpha_{1}, \alpha_{2}\in (\delta'_{0}, \frac{1}{\delta'_{0}})$, and constant $\delta'_{0}>0$ is given in {\rm  Lemma \ref{lem:2.7}}.
Then, for $\|\boldsymbol{\mu}\|\leq \|\boldsymbol{\bar{\mu}}'_{1}\|$,
\begin{eqnarray}\label{eq:2.55}
\beta_{j}=\alpha_{j}+O(1)\big(\sum_{k=1,2}|\alpha_{k}-1|\big)\|\boldsymbol{\mu}\|\qquad \mbox{for $j=1,2$},
\end{eqnarray}
where $\boldsymbol{\bar{\mu}}'_{1}=(\bar{\epsilon}'_{1}, \bar{\tau}'_{1})$ is given in {\rm Lemma \ref{lem:2.7}}.
Moreover, if $\tilde{U}_{R}\in D$, $|\tilde{U}_{R}-U_{R}|=\alpha_{NP}$, and
\begin{eqnarray}\label{eq:2.56}
 U_{R}=\Phi(\boldsymbol{\beta}; U_{L}), \quad \tilde{U}_{R}=\Phi^{(\boldsymbol{\mu})}(\boldsymbol{\alpha}; U_{L}, \boldsymbol{\mu}) 
 \qquad \mbox{ for $\boldsymbol{\beta}=(\beta_1, \beta_2)$ and
 $\boldsymbol{\alpha}=(\alpha_1, \alpha_2)$},
\end{eqnarray}
then
\begin{eqnarray}\label{eq:2.57}
\beta_{j}=\alpha_{j}+O(1)\big(\sum_{k=1,2}|\alpha_{k}-1|\big)\|\boldsymbol{\mu}\|+O(1)\alpha_{NP}
\qquad \mbox{for $j=1,2$},
\end{eqnarray}
where the bounds of $O(1)$ are independent of $\boldsymbol{\mu}$.
\end{corollary}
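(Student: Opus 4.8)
The plan is to repeat the argument of Proposition~\ref{prop:2.1}, now retaining both wave strengths simultaneously. Writing $U_L=(\rho_L,v_L)^{\top}$ and using \eqref{eq:2.38} together with \eqref{eq:2.46}, the identity $\Phi(\boldsymbol{\beta};U_L)=\Phi^{(\boldsymbol{\mu})}(\boldsymbol{\alpha};U_L,\boldsymbol{\mu})$ is equivalent to the system $G_1=G_2=0$ for $\boldsymbol{\beta}=(\beta_1,\beta_2)$, where $G_1:=\beta_1\beta_2-\alpha_1\alpha_2$ and $G_2:=\varphi_1(\beta_1)+\varphi_2(\beta_2)-\varphi^{(\boldsymbol{\mu})}_1(\alpha_1;U_L,\boldsymbol{\mu})-\varphi^{(\boldsymbol{\mu})}_2\big(\alpha_2;\Phi^{(\boldsymbol{\mu})}_1(\alpha_1;U_L,\boldsymbol{\mu}),\boldsymbol{\mu}\big)$. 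By Lemma~\ref{lem:2.6}(iv), $\varphi^{(\boldsymbol{\mu})}_j|_{\boldsymbol{\mu}=\boldsymbol{0}}=\varphi_j$ and $\Phi^{(\boldsymbol{\mu})}_1|_{\boldsymbol{\mu}=\boldsymbol{0}}=\Phi_1$, so at $\boldsymbol{\mu}=\boldsymbol{0}$ the system reduces to $\beta_1\beta_2=\alpha_1\alpha_2$ and $\varphi_1(\beta_1)+\varphi_2(\beta_2)=\varphi_1(\alpha_1)+\varphi_2(\alpha_2)$, whose unique solution in $(\delta'_0,1/\delta'_0)^2$ is $\boldsymbol{\beta}=\boldsymbol{\alpha}$ by the strict monotonicity in Lemma~\ref{lem:2.6}(ii), exactly as in the proof of Lemma~\ref{lem:2.7}.

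Next I would verify the nondegeneracy in $\boldsymbol{\beta}$: a direct computation gives $\det\frac{\partial(G_1,G_2)}{\partial(\beta_1,\beta_2)}=\beta_2\varphi'_2(\beta_2)-\beta_1\varphi'_1(\beta_1)$, which at $\boldsymbol{\beta}=\boldsymbol{\alpha}$ equals $\alpha_2\varphi'_2(\alpha_2)-\alpha_1\varphi'_1(\alpha_1)\ge C_{\delta'_0}>0$ for $\alpha_1,\alpha_2\in(\delta'_0,1/\delta'_0)$, using $\varphi'_1<0<\varphi'_2$ from Lemma~\ref{lem:2.6}(ii). Hence, by the implicit function theorem together with the uniform $C^2$ bounds on $\varphi^{(\boldsymbol{\mu})}_j$ and $\Phi^{(\boldsymbol{\mu})}_1$ supplied by Lemmas~\ref{lem:2.3}--\ref{lem:2.5}, after shrinking $\boldsymbol{\bar{\mu}}'_1$ if necessary, for $\|\boldsymbol{\mu}\|\le\|\boldsymbol{\bar{\mu}}'_1\|$ the system has a unique solution $\boldsymbol{\beta}=\boldsymbol{\beta}(\boldsymbol{\alpha},\boldsymbol{\mu})\in C^2$ with derivatives bounded uniformly in $\boldsymbol{\mu}$. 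I then record two boundary values: $\boldsymbol{\beta}(\boldsymbol{\alpha},\boldsymbol{0})=\boldsymbol{\alpha}$ from the first paragraph, and $\boldsymbol{\beta}((1,1),\boldsymbol{\mu})=(1,1)$, since $\varphi^{(\boldsymbol{\mu})}_j(1;\cdot,\boldsymbol{\mu})=0$ forces $\Phi^{(\boldsymbol{\mu})}((1,1);U_L,\boldsymbol{\mu})=U_L$, hence $U_R=U_L$ and $\boldsymbol{\beta}=(1,1)$ by the same monotonicity.

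Estimate \eqref{eq:2.55} then follows from a mixed-difference Taylor estimate. Put $g_j(\boldsymbol{\alpha}):=\beta_j(\boldsymbol{\alpha},\boldsymbol{\mu})-\beta_j(\boldsymbol{\alpha},\boldsymbol{0})=\beta_j(\boldsymbol{\alpha},\boldsymbol{\mu})-\alpha_j$. By the two boundary values, $g_j(1,1)=0$, while $\nabla_{\boldsymbol{\alpha}}g_j=\nabla_{\boldsymbol{\alpha}}\beta_j(\cdot,\boldsymbol{\mu})-\nabla_{\boldsymbol{\alpha}}\beta_j(\cdot,\boldsymbol{0})=O(1)\|\boldsymbol{\mu}\|$ by the uniform $C^2$-bound (mean value in $\boldsymbol{\mu}$). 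Integrating $\nabla_{\boldsymbol{\alpha}}g_j$ along the segment joining $(1,1)$ to $\boldsymbol{\alpha}$ yields $g_j(\boldsymbol{\alpha})=O(1)\big(|\alpha_1-1|+|\alpha_2-1|\big)\|\boldsymbol{\mu}\|$, which is \eqref{eq:2.55}.

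For \eqref{eq:2.57}, I would introduce $w:=\tilde U_R-U_R$, $|w|=\alpha_{NP}$, as an extra parameter: the identity becomes $\Phi(\boldsymbol{\beta};U_L)=\Phi^{(\boldsymbol{\mu})}(\boldsymbol{\alpha};U_L,\boldsymbol{\mu})-w$, which only adds a smooth $O(|w|)$ term to the right-hand sides of $G_1,G_2$, so the implicit-function solution becomes $\boldsymbol{\beta}=\boldsymbol{\beta}(\boldsymbol{\alpha},\boldsymbol{\mu},w)\in C^2$ with uniform bounds, $\boldsymbol{\beta}(\boldsymbol{\alpha},\boldsymbol{\mu},0)$ is the previous solution, and $\boldsymbol{\beta}((1,1),\boldsymbol{\mu},w)$ is independent of $\boldsymbol{\mu}$ because at $\boldsymbol{\alpha}=(1,1)$ one has $\tilde U_R=U_L$ for every $\boldsymbol{\mu}$, so $\boldsymbol{\beta}$ is fixed by $\Phi(\boldsymbol{\beta};U_L)=U_L-w$ alone. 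Splitting $\beta_j-\alpha_j=\big[\beta_j(\boldsymbol{\alpha},\boldsymbol{\mu},w)-\beta_j(\boldsymbol{\alpha},\boldsymbol{0},w)\big]+\big[\beta_j(\boldsymbol{\alpha},\boldsymbol{0},w)-\beta_j(\boldsymbol{\alpha},\boldsymbol{0},0)\big]$, the second bracket is $O(1)|w|=O(1)\alpha_{NP}$ by smoothness in $w$, and the first bracket vanishes at $\boldsymbol{\alpha}=(1,1)$ and has $\boldsymbol{\alpha}$-gradient $O(1)\|\boldsymbol{\mu}\|$, hence equals $O(1)\big(|\alpha_1-1|+|\alpha_2-1|\big)\|\boldsymbol{\mu}\|$ as before; adding the two bounds gives \eqref{eq:2.57}. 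The statement is essentially routine once Lemmas~\ref{lem:2.3}--\ref{lem:2.7} are in hand; the only genuinely delicate points are the uniform-in-$\boldsymbol{\mu}$ control of $\nabla_{\boldsymbol{\alpha}}\boldsymbol{\beta}$ (resting on the uniform lower bound $C_{\delta'_0}$ for the Jacobian and the uniform $C^2$ estimates of Lemmas~\ref{lem:2.3}--\ref{lem:2.5}) and the bookkeeping that the mixed difference vanishes exactly at $\boldsymbol{\alpha}=(1,1)$, which is precisely what upgrades a bare $O(\|\boldsymbol{\mu}\|)$ bound to the sharp weighted one.
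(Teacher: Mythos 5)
Your proposal is correct and follows essentially the same route the paper intends: the corollary is stated with proof omitted as a direct adaptation of the proof of Proposition \ref{prop:2.1}, and you carry out exactly that adaptation (the two-strength system for $(\beta_1,\beta_2)$, the nondegenerate Jacobian $\beta_2\varphi_2'(\beta_2)-\beta_1\varphi_1'(\beta_1)>C_{\delta_0'}$ from Lemma \ref{lem:2.6}, the implicit function theorem with uniform $C^2$ bounds from Lemmas \ref{lem:2.3}--\ref{lem:2.5}, and the mixed-difference Taylor argument anchored at $\boldsymbol{\alpha}=(1,1)$ and $\boldsymbol{\mu}=\boldsymbol{0}$). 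The treatment of the non-physical perturbation by introducing $w=\tilde U_R-U_R$ as an extra smooth parameter and splitting the increment is a clean, equivalent way of obtaining the additional $O(1)\alpha_{NP}$ term in \eqref{eq:2.57}.
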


Next, we compare the Riemann solutions near the boundary with the following initial boundary value conditions:
\begin{align}\label{eq:2.58}
\begin{cases}
v^{(\boldsymbol{\mu})}_{b}=\sqrt{1-\tau^2B^{(\epsilon)}(\rho^{(\boldsymbol{\mu})}_{b}, v^{(\boldsymbol{\mu})}_{b},\epsilon)}b_{0}&\  
\mbox{on $\{x=\hat{x},\ y=\hat{y}+b_{0}(x-\hat{x})\}$},\\[3pt]
U(x,y)=U_{L} &\  \mbox{on $\{ x=\hat{x},\ y<\hat{y}\}$},
\end{cases}
\end{align}
where $U_{L}=(\rho_{L},v_{L})^{\top}$ and $b_0<0$.

\begin{lemma}\label{lem:2.8}
For any given constant state $U_{L}\in D$ with $D$ defined by {\rm Lemma \ref{lem:2.1}}, there exist small constants
$\bar{\epsilon}''_{1}\leq \min\{\bar{\epsilon}'_{0}, \bar{\epsilon}''_{0}\}$, $\bar{\tau}''_{1}\leq \min\{\bar{\tau}'_{0}, \bar{\tau}''_{0}\}$,
and  $\delta''_{0}\in (\delta_{0},\frac{1}{2})$ such that, for
$\|\boldsymbol{\mu}\|\leq \|\boldsymbol{\bar{\mu}}''_{1}\|$, the Riemann problem \eqref{eq:1.15} and \eqref{eq:2.58}
admits a unique solution $U^{(\boldsymbol{\mu})}_{b}=(\rho^{(\boldsymbol{\mu})}_{b}, v^{(\boldsymbol{\mu})}_{b})^{\top}$
connecting $U_{L}$ by the first-family wave curve with strength $\alpha_1\in(\delta''_{0},\frac{1}{\delta''_{0}})${\rm :}
\begin{eqnarray}\label{eq:2.59}
U^{(\boldsymbol{\mu})}_{b}=\Phi^{(\boldsymbol{\mu})}_{1}(\alpha_1;U_{L},\boldsymbol{\mu}),
\qquad U^{(\boldsymbol{\mu})}_{b}=(\rho^{(\boldsymbol{\mu})}_{b}, v^{(\boldsymbol{\mu})}_{b})^{\top}.
\end{eqnarray}
\end{lemma}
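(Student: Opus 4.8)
\textit{Proof proposal.} The plan is to reduce the boundary Riemann problem \eqref{eq:1.15}, \eqref{eq:2.58} to a single scalar equation in the first--family wave strength $\alpha_1$ and then to solve it by the implicit function theorem, using $\boldsymbol{\mu}=\boldsymbol{0}$ as the reference configuration. By Lemma~\ref{lem:2.5} (see also \eqref{eq:2.36}), every state lying on the first--family wave curve issuing from $U_L$ has the form $\Phi^{(\boldsymbol{\mu})}_{1}(\alpha_1;U_L,\boldsymbol{\mu})=(\rho_L\alpha_1,\,v_L+\varphi^{(\boldsymbol{\mu})}_{1}(\alpha_1;U_L,\boldsymbol{\mu}))^{\top}$ with $\alpha_1\in(0,\delta_0^{-1})$. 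Imposing on such a state the slip condition in \eqref{eq:2.58}, namely $v^{(\boldsymbol{\mu})}_b=b_0\sqrt{1-\tau^2B^{(\epsilon)}(\rho^{(\boldsymbol{\mu})}_b,v^{(\boldsymbol{\mu})}_b,\epsilon)}$, turns the unknown into the scalar $\alpha_1$, and solving the Riemann problem becomes equivalent to solving
\begin{equation*}
G^{(\boldsymbol{\mu})}(\alpha_1;U_L,\boldsymbol{\mu}):=v_L+\varphi^{(\boldsymbol{\mu})}_{1}(\alpha_1;U_L,\boldsymbol{\mu})-b_0\sqrt{1-\tau^2B^{(\epsilon)}\big(\rho_L\alpha_1,\,v_L+\varphi^{(\boldsymbol{\mu})}_{1}(\alpha_1;U_L,\boldsymbol{\mu}),\,\epsilon\big)}=0 .
\end{equation*}

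For the reference case $\boldsymbol{\mu}=\boldsymbol{0}$ one has $\tau=0$ and, by Lemma~\ref{lem:2.6}(iv), $\varphi^{(\boldsymbol{0})}_1=\varphi_1$, so the equation collapses to $v_L+\varphi_1(\alpha_1)=b_0$. Since $\varphi_1\in C^2(\mathbb{R}_{+})$ is strictly decreasing with $\varphi_1(1)=0$ and, by \eqref{eq:2.44}, $\varphi_1(\alpha_1)\to+\infty$ as $\alpha_1\to0^+$ and $\varphi_1(\alpha_1)\to-\infty$ as $\alpha_1\to\infty$, the map $\varphi_1$ is a $C^2$ diffeomorphism of $(0,\infty)$ onto $\mathbb{R}$; hence there is a unique $\alpha_1^{*}=\alpha_1^{*}(U_L)$ with $\varphi_1(\alpha_1^{*})=b_0-v_L$. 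Because $b_0$ is the (fixed) wedge slope and $\bar D$ is compact, $\{\alpha_1^{*}(U_L):U_L\in\bar D\}$ lies in a compact subinterval of $(0,\infty)$, which fixes $\delta''_{0}\in(\delta_0,\tfrac12)$ (depending only on $\bar D$ and $b_0$, hence independent of $\boldsymbol{\mu}$) so that $\alpha_1^{*}(U_L)\in(\delta''_{0},1/\delta''_{0})$ for every $U_L\in\bar D$.

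Next I would invoke the implicit function theorem at $(\alpha_1,\boldsymbol{\mu})=(\alpha_1^{*}(U_L),\boldsymbol{0})$. The function $G^{(\boldsymbol{\mu})}$ is $C^2$ jointly in $(\alpha_1,U_L,\boldsymbol{\mu})$ by Lemmas~\ref{lem:2.3}--\ref{lem:2.5} together with the smoothness of $B^{(\epsilon)}$ and of the square root for bounded states and small $\|\boldsymbol{\mu}\|$; moreover a direct differentiation gives $\partial_{\alpha_1}G^{(\boldsymbol{\mu})}\big|_{\boldsymbol{\mu}=\boldsymbol{0}}=\varphi_1'(\alpha_1)$, which is strictly negative by Lemma~\ref{lem:2.6}(ii) and, on the chosen compact $\alpha_1$--interval, bounded away from $0$ uniformly in $U_L\in\bar D$. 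Therefore there exist $\bar{\epsilon}''_{1}\le\min\{\bar{\epsilon}'_{0},\bar{\epsilon}''_{0}\}$ and $\bar{\tau}''_{1}\le\min\{\bar{\tau}'_{0},\bar{\tau}''_{0}\}$ such that, for $\|\boldsymbol{\mu}\|\le\|\boldsymbol{\bar{\mu}}''_{1}\|$ and all $U_L\in\bar D$, the equation $G^{(\boldsymbol{\mu})}=0$ has a $C^2$ solution $\alpha_1=\alpha_1(U_L,\boldsymbol{\mu})$; by continuity $\partial_{\alpha_1}G^{(\boldsymbol{\mu})}<0$ on all of $(\delta''_{0},1/\delta''_{0})$ for small $\|\boldsymbol{\mu}\|$, which upgrades this to a \emph{unique} solution on the full admissible window (after a possible further shrinking of $\boldsymbol{\bar{\mu}}''_{1}$ to keep $\alpha_1(U_L,\boldsymbol{\mu})$ interior). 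Setting $U^{(\boldsymbol{\mu})}_b:=\Phi^{(\boldsymbol{\mu})}_1(\alpha_1(U_L,\boldsymbol{\mu});U_L,\boldsymbol{\mu})$ then gives the assertion, with $\rho^{(\boldsymbol{\mu})}_b=\rho_L\alpha_1$ and $v^{(\boldsymbol{\mu})}_b$ satisfying \eqref{eq:2.58}.

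The local solvability is routine; the real work lies in two places. First, the smallness threshold $\boldsymbol{\bar{\mu}}''_{1}$ and the wave--strength window $(\delta''_{0},1/\delta''_{0})$ must be made uniform over all $U_L\in\bar D$, which rests on the compactness of $\bar D$, the fact that $b_0$ and the similarity parameter $a_\infty$ are fixed, and the a priori relation between $\|\boldsymbol{\mu}\|$ and $|b_0|$ built into the hypotheses. Second, one must ensure that $1-\tau^2B^{(\epsilon)}$ stays positive along the construction for bounded states and small $\|\boldsymbol{\mu}\|$, so that the square root --- hence $\varphi^{(\boldsymbol{\mu})}_1$ and the whole map $G^{(\boldsymbol{\mu})}$ --- is well defined; this uses the boundedness of $B^{(\epsilon)}$ over $\bar D$ (with $\tfrac{\rho^\epsilon-1}{\epsilon}$ staying bounded as $\epsilon\to0$). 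By contrast, the boundedness of $\alpha_1^{*}(U_L)$ away from $0$ and $\infty$ is merely the statement that $\varphi_1^{-1}$ carries the bounded set $\{b_0-v_L:U_L\in\bar D\}$ into a compact subinterval of $(0,\infty)$, so no extra obstacle arises there.
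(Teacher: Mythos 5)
Your proposal is correct and follows essentially the same route as the paper: reduce the boundary Riemann problem to the scalar equation $v_L+\varphi^{(\boldsymbol{\mu})}_{1}(\alpha_1;U_L,\boldsymbol{\mu})=b_0\sqrt{1-\tau^2B^{(\epsilon)}}$, solve it at $\boldsymbol{\mu}=\boldsymbol{0}$ using the strict monotonicity and range of $\varphi_1$ from \eqref{eq:2.44} and Lemma \ref{lem:2.6}, and then apply the implicit function theorem with $\partial_{\alpha_1}F\big|_{\boldsymbol{\mu}=\boldsymbol{0}}=\varphi_1'(\alpha_1)$ bounded away from zero on a compact strength window uniformly over $\bar D$. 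The only cosmetic difference is that you treat the $\boldsymbol{\mu}=\boldsymbol{0}$ case by a single surjectivity/monotonicity argument where the paper splits into the cases $v_L\leq b_0$ (explicit solution $\alpha_1=e^{a_\infty(v_L-b_0)}$) and $v_L>b_0$.
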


\begin{proof}
It suffices to show that the following equation:
\begin{eqnarray*}
v_{L}+\varphi^{(\boldsymbol{\mu})}_{1}(\alpha_1; U_{L},\boldsymbol{\mu})
=b_0\sqrt{1-\tau^2B^{(\epsilon)}(\Phi^{(\boldsymbol{\mu})}_{1}(\alpha_1;U_{L},\boldsymbol{\mu}),\epsilon)}
\end{eqnarray*}
has a unique solution $\alpha_1$ when $\|\boldsymbol{\mu}\|$ is small for $U_{L}\in D$ and $b_{0}<0$.

Let
\begin{eqnarray*}
F_{b}(\alpha_1; \boldsymbol{\mu}, b_{0}, U_{L})
=v_{L}+\varphi^{(\boldsymbol{\mu})}_{1}(\alpha_1; U_{L},\boldsymbol{\mu})
-b_0\sqrt{1-\tau^2B^{(\epsilon)}(\Phi^{(\boldsymbol{\mu})}_{1}(\alpha_1;U_{L},\boldsymbol{\mu}),\epsilon)}.
\end{eqnarray*}
When $\boldsymbol{\mu}=\boldsymbol{0}$, $F_{b}(\alpha_1; \boldsymbol{\mu}, b_{0}, U_{L})$ can be reduced as
\begin{eqnarray*}
F_{b}(\alpha_1; \boldsymbol{0}, b_{0}, U_{L})=\varphi_{1}(\alpha_1)+v_{L}-b_0.
\end{eqnarray*}

If $v_{L}\leq b_{0}$, then, by \eqref{eq:2.44},
equation $F_{b}(\alpha_1; \boldsymbol{0}, b_{0}, v_{L})=0$ has a unique solution $\alpha_1=e^{a_{\infty}(v_{L}-b_{0})}\in(0,1]$.

If $v_{L}> b_{0}$, it follows from $\lim_{\alpha_{1}\rightarrow\infty}\frac{(\alpha_1-1)\ln \alpha_1}{\alpha_1+1}=\infty$ that
$$
\lim_{\alpha_{1}\rightarrow\infty}F_{b}(\alpha_1; \boldsymbol{0}, b_{0}, U_{L})=-\infty.
$$
Moreover, for $\alpha_1\in [1,\infty)$,
\begin{eqnarray*}
F_{b}(1; \boldsymbol{0}, b_{0}, U_{L})=v_{L}-b_{0}>0
\end{eqnarray*}
and, by Lemma \ref{lem:2.6},
\begin{eqnarray*}
\frac{\partial F_{b}(\alpha_1; \boldsymbol{0}, b_{0}, U_{L})}{\partial \alpha_1}=\varphi'_{1}(\alpha_1)<0.
\end{eqnarray*}
Thus, $F_{b}(\alpha_1; \boldsymbol{0}, b_{0}, U_{L})=0$ has a unique solution $\alpha_1\in (1,\infty)$.
Therefore, if $\boldsymbol{\mu}=\boldsymbol{0}$, $F_{b}(\alpha_1; \boldsymbol{\mu}, b_{0}, U_{L})=0$ has a unique
solution $\alpha_1$ for $U_{L}\in D$ and $b_{0}<0$.

Next, notice that there exists a constant $\delta''_{0}>0$ such that, for $\alpha_{1}\in(\delta''_{0}, \frac{1}{\delta''_{0}})$,
\begin{eqnarray*}
\frac{\partial F_{b}}{\partial \alpha_1}\bigg|_{\boldsymbol{\mu}=\boldsymbol{0}}=\varphi'_{1}(\alpha_1)<-C_{\delta''_{0}}<0.
\end{eqnarray*}
Then, by the implicit function theorem, there exist small constants
$\bar{\epsilon}''_{1}\leq \min\{\bar{\epsilon}'_{0}, \bar{\epsilon}''_{0}\}$ and $\bar{\tau}''_{1}\leq \min\{\bar{\tau}'_{0}, \bar{\tau}''_{0}\}$
such that, for
$\|\boldsymbol{\mu}\|\leq \|\boldsymbol{\bar{\mu}}''_{1}\|$,
$F_{b}(\alpha_1; \boldsymbol{\mu}, b_{0}, U_{L})=0$ admits a unique solution
$\alpha_{1}\in(\delta''_{0}, \frac{1}{\delta''_{0}})$.
This completes the proof.
\end{proof}

Now we are ready to compare the Riemann solutions between system \eqref{eq:1.15} and system \eqref{eq:1.19} with a boundary.
\begin{proposition}\label{prop:2.2}
Let $U_{L}=(\rho_{L}, v_{L})^{\top}$, $U_{b}=(\rho_{b}, v_{b})^{\top}$,
and $U^{(\boldsymbol{\mu})}_{b}=(\rho^{(\boldsymbol{\mu})}_{b}, v^{(\boldsymbol{\mu})}_{b})^{\top}$ be the three
constant states in $D$ satisfying
\begin{eqnarray}\label{eq:2.60}
U_{b}=\Phi_{1}(\beta_1; U_{L}),\qquad U^{(\boldsymbol{\mu})}_{b}=\Phi^{(\boldsymbol{\mu})}_{1}(\alpha_1; U_{L}, \boldsymbol{\mu})
\quad \mbox{for $\alpha_1\in(\delta''_{0}, \frac{1}{\delta''_{0}})$},
\end{eqnarray}
and
\begin{eqnarray}\label{eq:2.61}
v_{b}=b_0, \qquad v^{(\boldsymbol{\mu})}_{b}=b_{0}\sqrt{1-\tau^2B^{(\epsilon)}(\rho^{(\boldsymbol{\mu})}_{b},v^{(\boldsymbol{\mu})}_{b},\epsilon)},
\end{eqnarray}
where $\delta''_{0}>0$ is given in {\rm Lemma \ref{lem:2.8}}.
Then, for $\|\boldsymbol{\mu}\|\leq \|\boldsymbol{\bar{\mu}}''_{1}\|$,
\begin{eqnarray}\label{eq:2.62}
\beta_{1}=\alpha_1+O(1)\big(1+|\alpha_1-1|\big)\|\boldsymbol{\mu}\|,
\end{eqnarray}
where $\bar{\epsilon}''_{1}$ and $\bar{\tau}''_{1}$ are given in {\rm Lemma \ref{lem:2.8}}
and the bound of $O(1)$ is independent of $\boldsymbol{\mu}$.
\end{proposition}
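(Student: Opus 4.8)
The plan is to reduce the estimate \eqref{eq:2.62} to a one–dimensional comparison of fixed points, reusing the scalar setup from the proof of Lemma \ref{lem:2.8}. Recall that, with
\[
F_{b}(\alpha_1; \boldsymbol{\mu}, b_0, U_{L})
:=v_{L}+\varphi^{(\boldsymbol{\mu})}_{1}(\alpha_1; U_{L},\boldsymbol{\mu})
-b_0\sqrt{1-\tau^2B^{(\epsilon)}(\Phi^{(\boldsymbol{\mu})}_{1}(\alpha_1;U_{L},\boldsymbol{\mu}),\epsilon)},
\]
the strength $\alpha_1$ appearing in \eqref{eq:2.60}--\eqref{eq:2.61} is the unique zero of $F_{b}(\,\cdot\,;\boldsymbol{\mu},b_0,U_L)$ in $(\delta''_{0},1/\delta''_{0})$, whereas $\beta_1$ (characterized by $U_b=\Phi_1(\beta_1;U_L)$ and $v_b=b_0$) is the unique zero of $F_{b}(\,\cdot\,;\boldsymbol{0},b_0,U_L)=\varphi_1(\,\cdot\,)+v_L-b_0$ in the same interval, by Lemma \ref{lem:2.8} applied at $\boldsymbol{\mu}=\boldsymbol{0}$. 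First I would subtract these two identities,
\begin{align*}
0&=F_{b}(\alpha_1;\boldsymbol{\mu},b_0,U_L)-F_{b}(\beta_1;\boldsymbol{0},b_0,U_L)\\
&=\big(F_{b}(\alpha_1;\boldsymbol{\mu},b_0,U_L)-F_{b}(\alpha_1;\boldsymbol{0},b_0,U_L)\big)+\big(\varphi_1(\alpha_1)-\varphi_1(\beta_1)\big),
\end{align*}
and treat the ``frozen-$\alpha_1$'' difference and the ``frozen-$\boldsymbol{\mu}$'' difference separately.

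For the second difference, the mean value theorem gives $\varphi_1(\alpha_1)-\varphi_1(\beta_1)=\varphi_1'(\xi)(\alpha_1-\beta_1)$ for some $\xi$ between $\alpha_1$ and $\beta_1$; since both lie in the fixed compact interval $(\delta''_{0},1/\delta''_{0})$ and $\varphi_1'<0$ on $\mathbb{R}_{+}$ by Lemma \ref{lem:2.6}, we obtain $|\varphi_1(\alpha_1)-\varphi_1(\beta_1)|\geq C_{\delta''_{0}}|\alpha_1-\beta_1|$ with $C_{\delta''_{0}}>0$ independent of $\boldsymbol{\mu}$. For the first difference, I would split it as
\[
\big(\varphi^{(\boldsymbol{\mu})}_{1}(\alpha_1;U_{L},\boldsymbol{\mu})-\varphi_1(\alpha_1)\big)
-b_0\big(\sqrt{1-\tau^2B^{(\epsilon)}(\Phi^{(\boldsymbol{\mu})}_{1}(\alpha_1;U_{L},\boldsymbol{\mu}),\epsilon)}-1\big).
\]
The first bracket vanishes identically at $\boldsymbol{\mu}=\boldsymbol{0}$ (Lemmas \ref{lem:2.5}--\ref{lem:2.6}) and also at $\alpha_1=1$, where $\varphi^{(\boldsymbol{\mu})}_{1}=\varphi_1=0$ for every $\boldsymbol{\mu}$; hence the uniform $C^2$ bounds of Lemmas \ref{lem:2.3}--\ref{lem:2.5} give, by a two–variable Taylor expansion, a bound $O(1)\,|\alpha_1-1|\,\|\boldsymbol{\mu}\|$. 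For the second bracket I would use that $B^{(\epsilon)}(\rho,v,\epsilon)=\frac{2(\rho^\epsilon-1)}{a_\infty^2\epsilon}+v^2$ stays bounded on $\bar D$ uniformly as $\epsilon\to0^+$ (since $\frac{\rho^\epsilon-1}{\epsilon}\to\ln\rho$), so that $\tau^2B^{(\epsilon)}=O(\tau^2)=O(\|\boldsymbol{\mu}\|)$ and therefore $\sqrt{1-\tau^2B^{(\epsilon)}}-1=O(\|\boldsymbol{\mu}\|)$; this contributes $O(|b_0|)\,\|\boldsymbol{\mu}\|$. Summing, the first difference is $O(1)\big(1+|\alpha_1-1|\big)\|\boldsymbol{\mu}\|$.

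Combining the two estimates yields $C_{\delta''_{0}}|\alpha_1-\beta_1|\leq O(1)(1+|\alpha_1-1|)\|\boldsymbol{\mu}\|$, which is exactly \eqref{eq:2.62}, with all constants depending only on $D$, $\delta''_{0}$, and $b_0$ but not on $\boldsymbol{\mu}$. The one delicate point — the same as in Proposition \ref{prop:2.1} — is the first bracket of the frozen-$\alpha_1$ difference: one has to genuinely exploit the double vanishing of $\varphi^{(\boldsymbol{\mu})}_{1}-\varphi_1$ (at $\boldsymbol{\mu}=\boldsymbol{0}$ and at $\alpha_1=1$) in order to extract the factor $|\alpha_1-1|$ rather than a cruder $O(\|\boldsymbol{\mu}\|)$, and one must keep the contribution of the mismatch between the two boundary conditions, $v_b=b_0$ versus $v^{(\boldsymbol{\mu})}_b=b_0\sqrt{1-\tau^2B^{(\epsilon)}}$, isolated in the $O(1)$–term that does \emph{not} vanish as $\alpha_1\to1$. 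Everything else reduces to the implicit function theorem together with Lemmas \ref{lem:2.3}--\ref{lem:2.6}.
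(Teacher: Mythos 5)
Your argument is correct, and it reaches \eqref{eq:2.62} by a route that is organized differently from the paper's. The paper works with the combined function $\mathcal{F}_{b}(\beta_1,\alpha_1,\boldsymbol{\mu},U_L)=\sqrt{1-\tau^2B^{(\epsilon)}}\,\varphi_1(\beta_1)-\varphi^{(\boldsymbol{\mu})}_{1}(\alpha_1;U_L,\boldsymbol{\mu})+(\sqrt{1-\tau^2B^{(\epsilon)}}-1)v_L$, in which $b_0$ has been eliminated through $b_0=v_L+\varphi_1(\beta_1)$; it then applies the implicit function theorem to obtain $\beta_1=\beta_1(\alpha_1,\boldsymbol{\mu})\in C^2$, computes $\beta_1(1,\boldsymbol{\mu})=1+O(1)\|\boldsymbol{\mu}\|$ from the reduced equation at $\alpha_1=1$, and concludes by the mixed Taylor expansion $\beta_1(\alpha_1,\boldsymbol{\mu})=\beta_1(\alpha_1,\boldsymbol{0})+\beta_1(1,\boldsymbol{\mu})-\beta_1(1,\boldsymbol{0})+O(1)|\alpha_1-1|\|\boldsymbol{\mu}\|$. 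You instead keep the scalar function $F_b$ of Lemma \ref{lem:2.8} with $b_0$ explicit, characterize $\alpha_1$ and $\beta_1$ as its zeros at parameter values $\boldsymbol{\mu}$ and $\boldsymbol{0}$ respectively, subtract the two equations, and invert $\varphi_1$ by the mean value theorem; the factor $(1+|\alpha_1-1|)\|\boldsymbol{\mu}\|$ then comes from the double vanishing of $\varphi^{(\boldsymbol{\mu})}_1-\varphi_1$ (at $\boldsymbol{\mu}=\boldsymbol{0}$ and at $\alpha_1=1$) together with $b_0(\sqrt{1-\tau^2B^{(\epsilon)}}-1)=O(1)\|\boldsymbol{\mu}\|$. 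This is more elementary — it avoids a second use of the implicit function theorem and any differentiation of the implicit map $\beta_1(\alpha_1,\boldsymbol{\mu})$ — but it relies on exactly the same ingredients the paper uses inside its Taylor step: uniform $C^2$ bounds on $\varphi^{(\boldsymbol{\mu})}_1$ up to $\boldsymbol{\mu}=\boldsymbol{0}$ (Lemmas \ref{lem:2.3}--\ref{lem:2.5}), strict monotonicity of $\varphi_1$ (Lemma \ref{lem:2.6}), and boundedness of $B^{(\epsilon)}$ on $\bar D$ as $\epsilon\to 0^+$; note also that $|b_0|=|v_b|<K$ since $U_b\in D$, so the $O(|b_0|)$ contribution is indeed a $\boldsymbol{\mu}$-independent constant. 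The only small imprecision is the claim that $\beta_1$ lies in $(\delta''_0,1/\delta''_0)$: that interval is only guaranteed for $\alpha_1$, but since $U_b,U_L\in D$ give $\beta_1=\rho_b/\rho_L\in(\rho_*/\rho^*,\rho^*/\rho_*)$, both strengths lie in a fixed compact subinterval of $(0,\infty)$ determined by $D$ and $\delta''_0$, on which $|\varphi_1'|$ is bounded below, so the mean value estimate goes through with constants independent of $\boldsymbol{\mu}$.
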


\begin{proof}
By \eqref{eq:2.36}, \eqref{eq:2.42}, and \eqref{eq:2.60}--\eqref{eq:2.61}, we have the following
relation for $\alpha_1$ and $\beta_1$:
\begin{eqnarray*}
v_{L}+\varphi^{(\boldsymbol{\mu})}_{1}(\alpha_1; U_{L}, \boldsymbol{\mu})
=\sqrt{1-\tau^2B^{(\epsilon)}(\rho^{(\boldsymbol{\mu})}_{b},v^{(\boldsymbol{\mu})}_{b},\epsilon)}\big(\varphi_{1}(\beta_1)+v_{L}\big).
\end{eqnarray*}
Let
\begin{eqnarray*}
\begin{split}
\mathcal{F}_{b}(\beta_1, \alpha_1, \boldsymbol{\mu}, U_{L})
:&=\sqrt{1-\tau^2B^{(\epsilon)}(\rho^{(\boldsymbol{\mu})}_{b},v^{(\boldsymbol{\mu})}_{b},\epsilon)}\,\varphi_{1}(\beta_1)
-\varphi^{(\boldsymbol{\mu})}_{1}(\alpha_1; U_{L},\boldsymbol{\mu})\\[3pt]
&\quad\, +\Big(\sqrt{1-\tau^2B^{(\epsilon)}(\rho^{(\boldsymbol{\mu})}_{b},v^{(\boldsymbol{\mu})}_{b},\epsilon)}-1\Big)v_{L}.
\end{split}
\end{eqnarray*}

For $\boldsymbol{\mu}=\boldsymbol{0}$, it is direct to see that
$\mathcal{F}_{b}(\beta_1, \alpha_1, \boldsymbol{0}, U_{L})=\varphi_1(\beta_1)-\varphi_1(\alpha_1)=0$ has a unique solution $\beta_1=\alpha_1$.
In addition, by Lemma \ref{lem:2.6}, for $\alpha_1\in(\delta''_{0},1)\cup(1, \frac{1}{\delta''_{0}})$,  we have
\begin{eqnarray}
\frac{\partial \mathcal{F}_{b}(\beta_1, \alpha_1, \boldsymbol{\mu}, U_{L})}{\partial \beta_1}\bigg|_{\boldsymbol{\mu}=\boldsymbol{0}, \beta_1=\alpha_1}
=-\varphi'_{1}(\alpha_1)-C_{\delta''_{0}}<0.
\end{eqnarray}
Then, by the implicit function theorem, for $\|\boldsymbol{\mu}\|\leq \|\boldsymbol{\bar{\mu}}''_{1}\|$,
there exists a unique solution $\beta_1=\beta_{1}(\alpha_1, \boldsymbol{\mu})\in C^2$ of the equation: $\mathcal{F}_{b}=0$.
Moreover, when $\alpha_1=1$, by Lemmas \ref{lem:2.5}--\ref{lem:2.6},
$\mathcal{F}_{b}(\beta_1, \alpha_1, \boldsymbol{\mu}, U_{L})=0$
can be reduced to
\begin{eqnarray*}
\sqrt{1-\tau^2B^{(\epsilon)}(\rho_{L},v_{L},\epsilon)}\,\varphi_{1}(\beta_1)
+\Big(\sqrt{1-\tau^2B^{(\epsilon)}(\rho_{L},v_{L},\epsilon)}-1\Big)v_{L}=0,
\end{eqnarray*}
so that
\begin{eqnarray*}
\beta_1(1, \boldsymbol{\mu})=1+O(1)\|\boldsymbol{\mu}\|,
\end{eqnarray*}
where the bound of $O(1)$ is independent of $\boldsymbol{\mu}$.

Finally, by the Taylor formula and the fact that $\beta_{1}(1,\boldsymbol{0})=1$, 
\begin{align*}
\beta_{1}(\alpha_1,\boldsymbol{\mu})&=\beta_{1}(\alpha_1,\boldsymbol{0})+\beta_{1}(1,\boldsymbol{\mu})-\beta_{1}(1,\boldsymbol{0})+O(1)|\alpha_1-1|\|\boldsymbol{\mu}\|\\[3pt]
&=\alpha_1+O(1)\big(1+|\alpha_1-1|\big)\|\boldsymbol{\mu}\|,
\end{align*}
where the bound of $O(1)$ is independent of $\boldsymbol{\mu}$.
\end{proof}

Based on Propositions \ref{prop:2.1}--\ref{prop:2.2}, we have
\begin{proposition}\label{prop:2.3}
Let $U_{L}=(\rho_{L}, v_{L})^{\top}$ and $U_{b}=(\rho_{b}, v_{b})^{\top}$ be two constant states in $D$ satisfying
\begin{eqnarray}
&&U_{b}=\Phi(\boldsymbol{\beta}; U_{L}),\quad U_{b}=\Phi^{(\boldsymbol{\mu})}_{1}(\alpha_1; U_{L}, \boldsymbol{\mu}) 
\qquad \mbox{ for $\boldsymbol{\beta}=(\beta_1, \beta_2)$},\label{eq:2.64}\\[3pt]
&&v_{b}=b_{0}\sqrt{1-\tau^2B^{(\epsilon)}(\rho_{b},v_{b},\epsilon)}.\label{eq:2.65}
\end{eqnarray}
Then, for $\alpha_1\in(\delta''_{0}, \frac{1}{\delta''_{0}})$ and $\|\boldsymbol{\mu}\|\leq \|\boldsymbol{\bar{\mu}}''_{1}\|$,
\begin{eqnarray}\label{eq:2.66}
\beta_{1}=\alpha_1+O(1)\big(1+|\alpha_1-1|\big)\|\boldsymbol{\mu}\|, \quad \beta_{2}=1+O(1)\big(1+|\alpha_1-1|\big)\|\boldsymbol{\mu}\|,
\end{eqnarray}
where the small constants $\delta''_{0}$, $\bar{\epsilon}''_{1}$, and $\bar{\tau}''_{1}$ are given in {\rm Lemmas \ref{lem:2.7}--\ref{lem:2.8}},
and the bound of $O(1)$ is independent of $\boldsymbol{\mu}$.
\end{proposition}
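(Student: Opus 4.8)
The plan is to deduce Proposition~\ref{prop:2.3} by combining Proposition~\ref{prop:2.2} with the bi-Lipschitz invertibility of the limit Riemann solver $\boldsymbol{\beta}\mapsto\Phi(\boldsymbol{\beta};U_L)$, the latter being exactly the ingredient used in the proof of Proposition~\ref{prop:2.1}. First I would introduce the auxiliary ``limit boundary state'' $\tilde{U}_b:=\Phi_1(\tilde\beta_1;U_L)$, where $\tilde\beta_1$ is the first-family strength for which the endpoint of the limit $1$-wave curve issued from $U_L$ meets the limit boundary condition $v=b_0$; existence, uniqueness, and membership of $\tilde\beta_1$ in a fixed compact subinterval of $(0,\infty)$ follow from the strict monotonicity of $\varphi_1$ in Lemma~\ref{lem:2.6}, just as in the $\boldsymbol{\mu}=\boldsymbol{0}$ case of Lemma~\ref{lem:2.8}. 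The triple $(U_L,\tilde U_b,U_b)$ then satisfies precisely the hypotheses of Proposition~\ref{prop:2.2}, with $U_b$ here playing the role of $U_b^{(\boldsymbol{\mu})}$ there and \eqref{eq:2.65} being the boundary condition on $U_b$; hence
\begin{eqnarray*}
\tilde\beta_1=\alpha_1+O(1)\big(1+|\alpha_1-1|\big)\|\boldsymbol{\mu}\|.
\end{eqnarray*}

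Second, I would estimate $|U_b-\tilde U_b|$ by the splitting $U_b-\tilde U_b=\big(\Phi^{(\boldsymbol{\mu})}_1(\alpha_1;U_L,\boldsymbol{\mu})-\Phi_1(\alpha_1;U_L)\big)+\big(\Phi_1(\alpha_1;U_L)-\Phi_1(\tilde\beta_1;U_L)\big)$. The first term equals $\big(0,\ \varphi^{(\boldsymbol{\mu})}_1(\alpha_1;U_L,\boldsymbol{\mu})-\varphi_1(\alpha_1)\big)^{\top}$; since $\varphi^{(\boldsymbol{\mu})}_1\big|_{\boldsymbol{\mu}=\boldsymbol{0}}=\varphi_1$ by Lemma~\ref{lem:2.6} while $\varphi^{(\boldsymbol{\mu})}_1\big|_{\alpha_1=1}=0=\varphi_1(1)$ by Lemma~\ref{lem:2.5}, the $C^2$ function $\varphi^{(\boldsymbol{\mu})}_1-\varphi_1$ vanishes on $\{\boldsymbol{\mu}=\boldsymbol{0}\}$ and on $\{\alpha_1=1\}$, so a two-variable Taylor estimate on the compact parameter set bounds it by $O(1)|\alpha_1-1|\|\boldsymbol{\mu}\|$. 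The second term is bounded by $|\alpha_1-\tilde\beta_1|$ times the (uniformly bounded) $C^1$-norm of $\alpha\mapsto(\rho_L\alpha,\,v_L+\varphi_1(\alpha))$, hence by $O(1)(1+|\alpha_1-1|)\|\boldsymbol{\mu}\|$. Altogether $|U_b-\tilde U_b|=O(1)(1+|\alpha_1-1|)\|\boldsymbol{\mu}\|$.

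Finally, since a pure limit $1$-wave forces the second strength to be $1$, we have $\tilde U_b=\Phi\big((\tilde\beta_1,1);U_L\big)$, while $U_b=\Phi(\boldsymbol{\beta};U_L)$ by \eqref{eq:2.64}. Using the non-degeneracy $\det\big(\partial_{\boldsymbol{\beta}}\Phi\big)=\rho_L\big(\beta_2\varphi_2'(\beta_2)-\beta_1\varphi_1'(\beta_1)\big)\ge C_{\delta''_{0}}>0$ established in the proof of Lemma~\ref{lem:2.7}, valid along the whole segment joining $(\tilde\beta_1,1)$ to $\boldsymbol{\beta}$ (both lying in a fixed compact box once $\|\boldsymbol{\mu}\|\le\|\boldsymbol{\bar{\mu}}''_{1}\|$ is small), I obtain $|\boldsymbol{\beta}-(\tilde\beta_1,1)|\le C|U_b-\tilde U_b|=O(1)(1+|\alpha_1-1|)\|\boldsymbol{\mu}\|$; combining this with the bound for $\tilde\beta_1$ gives $\beta_1=\alpha_1+O(1)(1+|\alpha_1-1|)\|\boldsymbol{\mu}\|$ and $\beta_2=1+O(1)(1+|\alpha_1-1|)\|\boldsymbol{\mu}\|$, which is \eqref{eq:2.66}. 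The only genuinely delicate point is that every $O(1)$ must remain independent of $\boldsymbol{\mu}$ and of $U_L\in\bar D$; this is guaranteed because the implicit-function constructions in Lemmas~\ref{lem:2.3}--\ref{lem:2.8} were all carried out with constants uniform over the compact set $\bar D$ and over small $\|\boldsymbol{\mu}\|$. (Alternatively, the hypotheses \eqref{eq:2.64} are already in the form required by Proposition~\ref{prop:2.1} with $k=1$ and $U_R=U_b$, which even yields the sharper $\beta_1=\alpha_1+O(1)|\alpha_1-1|\|\boldsymbol{\mu}\|$, $\beta_2=1+O(1)|\alpha_1-1|\|\boldsymbol{\mu}\|$, once the parameter ranges $\delta'_0$ and $\delta''_0$ are matched.)
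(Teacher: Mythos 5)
Your proposal is correct and takes essentially the paper's own route: the paper gives no separate argument for Proposition~\ref{prop:2.3}, stating it simply as a consequence of Propositions~\ref{prop:2.1}--\ref{prop:2.2}, which is precisely what you carry out in detail (via the auxiliary limit boundary state and Proposition~\ref{prop:2.2}, together with the uniform invertibility of $\boldsymbol{\beta}\mapsto\Phi(\boldsymbol{\beta};U_L)$ already used for Lemma~\ref{lem:2.7}). Your parenthetical observation is also right: since \eqref{eq:2.64} is exactly hypothesis \eqref{eq:2.51} with $k=1$, Proposition~\ref{prop:2.1} alone yields the (even sharper) estimate, with \eqref{eq:2.65} not needed, once the constants $\delta'_0$ and $\delta''_0$ are matched.
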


\section{Existence of Solutions of Problem \eqref{eq:1.15}--\eqref{eq:1.17} and Well-Posedness of Problem \eqref{eq:1.19}--\eqref{eq:1.21} with Large Data}
In this section, we construct the approximate solutions of the initial-boundary value problem \eqref{eq:1.15}--\eqref{eq:1.17}
and establish the well-posedness of the initial-boundary value problem \eqref{eq:1.19}--\eqref{eq:1.21} in $BV\cap L^1$,
which is the basis to establish the $L^1$-error estimate between the two respective entropy solutions of problem \eqref{eq:1.15}--\eqref{eq:1.17}
and problem \eqref{eq:1.19}--\eqref{eq:1.21}.

\subsection{Wave front-tracking scheme for  problem \eqref{eq:1.15}--\eqref{eq:1.17}}
Let $\nu\in\mathbb{N}_{+}$ be a given parameter.
As in \cite{Asakura, Bressan} (see also \cite{Dafermos2016}), for given initial data $U_{0}(y)={(\rho_{0}, v_{0})^{\top}}(y)$ with $y<0$,
we can construct a piecewise constant function $U^{\nu}_{0}(y)=(\rho^{\nu}_{0}, v^{\nu}_{0})^{\top}(y)$
such that
\begin{eqnarray}\label{eq:3.1}
\|U^{\nu}_{0}(\cdot)-U_{0}(\cdot)\|_{L^{1}({\Sigma_{0}})}\leq 2^{-\nu},
\qquad T.V.\{U^{\nu}_{0}(\cdot); {\Sigma_{0}}\}\leq T.V.\{U_{0}(\cdot); {\Sigma_{0}}\}.
\end{eqnarray}
Then the approximate solution $U^{(\boldsymbol{\mu}),\nu}(x,y)$ in $\Omega$ is constructed in the following way:

Let $y_{N}<y_{N-1}<\cdots<y_{1}<y_{0}=0$ be the location of the discontinuities of $U^{\nu}_{0}(y)$ at $x=0$.
At each point $(0,y_{k})$ for $1\leq k\leq N$, we solve the Riemann problem \eqref{eq:1.15} and \eqref{eq:2.44} with
$U_{L}=U^{\nu}_{0}(y_{k}-)$ and $U_{R}=U^{\nu}_{0}(y_{k}+)$.
At $(0,y_{0})$, we solve the Riemann problem \eqref{eq:1.15} and \eqref{eq:2.58} with $U_{L}=U^{\nu}_{0}(0-)$.
Then, by Lemmas \ref{lem:2.7}--\ref{lem:2.8}, the solutions of these two types of Riemann problem may consist of
shock waves $S^{(\boldsymbol{\mu})}$ or rarefaction waves $R^{(\boldsymbol{\mu})}$.
We further partition the rarefaction waves into several small central rarefaction fans (still denoted by) $R^{(\boldsymbol{\mu})}$
with strength less than $\nu^{-1}$, which propagate with the characteristic speeds.
Such a modified solution of the two Riemann problems is called an {\it Accurate Riemann Solver} (\emph{ARS}).
Putting all the modified solutions together,
we define an approximate solution $U^{(\boldsymbol{\mu}), \nu}(x,y)$.
It is piecewise constant and prolongs until a pair of neighbouring discontinuities
interacts at point $(\hat{x}, y)\in\Omega$ or a wave front hits boundary $\Gamma_{\textrm{w}}$ at point $(\hat{x}_1, b_{0}\hat{x}_1)$.
At this point, we continue to construct the approximate solution
by giving the \emph{ARS} of the Riemann problem \eqref{eq:1.15} and \eqref{eq:2.44} with initial data $U^{(\boldsymbol{\mu}), \nu}(\hat{x}-,y)$
or of the Riemann problem \eqref{eq:1.15} and \eqref{eq:2.58} with Riemann data $U^{(\boldsymbol{\mu}), \nu}(\hat{x}_{1}-,b_{0}\hat{x}_{1}-)$.
We repeat this construction as long as the number of the wave fronts does not tend to the infinity in a finite {\it time}.
Then, to avoid the case that the number of wave fronts blows up,
we introduce a {\it Simplified Riemann Solver} (\emph{SRS}), in which all the new waves are lumped
into a single non-physical wave $NP^{(\boldsymbol{\mu})}$
with a fixed speed $\hat{\lambda}$, which is larger than all the characteristics speeds.
To decide when the \emph{SRS} is used, we introduce a threshold parameter $\varrho>0$,
depending only on $\nu^{-1}$. When the strengths of the two approaching physical wave fronts $\alpha$ and $\beta$
satisfy that $|\alpha-1||\beta-1|>\varrho$, the
\emph{ARS} is used and, otherwise, the \emph{SRS} is used.

Moreover,
we may change some of the speeds of the wave fronts slightly with a quality less than $2^{-\nu}$,
in order to make sure that only two wave fronts interact or only one wave front hits boundary $\Gamma$ at each point.
The set of all the fronts are defined by $J(U^{(\boldsymbol{\mu})}):=S^{(\boldsymbol{\mu})}\cup R^{(\boldsymbol{\mu})}\cup NP^{(\boldsymbol{\mu})}$.
Then, applying the path decomposition position method developed in \cite{Asakura}
and following the arguments in \cite{Kuang-Xiang-Zhang-1},
we obtain the following results for the approximate solution $U^{(\boldsymbol{\mu}),\nu}(x,y)$ of
problem \eqref{eq:1.15}--\eqref{eq:1.17}.

\begin{proposition}\label{prop:3.1}
Assume that $\rho_*\leq\rho_{0}\leq\rho^{*}$ for some constants $\rho^{*}>\rho_{*}>0$.
Then there exists both a constant vector $\boldsymbol{\bar{\mu}}^{*}_{0}=(\bar{\epsilon}^{*}_{0}, (\bar{\tau}^{*}_{0})^2)$
and a constant $\bar{C}_0>0$ with $\bar{\epsilon}^{*}_{0}>0$ and $\bar{\tau}^{*}_{0}>0$ depending only on $(a_{\infty}, \rho^{*}, \rho_{*})$
such that, for $\|\boldsymbol{\mu}\|\leq \|\boldsymbol{\bar{\mu}}^{*}_{0}\|$, if $(\rho_{0}-1, v_{0})\in (L^{1}\cap BV)(\Sigma_{0})$
and
\begin{eqnarray}\label{eq:3.2}
\|\boldsymbol{\mu}\|\big(T.V.\{U_{0}(\cdot);\ \Sigma_{0}\}+|b_{0}|\big)<\bar{C}_{0},
\end{eqnarray}
the approximate solution $U^{(\boldsymbol{\mu})}(x,y)$ constructed above
can be defined for all $(x,y)\in \Omega$ and satisfies
\begin{align}
&\sup_{x>0}\|U^{(\boldsymbol{\mu}),\nu}(x,\cdot)\|_{L^{\infty}(-\infty,\,b_{0}x)}
+\sup_{x>0}T.V.\{U^{(\boldsymbol{\mu}),\nu}(x,\cdot); (-\infty, b_{0}x)\}<\bar{C}_{1},\label{eq:3.3}\\
&\|U^{(\boldsymbol{\mu}),\nu}(x_1,\cdot+b_{0}x_1)-U^{(\boldsymbol{\mu}),\nu}(x_2,\cdot+b_{0}x_2)\|_{L^{1}(-\infty, 0)}
 <\bar{C}_{2}|x_{1}-x_{2}|.\label{eq:3.4}
\end{align}
The strength of each rarefaction wave-front and the total strength of the non-physical front are small{\rm :}
\begin{eqnarray}\label{eq:3.5}
\max_{\alpha \in R^{(\boldsymbol{\mu})}}|\alpha-1|<\bar{C}_{3}\nu^{-1}, \qquad\,
\sum_{\alpha \in NP^{(\boldsymbol{\mu})}}\alpha
<\bar{C}_{4}2^{-\nu}.
\end{eqnarray}
Moreover, there exists a subsequence $\{\nu_{i}\}^{\infty}_{i=1}$ with $\nu_{i}\rightarrow\infty$ as $i\rightarrow\infty$ such that
\begin{eqnarray}\label{eq:3.6}
 U^{(\boldsymbol{\mu}),\nu_{i}}\rightarrow U^{(\boldsymbol{\mu})}\qquad\,\,\mbox{ in $L^{1}_{\rm loc}(\Omega_{\rm{w}})$},
\end{eqnarray}
and $U^{(\boldsymbol{\mu})}\in (BV_{\rm loc}\cap L^{1}_{\rm loc})(\Omega_{\rm{w}})$  is an entropy solution of
problem \eqref{eq:1.15}--\eqref{eq:1.17}.
Here the positive constants $\bar{C}_{k}$, $k=1,2,3,4$, depend only on $(a_{\infty}, \rho^{*}, \rho_{*})$, but independent of $(\boldsymbol{\mu},\nu)$.
\end{proposition}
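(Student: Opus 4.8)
The proof follows the Glimm-type wave-front tracking analysis for large $BV$ data, combining the path-decomposition method of \cite{Asakura} with the bookkeeping of boundary reflections off the wedge, in the spirit of \cite{Kuang-Xiang-Zhang-1}. The structural fact that makes large data admissible is that, at $\boldsymbol{\mu}=\boldsymbol{0}$, the eigenvalues \eqref{eq:2.39} of system \eqref{eq:1.19} have a \emph{constant} gap $2a_{\infty}^{-1}$, so that this system is of isothermal (Nishida) type and its Glimm interaction potential is a priori controlled for data of arbitrary size; by Lemmas \ref{lem:2.1}--\ref{lem:2.6}, system \eqref{eq:1.15} is an $O(\|\boldsymbol{\mu}\|)$ perturbation of it, so the only uncontrolled contributions to the functional come from the $\|\boldsymbol{\mu}\|$-corrections in the wave curves and from the reflections off $\Gamma_{\textrm{w}}$, both of which are to be absorbed by the smallness hypothesis \eqref{eq:3.2}.

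First I would record the local estimates for the \emph{ARS}. For an interior interaction of two physical fronts of strengths $\alpha$ and $\beta$, Lemma \ref{lem:2.7} together with the genuine nonlinearity of Lemma \ref{lem:2.2} yields, for the outgoing fronts, the classical quadratic estimate $\sum_j|\alpha_j'-1|\le|\alpha-1|+|\beta-1|+O(1)\,|\alpha-1|\,|\beta-1|$ when the two incoming fronts are approaching, with the $O(1)$ uniform in $\boldsymbol{\mu}$ thanks to \eqref{eq:2.47}. When a physical front of strength $\alpha$ hits the boundary, Lemma \ref{lem:2.8} produces the reflected front of the opposite family and Propositions \ref{prop:2.2}--\ref{prop:2.3} control its strength: at $\boldsymbol{\mu}=\boldsymbol{0}$ the reflection coefficient depends only on $b_0$, so the boundary may increase the total strength of fronts at a rate proportional to $|b_0|$, which is exactly what \eqref{eq:3.2} is designed to handle; interactions involving a non-physical front create only a new non-physical front whose strength is controlled by the product of the incoming strengths. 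The role of the path decomposition is then to write the total strength of the fronts as a sum over ``paths'' and to show, path by path, that the total gain of strength along all paths --- from both interior interactions and boundary reflections --- is bounded by $C\|\boldsymbol{\mu}\|\big(T.V.\{U_0\}+|b_0|\big)$ times the current value of the functional, which is small by \eqref{eq:3.2}.

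Next I would introduce a Glimm-type functional $\mathcal{F}^{(\boldsymbol{\mu}),\nu}(x)=\mathcal{V}(x)+K_1\mathcal{Q}(x)+K_2\mathcal{B}(x)$, where $\mathcal{V}(x)$ is the suitably weighted total strength of all fronts crossing $\{x=\text{const}\}$, $\mathcal{Q}(x)$ is the Glimm interaction potential summed over approaching pairs, and $\mathcal{B}(x)$ is a boundary potential accounting for the fronts that will later strike $\Gamma_{\textrm{w}}$. Using the local estimates above, one verifies that across each interaction point and each boundary-hitting point $\mathcal{F}^{(\boldsymbol{\mu}),\nu}$ is non-increasing once $K_1,K_2$ are chosen large and $\|\boldsymbol{\mu}\|\big(T.V.\{U_0\}+|b_0|\big)<\bar C_0$; hence $\mathcal{F}^{(\boldsymbol{\mu}),\nu}(x)\le\mathcal{F}^{(\boldsymbol{\mu}),\nu}(0)=O\big(T.V.\{U_0\}+|b_0|\big)$ for all $x>0$. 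This gives the uniform total variation bound in \eqref{eq:3.3}, and, since $\rho_0\in[\rho_*,\rho^*]$ and $v_0$ is bounded, also the $L^{\infty}$ bound and the fact that $U^{(\boldsymbol{\mu}),\nu}$ remains within a domain $D$ of the type in Lemma \ref{lem:2.1} determined by the data, so that all the Riemann solvers of \S2 stay applicable along the construction. That the number of fronts stays finite for every $x>0$ follows in the usual way: the \emph{ARS} is invoked only when $|\alpha-1||\beta-1|>\varrho$, and each such interaction decreases $\mathcal{Q}$ by at least $c\varrho$, so there are finitely many of them; the remaining interactions use the \emph{SRS}, which adds only a bounded number of non-physical fronts, and choosing $\varrho=\varrho(\nu)$ small enough (with $\varrho(\nu)\to0$) controls the total non-physical strength, yielding the second estimate in \eqref{eq:3.5}; the first estimate in \eqref{eq:3.5} holds by construction of the \emph{ARS}.

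Finally, estimate \eqref{eq:3.4} follows from finite propagation speed: by Lemmas \ref{lem:2.1}--\ref{lem:2.2} and \eqref{eq:3.3}, after the shift $y\mapsto y+b_0 x$ straightening $\Gamma_{\textrm{w}}$, all front speeds are uniformly bounded by some $\Lambda_0$, whence $\|U^{(\boldsymbol{\mu}),\nu}(x_1,\cdot+b_0 x_1)-U^{(\boldsymbol{\mu}),\nu}(x_2,\cdot+b_0 x_2)\|_{L^1(-\infty,0)}\le\Lambda_0\,\big(\sup_x T.V.\{U^{(\boldsymbol{\mu}),\nu}(x,\cdot)\}\big)\,|x_1-x_2|$. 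The bounds \eqref{eq:3.3}--\eqref{eq:3.4} and Helly's theorem give a subsequence $\nu_i\to\infty$ with $U^{(\boldsymbol{\mu}),\nu_i}\to U^{(\boldsymbol{\mu})}$ in $L^1_{\rm loc}(\Omega_{\textrm{w}})$, and since the total non-physical strength vanishes as $\nu\to\infty$ by \eqref{eq:3.5} and the scheme is consistent with \eqref{eq:1.15}--\eqref{eq:1.17}, the standard argument (see \cite{Bressan,Dafermos2016}, and \cite{Kuang-Xiang-Zhang-1} for the boundary treatment) identifies $U^{(\boldsymbol{\mu})}$ as an entropy solution. The main obstacle is the monotonicity of $\mathcal{F}^{(\boldsymbol{\mu}),\nu}$ for \emph{large} data in the presence of the reflecting wedge: the reflections can pump total variation, so the weights in $\mathcal{V}$ and the boundary potential $\mathcal{B}$ must be tuned against one another through the path decomposition, and the $\|\boldsymbol{\mu}\|$-errors in the wave curves of \S2 must be absorbed uniformly --- which is precisely where the quantitative smallness $\|\boldsymbol{\mu}\|\big(T.V.\{U_0\}+|b_0|\big)<\bar C_0$ enters.
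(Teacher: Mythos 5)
Your outline is correct and follows essentially the same route the paper relies on: the paper gives no detailed proof of Proposition \ref{prop:3.1}, but simply invokes Asakura's path-decomposition method and the large-data wave-front-tracking arguments of \cite{Kuang-Xiang-Zhang-1}, which is exactly the combination (Nishida-type structure at $\boldsymbol{\mu}=\boldsymbol{0}$, interaction and boundary-reflection estimates absorbed via \eqref{eq:3.2}, Glimm-type functional with boundary potential, finite propagation speed, and Helly compactness) that you reconstruct.
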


\subsection{Well-posedness of the initial-boundary value problem \eqref{eq:1.19}--\eqref{eq:1.21}}
In this subsection, we consider the initial-boundary value problem \eqref{eq:1.19}--\eqref{eq:1.21},
construct the semigroup, and establish the existence and $L^1$-stability of the solutions.
First, we consider the following initial-boundary value problem on $\bar{\Omega}=\{(x,y)\,:\, x>0, y<0\}$:
\begin{eqnarray}\label{eq:3.7}
\begin{cases}
\partial_{x}\rho+\partial_{x}(\rho v)=0 &\qquad \mbox{in $\bar{\Omega}$}, \\[3pt]
\partial_{x}+\partial_{x}\big(\frac{1}{2}v^2+\frac{\ln \rho}{a^{2}_{\infty}}\big)=0&\qquad \mbox{in $\bar{\Omega}$},
\end{cases}
\end{eqnarray}
with initial data
\begin{eqnarray}\label{eq:3.8}
(\rho,v)=(\bar{\rho}_0, \bar{v}_0)(y) \qquad \mbox{on $\bar{\Sigma}_{0}=\{(x,y)\,:\, x=0, y<0\}$},
\end{eqnarray}
and boundary condition
\begin{eqnarray}\label{eq:3.9}
v=0 \qquad \mbox{ on ${\bar{\Gamma}}=\{(x,y)\,:\, x>0, y=0\}$}.
\end{eqnarray}

Let $U(x,y)=(\rho, v)^{\top}(x,y)$, $\bar{U}_{0}(y)=(\bar{\rho}_{0}, \bar{v}_{0})^{\top}(y)$,
and $\bar{U}_{\infty}=(\bar{\rho}_\infty, \bar{v}_{\infty})^\top$ with $\bar{\rho}_\infty>0$ and $\bar{v}_\infty>0$.
Following the results in \cite{Colombo-Risebro, Nishida}, we have

\begin{lemma}\label{lem:3.1}
Assume that $0<\rho_{*}<\bar{\rho}_{0}(y)<\rho^{*}<\infty$ and $\bar{U}_{0}(y)-\bar{U}_{\infty}\in(BV\cap L^{1})({\bar{\Sigma}_{0}})$.
Then there is a constant $\bar{C}'_{0}>0$ such that, if
\begin{eqnarray}\label{eq:3.10}
T.V.\{\bar{U}_{0}(y); {\bar{\Sigma}_{0}}\}+|\bar{v}_{0}(0-)|<\bar{C}'_{0},
\end{eqnarray}
there exist a domain $\bar{\mathcal{D}}\subseteq BV((-\infty,0))$,
an $L^{1}$-Lipschitz semigroup $\bar{\mathcal{S}}_{x}:(0,\infty)\times \bar{\mathcal{D}}\mapsto \bar{\mathcal{D}}$,
and a Lipschitz constant $\bar{L}>0$ so that

\begin{enumerate}
\item[(i)] $\bar{\mathcal{D}}$ contains the $L^{1}$-closure of the set of those
functions $U(\cdot,y):(-\infty,0)\mapsto \bar{\Omega}$ satisfying $U-\bar{U}_{\infty}\in \big(L^{1}\cap BV\big)((-\infty, 0));$

\item[(ii)] $U(x)=\bar{\mathcal{S}}_{x}(\bar{U}_{0})$ is the entropy solution of the initial-boundary value
  problem \eqref{eq:3.7}--\eqref{eq:3.9} and satisfies
\begin{align}\label{eq:3.11}
\|\bar{\mathcal{S}}_{x_1}\bar{U}_{1,0}-\bar{\mathcal{S}}_{x_2}\bar{U}_{2,0}\|_{L^{1}((-\infty,0))}
\leq \bar{L}\big(\|\bar{U}_{1,0}-\bar{U}_{2,0}\|_{L^{1}( \bar{\Sigma_{0}})}+|x_1-x_{2}|\big);
\end{align}

\item[(iii)] If $\bar{U}_{0}$ is a  piecewise constant function, then, for $x>0$ sufficiently small, $\bar{\mathcal{S}}_{x}(U_{0})$ {coincides}
 with the solution of the initial-boundary value problem
\eqref{eq:3.7}--\eqref{eq:3.9} by piecing together the Riemann solutions at the all jumps of $\bar{U}_{0}$.
\end{enumerate}
\end{lemma}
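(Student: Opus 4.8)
The plan is to read system \eqref{eq:3.7} as a genuinely nonlinear $2\times2$ system of conservation laws in which $x$ plays the role of the evolution (``time'') variable and $y\in(-\infty,0)$ that of the spatial variable, with a single boundary located at $y=0$, and then to appeal to the general well-posedness theory for initial-boundary value problems of Colombo--Risebro \cite{Colombo-Risebro}, together with the large-data framework of Nishida \cite{Nishida} for isothermal-type systems, which applies precisely to this kind of IBVP. First I would verify that the structural hypotheses of that theory hold for \eqref{eq:3.7}--\eqref{eq:3.9}; then I would invoke the abstract construction to obtain the triple $(\bar{\mathcal{D}},\bar{\mathcal{S}}_x,\bar{L})$; finally I would read the properties (i)--(iii) off from the way the semigroup is built.

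For the structural hypotheses: by \eqref{eq:2.39}--\eqref{eq:2.40} the eigenvalues of \eqref{eq:3.7} are $\lambda_1(U)=v-a_\infty^{-1}$ and $\lambda_2(U)=v+a_\infty^{-1}$, with right eigenvectors $\mathbf{r}_1=(-\rho,a_\infty^{-1})^{\top}$ and $\mathbf{r}_2=(\rho,a_\infty^{-1})^{\top}$, so the system is strictly hyperbolic on $D$ and, since $\nabla_U\lambda_j\cdot\mathbf{r}_j=a_\infty^{-1}>0$, both characteristic fields are genuinely nonlinear there; the flux is smooth because $\rho$ stays in $(\rho_*,\rho^*)$. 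A convenient feature is that $\lambda_2-\lambda_1\equiv2a_\infty^{-1}$ is constant, which gives the global wave curves \eqref{eq:2.42}--\eqref{eq:2.46} the translation structure of isothermal gas dynamics and underlies the Nishida-type estimates; in any case the smallness hypothesis \eqref{eq:3.10} places us in the small-$BV$ regime, where $|v|$ stays bounded away from $a_\infty^{-1}$, so that $\lambda_1<0<\lambda_2$ at all states that arise. Consequently, at the boundary $y=0$ exactly one characteristic family, the $1$-family, enters the domain $\{y<0\}$, which matches exactly the single scalar boundary condition $v=0$ in \eqref{eq:3.9}; moreover $\lambda_1\equiv -a_\infty^{-1}$ at the boundary states, so the boundary is uniformly non-characteristic. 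Finally, the boundary Riemann problem is uniquely solvable: given an interior left state $U_L\in D$, the $\boldsymbol{\mu}=\boldsymbol{0}$ case of Lemma \ref{lem:2.8} (with $b_0=0$) produces a unique $\alpha_1$ with $\varphi_1(\alpha_1)+v_L=0$, hence a unique boundary state $U_b=\Phi_1(\alpha_1;U_L)$ with $v_b=0$ joined to $U_L$ by an admissible $1$-wave, depending Lipschitz-continuously on $U_L$ by Lemma \ref{lem:2.6}.

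Once these are in place, the theory of \cite{Colombo-Risebro} applies directly: the wave-front-tracking approximations, controlled by a Glimm-type functional augmented with a boundary term that is finite under \eqref{eq:3.10} (and which simultaneously keeps $\rho$ in $(\rho_*,\rho^*)$ and $v$ small for all $x>0$), converge to a unique $L^1$-Lipschitz semigroup $\bar{\mathcal{S}}_x$ of entropy solutions on a domain $\bar{\mathcal{D}}\subseteq BV((-\infty,0))$, with a Lipschitz constant $\bar{L}$. Property (i) is exactly the identification of $\bar{\mathcal{D}}$ as the $L^1$-closure of the set of $U$ with $U-\bar{U}_\infty\in(L^1\cap BV)((-\infty,0))$. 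Property (ii), including \eqref{eq:3.11}, is the standard joint $L^1\times x$-Lipschitz continuity of the semigroup together with the fact that its orbits solve \eqref{eq:3.7}--\eqref{eq:3.9} in the entropy sense. Property (iii) holds because, for piecewise-constant data and $x$ small, the front-tracking solution is by construction the exact juxtaposition of the interior Riemann solvers coming from \eqref{eq:2.42}--\eqref{eq:2.46} and Lemma \ref{lem:2.6} and of the boundary Riemann solver described above, before any interaction takes place, so that $\bar{\mathcal{S}}_x(\bar{U}_0)$ coincides with that gluing.

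The step I expect to be the main obstacle is the boundary bookkeeping: one must check that a Glimm-type functional which also accounts for the interaction of the interior wave-fronts with the boundary $y=0$ — where an incoming wave is absorbed and a reflected wave of the other family is emitted to restore $v=0$ — is nonincreasing for the approximate solutions, i.e. that the associated interaction potential is controlled; this is what keeps the approximations inside $D$, where strict hyperbolicity, genuine nonlinearity, and the non-characteristic condition all persist. This is precisely what \cite{Colombo-Risebro} provides under a smallness assumption of the type \eqref{eq:3.10}; once it is invoked, the remaining arguments are routine and properties (i)--(iii) follow as stated.
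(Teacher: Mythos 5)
Your proposal follows essentially the same route as the paper, which establishes Lemma \ref{lem:3.1} only by noting the same structural facts (strict hyperbolicity and genuine nonlinearity of \eqref{eq:3.7} on $D$, the isothermal translation-invariant wave-curve structure, and the non-characteristic boundary $v=0$ with its uniquely solvable boundary Riemann problem) and then citing \cite{Colombo-Risebro} and \cite{Nishida}. One caution: since the semigroup is later applied to states whose total variation is not small, the intended result is the large-data version of \cite{Colombo-Risebro, Nishida}, so the argument should rest on the Nishida-type structure (the boundary Riemann problem with $v_b=0$ is solvable by a $1$-wave of negative speed for arbitrary $U_L\in D$, as the Rankine--Hugoniot relation shows) rather than on your ``small-$BV$ regime'' fallback, which satisfies the literal statement \eqref{eq:3.10} but would not serve the subsequent applications.
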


We now turn to the initial-boundary value problem \eqref{eq:1.19}--\eqref{eq:1.21}.
\begin{proposition}\label{prop:3.2}
Suppose that $0<\rho_{*}<\rho_{0}(y)<\rho^{*}<\infty$ and $U_{0}(y)-U_{\infty}\in(BV\cap L^{1})(\Sigma_{0})$ with $U_\infty=(1,0)^\top$.
Then there is a constant $\bar{C}''_{0}>0$ such that,
\begin{eqnarray}\label{eq:3.12}
T.V.\{U_{0}(y); \Sigma_{0}\}+|b_0|<\bar{C}''_{0},
\end{eqnarray}
there exist a domain $\mathcal{D}\subset BV((-\infty, b_{0}x))$, an $L^1$-Lipschitz semigroup $\mathcal{S}_x:(0,\infty)\times \mathcal{D}\mapsto \mathcal{D}$,
and a Lipschitz constant $L>0$ so that
\begin{enumerate}
\item[(i)] $\mathcal{D}$ contains the $L^1$-closure of the set of functions
$U:(-\infty, b_{0}x)\mapsto \Omega_{\textrm{w}}$ satisfying $U-U_{\infty}\in \big(L^1\cap BV\big)((-\infty, b_{0}x));$

\item[(ii)] $U(x,\cdot)=\mathcal{S}_xU_0(\cdot)$
is the entropy solution of problem \eqref{eq:1.19}--\eqref{eq:1.21} and
\begin{align}\label{eq:3.13}
\|\mathcal{S}_{x_1}(U_{1,0}(\cdot))-\mathcal{S}_{x_2}(U_{2,0}(\cdot))\|_{L^{1}((-\infty,b_{0}x))}\leq L\|U_{1,0}(\cdot)-U_{2,0}(\cdot)\|_{L^{1}(\Sigma_{0})};
\end{align}

\item[(iii)] If $U(\tilde{x},\cdot)$ is a piecewise constant function, then, for $x>\tilde{x}$ sufficiently small, $\mathcal{S}_{x}U(\tilde{x},\cdot)$ coincides
with the solution of the initial-boundary value problem \eqref{eq:1.19}--\eqref{eq:1.21} by piecing together all the Riemann solutions at the all jumps of $U(x)$.
\end{enumerate}
\end{proposition}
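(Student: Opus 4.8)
The plan is to deduce Proposition \ref{prop:3.2} from Lemma \ref{lem:3.1} by an explicit affine change of variables that straightens the slanted wedge $\Gamma_{\textrm{w}}$ into the horizontal boundary $\bar\Gamma$ while leaving system \eqref{eq:1.19} invariant. Concretely, I would introduce
\begin{equation*}
\tilde x=x,\qquad \tilde y=y-b_0x,\qquad \tilde\rho=\rho,\qquad \tilde v=v-b_0,
\end{equation*}
which maps $\Omega_{\textrm{w}}$ onto $\bar\Omega$, $\Gamma_{\textrm{w}}$ onto $\bar\Gamma$, and $\Sigma_0$ onto $\bar\Sigma_0$. Using $\partial_x=\partial_{\tilde x}-b_0\partial_{\tilde y}$, $\partial_y=\partial_{\tilde y}$ and the identity $\tfrac12 v^2-b_0v=\tfrac12\tilde v^2-\tfrac12 b_0^2$, a direct computation shows that $(\rho,v)$ solves \eqref{eq:1.19}--\eqref{eq:1.21} in $\Omega_{\textrm{w}}$ if and only if $(\tilde\rho,\tilde v)$ solves the \emph{same} system \eqref{eq:3.7} in $\bar\Omega$ with boundary condition $\tilde v=0$ on $\bar\Gamma$ and initial data $(\tilde\rho_0,\tilde v_0)=(\rho_0,v_0-b_0)$ on $\bar\Sigma_0$; the flux term $-b_0v$ created by the shear is absorbed into the unknown up to the additive constant $-\tfrac12 b_0^2$, which disappears under $\partial_{\tilde y}$. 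This is precisely problem \eqref{eq:3.7}--\eqref{eq:3.9} with far field $\bar U_\infty=(1,-b_0)^\top$, and $\bar v_\infty=-b_0>0$ because $b_0<0$, so the sign hypothesis in Lemma \ref{lem:3.1} is satisfied.

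Next I would verify the smallness condition \eqref{eq:3.10} for the transformed data. Since the change of the $v$-variable is a constant shift, $T.V.\{\tilde U_0;\bar\Sigma_0\}=T.V.\{U_0;\Sigma_0\}$ and $0<\rho_*<\tilde\rho_0<\rho^*$ is immediate. As $U_0-U_\infty\in BV\cap L^1$ with $U_\infty=(1,0)^\top$, the function $v_0$ has a limit at $-\infty$ which must vanish, so $|v_0(0-)|\le T.V.\{v_0;\Sigma_0\}\le T.V.\{U_0;\Sigma_0\}$, whence
\begin{equation*}
T.V.\{\tilde U_0;\bar\Sigma_0\}+|\tilde v_0(0-)|\le 2\,T.V.\{U_0;\Sigma_0\}+|b_0|\le 2\big(T.V.\{U_0;\Sigma_0\}+|b_0|\big).
\end{equation*}
Hence, choosing $\bar C''_0\le\tfrac12\bar C'_0$, condition \eqref{eq:3.12} implies \eqref{eq:3.10}, and Lemma \ref{lem:3.1} produces a domain $\bar{\mathcal D}\subseteq BV((-\infty,0))$, an $L^1$-Lipschitz semigroup $\bar{\mathcal S}_x$, and a Lipschitz constant $\bar L$ for the transformed problem.

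Finally I would transport $\bar{\mathcal S}_x$ back through the inverse change of variables. Writing $\Psi(x,y)=(x,y-b_0x)$ for the shear, whose restriction to each slice $\{x=\text{const}\}$ is a translation and hence an $L^1$-isometry from $L^1((-\infty,b_0x))$ onto $L^1((-\infty,0))$, I would define the semigroup by the formula
\begin{equation*}
(\mathcal S_x U_0)(y):=\big(\bar{\mathcal S}_x(U_0-(0,b_0)^\top)\big)(y-b_0x)+(0,b_0)^\top,\qquad y<b_0x,
\end{equation*}
and take $\mathcal D$ to be the corresponding pull-back of $\bar{\mathcal D}$; property (i) then follows because $U-U_\infty\in L^1\cap BV$ on $(-\infty,b_0x)$ precisely when $\tilde U-\bar U_\infty\in L^1\cap BV$ on $(-\infty,0)$. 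Since $\Psi$ together with the $v$-shift is affine with constant coefficients, it maps entropy solutions of \eqref{eq:1.19}--\eqref{eq:1.21} to entropy solutions of \eqref{eq:3.7}--\eqref{eq:3.9} and back (the weak formulation, the Rankine--Hugoniot relations, and the Lax conditions are all preserved, each shock speed being shifted by $b_0$), it intertwines the interior and the boundary Riemann solvers of the two problems, and it preserves the $L^1$-norm on horizontal slices together with the semigroup property after the evident relabelling of the initial surface $\{x=x_2\}$ when composing $\mathcal S_{x_1}\circ\mathcal S_{x_2}$; therefore \eqref{eq:3.13} follows from \eqref{eq:3.11} and statement (iii) follows from Lemma \ref{lem:3.1}(iii). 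The one point requiring genuine care is the invariance of \eqref{eq:1.19} under the combined shear-plus-shift --- that $\tilde v=v-b_0$ is exactly the compensating change of the unknown making the transformed flux coincide with the original one up to a constant --- together with the bookkeeping of the $x$-dependent half-lines $(-\infty,b_0x)$ in the semigroup and Lipschitz identities; everything else is routine.
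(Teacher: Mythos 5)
Your proposal is correct and follows essentially the same route as the paper: the paper's proof of Proposition \ref{prop:3.2} is exactly the shear-plus-shift transformation $(\hat{x},\hat{y})=(x,y-b_0x)$, $\hat{v}=v-b_0$ of \eqref{eq:3.14}--\eqref{eq:3.15}, followed by an application of Lemma \ref{lem:3.1} and the inverse transformation. Your additional verifications (invariance of system \eqref{eq:1.19} under the change of unknown and the check that \eqref{eq:3.12} implies \eqref{eq:3.10}) simply make explicit details the paper leaves implicit.
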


\begin{proof}
Let
\begin{eqnarray}\label{eq:3.14}
(\hat{x},\,\hat{y}):= (x,\,y-b_{0}x).
\end{eqnarray}
Define
\begin{eqnarray}\label{eq:3.15}
\hat{\rho}(\hat{x},\hat{y}):=\rho(\hat{x}, \hat{y}+b_{0}\hat{x}),\quad  \hat{v}(\hat{x},\hat{y}):=v(\hat{x}, \hat{y}+b_{0}\hat{x})-b_{0}.
\end{eqnarray}
Then 
$(\hat{\rho}, \hat{v})$ satisfies the initial-boundary value problem \eqref{eq:3.7}--\eqref{eq:3.9} in the $(\hat{x},\hat{y})$-plane
with the initial data:
\begin{eqnarray*}
(\hat{\rho},\hat{v})(0,\hat{y})=(\rho_{0}(\hat{y}), v_{0}(\hat{y})-b_0).
\end{eqnarray*}
Thus, by applying Lemma \ref{lem:3.1},
there exist a domain $\hat{\mathcal{D}}\subset BV((-\infty,0))$, an $L^1$-Lipschitz
semigroup $\hat{\mathcal{S}}_{\hat{x}}:[0,\infty)\times \hat{\mathcal{D}}\mapsto \hat{\mathcal{D}}$,
and a Lipschitz constant $\hat{L}>0$ so that facts \rm(i)--\rm(iii) in Lemma \ref{lem:3.1} hold.
We define the inverse transformation of \eqref{eq:3.14}--\eqref{eq:3.15}:
\begin{eqnarray*}
x=\hat{x},\quad y=\hat{y}+b_{0}\hat{x},\quad \rho(x,y)=\hat{\rho}(x, y-b_{0}x ),\quad v(x,y)=\hat{v}(x,y-b_{0}x)-b_{0}.
\end{eqnarray*}
Then substituting them into Lemma \ref{lem:3.1}, we obtain \rm(i)--\rm(iii).
This completes the proof.
\end{proof}

By Proposition \ref{prop:3.2} and \cite{Bressan}, we can also derive the following semigroup formula:
\begin{proposition}\label{prop:3.3}
Let $V(x,y):[0,\infty)\mapsto \mathcal{D}$ be a Lipschitz continuous map with a finite number of wave fronts for some $x>0$ and $V(0,y)=V_{0}(y)$.
Let $\mathcal{S}$ be a semigroup obtained by {\rm Proposition \ref{prop:3.2}}. Then
\begin{align}\label{eq:3.16}
&\|\mathcal{S}_{x}(V_{0}(\cdot))-V(x,\cdot)\|_{L^{1}(-\infty,\,b_{0}x)}\nonumber\\[3pt]
&\leq L\int^{x}_{0}\lim\inf_{h\rightarrow 0+}\frac{\|\mathcal{S}_{h}(V(s,\cdot))-V(s+h,\cdot)\|_{L^{1}(-\infty,\,b_{0}(s+h))}}{h}{\rm d}s,
\end{align}
where $L$ and $\mathcal{D}$ are given in {\rm Proposition \ref{prop:3.2}}.
\end{proposition}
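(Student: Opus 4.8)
The plan is to reduce \eqref{eq:3.16} to the classical error estimate of \cite{Bressan} for standard Riemann semigroups, using the affine reduction already employed in the proof of Proposition \ref{prop:3.2}. First I would pass to the variables $(\hat x,\hat y)=(x,y-b_0x)$ of \eqref{eq:3.14}--\eqref{eq:3.15}, under which the moving domain $(-\infty,b_0x)$ becomes the fixed half-line $(-\infty,0)$ and the semigroup $\mathcal{S}$ turns into the semigroup $\hat{\mathcal{S}}$ supplied by Lemma \ref{lem:3.1}; since this change of variables has unit Jacobian, it is an $L^1$-isometry, so it suffices to prove the analogue of \eqref{eq:3.16} for $\hat{\mathcal{S}}$, $\hat V$, $\hat V_0$ and then transform back. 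Note that, through the proof of Proposition \ref{prop:3.2}, $\hat{\mathcal{S}}$ inherits the full $L^1$-Lipschitz estimate \eqref{eq:3.11}, including the dependence on the time variable.

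Next I would fix $x>0$ and, for $s\in[0,x]$, introduce
\begin{equation*}
\Phi(s):=\big\|\hat{\mathcal{S}}_{x-s}\big(\hat V(s,\cdot)\big)-\hat{\mathcal{S}}_{x}\big(\hat V_0\big)\big\|_{L^1((-\infty,0))}.
\end{equation*}
The semigroup identity $\hat{\mathcal{S}}_{x-s}\circ\hat{\mathcal{S}}_{s}=\hat{\mathcal{S}}_{x}$ gives $\Phi(0)=0$, while $\Phi(x)=\|\hat V(x,\cdot)-\hat{\mathcal{S}}_x(\hat V_0)\|_{L^1}$, which --- after transforming back --- is precisely the left-hand side of \eqref{eq:3.16}. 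The strategy is then to bound $\Phi(x)=\Phi(x)-\Phi(0)$ by integrating an upper Dini derivative of $\Phi$. To this end I would first verify that $\Phi$ is Lipschitz on $[0,x]$, hence absolutely continuous: for $0\le s<s'\le x$ one inserts the intermediate term $\hat{\mathcal{S}}_{x-s'}(\hat V(s,\cdot))$ and applies \eqref{eq:3.11} twice --- once in the time variable (comparing $\hat{\mathcal{S}}_{x-s}$ with $\hat{\mathcal{S}}_{x-s'}$ on the datum $\hat V(s,\cdot)$) and once in the initial datum (comparing $\hat V(s,\cdot)$ with $\hat V(s',\cdot)$, using the Lipschitz continuity of $\hat V$ in $x$). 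Thus $\Phi(x)=\int_0^x\Phi'(s)\,\dd s$, with $\Phi'(s)$ bounded above, for a.e. $s$, by the upper right Dini derivative $D^+\Phi(s)$.

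For $s\in[0,x)$ and small $h>0$, the semigroup property $\hat{\mathcal{S}}_{x-s}=\hat{\mathcal{S}}_{x-s-h}\circ\hat{\mathcal{S}}_{h}$ together with the triangle inequality yields
\begin{align*}
\Phi(s+h)-\Phi(s)
&\le\big\|\hat{\mathcal{S}}_{x-s-h}\big(\hat V(s+h,\cdot)\big)-\hat{\mathcal{S}}_{x-s-h}\big(\hat{\mathcal{S}}_{h}(\hat V(s,\cdot))\big)\big\|_{L^1}\\[2pt]
&\le L\,\big\|\hat V(s+h,\cdot)-\hat{\mathcal{S}}_{h}(\hat V(s,\cdot))\big\|_{L^1}
\end{align*}
by \eqref{eq:3.11} applied with equal time variables. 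Dividing by $h$, taking $\liminf_{h\to0+}$, and using the $L^1$-isometry once more to return to the original variables, one obtains
$D^+\Phi(s)\le L\,\liminf_{h\to0+}h^{-1}\|\mathcal{S}_h(V(s,\cdot))-V(s+h,\cdot)\|_{L^1((-\infty,b_0(s+h)))}$;
integrating over $(0,x)$ then gives \eqref{eq:3.16}.

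The one delicate point --- where I expect the real work to lie --- is the measurability of the map $s\mapsto\liminf_{h\to0+}h^{-1}\|\mathcal{S}_h(V(s,\cdot))-V(s+h,\cdot)\|_{L^1}$, which is needed both to interpret the right-hand side of \eqref{eq:3.16} as a Lebesgue integral and to pass from the pointwise bound $\Phi'(s)\le D^+\Phi(s)\le(\cdots)$ to the corresponding inequality between integrals. This is handled by observing that the Lipschitz continuity of $V$ in $x$ makes the quantity uniformly bounded on $[0,x]$, while the hypothesis that $V$ has only finitely many wave fronts allows one, for all $s$ outside a finite set, to compare $\hat{\mathcal{S}}_h(\hat V(s,\cdot))$ with the front-tracking solution issued from $\hat V(s,\cdot)$ and thereby replace the $\liminf$ by a genuine limit depending measurably (indeed continuously off a finite set) on $s$. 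Apart from this routine technicality, the argument is a direct transcription of the standard semigroup error estimate of \cite{Bressan} to the present initial-boundary value problem via the affine reduction.
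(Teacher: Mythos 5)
Your argument is correct and coincides with what the paper intends: Proposition \ref{prop:3.3} is stated there without proof, as a direct consequence of Proposition \ref{prop:3.2} together with Bressan's standard error estimate for Lipschitz semigroups, and your affine reduction to the fixed quarter-plane followed by the Dini-derivative/integration argument for $\Phi(s)=\|\hat{\mathcal{S}}_{x-s}(\hat V(s,\cdot))-\hat{\mathcal{S}}_{x}(\hat V_{0})\|_{L^{1}((-\infty,0))}$ is exactly that proof. (A minor remark: the measurability of the integrand already follows from the continuity in $h$ and in $s$ of $\|\hat V(s+h,\cdot)-\hat{\mathcal{S}}_{h}(\hat V(s,\cdot))\|_{L^{1}}$, which lets one take the $\liminf$ along rational $h$, so the finite-number-of-fronts hypothesis is not needed for that step.)
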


\section{Proof of Theorem \ref{thm:1.1}}
In this section, we prove the main result of this paper, \emph{i.e.}, Theorem \ref{thm:1.1}.
To complete the proof of Theorem \ref{thm:1.1}, we need first to obtain the convergence rate estimate \eqref{eq:1.22} that is based on the local $L^1$-difference
between the entropy solutions $U^{(\boldsymbol{\mu})}$ and $U$, and then show that it is optimal with respect to $\boldsymbol{\mu}$ by constructing a simple example.

\subsection{Local $L^1$ error estimates between the entropy solutions $U^{(\boldsymbol{\mu})}$ and $U$}
In this subsection, we give some lemmas regarding the local $L^1$-difference between the entropy solutions $U^{(\boldsymbol{\mu})}$ and $U$
corresponding to problem \eqref{eq:1.15}--\eqref{eq:1.17} and
problem \eqref{eq:1.19}--\eqref{eq:1.21} with a boundary, respectively.

\begin{lemma}\label{lem:4.1}
Suppose that $U^{(\boldsymbol{\mu}),\nu}$ is an approximate solution of problem \eqref{eq:1.15}--\eqref{eq:1.17} constructed as in {\rm \S 3.1}
and satisfies {\rm Proposition \ref{prop:3.1}} with a front at point $(\hat{x},y_{\mathcal{I}})\in\Omega$.
For a given $k=1,2$, denote
\begin{equation}\label{eq:4.1}
U^{(\boldsymbol{\mu}),\nu}(\hat{x}+h,y)=
\begin{cases}
U_{R}=(\rho_{R},v_{R})^{\top}\qquad &\mbox{for $y>y_{\mathcal{I}}+\dot{y}_{\alpha_k}h$},\\[3pt]
U_{L}=(\rho_{L},v_{L})^{\top}\qquad &\mbox{for $y<y_{\mathcal{I}}+\dot{y}_{\alpha_k}h$},
\end{cases}
\end{equation}
where $U_{L}:=U^{(\boldsymbol{\mu}),\nu}(\hat{x}, y_{\mathcal{I}}-)$, $U_{R}:=U^{(\boldsymbol{\mu}),\nu}(\hat{x}, y_{\mathcal{I}}+)$, $h>0$,
and $|\dot{y}_{\alpha_k}|<\hat{\lambda}$.
Let $\mathcal{S}$ be a uniformly Lipschitz continuous semigroup obtained in {\rm Proposition \ref{prop:3.3}}.
If $\|\boldsymbol{\mu}\|\leq \|\boldsymbol{\bar{\mu}}^{*}_{0}\|$ with $\boldsymbol{\bar{\mu}}^{*}_{0}$
given in {\rm Proposition \ref{prop:3.1}} and $h>0$ is sufficiently small, then

\begin{enumerate}
\item[(i)] when $U_{L}$ and $U_{R}$ are connected by a $k$-th shock wave-font $\alpha_{k}\in S^{(\boldsymbol{\mu})}_{k}$
and $|\dot{y}_{\alpha_k}-\sigma^{(\boldsymbol{\mu})}_{k}(\alpha_{k})|<2^{-\nu}$ for $\sigma^{(\boldsymbol{\mu})}_{k}(\alpha_{k})$ as the speed of $k$-th shock wave,
\begin{eqnarray}\label{eq:4.2}
\|\mathcal{S}_{h}(U^{(\boldsymbol{\mu}),\nu}(\hat{x},\cdot))-U^{(\boldsymbol{\mu}),\nu}(\hat{x}+h,\cdot)\|_{L^1(y_{\mathcal{I}}-\eta, y_{\mathcal{I}}+\eta)}
\leq C_{\mathcal{I}}\big(\|\boldsymbol{\mu}\|+2^{-\nu}\big)|\alpha_{k}-1|h;
\end{eqnarray}

\item[(ii)] when $U_{L}$ and $U_{R}$ are connected by a $k$-th rarefaction front $\alpha_{k}\in R^{(\boldsymbol{\mu})}_{k}$
with $|\dot{y}_{\alpha_k}-\lambda^{(\boldsymbol{\mu})}_{k}(U_{R},\boldsymbol{\mu})|<2^{-\nu}$ for $\lambda^{(\boldsymbol{\mu})}_{k}(U_{R},\boldsymbol{\mu})$ as the speed of the rarefaction wave,
\begin{eqnarray}\label{eq:4.3}
\begin{split}
&\|\mathcal{S}_{h}(U^{(\boldsymbol{\mu}),\nu}(\hat{x},\cdot))-U^{(\boldsymbol{\mu}),\nu}(\hat{x}+h,\cdot)\|_{L^1(y_{\mathcal{I}}-\eta, y_{\mathcal{I}}+\eta)}\\[3pt]
&\leq C_{\mathcal{I}}\big(\|\boldsymbol{\mu}\|+2^{-\nu}+(1+\|\boldsymbol{\mu}\|)\nu^{-1}\big)|\alpha_{k}-1|h;
\end{split}
\end{eqnarray}

\item[(iii)] when $U_{L}$ and $U_{R}$ are connected by a non-physical wave $\alpha_{NP}\in NP^{(\boldsymbol{\mu})}$,
\begin{eqnarray}\label{eq:4.4}
\|\mathcal{S}_{h}(U^{(\boldsymbol{\mu}),\nu}(\hat{x},\cdot))-U^{(\boldsymbol{\mu}),\nu}(\hat{x}+h,\cdot)\|_{L^1(y_{\mathcal{I}}-\eta, y_{\mathcal{I}}+\eta)}
\leq C_{\mathcal{I}} \alpha_{NP}\,h,
\end{eqnarray}
where constant $C_{\mathcal{I}}>0$ is independent of  $(\boldsymbol{\mu},h)$, and constant $\eta$ satisfies $\eta>\hat{\lambda}h$.
\end{enumerate}
\end{lemma}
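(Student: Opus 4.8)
The plan is, for each of the three cases, to reduce the local comparison to a statement about a single $\boldsymbol{\mu}$-wave front of $U^{(\boldsymbol{\mu}),\nu}$ versus the self-similar Riemann solution of system \eqref{eq:1.19} with the same left and right states, and then to bound the resulting $L^1$-discrepancy on $(y_{\mathcal{I}}-\eta,y_{\mathcal{I}}+\eta)$ by a finite sum of ``width $\times$ amplitude'' contributions, each of width $O(h)$. First I would localize: since there is a single front of $U^{(\boldsymbol{\mu}),\nu}$ issuing from $(\hat{x},y_{\mathcal{I}})\in\Omega$ and $h$ is small, the trace $U^{(\boldsymbol{\mu}),\nu}(\hat{x},\cdot)$ is piecewise constant and, on $(y_{\mathcal{I}}-\eta,y_{\mathcal{I}}+\eta)$, equals $U_L$ for $y<y_{\mathcal{I}}$ and $U_R$ for $y>y_{\mathcal{I}}$ with the single jump at $y_{\mathcal{I}}$. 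By Proposition \ref{prop:3.2}(iii), for $h$ small, $\mathcal{S}_{h}(U^{(\boldsymbol{\mu}),\nu}(\hat{x},\cdot))$ is obtained by piecing together the Riemann solutions of \eqref{eq:1.19} at the jumps of the trace; since $\eta>\hat{\lambda}h$ and $\hat{\lambda}$ exceeds all characteristic speeds, the neighbouring fans do not reach $(y_{\mathcal{I}}-\eta,y_{\mathcal{I}}+\eta)$, so on that interval $\mathcal{S}_{h}(U^{(\boldsymbol{\mu}),\nu}(\hat{x},\cdot))$ coincides with the self-similar Riemann solution of \eqref{eq:1.19} with states $U_L,U_R$ centered at $y_{\mathcal{I}}$, while $U^{(\boldsymbol{\mu}),\nu}(\hat{x}+h,\cdot)$ there is the single-front configuration \eqref{eq:4.1}. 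Both functions equal $U_L$ near the left endpoint and $U_R$ near the right endpoint, so their $L^1$-difference is supported in $\{|y-y_{\mathcal{I}}|\le\hat{\lambda}h\}$.

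For cases (i) and (ii) one has $U_R=\Phi^{(\boldsymbol{\mu})}_{k}(\alpha_k;U_L,\boldsymbol{\mu})$, and writing the Riemann solution of \eqref{eq:1.19} as $U_R=\Phi(\boldsymbol{\beta};U_L)$, Proposition \ref{prop:2.1} gives $\beta_k=\alpha_k+O(1)|\alpha_k-1|\|\boldsymbol{\mu}\|$ and $\beta_j=1+O(1)|\alpha_k-1|\|\boldsymbol{\mu}\|$ for $j\neq k$; in particular the intermediate state of the $\boldsymbol{\mu}=\boldsymbol{0}$ fan differs from $U_L$ (resp.\ $U_R$) on the short side by $O(|\alpha_k-1|\|\boldsymbol{\mu}\|)$, so this fan is a $k$-wave of strength close to $\alpha_k$ plus a spurious $j$-wave of amplitude $O(|\alpha_k-1|\|\boldsymbol{\mu}\|)$. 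I would then split the difference on $\{|y-y_{\mathcal{I}}|\le\hat{\lambda}h\}$ into three pieces: (a) the mismatch of the prescribed front speed $\dot{y}_{\alpha_k}$ with the genuine $\boldsymbol{\mu}$-system $k$-speed, which is $<2^{-\nu}$ over a strip of width $2^{-\nu}h$ across a jump of size $O(|\alpha_k-1|)$; (b) the mismatch of the $\boldsymbol{\mu}$-system $k$-wave with the $\boldsymbol{\mu}=\boldsymbol{0}$ $k$-wave of the \emph{same type} --- whose end states and (for shocks) speeds differ by $O(|\alpha_k-1|\|\boldsymbol{\mu}\|)$, and for which, in the rarefaction case, the single approximating jump and the genuine continuous fan are compared over the fan width $O(|\alpha_k-1|h)$ with amplitude $O(|\alpha_k-1|)$, this last contribution being $\le O\big((1+\|\boldsymbol{\mu}\|)\nu^{-1}|\alpha_k-1|h\big)$ by the smallness $|\alpha_k-1|<\bar{C}_{3}\nu^{-1}$ of \eqref{eq:3.5}; and (c) the spurious $j$-wave of the $\boldsymbol{\mu}=\boldsymbol{0}$ fan, of amplitude $O(|\alpha_k-1|\|\boldsymbol{\mu}\|)$ over width $O(h)$. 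Summing these ``width $\times$ amplitude'' bounds yields \eqref{eq:4.2} and \eqref{eq:4.3}. For case (iii), $|U_R-U_L|=\alpha_{NP}$ and, by \eqref{eq:2.53}, the Riemann solution of \eqref{eq:1.19} joining $U_L$ to $U_R$ has total strength $O(\alpha_{NP})$ and, at ``time'' $h$, is confined to $\{|y-y_{\mathcal{I}}|\le Ch\}$, whereas $U^{(\boldsymbol{\mu}),\nu}(\hat{x}+h,\cdot)$ equals $U_L,U_R$ separated by the non-physical front at $y_{\mathcal{I}}+\hat{\lambda}h$; both transition from $U_L$ to $U_R$ within $\{|y-y_{\mathcal{I}}|\le\hat{\lambda}h\}$, where they differ by at most $\alpha_{NP}$, so integrating over this strip gives \eqref{eq:4.4}.

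Throughout, $C_{\mathcal{I}}$ depends only on $\hat{\lambda}$, on the bounds defining $D$ (hence on $a_\infty,\rho_*,\rho^*$), and on the $O(1)$-constants in Propositions \ref{prop:2.1}--\ref{prop:2.2}, so it is independent of $\boldsymbol{\mu}$ and $h$. The step I expect to require the most care is the quantitative estimate in (b): showing that the speed of the $\boldsymbol{\mu}$-system $k$-shock differs from that of the $\boldsymbol{\mu}=\boldsymbol{0}$ $\beta_k$-shock by $O(|\alpha_k-1|\|\boldsymbol{\mu}\|)$, which follows from the $C^2$-dependence on $\boldsymbol{\mu}$ of the shock curves in Lemma \ref{lem:2.3} together with the Rankine--Hugoniot relations \eqref{eq:2.10} and Lemma \ref{lem:2.1}; and, in the rarefaction case, absorbing the error between a single approximating jump and the genuine fan --- which is quadratic in the strength --- into a bound linear in $|\alpha_k-1|$ by means of $|\alpha_k-1|<\bar{C}_{3}\nu^{-1}$.
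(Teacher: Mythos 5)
Your proposal follows essentially the same route as the paper's proof: localize using Propositions \ref{prop:3.2}(iii) and \ref{prop:3.3}, convert the comparison into the wave-strength estimates of Proposition \ref{prop:2.1} (including \eqref{eq:2.53} for the non-physical front), and bound the $L^1$-difference by width-times-amplitude contributions of size $O(h)$, absorbing the quadratic rarefaction error via $|\alpha_k-1|\le \bar{C}_3\nu^{-1}$ from \eqref{eq:3.5}. One intermediate claim is too strong: the $k$-shock speeds of the two systems differ only by $O(1)\|\boldsymbol{\mu}\|$, not $O(1)|\alpha_k-1|\|\boldsymbol{\mu}\|$, since at $\alpha_k=1$ they degenerate to $\lambda^{(\boldsymbol{\mu})}_k(U_L,\boldsymbol{\mu})$ and $\lambda_k(U_L)$, which already differ by $O(\|\boldsymbol{\mu}\|)$; this is harmless, because the jump across the corresponding strip has amplitude $O(|\alpha_k-1|)$, so the correct $O(\|\boldsymbol{\mu}\|)$ estimate---which is exactly what the paper derives and uses---still yields \eqref{eq:4.2}--\eqref{eq:4.3}.
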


\begin{proof}  We divide the proof into three steps.

\smallskip
1. Without loss of the generality, we consider the case: $k=1$ only, since the case: $k=2$ can be dealt with in the same way.

By Proposition \ref{prop:3.3}, we know that, for sufficiently small $h>0$, $\mathcal{S}_{h}(U^{(\boldsymbol{\mu}),\nu}(\hat{x},\cdot))$
is the Riemann solution of system \eqref{eq:1.19}, consisting of three constant states $U_{L}$, $U_{M}$, and $U_{R}$ with
$U_{M}=(\rho_{M}, v_{M})^{\top}$. The three states are separated by the elementary waves $\beta_1$ and $\beta_2$.
Then we have
\begin{eqnarray}\label{eq:4.5}
\Phi(\boldsymbol{\beta};U_{L})=\Phi^{(\boldsymbol{\mu})}_{1}(\alpha_1; U_{L}, \boldsymbol{\mu})\qquad \mbox{for $\boldsymbol{\beta}=(\beta_1,\beta_2)$}.
\end{eqnarray}

By Proposition \ref{prop:2.1}, if $\|\boldsymbol{\mu}\|$ is sufficiently small, equation \eqref{eq:4.5} admits a unique
solution  $\boldsymbol{\beta}=(\beta_1,\beta_2)$ so that
\begin{eqnarray}\label{eq:4.6}
\beta_1=\alpha_1+O(1)\|\boldsymbol{\mu}\||\alpha_{1}-1|, \qquad\,\,  \beta_2=1+O(1)\|\boldsymbol{\mu}\||\alpha_{1}-1|.
\end{eqnarray}
Since $\alpha_1\in S^{(\boldsymbol{\mu})}_{1}$, $\beta_1$ is also a 1-shock wave, while $\beta_2$ may
be either a 2-shock wave or 2-rarefaction wave (see Fig. \ref{fig4.1} or Fig. \ref{fig4.2} below).

Let us first consider the case that both $\beta_1$ and  $\beta_2$ are shock waves.
As shown in Fig. \ref{fig4.1}, when $\eta>\hat{\lambda}h$, and $h>0$ is sufficiently small,
interval $(y_{I}-\eta, y_{I}+\eta)$ can be divided into two sub-intervals $I$ and $II$ by fronts $\alpha_1$, $\beta_1$,
and  $\beta_2$.

\begin{figure}[ht]
\begin{minipage}[t]{0.49\textwidth}
\centering
\begin{tikzpicture}[scale=0.6]
\draw [line width=0.04cm](-4,2.5)--(-4,-4.0);
\draw [line width=0.04cm](0.5,2.5)--(0.5,-4.0);
\draw [thin][<->](-3.9,2.1)--(0.4,2.1);

\draw [thick](-4,0)--(0.5, 1.0);
\draw [thick][red](-4,0)--(0.5, -1.2);
\draw [thick](-4,0)--(0.5, -3.1);

\draw [thick][blue](0.5,-1.2)--(0.5, 1.0);
\draw [thick][green](0.5,-3.1)--(0.5, -1.2);

\node at (2.6, 2) {$$};
\node at (-1.8, 2.7){$h$};
\node at (-5.1, 0) {$(\hat{x}, y_{\mathcal{I}})$};
\node at (-4, -4.9) {$x=\hat{x}$};
\node at (0.5, -4.9) {$x=\hat{x}+h$};

\node at (0.0, 1.3) {$\beta_{2}$};
\node at (0.0, -0.5) {$\alpha_1$};
\node at (0.0, -2.3) {$\beta_{1}$};

\node at (1.2, 0){$II$};
\node at (1.0, -2.4){$I$};

\node at (-1.8, 1.2){$U_{R}$};
\node at (-1.8, -0.1){$U_{M}$};
\node at (-1.8, -2.6){$U_{L}$};
\end{tikzpicture}
\caption{Comparison of the Riemann solvers for $\alpha_1\in S^{(\boldsymbol{\mu})}_1$ and $\beta_2$ being a $2$-shock wave}\label{fig4.1}
\end{minipage}
\begin{minipage}[t]{0.49\textwidth}
\centering
\begin{tikzpicture}[scale=0.6]
\draw [line width=0.04cm](-4,2.5)--(-4,-4.0);
\draw [line width=0.04cm](0.5,2.5)--(0.5,-4.0);
\draw [thin][<->](-3.9,2.1)--(0.4,2.1);

\draw [thin](-4,0)--(0.5, 1.2);
\draw [thin](-4,0)--(0.5, 1.0);
\draw [thin](-4,0)--(0.5, 0.8);
\draw [thin](-4,0)--(0.5, 0.6);
\draw [thick][red](-4,0)--(0.5, -1.2);
\draw [thick](-4,0)--(0.5, -3.1);

\draw [thick][green](0.5,0.6)--(0.5, 1.2);
\draw [thick][blue](0.5,-1.2)--(0.5, 0.6);
\draw [thick][green](0.5,-3.1)--(0.5, -1.2);

\node at (2.6, 2) {$$};
\node at (-1.8, 2.7){$h$};
\node at (-5.1, 0) {$(\hat{x}, y_{\mathcal{I}})$};
\node at (-4, -4.9) {$x=\hat{x}$};
\node at (0.5, -4.9) {$x=\hat{x}+h$};

\node at (-0.1, 1.5) {$\beta_{2}$};
\node at (-0.1, -0.5) {$\alpha_1$};
\node at (-0.1, -2.2) {$\beta_{1}$};

\node at (1.2, 0.8){$III$};
\node at (1.2, -0.2){$II$};
\node at (1.0, -2.4){$I$};

\node at (-1.6, 1.3){$U_{R}$};
\node at (-1.6, -0.2){$U_{M}$};
\node at (-1.6, -2.6){$U_{L}$};

\end{tikzpicture}
\caption{Comparison of the Riemann solvers for $\alpha_1\in S^{(\boldsymbol{\mu})}_1$ and $\beta_2$ being a $2$-rarefaction wave}\label{fig4.2}
\end{minipage}
\end{figure}

Denote the speeds of $\beta_1$ and $\beta_2$ by $\sigma_{1}(\beta_1)$ and $\sigma_{1}(\beta_2)$. By the \emph{Rankine-Hugoniot} (R-H) conditions:
\begin{eqnarray*}
&&\sigma_1(\beta_1)=\frac{\rho_{M}v_{M}-\rho_{L}v_{L}}{\rho_{M}-\rho_{L}}=v_{L}+\frac{\beta_1\varphi_{1}(\beta_1)}{\beta_1-1},\\
&&\sigma_2(\beta_2)=\frac{\rho_{R}v_{R}-\rho_{M}v_{M}}{\rho_{R}-\rho_{M}}=v_{M}+\frac{\beta_2\varphi_{2}(\beta_2)}{\beta_2-1}.
\end{eqnarray*}
Moreover, for $\sigma^{(\boldsymbol{\mu})}_1(\alpha_1)$, when $\|\boldsymbol{\mu}\|$ is sufficiently small, it follows from Lemma \ref{lem:2.6} that
\begin{eqnarray*}
\begin{split}
\sigma^{(\boldsymbol{\mu})}_1(\alpha_1)&=\frac{\rho_{R}v_{R}-\rho_{L}v_{L}}{\rho_{R}\sqrt{1-\tau^2B^{(\epsilon)}(\rho_{R},v_{R},\epsilon)}-\rho_{L}\sqrt{1-\tau^2B^{(\epsilon)}(\rho_{L},v_{L},\epsilon)}}\\[3pt]
&=\frac{\alpha_1\varphi^{(\boldsymbol{\mu})}_{1}(\alpha_1; U_{L},\boldsymbol{\mu})+(\alpha_1-1)v_{L}}{\alpha_1\sqrt{1-\tau^2B^{(\epsilon)}(\rho_{L}\alpha_1,\varphi^{(\boldsymbol{\mu})}_{1}+v_{L},\epsilon)}-\sqrt{1-\tau^2B^{(\epsilon)}(\rho_{L},v_{L},\epsilon)}}\\[3pt]
&=\frac{\alpha_1\varphi_{1}(\alpha_1)}{\alpha_1-1}+v_{L}+O(1)\|\boldsymbol{\mu}\|.
\end{split}
\end{eqnarray*}
Therefore, using \eqref{eq:4.6} and Propositions \ref{prop:3.1}--\ref{prop:3.2}, we have
\begin{eqnarray}
&&\sigma^{(\boldsymbol{\mu})}_1(\alpha_1)-\sigma_1(\beta_1)=\frac{\alpha_1\varphi_{1}(\alpha_1)}{\alpha_1-1}+v_{L}+O(1)\|\boldsymbol{\mu}\|-v_{L}
-\frac{\beta_1\varphi_{1}(\beta_1)}{\beta_1-1}
=O(1)\|\boldsymbol{\mu}\|,\qquad\qquad\nonumber\\
&&|\sigma_2(\beta_2)-\sigma^{(\boldsymbol{\mu})}_1(\alpha_1)|\leq |\sigma_2(\beta_2)|
+|\sigma^{(\boldsymbol{\mu})}_1(\alpha_1)|<\infty.\nonumber
\end{eqnarray}
Then
\begin{eqnarray*}
&&|I|\leq |\sigma^{(\boldsymbol{\mu})}_1(\alpha_1)-\sigma_1(\beta_1)|h+|\dot{y}_{\alpha_1}-\sigma^{(\boldsymbol{\mu})}_1(\alpha_1)|h
  \leq \big(O(1)\|\boldsymbol{\mu}\|+2^{-\nu}\big)h,\\
&&|II|\leq |\sigma_2(\beta_2)-\sigma^{(\boldsymbol{\mu})}_1(\alpha_1)|h+|\dot{y}_{\alpha_1}-\sigma^{(\boldsymbol{\mu})}_1(\alpha_1)|h\leq \big(O(1)+2^{-\nu}\big)h.
\end{eqnarray*}

On the other hand, on interval $I$,
\begin{eqnarray*}
|\mathcal{S}_{h}U^{(\boldsymbol{\mu}),\nu}(\hat{x},y)-U^{(\boldsymbol{\mu}),\nu}(\hat{x}+h,y)|=|\rho_{R}-\rho_{L}|+|v_{R}-v_{L}|\leq O(1)|\alpha_1-1|,
\end{eqnarray*}
and, on interval $II$,
\begin{align*}
|\mathcal{S}_{h}U^{(\boldsymbol{\mu}),\nu}(\hat{x},y)-U^{(\boldsymbol{\mu}),\nu}(\hat{x}+h,y)|&=|\rho_{R}-\rho_{M}|+|v_{R}-v_{M}|\\[2pt]
&\leq O(1)|\beta_2-1|
\leq O(1)\|\boldsymbol{\mu}\||\alpha_1-1|.
\end{align*}

Base on these estimates, we finally obtain
\begin{align*}
\begin{split}
&\|\mathcal{S}_{h}(U^{(\boldsymbol{\mu}),\nu}(\hat{x},\cdot))-U^{(\boldsymbol{\mu}),\nu}(\hat{x}+h,\cdot)\|_{L^1(y_{\mathcal{I}}-\eta,y_{\mathcal{I}}+\eta)}\\[3pt]
& =\|\mathcal{S}_{h}(U^{(\boldsymbol{\mu}),\nu}(\hat{x},\cdot))-U^{(\boldsymbol{\mu}),\nu}(\hat{x}+h,\cdot)\|_{L^1(I\cup II)}\\[3pt]
& \leq O(1)|I||\alpha_1-1|+O(1)|II|\|\boldsymbol{\mu}\||\alpha_1-1|\\[3pt]
& \leq O(1)\big(\|\boldsymbol{\mu}\|+2^{-\nu}\big)|\alpha_1-1|h,
\end{split}
\end{align*}
which completes the proof of \rm(i) for the case that $\beta_2$ is a shock wave.

\smallskip
Next, we consider the case that $\beta_2$ is a rarefaction wave as shown in Fig. \ref{fig4.2}. In this case,
\begin{eqnarray}\label{eq:4.8}
\mathcal{S}_{h}(U^{(\boldsymbol{\mu}),\nu}(\hat{x},y))=
\begin{cases}
U_{R},&\quad\ \xi\in[\lambda_{2}(U_{R}),\frac{\eta}{h}),\\[3pt]
\Phi_{2}(\beta_{2}(\xi);U_{M}),&\quad\  \xi\in[\lambda_{2}(U_{M}),\lambda_{2}(U_{R})),\\[3pt]
U_{M}, &\quad\ \xi\in[\sigma_1(\beta_1),\lambda_{2}(U_{M})),\\[3pt]
U_{L},&\quad\ \xi\in(-\frac{\eta}{h}, \sigma_1(\beta_1)),
\end{cases}
\end{eqnarray}
where $\xi=\frac{y-y_{I}}{h}$, $\beta_{2}(\lambda_{2}(U_{M}))=1$, and $\beta_{2}(\lambda_{2}(U_{R}))=\beta_2$.

Following the same argument as done for the case that $\beta_2$ is a shock wave, we have 
\begin{eqnarray}\label{eq:4.9}
\|\mathcal{S}_{h}(U^{(\boldsymbol{\mu}),\nu}(\hat{x},\cdot))-U^{(\boldsymbol{\mu}),\nu})(\hat{x}+h,\cdot)\|_{L^1(I)}
\leq O(1)(\|\boldsymbol{\mu}\|+2^{-\nu})|\alpha_1-1|h.
\end{eqnarray}

For the length of interval $II$, it follows from Proposition \ref{prop:3.1} that 
\begin{align*}
|II|\leq |\dot{y}_{\alpha_1}-\xi_0|\leq \big(2^{-\nu}+|\sigma^{(\boldsymbol{\mu})}_{1}-\lambda_2(U_{M})|\big)h
\leq \big(2^{-\nu}+O(1)\big)h.
\end{align*}
Moreover, on $II$, by \eqref{eq:4.6} and Proposition \ref{prop:3.2}, we obtain
\begin{align*}
\begin{split}
|\mathcal{S}_{h}(U^{(\boldsymbol{\mu}),\nu}(\hat{x},y))-U^{(\boldsymbol{\mu}),\nu}(\hat{x}+h,y)|&=|\rho_{R}-\rho_{M}|+|v_{R}-v_{M}|\\[3pt]
&\leq O(1)|\beta_2-1|
\leq O(1)|\alpha_1-1|\|\boldsymbol{\mu}\|,
\end{split}
\end{align*}
so that
\begin{align}\label{eq:4.10}
\|\mathcal{S}_{h}(U^{(\boldsymbol{\mu}),\nu}(\hat{x},\cdot))-U^{(\boldsymbol{\mu}),\nu}(\hat{x}+h,\cdot)\|_{L^1(II)}
\leq O(1)\big(\|\boldsymbol{\mu}\|+2^{-\nu}\big)|\alpha_1-1|h.
\end{align}

On interval $III$, it is direct to see
\begin{align*}
|\mathcal{S}_{h}(U^{(\boldsymbol{\mu}),\nu}(\hat{x},y))-\,U^{(\boldsymbol{\mu}),\nu}(\hat{x}+h,y)|\,&=|\rho(\xi)-\rho_{R}|+|v(\xi)-v_{R}|\\[3pt]
&\leq O(1)|\xi_0-\xi_{1}|=O(1)|\lambda_{2}(U_{M})-\lambda_{2}(U_R)|\\[3pt]
&\leq O(1)|\beta_2-1|\leq O(1)|\alpha_1-1|\|\boldsymbol{\mu}\|,\\[3pt]
|III|\leq O(1)|\xi_0-\xi_{1}|\leq O(1)|\lambda_1(U_{M}&)-\lambda_{1}(U_R)|\leq O(1)h,
\end{align*}
so that
\begin{align}\label{eq:4.11}
\|\mathcal{S}_{h}(U^{(\boldsymbol{\mu}),\nu}(\hat{x},\cdot))-U^{(\boldsymbol{\mu}),\nu}(\hat{x}+h,\cdot)\|_{L^1(III)}
\leq O(1)\|\boldsymbol{\mu}\||\alpha_1-1|h.
\end{align}

Finally, combining estimates \eqref{eq:4.9}--\eqref{eq:4.11} together, we have
\begin{align*}
\begin{split}
&\|\mathcal{S}_{h}(U^{(\boldsymbol{\mu}),\nu}(\hat{x},y))-U^{(\boldsymbol{\mu}),\nu}(\hat{x}+h,y)\|_{L^1(y_{I}-\eta,y_{I}+\eta)}\\[3pt]
&=\|\mathcal{S}_{h}U^{(\boldsymbol{\mu}),\nu}(\hat{x},y)-U^{(\boldsymbol{\mu}),\nu}(\hat{x}+h,y)\|_{L^1(I\cup II\cup III)}\\[3pt]
&\leq O(1)\big(\|\boldsymbol{\mu}\|+2^{-\nu}\big)|\alpha_1-1|h,
\end{split}
\end{align*}
which gives estimate \eqref{eq:4.2} for the case that $\beta_2$ is a rarefaction wave.

\medskip
2. Without loss of the generality, similarly, we consider the case: $k=1$ only.
The Riemann solver $\mathcal{S}_{h}(U^{(\boldsymbol{\mu}),\nu}(\hat{x},y))$
consists of three constant states $U_{L}$, $U_{M}$, and $U_{R}$,
which are separated by the elementary waves $\beta_1$ and $\beta_2$
with equation \eqref{eq:4.5} and estimates \eqref{eq:4.6}.
Moreover, it follows from \eqref{eq:4.6} that $\beta_1$ is a $1$-rarefaction wave.
However, $\beta_2$ may be either a $2$-shock wave or a $2$-rarefaction wave (see Fig. \ref{fig4.3} or Fig. \ref{fig4.4} below).
\begin{figure}[ht]
\begin{minipage}[t]{0.49\textwidth}
\centering
\begin{tikzpicture}[scale=0.6]
\draw [line width=0.04cm](-4,2.5)--(-4,-4.0);
\draw [line width=0.04cm](0.5,2.5)--(0.5,-4.0);
\draw [thin][<->](-3.9,2.1)--(0.4,2.1);

\draw [thick](-4,0)--(0.5, 1.0);
\draw [thick][blue](-4,0)--(0.5, -1.2);
\draw [thick][blue](-4,0)--(0.5, -1.4);
\draw [thick][blue](-4,0)--(0.5, -1.6);
\draw [thick][blue](-4,0)--(0.5, -1.8);
\draw [thick][red](-4,0)--(0.5, -3.1);

\draw [thick][red](0.5,-1.2)--(0.5, 1.0);
\draw [line width=0.05cm](0.5,-1.9)--(0.5, -1.2);
\draw [thick][green](0.5,-3.1)--(0.5, -1.8);

\node at (2.6, 2) {$$};
\node at (-1.8, 2.7){$h$};
\node at (-5.1, 0) {$(\hat{x}, y_{\mathcal{I}})$};
\node at (-4, -4.9) {$x=\hat{x}$};
\node at (0.5, -4.9) {$x=\hat{x}+h$};

\node at (0.0, 1.4) {$\beta_{2}$};
\node at (0.0, -0.5) {$\beta_{1}$};
\node at (0.0, -2.3) {$\alpha_1$};

\node at (1.3, -0.2){$III$};
\node at (1.1, -1.5){$II$};
\node at (1.0, -2.5){$I$};

\node at (-1.8, 1.2){$U_{R}$};
\node at (-1.8, -0.1){$U_{M}$};
\node at (-1.8, -2.6){$U_{L}$};

\end{tikzpicture}
\caption{Comparison of the Riemann solvers for $\alpha_1\in R^{(\boldsymbol{\mu})}_1$ and $\beta_2$ being $2$-shock wave}\label{fig4.3}
\end{minipage}
\begin{minipage}[t]{0.49\textwidth}
\centering
\begin{tikzpicture}[scale=0.6]
\draw [line width=0.04cm](-4,2.5)--(-4,-4.0);
\draw [line width=0.04cm](0.5,2.5)--(0.5,-4.0);
\draw [thin][<->](-3.9,2.1)--(0.4,2.1);

\draw [thin](-4,0)--(0.5, 1.2);
\draw [thin](-4,0)--(0.5, 1.0);
\draw [thin](-4,0)--(0.5, 0.8);
\draw [thin](-4,0)--(0.5, 0.6);
\draw [thick][blue](-4,0)--(0.5, -1.2);
\draw [thick][blue](-4,0)--(0.5, -1.4);
\draw [thick][blue](-4,0)--(0.5, -1.6);
\draw [thick][blue](-4,0)--(0.5, -1.8);
\draw [thick][red](-4,0)--(0.5, -3.1);

\draw [thick][green](0.5,0.6)--(0.5, 1.2);
\draw [thick][red](0.5,-1.2)--(0.5, 0.6);
\draw [line width=0.05cm](0.5,-1.9)--(0.5, -1.2);
\draw [thick][green](0.5,-3.1)--(0.5, -1.8);

\node at (2.6, 2) {$$};
\node at (-1.8, 2.7){$h$};
\node at (-5.1, 0) {$(\hat{x}, y_{\mathcal{I}})$};
\node at (-4, -4.9) {$x=\hat{x}$};
\node at (0.5, -4.9) {$x=\hat{x}+h$};

\node at (0.0, 1.5) {$\beta_{2}$};
\node at (0.0, -0.5) {$\beta_{1}$};
\node at (0.0, -2.3) {$\alpha_1$};

\node at (1.2, 0.9){$IV$};
\node at (1.2, -0.3){$III$};
\node at (1.0, -1.5){$II$};
\node at (0.9, -2.6){$I$};

\node at (-1.6, 1.2){$U_{R}$};
\node at (-1.6, -0.2){$U_{M}$};
\node at (-1.6, -2.6){$U_{L}$};
\end{tikzpicture}
\caption{Comparison of the Riemann solvers for $\alpha_1\in R^{(\boldsymbol{\mu})}_1$ and $\beta_2$ being a $2$-rarefaction wave}\label{fig4.4}
\end{minipage}
\end{figure}

If $\beta_2$ is a shock wave, then
\begin{eqnarray}\label{eq:4.12}
\mathcal{S}_{h}(U^{(\boldsymbol{\mu}),\nu}(\hat{x},y))=
\begin{cases}
U_{R},&\qquad \xi\in[\sigma_{2}(\beta_2),\frac{\eta}{h}),\\[3pt]
U_{M}, &\qquad \xi\in[\lambda_{1}(U_{M}),\sigma_{2}(\beta_2)),\\[3pt]
\Phi_{1}(\beta_{1}(\xi);U_{L}),&\qquad\xi\in[\lambda_{1}(U_{L}),\lambda_{1}(U_{M})),\\[3pt]
U_{L},&\qquad \xi\in(-\frac{\eta}{h}, \lambda_1(U_L)),
\end{cases}
\end{eqnarray}
where $\xi=\frac{y-y_{\mathcal{I}}}{h}$, $\beta_{1}(\lambda_{1}(U_{L}))=1$, $\beta_{1}(\lambda_{1}(U_{M}))=\beta_1$,
and $\sigma_{2}(\beta_2)$ is the speed of $\beta_2$. Then interval $(y_{\mathcal{I}}-\eta, y_{\mathcal{I}}+\eta)$
is divided into three subintervals $I$, $II$, and $III$.

Using \eqref{eq:2.41}
and Proposition \ref{prop:3.1}, a direct computation leads to
\begin{align*}
\begin{split}
|I|&=|\lambda_1(U_L)-\dot{y}_{\alpha_1}|h\leq \big(|\lambda_1(U_L)-\lambda^{(\boldsymbol{\mu})}_{1}(U_{R}, \boldsymbol{\mu})|+2^{-\nu}\big)h\\[3pt]
&\leq \big(|\lambda_1(U_L)-\lambda_{1}(U_{R})|+O(1)\|\boldsymbol{\mu}\|+2^{-\nu}\big)h\\[3pt]
&\leq O(1)\big(|\alpha_1-1|+\|\boldsymbol{\mu}\|+2^{-\nu}\big)h\\[3pt]
&\leq O(1)\big(\|\boldsymbol{\mu}\|+\nu^{-1}+2^{-\nu}\big)h.
\end{split}
\end{align*}
Moreover, on $I$,
\begin{align*}
|\mathcal{S}_{h}(U^{(\boldsymbol{\mu}),\nu}(\hat{x},y))-U^{(\boldsymbol{\mu}),\nu}(\hat{x}+h,y)|&=|\rho_{R}-\rho_{L}|+|v_{R}-v_{L}|
\leq O(1)|\alpha_1-1|.
\end{align*}
Therefore, we have
\begin{eqnarray}\label{eq:4.13}
\|\mathcal{S}_{h}(U^{(\boldsymbol{\mu}),\nu}(\hat{x},\cdot))-U^{(\boldsymbol{\mu}),\nu}(\hat{x}+h,\cdot)\|_{L^1(I)}
\leq O(1)\big(\|\boldsymbol{\mu}\|+2^{-\nu}+\nu^{-1}\big)|\alpha_1-1|h.
\end{eqnarray}

Next, it follows from \eqref{eq:4.6} that
\begin{eqnarray*}
|II|=|\lambda_1(U_{M})-\lambda_1(U_{L})|h\leq O(1)|\beta_1-1|h=O(1)\big(1+\|\boldsymbol{\mu}\|\big)|\alpha_1-1|h,
\end{eqnarray*}
and, on  interval $II$,
\begin{align*}
\begin{split}
&|\mathcal{S}_{h}(U^{(\boldsymbol{\mu}),\nu}(\hat{x},y))-U^{(\boldsymbol{\mu}),\nu}(\hat{x}+h,y)|\\[3pt]
&=|\Phi^{(1)}_{1}(\beta_{1}(\xi);U_{L})-\rho_{R}|+|\Phi^{(2)}_{1}(\beta_{1}(\xi);U_{L})-v_{R}|\\[3pt]
&\leq |\Phi^{(1)}_{1}(\beta_{1}(\xi);U_{L})-\rho_{M}|+|\rho_{M}-\rho_{R}|+|\Phi^{(2)}_{1}(\beta_{1}(\xi);U_{L})-v_{M}|+|v_{M}-v_{R}|\\[3pt]
&\leq O(1)\big(|\xi-\xi_{1}|+|\beta_2-1|\big)\\[3pt]
&\leq O(1)\big(|\beta_1-1|+|\beta_2-1|\big)\\[3pt]
&\leq O(1)\big(1+\|\boldsymbol{\mu}\|\big)|\alpha_1-1|.
\end{split}
\end{align*}
Thus, by Proposition \ref{prop:3.1}, we obtain
\begin{align}\label{eq:4.14}
\|\mathcal{S}_{h}(U^{(\boldsymbol{\mu}),\nu}(\hat{x},\cdot))-U^{(\boldsymbol{\mu}),\nu}(\hat{x}+h,\cdot)\|_{L^1(II)}
&\leq O(1)\big(1+\|\boldsymbol{\mu}\|\big)^2|\alpha_1-1|^2h\nonumber\\[3pt]
&\leq O(1)\nu^{-1}|\alpha_1-1|h.
\end{align}

Finally, it follows from Proposition \ref{prop:3.1} and estimates \eqref{eq:4.6} that
\begin{eqnarray*}
|III|=|\lambda_1(U_{M})-\sigma_{2}(\beta_2)|h
=\Big|v_{M}-\frac{1}{a_{\infty}}-\frac{\beta_2 v_{R}-v_{M}}{\beta_2-1}\Big|h\leq O(1)h,
\end{eqnarray*}
and, on interval $III$,
\begin{eqnarray*}
\begin{split}
|\mathcal{S}_{h}(U^{(\boldsymbol{\mu}),\nu}(\hat{x},y))-U^{(\boldsymbol{\mu}),\nu}(\hat{x}+h,y)|&=|\rho_{M}-\rho_{R}|+|v_{M}-v_{R}|\\[3pt]
&=O(1)|\beta_2-1|
=O(1)|\alpha_1-1|\|\boldsymbol{\mu}\|.
\end{split}
\end{eqnarray*}
Then
\begin{eqnarray}\label{eq:4.15}
\|\mathcal{S}_{h}(U^{(\boldsymbol{\mu}),\nu}(\hat{x},\cdot))-U^{(\boldsymbol{\mu}),\nu}(\hat{x}+h,\cdot)\|_{L^1(III)}\leq O(1)|\alpha_1-1|\|\boldsymbol{\mu}\|h.
\end{eqnarray}

Combining estimates \eqref{eq:4.13}--\eqref{eq:4.15}, we finally obtain
\begin{eqnarray*}
\|\mathcal{S}_{h}(U^{(\boldsymbol{\mu}),\nu}(\hat{x},\cdot))-U^{(\boldsymbol{\mu}),\nu}(\hat{x}+h,\cdot)\|_{L^1(y_{\mathcal{I}}-\eta, y_{\mathcal{I}}+\eta)}
\leq O(1)\big(\|\boldsymbol{\mu}\|+\nu^{-1}+2^{-\nu}\big)|\alpha_1-1|h.
\end{eqnarray*}

\smallskip
When $\beta_2$ is a rarefaction wave, as shown in Fig. \ref{fig4.4}, interval $(y_{\mathcal{I}}-\eta, y_{\mathcal{I}}+\eta)$
is divided into four subintervals $I$, $II$, $III$, and $IV$.
Then 
\begin{eqnarray}\label{eq:4.16}
\mathcal{S}_{h}(U^{(\boldsymbol{\mu}),\nu}(\hat{x},y))=
\begin{cases}
U_{R},&\qquad \xi\in[\lambda_{2}(U_{R}),\frac{\eta}{h}),\\[3pt]
\Phi_{2}(\beta_2(\xi); U_{M}), &\qquad \xi \in[\lambda_2(U_{M}), \lambda_{2}(U_{R})),\\[3pt]
U_{M}, &\qquad \xi\in[\lambda_{1}(U_{M}),\lambda_2(U_{M})),\\[3pt]
\Phi_{1}(\beta_{1}(\xi);U_{L}),&\qquad\xi\in[\lambda_{1}(U_{L}),\lambda_{1}(U_{M})),\\[3pt]
U_{L},&\qquad \xi\in(-\frac{\eta}{h}, \lambda_1(U_L)),
\end{cases}
\end{eqnarray}
where $\xi=\frac{y-y_{\mathcal{I}}}{h}$, $\beta_{1}(\lambda_{1}(U_{L}))=1$, $\beta_{1}(\lambda_{1}(U_{M}))=\beta_1$,
$\beta_2(\lambda_2(U_{M}))=1$, and $\beta_{2}(\lambda_3(U_{R}))=\beta_2$.
First, by the same argument for the case that $\beta_2$ is a shock above,
\begin{eqnarray}\label{eq:4.17}
\|\mathcal{S}_{h}(U^{(\boldsymbol{\mu}),\nu}(\hat{x},\cdot))-U^{(\boldsymbol{\mu}),\nu}(\hat{x}+h,\cdot)\|_{L^1(I\cup II \cup III)}
\leq O(1)\big(\|\boldsymbol{\mu}\|+\nu^{-1}+2^{-\nu}\big)|\alpha_1-1|h.\qquad
\end{eqnarray}

Thus, it suffices to
consider the estimate of $\|\mathcal{S}_{h}(U^{(\boldsymbol{\mu}),\nu}(\hat{x},\cdot))-U^{(\boldsymbol{\mu}),\nu}(\hat{x}+h,\cdot)\|_{L^1(IV)}$.
By Proposition \ref{prop:3.1}, we directly have
\begin{eqnarray*}
|IV|=|\lambda_{2}(U_{R})-\lambda_{2}(U_{M})|h\leq O(1)h,
\end{eqnarray*}
and, on $IV$,
\begin{eqnarray*}
\begin{split}
|\mathcal{S}_{h}(U^{(\boldsymbol{\mu}),\nu}(\hat{x},y))-U^{(\boldsymbol{\mu}),\nu}(\hat{x}+h,y)|
&=|\Phi^{(1)}_{2}(\beta_2(\xi); U_{M})-\rho_{R}|+|\Phi^{(2)}_{2}(\beta_2(\xi); U_{M})-v_{R}|\\[3pt]
&\leq O(1)|\xi_2-\xi_{3}|
\leq O(1)|\beta_2-1|
\leq O(1)|\alpha_1-1|\|\boldsymbol{\mu}\|,
\end{split}
\end{eqnarray*}
so that
\begin{eqnarray}\label{eq:4.18}
\|\mathcal{S}_{h}(U^{(\boldsymbol{\mu}),\nu}(\hat{x},y))-U^{(\boldsymbol{\mu}),\nu}(\hat{x}+h,y)\|_{L^1(IV)}\leq O(1)|\alpha_1-1|\|\boldsymbol{\mu}\|h.
\end{eqnarray}

Then combining estimate \eqref{eq:4.17} with estimate \eqref{eq:4.18} yields
\begin{eqnarray*}
\begin{split}
&\|\mathcal{S}_{h}(U^{(\boldsymbol{\mu}),\nu}(\hat{x},\cdot))-U^{(\boldsymbol{\mu}),\nu}(\hat{x}+h,\cdot)\|_{L^1(y_{\mathcal{I}}-\eta, y_{\mathcal{I}}+\eta)}\\[3pt]
&
\leq \|\mathcal{S}_{h}(U^{(\boldsymbol{\mu}),\nu}(\hat{x},\cdot))-U^{(\boldsymbol{\mu}),\nu}(\hat{x}+h,\cdot)\|_{L^1(I\cup II\cup III\cup IV)}\\[3pt]
&
\leq O(1)\big(\|\boldsymbol{\mu}\|+\nu^{-1}+2^{-\nu}\big)|\alpha_1-1|h.
\end{split}
\end{eqnarray*}

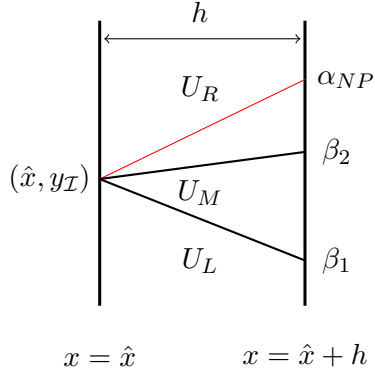
\begin{figure}[ht]
\begin{tikzpicture}[scale=0.6]
\draw [line width=0.04cm](-4,2.5)--(-4,-3.8);
\draw [line width=0.04cm](0.5,2.5)--(0.5,-3.8);
\draw [thin][<->](-3.9,2.1)--(0.4,2.1);

\draw [thin][red](-4,-1)--(0.5, 1.2);
\draw [thick](-4,-1)--(0.5, -0.4);
\draw [thick](-4,-1)--(0.5, -2.8);

\node at (2.6, 2) {$$};
\node at (-1.8, 2.7){$h$};
\node at (-5.1, -1.0) {$(\hat{x}, y_{\mathcal{I}})$};
\node at (-4, -4.9) {$x=\hat{x}$};
\node at (0.5, -4.9) {$x=\hat{x}+h$};

\node at (1.4, 1.2) {$\alpha_{NP}$};
\node at (1.2, -0.4) {$\beta_{2}$};
\node at (1.2, -2.8) {$\beta_{1}$};

\node at (-1.8, 1.0){$U_{R}$};
\node at (-1.8, -1.3){$U_{M}$};
\node at (-1.8, -2.8){$U_{L}$};

\end{tikzpicture}
\caption{Comparison of the Riemann solvers for $\alpha_{NP}\in NP^{(\boldsymbol{\mu})}$}\label{fig4.5}
\end{figure}

3. When the front in $U^{(\nu),\nu}(\hat{x}+h,\cdot)$ is a non-physical wave $\alpha_{NP}$, as shown in Fig. \ref{fig4.5},
$|U_{R}-U_{L}|=\alpha_{NP}$, and the Riemann solution $\mathcal{S}_{h}(U^{(\boldsymbol{\mu}),\nu}(\hat{x},\cdot))$
consists of two waves $\beta_1$ and $\beta_2$ satisfying
\begin{eqnarray}
U_{R}=\Phi(\beta_1,\beta_2; U_{L}).
\end{eqnarray}

Applying Proposition \ref{prop:2.1} leads  to
\begin{eqnarray}\label{eq:4.20}
|\beta_1-1|+|\beta_2-1|=O(1)\alpha_{NP}.
\end{eqnarray}

Let $U_{M}$ be the middle state of $\mathcal{S}_{h}(U^{(\boldsymbol{\mu}),\nu}(\hat{x},\cdot))$.
Then it follows from Propositions \ref{prop:3.1}--\ref{prop:3.2}
and estimate \eqref{eq:4.20} that
\begin{eqnarray*}
\begin{split}
 \|\mathcal{S}_{h}(U^{(\boldsymbol{\mu}),\nu}(\hat{x},\cdot))-U^{(\boldsymbol{\mu}),\nu}(\hat{x}+h,\cdot)\|_{L^1(y_{\mathcal{I}}-\eta, y_{\mathcal{I}}+\eta)}
& \leq O(1)\big(|U_{M}-U_{L}|+|U_{M}-U_{R}|\big)h\\[3pt]
&\leq O(1)\big(|\beta_1-1|+|\beta_2-1|\big)h\\[3pt]
& \leq O(1)\alpha_{NP}\,h.
\end{split}
\end{eqnarray*}
\end{proof}

Based on Lemma \ref{lem:4.1}, it is direct to derive the following corollary for the case that $U^{(\boldsymbol{\mu}),\nu}(\hat{x}+h,y)$
contains more than one discontinuities:

\begin{corollary}\label{coro:4.1}
Let $U^{(\boldsymbol{\mu}),\nu}$ be an approximate solution of problem \eqref{eq:1.15}--\eqref{eq:1.17} constructed in {\rm \S 3.1}
with a jump at point $(\hat{x},y)\in\Omega$. Let $\mathcal{S}$ be the uniformly Lipschitz continuous semigroup obtained
in {\rm Proposition \ref{prop:3.3}}.
Denote
\begin{equation}\label{eq:4.21}
U^{(\boldsymbol{\mu}),\nu}(\hat{x}+h,y)=
\begin{cases}
U_{R}=(\rho_{R},v_{R})^{\top}\quad &\mbox{for $y>y_{\mathcal{I}}+\hat{\lambda}h$},\\[3pt]
\hat{U}_{R}=(\hat{\rho}_{R},\hat{v}_{R})^{\top}\quad &\mbox{for $y_{\mathcal{I}}+\dot{y}_{\alpha_2}h<y<y_{\mathcal{I}}+\hat{\lambda}h$},\\[3pt]
U_{M}=(\rho_{M},v_{M})^{\top}\quad &\mbox{for $y_{\mathcal{I}}+\dot{y}_{\alpha_1}h<y<y_{\mathcal{I}}+\dot{y}_{\alpha_2}h$},\\[3pt]
U_{L}=(\rho_{L},v_{L})^{\top}\quad &\mbox{for $y<y_{\mathcal{I}}+\dot{y}_{\alpha_1}h$},
\end{cases}
\end{equation}
where $U_{L}=U^{(\boldsymbol{\mu}),\nu}(\hat{x}, y_{\mathcal{I}}-)$, $U_{R}=U^{(\boldsymbol{\mu}),\nu}(\hat{x}, y_{\mathcal{I}}+)$,
$|\dot{y}_{\alpha_k}|<\hat{\lambda}$ with $k=1,2$,
and $U_{L}, \hat{U}_{R}$, and $U_{R}$ satisfy \eqref{eq:3.3}.
For $\|\boldsymbol{\mu}\|\leq \|\boldsymbol{\bar{\mu}}^{*}_{0}\|$ with $\boldsymbol{\bar{\mu}}^{*}_{0}$ given in {\rm Proposition \ref{prop:3.1}}
and, for sufficiently small $h>0$, if $U_{L}$ and $U_{M}$ are connected by a $1$-shock wave $\alpha_1\in S^{(\boldsymbol{\mu})}_1$
with $|\dot{y}_{\alpha_1}-\sigma^{(\boldsymbol{\mu})}_{1}(\alpha_1)|<2^{-\nu}$ {\rm (}or a $1$-rarefaction front $\alpha_1\in R^{(\boldsymbol{\mu})}_1$
with $|\dot{y}_{\alpha_1}-\lambda_{1}(U_{M}, \boldsymbol{\mu})|<2^{-\nu}${\rm )},
if $U_{M}$ and $\hat{U}_{R}$ are connected by a $2$-shock wave $\alpha_2\in S^{(\boldsymbol{\mu})}_2$
with $|\dot{y}_{\alpha_2}-\sigma^{(\boldsymbol{\mu})}_{2}(\alpha_2)|<2^{-\nu}$
{\rm (}or a $2$-rarefaction front $\alpha_2\in R^{(\boldsymbol{\mu})}_2$ with $|\dot{y}_{\alpha_2}-\lambda_{2}(\hat{U}_{R}, \boldsymbol{\mu})|<2^{-\nu}${\rm )},
and if $\hat{U}_{R}$ and $U_{R}$ are connected by a non-physical wave front $\alpha_{\mathcal{NP}}\in NP^{(\boldsymbol{\mu})}$, then
\begin{eqnarray}\label{eq:4.22}
&&\|\mathcal{S}_{h}(U^{(\boldsymbol{\mu}),\nu}(\hat{x},\cdot))-U^{(\boldsymbol{\mu}),\nu}(\hat{x}+h,\cdot)\|_{L^1(y_{\mathcal{I}}-\eta, y_{\mathcal{I}}+\eta)}\nonumber\\[3pt]
&&\leq C_{\mathcal{II}}\big(\|\boldsymbol{\mu}\|+2^{-\nu}+\nu^{-1}\big)\Big(\sum_{k=1,2}|\alpha_{k}-1|+ \alpha_{NP}\Big)h,
\end{eqnarray}
where constant $\eta>0$ satisfies $\eta>\hat{\lambda}h$, and constant $C_{\mathcal{II}}>0$ is independent of $\boldsymbol{\mu}$ and $h$.
\end{corollary}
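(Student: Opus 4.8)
The plan is to carry out, for the three--wave fan of \eqref{eq:4.21}, the same comparison of the two Riemann solvers as in the proof of Lemma \ref{lem:4.1}, this time invoking the sharper two--solver estimate of Corollary \ref{coro:2.1} so that the middle and right parts of the fan contribute only at order $\|\boldsymbol{\mu}\|$ or $\alpha_{NP}$. First I would pass to the self--similar variable $\xi=(y-y_{\mathcal{I}})/h$. Since all the relevant speeds are bounded by $\hat\lambda$ and $\eta>\hat\lambda h$, both $\mathcal{S}_{h}(U^{(\boldsymbol{\mu}),\nu}(\hat x,\cdot))$ and $U^{(\boldsymbol{\mu}),\nu}(\hat x+h,\cdot)$ equal $U_{L}$ below the fan and $U_{R}$ above it on $(y_{\mathcal{I}}-\eta,y_{\mathcal{I}}+\eta)$, so the left--hand side of \eqref{eq:4.22} equals $h$ times the $L^{1}_{\xi}$--distance of two self--similar profiles. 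By Proposition \ref{prop:3.2}(iii) and Proposition \ref{prop:3.3}, for $h$ small the first profile is the \eqref{eq:1.19}--Riemann solution of $(U_{L},U_{R})$: the three states $U_{L}$, $U^{(0)}_{M}:=\Phi_{1}(\beta_{1};U_{L})$, $U_{R}$ with $U_{R}=\Phi(\boldsymbol{\beta};U_{L})$, separated by a $\beta_{1}$--wave and a $\beta_{2}$--wave (each a shock or a centred rarefaction); the second profile is the four--state configuration $U_{L},U_{M},\hat U_{R},U_{R}$ of \eqref{eq:4.21}, where $U_{M}=\Phi^{(\boldsymbol{\mu})}_{1}(\alpha_{1};U_{L},\boldsymbol{\mu})$, $\hat U_{R}=\Phi^{(\boldsymbol{\mu})}(\boldsymbol{\alpha};U_{L},\boldsymbol{\mu})$, $|U_{R}-\hat U_{R}|=\alpha_{NP}$, and a non-physical front of speed $\hat\lambda$ lies between $\hat U_{R}$ and $U_{R}$.

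Second, applying Corollary \ref{coro:2.1} in the form \eqref{eq:2.56}--\eqref{eq:2.57} with $\tilde U_{R}=\hat U_{R}$ gives $\beta_{j}=\alpha_{j}+O(1)\big(\sum_{k}|\alpha_{k}-1|\big)\|\boldsymbol{\mu}\|+O(1)\alpha_{NP}$ for $j=1,2$, hence $|\beta_{j}-1|=O(1)\big(\sum_{k}|\alpha_{k}-1|+\alpha_{NP}\big)$, and, using Lemmas \ref{lem:2.5}--\ref{lem:2.6}, $|U^{(0)}_{M}-U_{M}|=O(1)\big((\sum_{k}|\alpha_{k}-1|)\|\boldsymbol{\mu}\|+\alpha_{NP}\big)$. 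Then I would split $(-\eta/h,\eta/h)$ into the finitely many sub-intervals cut out by the wave--edges of the two profiles (edges of the rarefaction fans and the non-physical front included), and estimate on each piece its length and the pointwise size of the difference, exactly as in Lemma \ref{lem:4.1} and treating the four shock/rarefaction combinations for $(\alpha_{1},\alpha_{2})$ as in its cases (i)--(ii). A piece trapped between an $\alpha_{j}$--edge and the matching $\beta_{j}$--edge has length at most $|\dot y_{\alpha_j}-\sigma^{(\boldsymbol{\mu})}_{j}(\alpha_j)|+|\sigma^{(\boldsymbol{\mu})}_{j}(\alpha_j)-\sigma_{j}(\beta_j)|+O(\nu^{-1})=O(\|\boldsymbol{\mu}\|+2^{-\nu}+\nu^{-1})$ by the speed bounds used for Lemma \ref{lem:4.1} together with \eqref{eq:3.5}, and there the difference is $O(1)\big(\sum_{k}|\alpha_{k}-1|+\alpha_{NP}\big)$; a piece where the profiles ought to agree --- the ``$U^{(0)}_{M}$ versus $U_{M}$'' gap between $\alpha_{2}$ and $\beta_{2}$, or the ``$\hat U_{R}$ versus $U_{R}$'' region across the non-physical front --- has length $O(1)$ but carries a difference of size $O(1)\big(|\beta_{j}-\alpha_{j}|+|U^{(0)}_{M}-U_{M}|+\alpha_{NP}\big)=O(1)\big((\sum_{k}|\alpha_{k}-1|)\|\boldsymbol{\mu}\|+\alpha_{NP}\big)$; rarefaction contributions of the form $|\alpha_{j}-1|^{2}$ are absorbed via $|\alpha_{j}-1|<\bar C_{3}\nu^{-1}$ from \eqref{eq:3.5}. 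Multiplying lengths by heights and summing the bounded number of pieces yields \eqref{eq:4.22}, with $C_{\mathcal{II}}$ depending only on $(a_{\infty},\rho^{*},\rho_{*})$.

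\textbf{Main obstacle.} The case analysis is routine; the point that needs care is to verify that in every ``overlap'' region the pointwise difference of the two profiles is controlled by the \emph{discrepancies} $|\beta_{j}-\alpha_{j}|$, $|U^{(0)}_{M}-U_{M}|$ and $\alpha_{NP}$ --- not merely by the wave strengths $|\alpha_{j}-1|$ --- since this is what produces the decisive extra factor $\|\boldsymbol{\mu}\|$ (and the additive $\alpha_{NP}$); it rests precisely on the sharp comparison in Corollary \ref{coro:2.1} and on the $C^{2}$--dependence of the wave curves $\varphi^{(\boldsymbol{\mu})}_{j},\varphi_{j}$ from Lemmas \ref{lem:2.5}--\ref{lem:2.6}. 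The non-physical wave is harmless, entering only through its bounded speed $\hat\lambda$ and its strength $\alpha_{NP}$, which is why it adds only the $O(1)\alpha_{NP}h$ term.
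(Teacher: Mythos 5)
Your proposal is essentially the argument the paper has in mind: the paper offers no written proof of this corollary (it is declared ``direct'' from Lemma \ref{lem:4.1}), and your plan --- identify $\mathcal{S}_{h}$ with the $(U_L,U_R)$ Riemann solution via Propositions \ref{prop:3.2}(iii) and \ref{prop:3.3}, apply Corollary \ref{coro:2.1} in the form \eqref{eq:2.56}--\eqref{eq:2.57} with $\tilde U_R=\hat U_R$, and then redo the length-times-height bookkeeping of Lemma \ref{lem:4.1} piecewise, absorbing rarefaction squares by \eqref{eq:3.5} --- is exactly the natural completion of that sketch, and it is sound. One caveat: your own accounting gives the non-physical contribution as $O(1)\,\alpha_{NP}h$ (an $O(1)$-length overlap carrying a difference of size $\alpha_{NP}$, just as in Lemma \ref{lem:4.1}(iii)), so what you actually prove is
$C\big(\|\boldsymbol{\mu}\|+2^{-\nu}+\nu^{-1}\big)\sum_{k}|\alpha_k-1|\,h+C\,\alpha_{NP}\,h$,
not the literal \eqref{eq:4.22}, where the small factor also multiplies $\alpha_{NP}$; indeed no argument of this type can supply that extra factor (take $\alpha_1=\alpha_2=1$ and the estimate must reduce to Lemma \ref{lem:4.1}(iii)). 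This discrepancy lies in the statement rather than in your method --- the paper itself only uses the weaker form in \eqref{eq:4.42}, where the non-physical terms enter with an $O(1)$ coefficient --- but you should either prove the weaker inequality and note that it suffices downstream, or not claim \eqref{eq:4.22} verbatim in the last step.
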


Next, we consider the comparison of the approximate solution $U^{(\boldsymbol{\mu}),\nu}(\hat{x}+h,y)$ of
problem \eqref{eq:1.15}--\eqref{eq:1.17} and the entropy solution $\mathcal{S}_{h}(U^{(\boldsymbol{\mu}),\nu}(\hat{x},\cdot))$
of problem \eqref{eq:1.19}--\eqref{eq:1.21} near boundary $\Gamma_{\textrm{w}}$
with $(\hat{x},b_0\hat{x})$ as a discontinuity point on it.
Denote
\begin{eqnarray}
&&U_{L}=(\rho_{L}, v_{L})^{\top}:=U^{(\boldsymbol{\mu}),\nu}(\hat{x},b_0\hat{x}-),
  \quad U_{b}=(\rho_{b}, v_{b})^{\top}:=U^{(\boldsymbol{\mu}),\nu}(\hat{x},b_0\hat{x}),\label{eq:4.23}\\[3pt]
&&U^{(\boldsymbol{\mu})}_{b}:=(\rho^{(\boldsymbol{\mu})}_{b}, v^{(\boldsymbol{\mu})}_{b})^{\top}=U^{(\boldsymbol{\mu}),\nu}(x,b_0x)
\qquad\,\, \mbox{ for $x\in(\hat{x},\hat{x}+h),\ h>0$},\label{eq:4.24}\\[3pt]
&&U^{(\boldsymbol{\mu}),\nu}_{b}(\hat{x},y):=
\begin{cases}
U_{b} \quad &\mbox{for $y=b_0\hat{x}$},\\[3pt]
U_{L} \quad &\mbox{for $y<b_0\hat{x}$}.
\end{cases}\label{eq:4.25}
\end{eqnarray}
Then we have the following lemma:
\begin{lemma}\label{lem:4.2}
Let $U^{(\boldsymbol{\mu}),\nu}_{b}(\hat{x},y)$ be defined by \eqref{eq:4.25} with $v_{b}=b_{0}$.
Let $\mathcal{S}$ be a uniform Lipschitz continuous semigroup obtained in {\rm Proposition \ref{eq:3.2}}.
Define
\begin{equation}\label{eq:4.26}
U^{(\boldsymbol{\mu}),\nu}_{b}(\hat{x}+h,y):=
\begin{cases}
U^{(\boldsymbol{\mu})}_{b}\quad &\mbox{for $b_0\hat{x}+\dot{y}_{\alpha_1}h<y<b_{0}h$},\\[3pt]
U_{L}\quad &\mbox{for $y<b_0\hat{x}+\dot{y}_{\alpha_1}h$},
\end{cases}
\end{equation}
where $\dot{y}_{\alpha_1}\in (-\hat{\lambda}, b_0)$, and $U^{(\boldsymbol{\mu})}_{b}$ satisfies
\begin{equation}\label{eq:4.27}
v^{(\boldsymbol{\mu})}_{b}=b_0\sqrt{1-\tau^2B^{(\epsilon)}(\rho^{(\boldsymbol{\mu})}_b, v^{(\boldsymbol{\mu})}_b, \epsilon)}.
\end{equation}
For $\|\boldsymbol{\mu}\|\leq \|\bar{\boldsymbol{\mu}}^{*}_0\|$ with $\boldsymbol{\bar{\mu}}^{*}_{0}$ given in {\rm Proposition \ref{prop:3.1}}
and for sufficiently small $h>0$,

\begin{enumerate}
\item[(i)] if $U^{(\boldsymbol{\mu})}_{b}$ and $U_{L}$ are connected by a $1$-shock wave $\alpha_1\in S^{(\boldsymbol{\mu})}$
with $|\dot{y}_{\alpha_1}-\sigma^{(\boldsymbol{\mu})}_{1}(\alpha_1)|<2^{-\nu}$ for $\sigma^{(\boldsymbol{\mu})}_{1}(\alpha_1)$ as the speed of $\alpha_1$, then
\begin{eqnarray}\label{eq:4.28}
&&\|\mathcal{S}_{h}(U^{(\boldsymbol{\mu}),\nu}_b(\hat{x},\cdot))-U^{(\boldsymbol{\mu}),\nu}_b(\hat{x}+h,\cdot)\|_{L^1(b_0\hat{x}-\eta, b_{0}(\hat{x}+h))}\nonumber\\[3pt]
&&\,
\leq C_{b}(\|\boldsymbol{\mu}\|+2^{-\nu})(|\alpha_1-1|+1)h;
\end{eqnarray}

\item[(ii)] if $U^{(\boldsymbol{\mu})}_{b}$ and $U_{L}$ are connected by a $1$-rarefaction wave $\alpha_1\in R^{(\boldsymbol{\mu})}$
with $|\dot{y}_{\alpha_1}-\lambda^{(\boldsymbol{\mu})}_{1}(U^{(\boldsymbol{\mu})}_{b}, \boldsymbol{\mu})|<2^{-\nu}$
for $\lambda^{(\boldsymbol{\mu})}_{1}(U^{(\boldsymbol{\mu})}_{b}, \boldsymbol{\mu})$ as the speed of $\alpha_1$, then
\begin{eqnarray}\label{eq:4.29}
&&\|\mathcal{S}_{h}(U^{(\boldsymbol{\mu}),\nu}_b(\hat{x},\cdot))-U^{(\boldsymbol{\mu}),\nu}_b(\hat{x}+h,\cdot)\|_{L^1(b_0\hat{x}-\eta, b_{0}(\hat{x}+h))}\nonumber\\[3pt]
&&\,\leq C_{b}(\|\boldsymbol{\mu}\|+\nu^{-1}+2^{-\nu})(|\alpha_1-1|+1)h,
\end{eqnarray}
\end{enumerate}
where $\eta$ satisfies $\eta<-\hat{\lambda}h$, and $C_{b}>0$ is independent of $\boldsymbol{\mu}$, $\nu$ and $h$.
\end{lemma}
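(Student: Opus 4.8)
The plan is to mirror the proof of Lemma \ref{lem:4.1}, replacing the interior Riemann comparison by the boundary comparison in Proposition \ref{prop:2.2}, and carefully tracking the extra ``$+1$'' in $(|\alpha_1-1|+1)$, which is the trace of the $O(\|\boldsymbol{\mu}\|)$ mismatch between the two boundary conditions in \eqref{eq:2.61}. First I would identify the structure of $\mathcal{S}_h(U^{(\boldsymbol{\mu}),\nu}_b(\hat{x},\cdot))$: by Propositions \ref{prop:3.2}--\ref{prop:3.3}, for small $h>0$ this is the entropy solution of the boundary Riemann problem for system \eqref{eq:1.19} with data $U_L$ below the boundary point $(\hat{x},b_0\hat{x})$ and boundary condition $v=b_0$; since the only admissible wave entering $\Omega_{\textrm{w}}$ at the boundary is a $1$-wave, it consists of $U_L$, a reflected boundary state $U^{(0)}_b=(\rho^{(0)}_b,b_0)^{\top}=\Phi_1(\beta_1;U_L)$, separated by a $1$-shock $\beta_1\in S_1$ or a $1$-rarefaction $\beta_1\in R_1$. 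Applying Proposition \ref{prop:2.2} with the two boundary relations \eqref{eq:2.61} gives $\beta_1=\alpha_1+O(1)(1+|\alpha_1-1|)\|\boldsymbol{\mu}\|$, and then from Lemmas \ref{lem:2.5}--\ref{lem:2.6} and the $C^2$-smoothness of $\Phi^{(\boldsymbol{\mu})}_1$ in $(\alpha_1,\boldsymbol{\mu})$ also $|U^{(0)}_b-U^{(\boldsymbol{\mu})}_b|\le O(1)(1+|\alpha_1-1|)\|\boldsymbol{\mu}\|$. Note that the boundary curve $\{y=b_0x\}$ is exact for both problems, so no error comes from the boundary itself.

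For part (i), I would next compare the two $1$-shock speeds. Using the Rankine--Hugoniot conditions for $\beta_1$ together with \eqref{eq:2.61}, Propositions \ref{prop:3.1}--\ref{prop:3.2}, Lemma \ref{lem:2.6}, and the bound on $\beta_1-\alpha_1$, exactly as in Step 1 of the proof of Lemma \ref{lem:4.1}, one obtains $\sigma^{(\boldsymbol{\mu})}_1(\alpha_1)-\sigma_1(\beta_1)=O(1)\|\boldsymbol{\mu}\|$, with $|\sigma_1(\beta_1)|$ and $|\sigma^{(\boldsymbol{\mu})}_1(\alpha_1)|$ bounded uniformly. I then partition the interval $(b_0\hat{x}-\eta,b_0(\hat{x}+h))$ by the two $1$-fronts into three strips: the strip between the boundary and the slower front, where $\mathcal{S}_h(\cdot)=U^{(0)}_b$ while $U^{(\boldsymbol{\mu}),\nu}_b(\hat{x}+h,\cdot)=U^{(\boldsymbol{\mu})}_b$, of width $O(1)h$ and pointwise difference $O(1)(1+|\alpha_1-1|)\|\boldsymbol{\mu}\|$; the strip between the two fronts, of width $\le (|\dot{y}_{\alpha_1}-\sigma^{(\boldsymbol{\mu})}_1(\alpha_1)|+|\sigma^{(\boldsymbol{\mu})}_1(\alpha_1)-\sigma_1(\beta_1)|)h\le (2^{-\nu}+O(1)\|\boldsymbol{\mu}\|)h$ and pointwise difference $O(1)|\alpha_1-1|$; and the strip below both fronts, where both solutions equal $U_L$. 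Adding the three contributions yields \eqref{eq:4.28}.

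For part (ii), the wave $\beta_1$ is a centred rarefaction fan $\Phi_1(\beta_1(\xi);U_L)$ on $\xi\in[\lambda_1(U_L),\lambda_1(U^{(0)}_b)]$, while on the $\boldsymbol{\mu}$-side the front $\alpha_1$ has been split into sub-fronts of strength $<\nu^{-1}$ propagating with characteristic speeds by the accurate Riemann solver of \S 3.1. I would again split $(b_0\hat{x}-\eta,b_0(\hat{x}+h))$ by the boundary and the outermost rarefaction fronts: above both fans the difference is $O(1)(1+|\alpha_1-1|)\|\boldsymbol{\mu}\|$ over a strip of width $O(1)h$; in the overlap of the two fans the width is $O(1)|\alpha_1-1|h$ (the fan opening $|\lambda_1(U^{(0)}_b)-\lambda_1(U_L)|$ is $O(1)|\alpha_1-1|$ by \eqref{eq:2.41} and Proposition \ref{prop:3.1}) and the pointwise difference is $O(1)(\|\boldsymbol{\mu}\|+\nu^{-1})$, coming from \eqref{eq:2.41}, the bound $|\beta_1-\alpha_1|=O(1)(1+|\alpha_1-1|)\|\boldsymbol{\mu}\|$, and the $\nu^{-1}$ discretization of the fan; below both fans both solutions equal $U_L$. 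Combining these as in Step 2 of Lemma \ref{lem:4.1}, and using $|\alpha_1-1|=O(1)$ to absorb quadratic terms $|\alpha_1-1|^2$, gives \eqref{eq:4.29}.

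The main obstacle is the bookkeeping on the thin strip between the boundary curve and the $1$-fronts: one must verify that the ``$+1$'' in $(|\alpha_1-1|+1)$ is genuinely produced — it is exactly the persistent $O(\|\boldsymbol{\mu}\|)$ discrepancy between the two boundary conditions in \eqref{eq:2.61}, which survives when $\alpha_1\to 1$ — and simultaneously check that the speed differences collapse the intermediate strip to width $(2^{-\nu}+O(1)\|\boldsymbol{\mu}\|)h$ (respectively $O(1)\nu^{-1}|\alpha_1-1|h$ in the rarefaction case), so that the pointwise $O(1)|\alpha_1-1|$ jump there is multiplied only by the small factor $\|\boldsymbol{\mu}\|+2^{-\nu}$ (respectively $\|\boldsymbol{\mu}\|+\nu^{-1}+2^{-\nu}$). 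Everything else is routine and parallels Lemma \ref{lem:4.1}.
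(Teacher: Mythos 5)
Your proposal is correct and follows essentially the same route as the paper: the paper likewise reduces to the boundary comparison $\beta_1=\alpha_1+O(1)\big(1+|\alpha_1-1|\big)\|\boldsymbol{\mu}\|$ from Proposition \ref{prop:2.2}, compares the $1$-shock speed (resp.\ the fan edges) exactly as in Lemma \ref{lem:4.1}, and sums width-times-amplitude bounds over the strips $I_{b1},I_{b2}$ in the shock case and $I_{b1},I_{b2},I_{b3}$ in the rarefaction case, just as you describe. The only point to tidy is the rarefaction case: besides the fan region and the strip next to the boundary, one must also count the thin strip where only one of the two solutions has begun its $1$-wave (the paper's $I_{b1}$), of width $O(1)\big(\|\boldsymbol{\mu}\|+\nu^{-1}+2^{-\nu}\big)h$ and amplitude $O(1)|\alpha_1-1|$, and the absorption of the quadratic term rests on the rarefaction-front strength bound $|\alpha_1-1|\leq C\nu^{-1}$ from \eqref{eq:3.5} (which your ``$\nu^{-1}$ discretization'' remark already supplies), not merely on $|\alpha_1-1|=O(1)$.
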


\begin{proof} We divide the proof into two steps accordingly.

\smallskip
1. We know that $U^{(\boldsymbol{\mu})}_{b}$, $U_{L}$, and $\alpha_1$ satisfy
\begin{eqnarray*}
U^{(\boldsymbol{\mu})}_{b}=\Phi^{(\boldsymbol{\mu})}_{1}(\alpha_1; U_{L},\boldsymbol{\mu}).
\end{eqnarray*}
$\mathcal{S}_{h}(U^{(\boldsymbol{\mu}),\nu}_{b}(\hat{x},y))$ near the boundary $\Gamma$ satisfies
\begin{eqnarray*}
U_{b}=\Phi_{1}(\beta_1; U_{L}).
\end{eqnarray*}
Then 
\begin{eqnarray}\label{eq:4.30}
v_{L}+\Phi^{(\boldsymbol{\mu}),(2)}_{1}(\alpha_1; U_{L},\boldsymbol{\mu})
=\sqrt{1-\tau^2B^{(\epsilon)}(\Phi^{(\boldsymbol{\mu}),(1)}_{1},\Phi^{(\boldsymbol{\mu}),(2)}_{1},\boldsymbol{\mu })}\big(\Phi^{(2)}_{1}(\beta_1; U_{L})+v_{L}\big),
\end{eqnarray}
where $\Phi^{(\boldsymbol{\mu}),(k)}_{1}$ is the $k$-th component of $\Phi^{(\boldsymbol{\mu})}_{1}$ for $k=1,2$,
and  $\Phi^{(2)}_{1}$ is the $2$-nd component of $\Phi_{1}$.
Thus, by Proposition \ref{prop:2.2}, when $\|\boldsymbol{\mu}\|\leq \|\boldsymbol{\mu}^{*}_0\|$, 
equation \eqref{eq:4.30} admits a unique solution $\beta_1=\beta_1(\alpha_1,\boldsymbol{\mu})\in C^2$ such that
\begin{eqnarray}\label{eq:4.31}
\beta_1=\alpha_1+O(1)\big(1+|\alpha_1-1|\big)\|\boldsymbol{\mu}\|.
\end{eqnarray}
Hence, $\beta_1$ is also a $1$-shock wave.

\begin{figure}[ht]
\begin{minipage}[t]{0.49\textwidth}
\centering
\begin{tikzpicture}[scale=1.0]
\draw [thick](-5.0,2.5)--(-2,2);
\draw [thin][red](-5.0,2.5)--(-2,0.8);
\draw [thin](-5.0,2.5)--(-2,-0.4);

\draw [line width=0.05cm](-5.0,2.5)--(-5.0,-1.5);
\draw [line width=0.05cm](-2,2)--(-2,-1.5);
\draw [line width=0.05cm][green](-2,2)--(-2,0.8);
\draw [line width=0.05cm][blue](-2,0.8)--(-2,-0.4);

\draw [thin](-5.0,2.5)--(-4.7, 2.8);
\draw [thin](-4.7,2.45)--(-4.4, 2.75);
\draw [thin](-4.4,2.40)--(-4.1, 2.70);
\draw [thin](-4.1,2.35)--(-3.8, 2.65);
\draw [thin](-3.8, 2.30)--(-3.5,2.60);
\draw [thin](-3.5, 2.25)--(-3.2,2.55);

\draw [thin](-3.2, 2.20)--(-2.9,2.50);
\draw [thin](-2.9, 2.15)--(-2.6,2.45);
\draw [thin](-2.6, 2.10)--(-2.3,2.40);
\draw [thin](-2.3, 2.05)--(-2.0,2.35);
\draw [thin](-2.0, 2.00)--(-1.7,2.30);

\node at (-1.6, 1.4) {$I_{b2}$};
\node at (-1.6, 0.1) {$I_{b1}$};
\node at (-1.6, 0.7) {$\alpha_{1}$};
\node at (-1.6, -0.7) {$\beta_{1}$};
\node at (-2.5, 1.6) {$U^{(\boldsymbol{\mu})}_{b}$};
\node at (-2.8, 0.8) {$U_{b}$};
\node at (-2.8, -0.4) {$U_{L}$};

\node at (1, 2) {$$};

\node at (-5.8, 2.5) {$(\hat{x},b_0\hat{x})$};
\node at (-5.0, -1.9) {$\hat{x}$};
\node at (-2.0, -1.9) {$\hat{x}+h$};

\end{tikzpicture}
\caption{Comparison of the Riemann solvers for $\alpha_1\in S^{(\boldsymbol{\mu})}_1$ and $\beta_1$ is a $1$-shock wave}\label{fig4.6}
\end{minipage}
\begin{minipage}[t]{0.49\textwidth}
\centering
\begin{tikzpicture}[scale=1.0]
\draw [thick](-5.0,2.5)--(-2,2);
\draw [thin](-5.0,2.5)--(-2,0.9);
\draw [thin](-5.0,2.5)--(-2,0.75);
\draw [thin](-5.0,2.5)--(-2,0.6);
\draw [thin](-5.0,2.5)--(-2,0.45);

\draw [thin][red](-5.0,2.5)--(-2,-0.6);

\draw [line width=0.05cm](-5.0,2.5)--(-5.0,-1.5);
\draw [line width=0.05cm](-2,2)--(-2,-1.5);
\draw [line width=0.05cm][green](-2,0.9)--(-2,0.45);
\draw [line width=0.05cm][blue](-2,0.45)--(-2,-0.6);

\draw [thin](-5.0,2.5)--(-4.7, 2.8);
\draw [thin](-4.7,2.45)--(-4.4, 2.75);
\draw [thin](-4.4,2.40)--(-4.1, 2.70);
\draw [thin](-4.1,2.35)--(-3.8, 2.65);
\draw [thin](-3.8, 2.30)--(-3.5,2.60);
\draw [thin](-3.5, 2.25)--(-3.2,2.55);
\draw [thin](-3.2, 2.20)--(-2.9,2.50);
\draw [thin](-2.9, 2.15)--(-2.6,2.45);
\draw [thin](-2.6, 2.10)--(-2.3,2.40);
\draw [thin](-2.3, 2.05)--(-2.0,2.35);
\draw [thin](-2.0, 2.00)--(-1.7,2.30);

\node at (-1.6, 1.4) {$I_{b3}$};
\node at (-1.6, 0.7) {$I_{b2}$};
\node at (-1.6, -0.1) {$I_{b1}$};
\node at (-2.25, 1.4) {$\beta_{1}$};
\node at (-1.6, -0.7) {$\alpha_{1}$};
\node at (-2.8, 1.75) {$U^{(\boldsymbol{\mu})}_{b}$};
\node at (-2.8, 0.6) {$U_{b}$};
\node at (-2.8, -0.4) {$U_{L}$};

\node at (1, 2) {$$};

\node at (-5.8, 2.5) {$(\hat{x},b_0\hat{x})$};
\node at (-5.0, -1.9) {$\hat{x}$};
\node at (-2.0, -1.9) {$\hat{x}+h$};
\end{tikzpicture}
\caption{Comparison of the Riemann solvers for $\alpha_1\in R^{(\boldsymbol{\mu})}_1$ and $\beta_1$ is a $1$-rarefaction wave}\label{fig4.7}
\end{minipage}
\end{figure}

To estimate $\|\mathcal{S}_{h}(U^{(\boldsymbol{\mu}),\nu}_{b}(\hat{x},\cdot))-U^{(\boldsymbol{\mu}),\nu}_{b}(\hat{h}+h,\cdot)\|_{L^1(b_0\hat{x}-\eta, b_0(\hat{x}+h))}$,
it suffices to consider it on intervals $I_{b1}$ and $I_{b2}$ as shown in Fig. \ref{fig4.6}.
Let $\sigma_{1}(\beta_1)$ be the speed of $\beta_1$. Then
\begin{eqnarray*}
\sigma_1(\beta_1)=\frac{\rho_{b}v_{b}-\rho_{L}v_{L}}{\rho_{b}-\rho_{L}}=\frac{\beta_1\varphi_{1}(\beta_1)}{\beta_1-1}+v_{L}.
\end{eqnarray*}
Note that
\begin{eqnarray*}
\begin{split}
\sigma^{(\boldsymbol{\mu})}_1(\alpha_1)&=\frac{\rho^{(\boldsymbol{\mu})}_{b}v^{(\boldsymbol{\mu})}_{b}-\rho_{L}v_{L}}{\rho^{(\boldsymbol{\mu})}_{b}
\sqrt{1-\tau^2B^{(\epsilon)}(\rho^{(\boldsymbol{\mu})}_{b},v^{(\boldsymbol{\mu})}_{b}, \epsilon )}-\rho_{L}\sqrt{1-\tau^2B^{(\epsilon)}(\rho_{L},v_{L}, \epsilon )}}\\[3pt]
&=\frac{\rho^{(\boldsymbol{\mu})}_{b}v^{(\boldsymbol{\mu})}_{b}-\rho_{L}v_{L}}{\rho^{(\boldsymbol{\mu})}_{b}-\rho_{L}}+O(1)\|\boldsymbol{\mu}\|\\[3pt]
&=\frac{\alpha_1\varphi^{(\boldsymbol{\mu  })}_{1}(\alpha_1; U_{L},\boldsymbol{\mu})}{\alpha_1-1}+v_{L}+O(1)\|\boldsymbol{\mu}\|.
\end{split}
\end{eqnarray*}
Then, by Lemma \ref{lem:2.5} and Proposition \ref{prop:3.1}, we have
\begin{eqnarray*}
\begin{split}
|I_{b1}|&\leq |\sigma^{(\boldsymbol{\mu})}_{1}(\alpha_1)-\sigma_{1}(\beta_1)|h+2^{-\nu}h\\[3pt]
&\leq \Big(\Big|\frac{\alpha_1\varphi^{(\boldsymbol{\mu  })}_{1}(\alpha_1; U_{L},\boldsymbol{\mu})}{\alpha_1-1}
  -\frac{\beta_1\varphi_{1}(\beta_1)}{\beta_1-1}\Big|+O(1)\|\boldsymbol{\mu}\|+2^{-\nu}\Big)h\\[3pt]
&\leq O(1)\big(\|\boldsymbol{\mu}\|+2^{-\nu}\big)h.
\end{split}
\end{eqnarray*}

On interval $I_{b1}$, it follows from \eqref{eq:4.31} and Proposition \ref{prop:3.1} that
\begin{eqnarray*}
\begin{split}
|\mathcal{S}_{h}(U^{(\boldsymbol{\mu}),\nu}_{b}(\hat{x}, y))-U^{(\boldsymbol{\mu}),\nu}_{b}(\hat{x}+h, y)|&=|\rho_{b}-\rho_{L}|+|v_{b}-v_{L}|\\[3pt]
&=O(1)|\beta_1-1|
=O(1)\big(|\alpha_1-1|+1\big),
\end{split}
\end{eqnarray*}
so that
\begin{eqnarray}\label{eq:4.32}
\|\mathcal{S}_{h}(U^{(\boldsymbol{\mu}),\nu}_{b}(\hat{x}, \cdot))-U^{(\boldsymbol{\mu}),\nu}_{b}(\hat{x}+h, \cdot)\|_{L^1(I_{b1})}
\leq O(1)\big(|\alpha_1-1|+1\big)\big(\|\boldsymbol{\mu}\|+2^{-\nu}\big)h.
\end{eqnarray}

Similarly, on $I_{b2}$, we have
\begin{eqnarray*}
\begin{split}
|\mathcal{S}_{h}(U^{(\boldsymbol{\mu}),\nu}_{b}(\hat{x}, y))-U^{(\boldsymbol{\mu}),\nu}_{b}(\hat{x}+h, y)|
&=|\rho^{(\boldsymbol{\mu})}_{b}-\rho_{b}|+|v^{(\boldsymbol{\mu})}_{b}-v_{b}|\\[3pt]
&=O(1)|\beta_1-\alpha_1|+O(1)\|\boldsymbol{\mu}\|\\[3pt]
&=O(1)\big(|\alpha_1-1|+1\big)\|\boldsymbol{\mu}\|,\\[3pt]
|I_{b2}|\leq |\sigma^{(\boldsymbol{\mu})}_{1}(\alpha_1)-b_0|h+2^{-\nu}h\leq (&O(1)+2^{-\nu})h,
\end{split}
\end{eqnarray*}
so that
\begin{eqnarray}\label{eq:4.33}
\|\mathcal{S}_{h}(U^{(\boldsymbol{\mu}),\nu}_{b}(\hat{x}, \cdot))-U^{(\boldsymbol{\mu}),\nu}_{b}(\hat{x}+h, \cdot)\|_{L^1(I_{b2})}
\leq O(1)\big(|\alpha_1-1|+1\big)\|\boldsymbol{\mu}\|h.
\end{eqnarray}

Finally, with estimates \eqref{eq:4.32}--\eqref{eq:4.33}, we arrive at
\begin{align*}
\begin{split}
& \|\mathcal{S}_{h}(U^{(\boldsymbol{\mu}),\nu}_{b}(\hat{x}, \cdot))-U^{(\boldsymbol{\mu}),\nu}_{b}(\hat{x}+h, \cdot)\|_{L^1(b_{0}\hat{x}-\eta, b_{0}(\hat{x}+h))}\\[3pt]
&
\leq \sum_{j=1,2}\|\mathcal{S}_{h}(U^{(\boldsymbol{\mu}),\nu}_{b}(\hat{x}, \cdot))-U^{(\boldsymbol{\mu}),\nu}_{b}(\hat{x}+h, \cdot)\|_{L^1(I_{bj})}\\[3pt]
&
\leq O(1)\big(|\alpha_1-1|+1\big)\big(\|\boldsymbol{\mu}\|+2^{-\nu}\big)h,
\end{split}
\end{align*}
which gives estimate \eqref{eq:4.28}.

\smallskip
2. In this case, we know that relation \eqref{eq:4.30} and estimate \eqref{eq:4.31} hold.
Then $\beta_1$ is a rarefaction wave, and
\begin{eqnarray}\label{eq:4.34}
\mathcal{S}_{h}(U^{(\boldsymbol{\mu}),\nu}(\hat{x},y))=
\begin{cases}
U_{b}&\qquad \mbox{for $\xi\in(\lambda_{1}(U_{b}),b_0)$},\\[3pt]
\Phi_{1}(\beta_{1}(\xi);U_{L})&\qquad\mbox{for $\xi\in(\lambda_{1}(U_{L}),\lambda_{1}(U_{b})]$},\\[3pt]
U_{L},&\qquad\mbox{for $\xi\in(-\frac{\eta}{h}, \lambda_1(U_L)]$},
\end{cases}
\end{eqnarray}
where $\xi=\frac{y-b_{0}\hat{x}}{h}$, $\beta_1(\lambda_1(U_{L}))=1$, and $\beta_{1}(\lambda_1(U_{b}))=\beta_1$.

To show estimate \eqref{eq:4.29}, we only consider the case as shown in Fig. \ref{fig4.7}, since the other case can be treated in the same way.
Interval $(b_{0}\hat{x}-\eta, b_{0}(\hat{x}+h))$ is divided into three subintervals, \emph{i.e.}, $I_{bj}$ with $1\leq j\leq 3$.
Using \eqref{eq:2.41}, Proposition \ref{prop:3.1}, and Lemma \ref{lem:2.5}, we have
\begin{align*}
\begin{split}
|I_{b1}|&\leq \big(|\lambda^{(\boldsymbol{\mu})}_{1}(U^{(\boldsymbol{\mu})}_{b},\boldsymbol{\mu})-\lambda_1(U_{L})|+2^{-\nu}\big)h\\[3pt]
&\leq O(1)\big(\|\boldsymbol{\mu}\|+|\alpha_1-1|+2^{-\nu}\big)h\\[3pt]
&\leq O(1)\big(\|\boldsymbol{\mu}\|+\nu^{-1}+2^{-\nu}\big)h,
\end{split}
\end{align*}
and, on interval $I_{b1}$,
\begin{align*}
|\mathcal{S}_{h}(U^{(\boldsymbol{\mu}),\nu}_{b}(\hat{x}, y))-U^{(\boldsymbol{\mu}),\nu}_{b}(\hat{x}+h, y)|
=|\rho^{(\boldsymbol{\mu})}_{b}-\rho_{L}|+|v^{(\boldsymbol{\mu})}_{b}-v_{L}|
\leq O(1)|\alpha_1-1|,
\end{align*}
so that
\begin{align}\label{eq:4.35}
& \|\mathcal{S}_{h}(U^{(\boldsymbol{\mu}),\nu}_{b}(\hat{x}, \cdot))-U^{(\boldsymbol{\mu}),\nu}_{b}(\hat{x}+h, \cdot)\|_{L^1(I_{b1})}
\leq O(1)|\alpha_1-1|\big(\|\boldsymbol{\mu}\|+\nu^{-1}+2^{-\nu}\big)h.
\end{align}

\smallskip
Next, it follows from \eqref{eq:4.31} and \eqref{eq:4.34} that
\begin{align*}
|I_{b2}|&=|\lambda_1(U_{b})-\lambda_1(U_{L})|h=O(1)|U_{b}-U_{L}|h=O(1)|\alpha_1-1|h,
\end{align*}
and, on $I_{b2}$,
\begin{align*}
|\mathcal{S}_{h}(U^{(\boldsymbol{\mu}),\nu}_{b}(\hat{x}, y))-U^{(\boldsymbol{\mu}),\nu}_{b}(\hat{x}+h, y)|&=|\Phi_{1}(\beta_1(\xi); U_{L})-U^{(\boldsymbol{\mu})}_{b}|\\[3pt]
&\leq |\Phi_{1}(\beta_1(\xi); U_{L})-U_{L}|+|U_{L}-U^{(\boldsymbol{\mu})}_{b}|\\[3pt]
&\leq O(1)|\xi_0-\xi_{1}|+O(1)|\alpha_1-1|\\[3pt]
&\leq O(1)|\alpha_1-1|,
\end{align*}
so that
\begin{align}\label{eq:4.36}
\|\mathcal{S}_{h}(U^{(\boldsymbol{\mu}),\nu}_{b}(\hat{x}, \cdot))-U^{(\boldsymbol{\mu}),\nu}_{b}(\hat{x}+h, \cdot)\|_{L^1(I_{b2})}
\leq O(1)|\alpha_1-1|^{2}h \leq O(1)|\alpha_1-1| \nu^{-1} h.
\end{align}

On interval $I_{b3}$, by Lemma \ref{lem:2.6} and estimate \eqref{eq:4.31}, we obtain
\begin{align*}
\begin{split}
|\mathcal{S}_{h}(U^{(\boldsymbol{\mu}),\nu}_{b}(\hat{x}, y))-U^{(\boldsymbol{\mu}),\nu}_{b}(\hat{x}+h, y)|
&=|\rho^{(\boldsymbol{\mu})}_{b}-\rho_{b}|+|v^{(\boldsymbol{\mu})}_{b}-v_{b}|\\[3pt]
&\leq O(1)\big(|\beta_1-\alpha_1|+\|\boldsymbol{\mu}\|\big)\\[3pt]
&\leq O(1)\big(|\alpha_1-1|+1\big)\|\boldsymbol{\mu}\|,
\end{split}
\end{align*}
and $|I_{b3}|=|\lambda_1(U_{b})-b_{0}|h\leq O(1)h$, so that
\begin{align}\label{eq:4.37}
\|\mathcal{S}_{h}(U^{(\boldsymbol{\mu}),\nu}_{b}(\hat{x}, \cdot))-U^{(\boldsymbol{\mu}),\nu}_{b}(\hat{x}+h, \cdot)\|_{L^1(I_{b3})}
&=|\rho^{(\boldsymbol{\mu})}_{b}-\rho_{b}|+|v^{(\boldsymbol{\mu})}_{b}-v_{b}|\nonumber\\[3pt]
&\leq O(1)\big(|\beta_1-\alpha_1|+\|\boldsymbol{\mu}\|\big)\nonumber\\[3pt]
&\leq O(1)\big(|\alpha_1-1|+1\big)\|\boldsymbol{\mu}\|h.
\end{align}

Finally, combining estimates \eqref{eq:4.35}--\eqref{eq:4.37}, we obtain
\begin{align*}
\begin{split}
&\|\mathcal{S}_{h}(U^{(\boldsymbol{\mu}),\nu}_{b}(\hat{x}, \cdot))-U^{(\boldsymbol{\mu}),\nu}_{b}(\hat{x}+h, \cdot)\|_{L^1(b_{0}\hat{x}-\eta, b_{0}(\hat{x}+h))}\\[3pt]
&\leq \sum^{3}_{j=1}\|\mathcal{S}_{h}(U^{(\boldsymbol{\mu}),\nu}_{b}(\hat{x}, \cdot))-U^{(\boldsymbol{\mu}),\nu}_{b}(\hat{x}+h, \cdot)\|_{L^1(I_{bj})}\\[3pt]
&\leq O(1)\big(|\alpha_1-1|+1\big)\big(\|\boldsymbol{\mu}\|+\nu^{-1}+2^{-\nu}\big)h.
\end{split}
\end{align*}
This completes the proof.
\end{proof}

Next, we consider the comparison between the Riemann solutions with a boundary but away from the reflection points.
Using Proposition \ref{prop:2.3} and following the procedure of the proof
in Lemma \ref{lem:4.1}, we have

\begin{lemma}\label{lem:4.3}
Let $U^{(\boldsymbol{\mu}),\nu}_{b}(\hat{x},y)$ be a piecewise constant function defined by \eqref{eq:4.25} with $y\neq b_{0}\hat{x}$.
Let $\mathcal{S}$ be a uniform Lipschitz continuous semigroup obtained by {\rm Proposition \ref{eq:3.2}}. Define
\begin{equation}\label{eq:4.38}
U^{(\boldsymbol{\mu}),\nu}_{b}(\hat{x}+h,y):=
\begin{cases}
U_{b}\quad &\mbox{for $b_0\hat{x}+\dot{y}_{\alpha_1}h<y<b_{0}(\hat{x}+h)$},\\[3pt]
U_{L}\quad &\mbox{for $y<b_0\hat{x}+\dot{y}_{\alpha_1}h$},
\end{cases}
\end{equation}
where $\dot{y}_{\alpha_1}\in (-\hat{\lambda}, b_0)$, and $U_{b}$ is defined by \eqref{eq:4.23} with 
\begin{equation}\label{eq:4.39}
v_{b}=b_0\sqrt{1-\tau^2B^{(\epsilon)}(\rho_b, v_b, \epsilon)}.
\end{equation}
For $\|\boldsymbol{\mu}\|\leq \|\bar{\boldsymbol{\mu}}^{*}_0\|$ with $\boldsymbol{\bar{\mu}}^{*}_{0}$ given in {\rm Proposition \ref{prop:3.1}}
and for sufficiently small $h>0$,

\begin{enumerate}
\item[(i)]
if $U_{b}$ and $U_{L}$ are connected by a $1$-shock wave $\alpha_1\in S^{(\boldsymbol{\mu})}$
with $|\dot{y}_{\alpha_1}-\sigma^{(\boldsymbol{\mu})}_{1}(\alpha_1)|<2^{-\nu}$
for $\sigma^{(\boldsymbol{\mu})}_{1}(\alpha_1)$ as the exact speed of $\alpha_1$,
then
\begin{eqnarray}\label{eq:4.40}
&&
\|\mathcal{S}_{h}(U^{(\boldsymbol{\mu}),\nu}_{b}(\hat{x},\cdot))-U^{(\boldsymbol{\mu}),\nu}_{b}(\hat{x}+h,\cdot)\|_{L^1(b_0\hat{x}-\eta, b_{0}(\hat{x}+h))}
\nonumber\\[3pt]
&&\,
\leq C_{bb}(\|\boldsymbol{\mu}\|+2^{-\nu})(|\alpha_1-1|+1)h;
\end{eqnarray}

\item[(ii)] if $U_{b}$ and $U_{L}$ are connected by a $1$-rarefaction wave front $\alpha_1\in R^{(\boldsymbol{\mu})}$
with $|\dot{y}_{\alpha_1}-\lambda^{(\boldsymbol{\mu})}_{1}(U^{(\boldsymbol{\mu})}_{b}, \boldsymbol{\mu})|<2^{-\nu}$
for $\lambda^{(\boldsymbol{\mu})}_{1}(U^{(\boldsymbol{\mu})}_{b}, \boldsymbol{\mu})$ as the exact speed of $\alpha_1$, then
\begin{eqnarray}\label{eq:4.41}
&&\|\mathcal{S}_{h}(U^{(\boldsymbol{\mu}),\nu}_{b}(\hat{x},\cdot))-U^{(\boldsymbol{\mu}),\nu}_{b}(\hat{x}+h,\cdot)\|_{L^1(b_0\hat{x}-\eta, b_{0}(\hat{x}+h))}
\nonumber\\[3pt]
&&\,
\leq C_{bb}(\|\boldsymbol{\mu}\|+\nu^{-1}+2^{-\nu})(|\alpha_1-1|+1)h,
\end{eqnarray}
\end{enumerate}
where $\eta<-\hat{\lambda}h$ and constant $C_{bb}>0$ is independent on $\boldsymbol{\mu}$, $\nu$ and $h$.
\end{lemma}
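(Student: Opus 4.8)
The plan is to run the argument in the proofs of Lemma~\ref{lem:4.1} and Lemma~\ref{lem:4.2}, with the interior Riemann comparison of Proposition~\ref{prop:2.1} replaced by the boundary--adjacent comparison of Proposition~\ref{prop:2.3}. Fix $k=1$, the case $k=2$ being identical. For $h>0$ sufficiently small, Proposition~\ref{prop:3.3} together with Proposition~\ref{prop:3.2}(iii) implies that, on the strip in the statement, $\mathcal{S}_h\big(U^{(\boldsymbol{\mu}),\nu}_b(\hat x,\cdot)\big)$ is the Riemann solution of \eqref{eq:1.19} joining the lower state $U_L$ to the near--boundary state $U_b$, consisting of three constant states $U_L,U_M,U_b$ separated by a $1$-wave $\beta_1$ and a $2$-wave $\beta_2$ with
\[
\Phi(\boldsymbol{\beta};U_L)=\Phi^{(\boldsymbol{\mu})}_1(\alpha_1;U_L,\boldsymbol{\mu})=U_b,\qquad \boldsymbol{\beta}=(\beta_1,\beta_2).
\]
Since $v_b=b_0\sqrt{1-\tau^2B^{(\epsilon)}(\rho_b,v_b,\epsilon)}$, Proposition~\ref{prop:2.3} yields $\beta_1=\alpha_1+O(1)\big(1+|\alpha_1-1|\big)\|\boldsymbol{\mu}\|$ and $\beta_2=1+O(1)\big(1+|\alpha_1-1|\big)\|\boldsymbol{\mu}\|$. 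Thus, for $\|\boldsymbol{\mu}\|$ small, $\beta_1$ is of the same type as $\alpha_1$ (shock in (i), rarefaction in (ii)), while $\beta_2$ is a small front whose strength is $O(1)(1+|\alpha_1-1|)\|\boldsymbol{\mu}\|$ and which encodes the mismatch between the $\boldsymbol{\mu}$-boundary relation and the limiting condition $v=b_0$.

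In case (i), $\beta_1$ is a $1$-shock, and I would split the strip by the fronts $\{\dot y_{\alpha_1}\}$, $\{\sigma_1(\beta_1)\}$, $\beta_2$ into $I_{b1}$ (between $\{\dot y_{\alpha_1}\}$ and $\{\sigma_1(\beta_1)\}$) and $I_{b2}$ (between $\{\sigma_1(\beta_1)\}$ and $\beta_2$, plus the sliver up to the boundary on which both solutions equal $U_b$). From the Rankine--Hugoniot relations one has $\sigma_1(\beta_1)=v_L+\tfrac{\beta_1\varphi_1(\beta_1)}{\beta_1-1}$ and, expanding in $\|\boldsymbol{\mu}\|$ exactly as in Lemma~\ref{lem:4.1}, $\sigma^{(\boldsymbol{\mu})}_1(\alpha_1)=v_L+\tfrac{\alpha_1\varphi_1(\alpha_1)}{\alpha_1-1}+O(1)\|\boldsymbol{\mu}\|$; combined with $\beta_1=\alpha_1+O(1)(1+|\alpha_1-1|)\|\boldsymbol{\mu}\|$ and $|\dot y_{\alpha_1}-\sigma^{(\boldsymbol{\mu})}_1(\alpha_1)|<2^{-\nu}$, this gives $|I_{b1}|\le O(1)(\|\boldsymbol{\mu}\|+2^{-\nu})h$ and $|I_{b2}|\le O(1)h$. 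On $I_{b1}$ the integrand equals $|U_b-U_L|=O(1)|\alpha_1-1|$ and on $I_{b2}$ it equals $|U_b-U_M|=O(1)|\beta_2-1|=O(1)(1+|\alpha_1-1|)\|\boldsymbol{\mu}\|$; summing yields \eqref{eq:4.40}. In case (ii), $\beta_1$ is a $1$-rarefaction fan, and I would decompose $\mathcal{S}_h(U^{(\boldsymbol{\mu}),\nu}_b(\hat x,\cdot))$ into the fan $\Phi_1(\beta_1(\xi);U_L)$, the constant $U_M$, the small wave $\beta_2$ and $U_b$, and compare it with the single discretized front $\alpha_1$ (of strength $<\bar{C}_3\nu^{-1}$ by \eqref{eq:3.5}). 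The strip then splits into $I_{b1}$, a mismatch region between the discretized front and the tail of the fan, of width $\le O(1)\big(\|\boldsymbol{\mu}\|+|\alpha_1-1|+2^{-\nu}\big)h$ with integrand $O(1)|\alpha_1-1|$; $I_{b2}$, the fan region, of width $|\lambda_1(U_M)-\lambda_1(U_L)|h=O(1)|\alpha_1-1|h$ with integrand $O(1)|\alpha_1-1|$, hence contributing $O(1)|\alpha_1-1|^2h\le O(1)\nu^{-1}|\alpha_1-1|h$; and $I_{b3}$, the strip carrying $U_M$ against $U_b$ up to the boundary, of width $O(1)h$ with integrand $O(1)|\beta_2-1|=O(1)(1+|\alpha_1-1|)\|\boldsymbol{\mu}\|$. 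Adding the three contributions and using $|\alpha_1-1|\le\bar{C}_3\nu^{-1}$ gives \eqref{eq:4.41}.

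The main obstacle, and the only genuinely new point compared with Lemma~\ref{lem:4.1}, is the presence of the spurious small $2$-wave $\beta_2$ forced by the \emph{nonlinear} boundary relation $v_b=b_0\sqrt{1-\tau^2B^{(\epsilon)}(\rho_b,v_b,\epsilon)}$: its strength is $O(\|\boldsymbol{\mu}\|)$ but is \emph{not} proportional to $|\alpha_1-1|$, which is exactly why the bounds \eqref{eq:4.40}--\eqref{eq:4.41} carry the factor $(|\alpha_1-1|+1)$ rather than $|\alpha_1-1|$. Once Proposition~\ref{prop:2.3} furnishes the control on $\beta_1$ and $\beta_2$ and the Rankine--Hugoniot speeds are expanded uniformly in $\boldsymbol{\mu}$ near the boundary as in Lemmas~\ref{lem:4.1}--\ref{lem:4.2}, the interval decomposition and the $O(1)$ bookkeeping are routine.
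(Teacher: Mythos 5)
Your interval bookkeeping is the right shape, but the proposal rests on an incorrect identification of what $\mathcal{S}_h$ produces, and that is a genuine gap. The semigroup $\mathcal{S}$ of Proposition \ref{prop:3.2} solves the initial-boundary value problem \eqref{eq:1.19}--\eqref{eq:1.21} with the boundary condition $v=b_0$ on $\Gamma_{\mathrm{w}}$, and the datum $U^{(\boldsymbol{\mu}),\nu}_b(\hat x,\cdot)$ is a.e. the constant $U_L$. Hence, by Proposition \ref{prop:3.2}(iii), for small $h$ the exact solution near the boundary is the \emph{boundary} Riemann solution of the limit system: a single $1$-wave $\tilde\beta_1$ joining $U_L$ to a boundary state $\tilde U_b=\Phi_1(\tilde\beta_1;U_L)$ determined by $\tilde v_b=b_0$ (the limit-system analogue of Lemma \ref{lem:2.8}, exactly as displayed in \eqref{eq:4.34}). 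It is not ``the Riemann solution joining $U_L$ to $U_b$'': the semigroup never sees $U_b$, its boundary trace satisfies $v=b_0$ rather than \eqref{eq:4.39}, and it cannot contain a $2$-wave trapped between the $1$-wave and the boundary, since near $\Gamma_{\mathrm{w}}$ one has $\lambda_2\approx b_0+a_\infty^{-1}>b_0$, so the $2$-family exits through the boundary and is never produced by the boundary Riemann solver. Proposition \ref{prop:2.3} only decomposes the \emph{state} $U_b$ along the wave curves of \eqref{eq:1.19}; it does not assert that $\mathcal{S}_h$ realizes the waves $\beta_1,\beta_2$. Consequently your region ``$I_{b2}$ between $\sigma_1(\beta_1)$ and $\beta_2$'' and the ``sliver on which both solutions equal $U_b$'' do not exist in the exact solution: the two solutions do \emph{not} agree near the boundary, and the additive $+1$ in \eqref{eq:4.40}--\eqref{eq:4.41} comes precisely from the mismatch of boundary traces $|U_b-\tilde U_b|=O(1)\big(1+|\alpha_1-1|\big)\|\boldsymbol{\mu}\|$, not from a spurious exact $2$-front. (Also, there is no ``case $k=2$'' here: only $1$-fronts connect an interior state to the boundary state.)

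The repair is the argument the paper actually intends, namely the proof of Lemma \ref{lem:4.2} with the states relabeled. Write $\tilde U_b=\Phi_1(\tilde\beta_1;U_L)$ with $\tilde v_b=b_0$ and $U_b=\Phi^{(\boldsymbol{\mu})}_1(\alpha_1;U_L,\boldsymbol{\mu})$ with \eqref{eq:4.39}; Proposition \ref{prop:2.2} gives $\tilde\beta_1=\alpha_1+O(1)\big(1+|\alpha_1-1|\big)\|\boldsymbol{\mu}\|$, so in case (i) the shock speeds differ by $O(1)\|\boldsymbol{\mu}\|$ (plus the $2^{-\nu}$ front-speed error), and the strip splits into $I_{b1}$ between the two $1$-waves (integrand $O(1)(|\alpha_1-1|+1)$, width $O(1)(\|\boldsymbol{\mu}\|+2^{-\nu})h$) and $I_{b2}$ between the exact wave and the boundary (width $O(1)h$, integrand $|U_b-\tilde U_b|$); in case (ii) one adds the fan region, of width and integrand $O(1)|\alpha_1-1|$, contributing $O(1)|\alpha_1-1|\nu^{-1}h$ by \eqref{eq:3.5}. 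Summing these three contributions yields \eqref{eq:4.40}--\eqref{eq:4.41}; your $O(1)$ estimates for the individual pieces are correct once they are attached to this (correct) description of $\mathcal{S}_h$, with Proposition \ref{prop:2.2} replacing Proposition \ref{prop:2.3} as the comparison tool.
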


\subsection{Proof of Theorem \ref{thm:1.1} for the convergence rate estimate \eqref{eq:1.22}}
Now, we are ready to prove estimate \eqref{eq:1.22} that is completed by the following two steps:

\smallskip
1. \emph{Estimate for $\|\mathcal{S}_{h}(U^{(\boldsymbol{\mu}),\nu}(\hat{x}, \cdot))-U^{(\boldsymbol{\mu}),\nu}(\hat{x}+h, \cdot)\|_{L^1(-\infty,\,b_{0}(\hat{x}+h))}$.}
Let $b_0\hat{x}=y_0>y_1>\cdot\cdot\cdot >y_{N}$ (or $b_0\hat{x}>y_0>y_1>\cdot\cdot\cdot >y_{N}$) be the jumps of $U^{(\boldsymbol{\mu}),\nu}(\hat{x},\cdot)$
on line $x=\hat{x}$.
Suppose that there is no wave interaction on the stripe between $x=\hat{x}$ and $x=\hat{x}+h$,
and there is no reflection on boundary $(x,b_{0}x)$ for $x\in (\hat{x},\hat{x}+h)$.
Let $S^{(\boldsymbol{\mu})}$ (or $R^{(\boldsymbol{\mu})}$) be the set of indices $\alpha_i$ with $i\in \{1,2,\cdots, N \}$
such that $U^{(\boldsymbol{\mu}),\nu}(\hat{x},y_{\alpha}+)$ and $U^{(\boldsymbol{\mu}),\nu}(\hat{x},y_{\alpha}-)$ are connected by a shock
wave front (or a rarefaction wave front) with strength $\alpha_i$.
Let $NP^{(\boldsymbol{\mu})}$ be the set of indices $\alpha_{NP}$ such that $U^{(\boldsymbol{\mu}),\nu}(\hat{x},y_{\alpha}+)$
and $U^{(\boldsymbol{\mu}),\nu}(\hat{x},y_{\alpha}-)$ are connected by a non-physical front with strength $\alpha_{NP}$.

By Lemmas \ref{lem:4.1}--\ref{lem:4.3}
and Proposition \ref{prop:3.1}, for sufficiently small $h>0$, if $\|\boldsymbol{\mu}\|\leq \|\boldsymbol{\mu}^{*}_{0}\|$, then
\begin{align}\label{eq:4.42}
&\|\mathcal{S}_{h}(U^{(\boldsymbol{\mu}),\nu}(\hat{x},\cdot))-U^{(\boldsymbol{\mu}),\nu}(\hat{x}+h,\cdot)\|_{L^1(-\infty,\,b_0(\hat{x}+h))}\nonumber\\[3pt]
& \leq\sum_{\alpha\in S^{(\boldsymbol{\mu})}\cup R^{(\boldsymbol{\mu})}\cup NP^{(\boldsymbol{\mu})}}
\|\mathcal{S}_{h}(U^{(\boldsymbol{\mu}),\nu}(\hat{x},y))-U^{(\boldsymbol{\mu}),\nu}(\hat{x}+h,y)\|_{L^1(y_{\alpha}-\eta,\,y_{\alpha}+\eta )}\nonumber\\[3pt]
& \leq C\big(\|\boldsymbol{\mu}\|+2^{-\nu}+\nu^{-1}\big)\Big(\sum_{\alpha\in S^{(\boldsymbol{\mu})}\cup R^{(\boldsymbol{\mu})}}|\alpha-1|+1\Big)h
+O(1)\Big(\sum_{\alpha_{NP}\in NP^{(\boldsymbol{\mu})}}\alpha_{NP}\Big)h\nonumber\\[3pt]
& \leq C\big(\|\boldsymbol{\mu}\|+2^{-\nu}+\nu^{-1}\big)\big(T.V.\{U^{(\boldsymbol{\nu}),\nu}(\hat{x},\cdot)\}+1\big)h
+O(1)2^{-\nu}h\nonumber\\[3pt]
&\leq C\big(\|\boldsymbol{\mu}\|+2^{-\nu}+\nu^{-1}\big)h,
\end{align}
where $C$ is independent of $(\mu,h)$, and $\eta=\frac{1}{2}\min_{1\leq j\leq N}\{y_{j-1}-y_{j}\}$.

\smallskip
2. \emph{Estimate on $\|U^{(\boldsymbol{\mu})}(x,\cdot)-U(x,\cdot)\|_{L^1(-\infty,\,b_{0}x)}$.} Let $U^{(\boldsymbol{\mu}),\nu}(x,y)$
be an approximate solution of \eqref{eq:1.15}--\eqref{eq:1.17} with initial data $U^{\nu}_{0}$ satisfying \eqref{eq:3.1}.
Let $\mathcal{S}$ be the uniformly Lipschitz semigroup given by Proposition \ref{prop:3.2}.
Then, by the triangle inequality, we have
\begin{align}\label{eq:4.43}
 \|U^{(\boldsymbol{\mu})}(x,\cdot)-U(x,\cdot)\|_{L^1(-\infty, b_{0}x)}
 &\leq \|U^{(\boldsymbol{\mu})}(x,\cdot)-U^{(\boldsymbol{\mu}),\nu}(x,\cdot)\|_{L^1(-\infty,\,b_{0}x)}\nonumber\\[3pt]
&\quad +\|U^{(\boldsymbol{\mu}),\nu}(x,\cdot)-\mathcal{S}_{x}(U^{\nu}_{0}(\cdot))\|_{L^1(-\infty,\,b_{0}x)}\nonumber\\[3pt]
&\quad +\|\mathcal{S}_{x}(U^{\nu}_{0}(\cdot))-U(x,\cdot)\|_{L^1(-\infty,\,b_{0}x)}\nonumber\\[3pt]
&=:J_1+J_2+J_3.
\end{align}

For $J_1$, by Proposition \ref{prop:3.1}, we can choose a subsequence (still denoted as) $\{U^{(\boldsymbol{\mu}),\nu}\}_{\nu}$
such that $U^{(\boldsymbol{\mu}),\nu}\rightarrow U^{(\boldsymbol{\mu})}$
in $L^{1}_{\rm loc}(\Omega)$ as $\nu\rightarrow \infty$.
Then $J_1\rightarrow0$ as $\nu\rightarrow \infty$.

Next, for $J_{3}$, by the Lipschitz property of $\mathcal{S}$ and \eqref{eq:3.1}, we have
\begin{align}\label{eq:4.44}
J_3&\leq \|\mathcal{S}_{x}(U^{\nu}_{0}(\cdot))-\mathcal{S}_{x}(U_0(\cdot))\|_{L^1(-\infty, b_{0}x)}\nonumber\\[3pt]
&\leq L\|U^{\nu}_{0}(\cdot)-U_0(\cdot)\|_{L^1(-\infty, b_{0}x)}\rightarrow 0 \qquad \mbox{as $\nu\rightarrow \infty$}.
\end{align}

For $J_{2}$, thanks to Proposition \ref{prop:3.3} and \emph{Step 1}, we obtain that, when $\|\boldsymbol{\mu}\|\leq \|\boldsymbol{\mu}^{*}_0\|$,
\begin{align}\label{eq:4.45}
J_2
&\leq L\int^{x}_{0}\liminf_{h\rightarrow0+}\frac{\|\mathcal{S}_{h}(U^{(\boldsymbol{\mu}),\nu}(\hat{x},\cdot))-U^{(\boldsymbol{\mu}),\nu}(\hat{x}+h,\cdot)\|_{L^1(-\infty,\,b_0(\hat{x}+h))}}{h}
  \,{\rm d}\hat{x}\nonumber\\[3pt]
&\leq O(1)x\big(2^{-\nu}+\nu^{-1}+\|\boldsymbol{\mu}\|\big).
\end{align}
Then it follows from \eqref{eq:4.42}--\eqref{eq:4.45} that we can choose a constant vector $\boldsymbol{\mu}_0=(\epsilon_0,\tau^2_{0})$
with $\epsilon_0>0, \tau_0>0$, and a constant $C_1>0$,
independent of $(\boldsymbol{\mu},\nu, x)$ such that, as $\nu\rightarrow \infty$,
\begin{eqnarray*}
\|U^{(\boldsymbol{\mu})}(x,\cdot)-U(x,\cdot)\|_{L^{1}(-\infty, b_{0}x)}\leq C_1x\|\boldsymbol{\mu}\|,
\end{eqnarray*}
which gives estimate \eqref{eq:1.22}.

\vspace{-3mm}
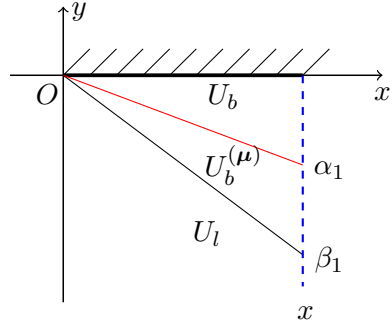
\begin{figure}[ht]
\begin{center}
\begin{tikzpicture}[scale=0.7]
\draw [line width=0.02cm][->] (-5,0) --(2.0,0);
\draw [line width=0.02cm][->] (-4.0,-4.3) --(-4.0,1.3);

\draw [line width=0.05cm](-4.0,0)--(0.5, 0);
\draw [thin](-4,0)--(0.5, -3.4);
\draw [thin][red](-4,0)--(0.5, -1.7);
\draw [thick][dashed][blue](0.5,0)--(0.5,-4.0);

\draw [thin](-4,0)--(-3.5, 0.5);
\draw [thin](-3.5,0)--(-3.0, 0.5);
\draw [thin](-3.0,0)--(-2.5, 0.5);
\draw [thin](-2.5,0)--(-2.0, 0.5);
\draw [thin](-2.0,0)--(-1.5, 0.5);
\draw [thin](-1.5,0)--(-1.0, 0.5);
\draw [thin](-1.0,0)--(-0.5, 0.5);
\draw [thin](-0.5,0)--(0.0, 0.5);
\draw [thin](0.0,0)--(0.5, 0.5);
\draw [thin](0.5,0)--(1.0, 0.5);

\node at (2.6, 2) {$$};
\node at (0.55, -4.5) {$x$};
\node at (1.0, -1.8) {$\alpha_1$};
\node at (1.0, -3.5) {$\beta_{1}$};

\node at (2.0, -0.35){$x$};
\node at (-3.7, 1.2){$y$};
\node at (-4.3, -0.35){$O$};
\node at (-0.8, -1.7){$U^{(\boldsymbol{\mu})}_{b}$};
\node at (-1.0, -0.4){$U_{b}$};
\node at (-1.3, -3.0){$U_{l}$};

\end{tikzpicture}
\end{center}
\caption{An example for the optimal convergence rate}\label{fig4.8}
\end{figure}

\subsection{Proof of Theorem \ref{thm:1.1} for the optimal convergence rate \eqref{eq:1.22}}
In this subsection, we further show that our convergence rate with respect to $\boldsymbol{\mu}$ in estimate \eqref{eq:1.22} is optimal.
To achieve this, it suffices to calculate an accurate convergence rate of a special Riemann solution in the following:
As shown in Fig.~\ref{fig4.8}, $\,b_{0}\equiv0$. We consider the Riemann problem for system \eqref{eq:1.19} with the following Riemann data:
\begin{eqnarray}\label{eq:4.46}
U|_{x=0}=\begin{cases}
v_{b}=0 \qquad  &\mbox{for $y=0$},\\[3pt]
U_{l}=(\rho_{l}, v_{l})^{\top} \qquad  &\mbox{for $y<0$},
\end{cases}
\end{eqnarray}
where $\rho_{l}=1$ and $v_{l}=\delta>0$.
Set  $\underline{U}:=U_{l}\big|_{\delta=0}=(1,0)^{\top}$.
Then, by \eqref{eq:2.42}, the following relation holds:
\begin{eqnarray}\label{eq:4.47}
0=\varphi_{1}(\alpha_1)+\delta.
\end{eqnarray}

For $\delta=0$, by \eqref{eq:4.47} and Lemma \ref{lem:2.6}, we see that $\alpha_1=1$.
Since $\varphi'_1(1)=-a_{\infty}^{-1}<0$ from Lemma \ref{lem:2.6}, by the implicit function theorem,
equation \eqref{eq:4.47} admits a unique solution $\alpha_1=\alpha_1(\delta)$ for $\delta>0$ sufficiently small.
Moreover, by direct computation, we obtain from \eqref{eq:4.47} that
\begin{eqnarray}\label{eq:4.48}
\alpha'_{1}(0)=-\frac{1}{\varphi'_1(1)}=a_{\infty}.
\end{eqnarray}
Therefore, by the Taylor formula, we have
\begin{eqnarray}\label{eq:4.49}
\alpha_1(\delta)=\alpha_{1}(0)+\alpha'_{1}(0)\delta+O(1)\delta^{2}=1+a_{\infty}\delta+O(1)\delta^{2},
\end{eqnarray}
where the bound of $O(1)$ depends only on $\underline{U}$. It implies that $\alpha_1$ is a shock wave.

Denote by $\rho_{b}$ the density on boundary $y=0$. Then, by \eqref{eq:2.42}, $\rho_{b}$ satisfies
\begin{eqnarray}\label{eq:4.50}
\rho_{b}=\alpha_1\rho_{l}=1+a_{\infty}\delta+O(1)\delta^{2}.
\end{eqnarray}
Therefore, the Riemann problem \eqref{eq:1.19} and \eqref{eq:4.46} admits a unique solution that consists of only one shock wave $\alpha_1$
issuing from point $(0, 0)$ and belonging to the $1$-st family with $U_l=(1,\delta)$ and $U_{b}:=(\rho_{b}, v_b)^{\top}$
as its \emph{left}-state and \emph{right}-state for some $\delta>0$.

Now, we turn to the Riemann solution of problem \eqref{eq:1.15} with $U_l=(1,\delta)^{\top}$ as the \emph{left}-state
and $v^{(\boldsymbol{\mu})}_b$ as the velocity on the boundary.
Let $\beta_1$ be the elementary wave in the Riemann solution. Then
\begin{eqnarray}\label{eq:4.51}
v^{(\boldsymbol{\mu})}_b-v_l=\varphi^{(\boldsymbol{\mu})}_{1}(\beta_1, U_l,\boldsymbol{\mu}),\qquad \rho^{(\boldsymbol{\mu})}_b=\rho_l\beta_1.
\end{eqnarray}
It follows from $b_0=0$ and the boundary condition \eqref{eq:4.27} in Lemma \ref{lem:4.2} that $v^{(\boldsymbol{\mu})}_b\equiv0$.
Thus, combining \eqref{eq:4.47} with \eqref{eq:4.51} yields
\begin{eqnarray}\label{eq:4.52}
\varphi^{(\boldsymbol{\mu})}_{1}(\beta_1, U_l,\boldsymbol{\mu})=\varphi_{1}(\alpha_1).
\end{eqnarray}

By Lemma \ref{lem:2.6}, we have
$$
\frac{\partial \varphi^{(\boldsymbol{\mu})}_{1}(\beta_1, U_l,\boldsymbol{\mu})}{\partial\beta_1}\big|_{\beta_1=1,\boldsymbol{\mu}=\boldsymbol{0}}=-a_{\infty}^{-1}<0.
$$
Thus, by applying the implicit function theorem again, equations \eqref{eq:4.51} admit a unique solution
$\beta_1$ that is a function of $(\alpha_1,\boldsymbol{\mu})$, \emph{i.e.}, $\beta_1=\beta_{1}(\alpha_1,\boldsymbol{\mu} )$.
Let $\rho^{(\boldsymbol{\mu})}_b$ be the density state on boundary $y=0$.
Then, by \eqref{eq:2.38}, $\rho^{(\boldsymbol{\mu})}_b$ satisfies $\rho^{(\boldsymbol{\mu})}_b=\beta_1\rho_{l}=\delta\beta_1$.

Moreover, by \eqref{eq:4.51}, we see that
\begin{eqnarray}\label{eq:4.52b}
\beta_1(1,\boldsymbol{\mu})=0, \qquad  \beta_1(\alpha_{1},\boldsymbol{0})=\alpha_1.
\end{eqnarray}
Then, by the Taylor formula and \eqref{eq:4.49}, we have
\begin{eqnarray}\label{4.53}
\begin{split}
\beta_1(\alpha_1,\boldsymbol{\mu})&=\beta_1(\alpha_{1},\boldsymbol{0})+\beta_1(1,\boldsymbol{\mu})+O(1)|\alpha_{1}-1|\|\boldsymbol{\mu}\|\\[3pt]
&=\alpha_{1}+O(1)|\alpha_{1}-1|\|\boldsymbol{\mu}\|
=1+a_{\infty}\delta+O(1)\|\boldsymbol{\mu}\|\delta,
\end{split}
\end{eqnarray}
where $\|\boldsymbol{\mu}\|=\epsilon+\tau^2$.
Thus, for $\delta>0$ sufficiently small, the Riemann problem \eqref{eq:1.15} with Riemann data $U_l=(1,\delta)^{\top}$ and $v^{(\boldsymbol{\mu})}_b=0$
admits a unique solution that also consists of only one shock wave $\beta_1$ issuing from point $(0, 0)$ and belonging to the $1$-st family with $U_l=(1,\delta)$
and $U^{(\boldsymbol{\mu})}_{b}:=(\rho^{(\boldsymbol{\mu})}_{b}, v^{(\boldsymbol{\mu})}_b)^{\top}$
as its \emph{left}-state and \emph{right}-state for some $\delta>0$. Moreover, $\beta_1=1$ is equivalent to $\delta=0$.

Next, we compute $\frac{\partial^2 \beta_{1}}{\partial \alpha_1\partial \epsilon}\Big|_{\alpha_1=1, \boldsymbol{\mu}=\boldsymbol{0}}$
and $\frac{\partial^2 \beta_{1}}{\partial \alpha_1\partial \tau^2}\Big|_{\alpha_1=1, \boldsymbol{\mu}=\boldsymbol{0}}$.
We first take the derivative on \eqref{eq:4.52b} with respect to $\alpha_1$ to deduce that
\begin{eqnarray}\label{eq:4.54}
\frac{\partial\varphi^{(\boldsymbol{\mu})}_{1}(\beta_1, U_l,\boldsymbol{\mu})}{\partial \beta_1}\frac{\partial \beta_1}{\partial \alpha_1}
=\varphi'_{1}(\alpha_1).
\end{eqnarray}
Taking $\alpha_1=1$ and $\boldsymbol{\mu}=\boldsymbol{0}$ in \eqref{eq:4.54}, by Lemma \ref{lem:2.6}, we obtain
\begin{eqnarray}\label{eq:4.55}
\frac{\partial \beta_1}{\partial \alpha_1}\Big|_{\alpha_1=1, \boldsymbol{\mu}=\boldsymbol{0}}=\frac{\varphi'_{1}(1)}
{\frac{\partial\varphi^{(\boldsymbol{\mu})}_{1}(\beta_1, U_l,\boldsymbol{\mu})}{\partial \beta_1}\Big|_{\alpha_1=1, \boldsymbol{\mu}=\boldsymbol{0}}}=1.
\end{eqnarray}

Based on this, we further take the derivatives on both sides of \eqref{eq:4.54} with respect to $\epsilon$ and $\tau^2$ to obtain
\begin{eqnarray*}
&&\frac{\partial\varphi^{(\boldsymbol{\mu})}_{1}}{\partial \beta_1}\frac{\partial^2 \beta_1}{\partial \alpha_1\partial\epsilon}
+\frac{\partial^2\varphi^{(\boldsymbol{\mu})}_{1}}{\partial \beta^2_1}\frac{\partial \beta_1}{\partial\epsilon}\frac{\partial \beta_1}{\partial \alpha_1}
+\frac{\partial^2\varphi^{(\boldsymbol{\mu})}_{1}}{\partial \beta_1\partial \epsilon}\frac{\partial \beta_1}{\partial \alpha_1}
=0,\\[3pt]
&&\frac{\partial\varphi^{(\boldsymbol{\mu})}_{1}}{\partial \beta_1}\frac{\partial^2 \beta_1}{\partial \alpha_1\partial\tau^2}
+\frac{\partial^2\varphi^{(\boldsymbol{\mu})}_{1}}{\partial \beta^2_1}\frac{\partial \beta_1}{\partial\tau^2}\frac{\partial \beta_1}{\partial \alpha_1}
+\frac{\partial^2\varphi^{(\boldsymbol{\mu})}_{1}}{\partial \beta_1\partial \tau^2}\frac{\partial \beta_1}{\partial \alpha_1}
=0.
\end{eqnarray*}
These imply that
\begin{eqnarray}\label{eq:4.57}
\begin{split}
\frac{\partial^2 \beta_1}{\partial \alpha_1\partial\epsilon}=-\frac{\Big(\frac{\partial^2\varphi^{(\boldsymbol{\mu})}_{1}}{\partial \beta^2_1}\frac{\partial \beta_1}{\partial\epsilon}
+\frac{\partial^2\varphi^{(\boldsymbol{\mu})}_{1}}{\partial \beta_1\partial \epsilon}\Big)\frac{\partial \beta_1}{\partial \alpha_1}}{\frac{\partial\varphi^{(\boldsymbol{\mu})}_{1}}{\partial \beta_1}},
\qquad
\frac{\partial^2 \beta_1}{\partial \alpha_1\partial\tau^2}=-\frac{\Big(\frac{\partial^2\varphi^{(\boldsymbol{\mu})}_{1}}{\partial \beta^2_1}\frac{\partial \beta_1}{\partial\tau^2}
+\frac{\partial^2\varphi^{(\boldsymbol{\mu})}_{1}}{\partial \beta_1\partial \tau^2}\Big)\frac{\partial \beta_1}{\partial \alpha_1}}{\frac{\partial\varphi^{(\boldsymbol{\mu})}_{1}}{\partial \beta_1}}.
\end{split}
\end{eqnarray}

In the following, we are devoted to the estimates of all the terms on both the right-hand sides
of $\frac{\partial^2 \beta_1}{\partial \alpha_1\partial \epsilon}$ and $\frac{\partial^2 \beta_1}{\partial \alpha_1\partial \tau^2}$ in \eqref{eq:4.57}, respectively.
Since $\beta_1$ is a shock wave, states $v^{(\boldsymbol{\mu})}-v_{l}$ and $U_l$ satisfy equation \eqref{eq:2.14b}:
\begin{align}\label{eq:4.58}
\mathcal{H}^{(\boldsymbol{\mu})}_{S}:=&\, (\varphi^{(\boldsymbol{\mu})}_{1})^2-\frac{2(\beta^{\epsilon}_1-1)(\beta_1-1)}{a^2_{\infty}\epsilon (\beta_1+1)}\nonumber\\[3pt]
&\,-\Big(\sqrt{(1-\tau^2\delta^2)(1-\tau^2\mathcal{B}^{(\epsilon)})}-1\Big)
\Big((\delta+\varphi^{(\boldsymbol{\mu})}_{1})\delta+\frac{2\beta_1\big(\beta^{\epsilon}_1-1\big)}{a^{2}_{\infty}\epsilon(\beta_1+1)}\Big)
-\tau^2\delta^2\mathcal{B}^{(\epsilon)}\nonumber\\[3pt]
=&\,0,
\end{align}
where $\mathcal{B}^{(\epsilon)}$ is given by
\begin{eqnarray}\label{eq:4.59}
\mathcal{B}^{(\epsilon)}=\frac{2(\beta^{\epsilon}_1-1)}{a^{2}_{\infty}\epsilon}+\big(\delta+\varphi^{(\boldsymbol{\mu})}_{1}\big)^2.
\end{eqnarray}

Using the Taylor formula and estimates (i)--(ii) in Lemma \ref{lem:A1} and applying Lemma \ref{lem:A2}, we obtain
\begin{align}\label{eq:4.60}
\begin{split}
\frac{\partial \mathcal{H}^{(\boldsymbol{\mu})}_{S}}{\partial \varphi^{(\boldsymbol{\mu})}_{1}}
&=2\varphi^{(\boldsymbol{\mu})}_{1}-\Big(\big(1-\tau^2\delta^2\big)^{\frac{1}{2}}-(1-\tau^2\mathcal{B}^{(\epsilon)})^{-\frac{1}{2}}\Big)\delta\\[3pt]
&\quad+2(\delta+\varphi^{(\boldsymbol{\mu})}_{1})\Big(\frac{1-\tau^2\delta^2}{1-\tau^2\mathcal{B}^{(\epsilon)}}\Big)^{\frac{1}{2}}
\Big((\delta+\varphi^{(\boldsymbol{\mu})}_{1})\delta+\frac{2\beta_1\big(\beta^{\epsilon}_1-1\big)}{a^{2}_{\infty}\epsilon(\beta_1+1)}\Big)\tau^2
   -2(\delta+\varphi^{(\boldsymbol{\mu})}_{1})\tau^2\delta^2\\[3pt]
&=-2\delta+O(1)(\delta+\epsilon+\tau^2)\delta,\\[5pt]
\frac{\partial \mathcal{H}^{(\boldsymbol{\mu})}_{S}}{\partial \beta_1}
&=\frac{(\epsilon+1)\beta^{\epsilon}_1+\epsilon\beta^{\epsilon-1}_1-1}{a^2_{\infty}(\beta_1+1)^2\epsilon}
-\frac{2(1-\tau^2\delta^2)^{\frac{1}{2}}(1-\tau^2\mathcal{B}^{(\epsilon)})^{\frac{1}{2}}}{a^2_{\infty}(\beta_1+1)^2}
\frac{(\epsilon+1)\beta^{\epsilon}_1+\epsilon\beta^{\epsilon-1}_1-1}{\epsilon}\\[3pt]
&\quad+\Big(\frac{1-\tau^2\delta^2}{1-\tau^2\mathcal{B}^{(\epsilon)}}\Big)^{\frac{1}{2}}
\Big((\delta+ \varphi^{(\boldsymbol{\mu})}_{1})\delta+\frac{2\beta_1(\beta^{\epsilon}_{1}-1)}{a^2_{\infty}(\beta_1+1)\epsilon}\Big)\frac{\tau^2}{a^2_{\infty}}\beta^{\epsilon-1}_{1}
-\frac{2\tau^2\delta^2}{a^2_{\infty}}\beta^{\epsilon-1}_{1},
\end{split}
\end{align}
and
\begin{align}\label{eq:4.61}
\begin{split}
\frac{\partial \mathcal{H}^{(\boldsymbol{\mu})}_{S}}{\partial \epsilon}
&=-\bigg\{\frac{\beta_1-1}{a^{2}_{\infty}(\beta_1+1)}
-\Big(\frac{1-\tau^2\delta^2}{1-\tau^2\mathcal{B}^{(\epsilon)}}\Big)^{\frac{1}{2}}
\Big((\delta+ \varphi^{(\boldsymbol{\mu})}_{1})\delta+\frac{2\beta_1(\beta^{\epsilon}_{1}-1)}{a^2_{\infty}(\beta_1+1)\epsilon}\Big)\frac{\tau^2}{a^2_{\infty}}\\[3pt]
&\qquad\,\,\,\,+\frac{\big((1-\tau^2\delta^2)^{\frac{1}{2}}(1-\tau^2\mathcal{B}^{(\epsilon)})^{\frac{1}{2}}-1\big)\beta_1}{a^{2}_{\infty}(\beta_1+1)}-\frac{2}{a^2_{\infty}}\tau^2\delta^2\bigg\}
\,\frac{(\epsilon\ln\beta_1-1)\beta^{\epsilon}_1+1}{\epsilon^{2}}\\[3pt]
&=-\Big(\frac{1}{a_{\infty}}\delta+O(1)(\delta+\epsilon+\tau^2)\delta\Big)\Big(a^{2}_{\infty}\delta^2+O(1)(\delta+\epsilon+\tau^2)\delta^2\Big)\\[3pt]
&=-a_{\infty}\delta^3+O(1)(\delta+\epsilon+\tau^2)\delta^3,\\[3pt]
\frac{\partial \mathcal{H}^{(\boldsymbol{\mu})}_{S}}{\partial \tau^2}
&=\frac{1}{2}\bigg(\Big(\frac{1-\tau^2\mathcal{B}^{(\epsilon)}}{1-\tau^2\delta^2}\Big)^{\frac{1}{2}}\delta^2
   +\Big(\frac{1-\tau^2\delta^2}{1-\tau^2\mathcal{B}^{(\epsilon)}}\Big)^{\frac{1}{2}}\mathcal{B}^{(\epsilon)}\bigg)
\bigg((\delta+\varphi^{(\boldsymbol{\mu})}_{1})\delta+\frac{2\beta_1\big(\beta^{\epsilon}_1-1\big)}{a^{2}_{\infty}\epsilon(\beta_1+1)}\bigg)
   -\delta^{2}\mathcal{B}^{(\epsilon)}\\[3pt]
&=\frac{1}{2}\Big(\frac{2}{a_{\infty}}\delta+O(1)(\delta+\epsilon+\tau^2)\delta\Big)\Big(\frac{1}{a_{\infty}}\delta+O(1)(\delta+\epsilon+\tau^2)\delta\Big)\\[3pt]
&=\frac{2}{a_{\infty}}\delta^2+O(1)(\delta+\epsilon+\tau^2)\delta^2.
\end{split}
\end{align}

Now, taking the derivatives on both sides of equation \eqref{eq:4.58} with respect to $\epsilon$ and $\tau^2$ respectively, we have
\begin{eqnarray}\label{eq:4.62}
\frac{\partial \mathcal{H}^{(\boldsymbol{\mu})}_{S}}{\partial \varphi^{(\boldsymbol{\mu})}_{1}}\frac{\partial \varphi^{(\boldsymbol{\mu})}_{1}}{\partial \epsilon}
+\frac{\partial \mathcal{H}^{(\boldsymbol{\mu})}_{S}}{\partial \epsilon}=0,
\qquad \frac{\partial \mathcal{H}^{(\boldsymbol{\mu})}_{S}}{\partial \varphi^{(\boldsymbol{\mu})}_{1}}\frac{\partial \varphi^{(\boldsymbol{\mu})}_{1}}{\partial \tau^2}
+\frac{\partial \mathcal{H}^{(\boldsymbol{\mu})}_{S}}{\partial \tau^2}=0.
\end{eqnarray}
Then it follows from estimates \eqref{eq:4.60}--\eqref{eq:4.61} that
\begin{eqnarray*}
\begin{split}
\frac{\partial \varphi^{(\boldsymbol{\mu})}_{1}}{\partial \epsilon}&=-\frac{\frac{\partial \mathcal{H}^{(\boldsymbol{\mu})}_{S}}{\partial \epsilon}}{\frac{\partial \mathcal{H}^{(\boldsymbol{\mu})}_{S}}{\partial \varphi^{(\boldsymbol{\mu})}_{1}}}
=-\frac{-a_{\infty}\delta^3+O(1)(\delta+\epsilon+\tau^2)\delta^3}{-2\delta+O(1)(\delta+\epsilon+\tau^2)\delta}\\[3pt]
&=-\frac{a_{\infty}}{2}\delta^2+O(1)(\delta+\epsilon+\tau^2)\delta^2,\\[3pt]
\frac{\partial \varphi^{(\boldsymbol{\mu})}_{1}}{\partial \tau^2}&=-\frac{\frac{\partial \mathcal{H}^{(\boldsymbol{\mu})}_{S}}{\partial \tau^2}}{\frac{\partial \mathcal{H}^{(\boldsymbol{\mu})}_{S}}{\partial \varphi^{(\boldsymbol{\mu})}_{1}}}=-\frac{\frac{2}{a_{\infty}}\delta^2+O(1)(\delta+\epsilon+\tau^2)\delta+O(1)(\delta+\epsilon+\tau^2)\delta^2}{-2\delta+O(1)(\delta+\epsilon+\tau^2)\delta}\\[3pt]
&=\frac{1}{2a_{\infty}}\delta+O(1)(\delta+\epsilon+\tau^2)\delta,
\end{split}
\end{eqnarray*}
which leads to
\begin{eqnarray}\label{eq:4.63}
\frac{\partial \varphi^{(\boldsymbol{\mu})}_{1}}{\partial\epsilon}\Big|_{\alpha_1=1, \epsilon=\tau=0}=
\frac{\partial \varphi^{(\boldsymbol{\mu})}_{1}}{\partial \tau^2}\Big|_{\alpha_1=1, \epsilon=\tau=0}=0.
\end{eqnarray}
Therefore, we obtain from  \eqref{eq:4.51} and \eqref{eq:4.63} that
\begin{align}\label{eq:4.64}
\frac{\partial\beta_{1}}{\partial \epsilon}\Big|_{\alpha_1=1, \epsilon=\tau=0}=
-\frac{\frac{\partial\varphi^{(\boldsymbol{\mu})}_{1}}{\partial \epsilon}\Big|_{\alpha_1=1, \epsilon=\tau=0}}
{\frac{\partial\varphi^{(\boldsymbol{\mu})}_{1}}{\partial \beta_1}\Big|_{\alpha_1=1, \epsilon=\tau=0}}
=0,\qquad \frac{\partial\beta_{1}}{\partial \tau^2}\Big|_{\alpha_1=1, \epsilon=\tau=0}=
-\frac{\frac{\partial\varphi^{(\boldsymbol{\mu})}_{1}}{\partial \tau^2}\Big|_{\alpha_1=1, \epsilon=\tau=0}}
{\frac{\partial\varphi^{(\boldsymbol{\mu})}_{1}}{\partial \beta_1}\Big|_{\alpha_1=1, \epsilon=\tau=0}}=0.
\end{align}

Next, we further take the derivatives on equation \eqref{eq:4.62} with respect to $\varphi^{(\boldsymbol{\mu})}_{1}$, $\beta_1$, $\epsilon$, and $\tau^2$ respectively,
and combine with the estimates in Lemmas \ref{lem:A1}--\ref{lem:A2} to obtain
\begin{align}\label{eq:4.65}
\frac{\partial^2 \mathcal{H}^{(\boldsymbol{\mu})}_{S}}{\partial \varphi^{(\boldsymbol{\mu})^{2}}_{1}}
&=2+(\delta+\varphi^{(\boldsymbol{\mu})}_{1})(1-\tau^2\mathcal{B}^{(\epsilon)})^{-\frac{3}{2}}\tau^2\delta-2\tau^2\delta^2\nonumber\\[3pt]
&\quad+2(1-\tau^2\delta^2)^{\frac{1}{2}}(1-\tau^2\mathcal{B}^{(\epsilon)})^{-\frac{3}{2}}
  \bigg\{(\delta+\varphi^{(\boldsymbol{\mu})}_{1})\Big((\delta+\varphi^{(\boldsymbol{\mu})}_{1})\delta
+\frac{2\beta_1\big(\beta^{\epsilon}_1-1\big)}{a^{2}_{\infty}\epsilon(\beta_1+1)}\Big)\tau^2\nonumber\\[3pt]
&\qquad\qquad\qquad\qquad\qquad\qquad\quad\quad\,\,\,\,
  +(1-\tau^2\mathcal{B}^{(\epsilon)})\Big(2(\delta+\varphi^{(\boldsymbol{\mu})}_{1})\delta+\frac{2\beta_1\big(\beta^{\epsilon}_1-1\big)}{a^{2}_{\infty}\epsilon(\beta_1+1)}\Big)
\bigg\}\nonumber\\[3pt]
&=2+O(1)(\delta+\epsilon+\tau^2)\delta,
\end{align}
\begin{align}\label{eq:4.66}
\frac{\partial^2 \mathcal{H}^{(\boldsymbol{\mu})}_{S}}{\partial \beta_{1}\partial \varphi^{(\boldsymbol{\mu})}_{1}}
&=\bigg\{\frac{\big((\beta_1+1)\epsilon+1\big)\beta^{\epsilon}_1-1}{(\beta_1+1)^{2}\epsilon}
+\frac{\beta^{\epsilon-1}_1\delta}{\delta+\varphi^{(\boldsymbol{\mu})}_{1}}
+\frac{(\delta+\varphi^{(\boldsymbol{\mu})}_{1})\beta^{\epsilon-1}_1\tau^2}{2(1-\tau^2\mathcal{B}^{(\epsilon)})}
 \Big((\delta+\varphi^{(\boldsymbol{\mu})}_{1})\delta+\frac{2\beta_1\big(\beta^{\epsilon}_1-1\big)}{a^{2}_{\infty}(\beta_1+1)\epsilon}\Big)
\bigg\}\nonumber\\[3pt]
&\quad\,\, \times \Big(\frac{1-\tau^2\delta^2}{1-\tau^2\mathcal{B}^{(\epsilon)}}\Big)^{\frac{1}{2}}\frac{(\delta+\varphi^{(\boldsymbol{\mu})}_{1})\tau^2}{a^{2}_{\infty}}
 \nonumber\\[3pt]
&=O(1)\tau^2\delta,
\end{align}

\begin{align}\label{eq:4.67}
\frac{\partial^2 \mathcal{H}^{(\boldsymbol{\mu})}_{S}}{\partial \varphi^{(\boldsymbol{\mu})}_{1}\partial \epsilon}
&=\bigg\{\delta+2(\delta+\varphi^{(\boldsymbol{\mu})}_{1})(1-\tau^2\delta)^{\frac{1}{2}}\Big((\delta+\varphi^{(\boldsymbol{\mu})}_{1})\delta
+\frac{2\beta_1\big(\beta^{\epsilon}_1-1\big)}{a^{2}_{\infty}\epsilon(\beta_1+1)}\Big)\tau^2
\nonumber\\[3pt]
&\qquad +\frac{4(\delta+\varphi^{(\boldsymbol{\mu})}_{1})(1-\tau^2\delta)^{\frac{1}{2}}(1-\tau^2\mathcal{B}^{(\epsilon)})}{\beta_1+1}\bigg\}\, \frac{(1-\tau^2\mathcal{B}^{(\epsilon)})^{-\frac{3}{2}}\tau^2}{a^{2}_{\infty}}\frac{(\epsilon\ln\beta_1-1)\beta^{\epsilon}_1+1}{\epsilon^2}\nonumber\\[3pt]
&=O(1)\tau^2\delta^3,
\end{align}

\begin{align}\label{eq:4.68}
\frac{\partial^2 \mathcal{H}^{(\boldsymbol{\mu})}_{S}}{\partial \varphi^{(\boldsymbol{\mu})}_{1}\partial \tau^2}
&=\frac{1}{2}\Big((1-\tau^2\mathcal{B}^{(\epsilon)})^{-\frac{3}{2}}\mathcal{B}^{(\epsilon)}+(1-\tau^2\delta^2)^{-\frac{1}{2}}\delta^2\Big)\delta-2(\delta+\varphi^{(\boldsymbol{\mu})}_{1})\delta^2\nonumber\\[3pt]
&\quad +2(\delta+\varphi^{(\boldsymbol{\mu})}_{1})\Big((\delta+\varphi^{(\boldsymbol{\mu})}_{1})\delta+\frac{2\beta_1\big(\beta^{\epsilon}_1-1\big)}{a^{2}_{\infty}\epsilon(\beta_1+1)}\Big)
  \Big(\frac{1-\tau^2\delta^2}{1-\tau^2\mathcal{B}^{(\epsilon)}}\Big)^{\frac{1}{2}} \nonumber\\[3pt]
&\qquad \times
\Big(1+\frac{(1-\tau^2\delta^2)^{\frac{1}{2}}\mathcal{B}^{(\epsilon)}-(1-\tau^2\mathcal{B}^{(\epsilon)})^{\frac{1}{2}}\delta^2}{2(1-\tau^2\delta^2)(1-\tau^2\mathcal{B}^{(\epsilon)})}\Big)\tau^2
\nonumber\\[3pt]
&=O(1)(\delta+\epsilon+\tau^2)\delta,
\end{align}

\begin{align}\label{eq:4.69}
\frac{\partial^2 \mathcal{H}^{(\boldsymbol{\mu})}_{S}}{\partial \beta_{1}\partial \epsilon}
&=\frac{2}{a^2_{\infty}(\beta_1+1)^2}\frac{\big(\epsilon(\epsilon+1)\ln\beta_1-1\big)\beta^{\epsilon}_1+\epsilon^2\beta^{\epsilon-1}\ln\beta_1+1}{\epsilon^2}\nonumber\\[3pt]
&\quad-\frac{2\big(1-\tau^2\delta^2\big)^{\frac{1}{2}}\big(1-\tau^2\mathcal{B}^{(\epsilon)}\big)^{\frac{1}{2}}}{a^2_{\infty}(\beta_1+1)^2}
\frac{\big((\beta_1+1)\epsilon+1\big)\epsilon\beta^{\epsilon}_1\ln\beta_1-\beta^{\epsilon}+1}{\epsilon^2}\nonumber\\[3pt]
&\quad+\bigg\{\frac{2}{a^2_{\infty}(\beta_1+1)^2}\frac{\big[(\beta_1+1)\epsilon+1\big]\beta^{\epsilon}_1-1}{\epsilon}+\frac{\beta^{\epsilon}_1}{\beta_1+1}
+\Big((\delta+\varphi^{(\boldsymbol{\mu})}_{1})\delta+\frac{2\beta_1\big(\beta^{\epsilon}_1-1\big)}{a^{2}_{\infty}(\beta_1+1)\epsilon}\Big)
\frac{\beta^{\epsilon}_1\tau^2}{2}\bigg\}\nonumber\\[3pt]
&\qquad\,\,\times \frac{2\tau^2}{a^{4}_{\infty}}\Big(\frac{1-\tau^2\delta^2}{1-\tau^2\mathcal{B}^{(\epsilon)}}\Big)^{\frac{1}{2}}
\frac{\big(\epsilon\ln\beta_1-1\big)\beta^{\epsilon}_1+1}{\epsilon^2}\nonumber\\[3pt]
&\quad+\bigg\{\Big(\frac{1-\tau^2\delta^2}{1-\tau^2\mathcal{B}^{(\epsilon)}}\Big)^{\frac{1}{2}}
\Big((\delta+\varphi^{(\boldsymbol{\mu})}_{1})\delta+\frac{2\beta_1\big(\beta^{\epsilon}_1-1\big)}{a^{2}_{\infty}(\beta_1+1)\epsilon}\Big)-\delta^2\bigg\}
\frac{\beta^{\epsilon}_1\ln\beta_1}{a^2_{\infty}}\tau^2\nonumber\\[3pt]
&=\frac{1}{2a^2_{\infty}}\Big(\frac{3a_{\infty}}{2}\delta+O(1)(\delta+\epsilon+\tau^2)\delta\Big)
-\frac{1}{2a^2_{\infty}}\Big(2a_{\infty}\delta+O(1)(\delta+\epsilon+\tau^2)\delta\Big)+O(1)(\epsilon+\tau^2)\delta\nonumber\\[3pt]
&=-\frac{1}{4a_{\infty}}\delta+O(1)(\delta+\epsilon+\tau^2)\delta,
\end{align}
and
\begin{align}\label{eq:4.70}
\frac{\partial^2 \mathcal{H}^{(\boldsymbol{\mu})}_{S}}{\partial \beta_{1}\partial \tau^2}
&=\frac{(1-\tau^2\mathcal{B}^{(\epsilon)})\delta^2+(1-\tau^2\delta^2)\mathcal{B}^{(\epsilon)}}{(1-\tau^2\delta^2)^{\frac{1}{2}}(1-\tau^2\mathcal{B}^{(\epsilon)})^{\frac{1}{2}}}
\frac{\big((\beta_1+1)\epsilon+1\big)\beta^{\epsilon}_1-1}{a^2_{\infty}(\beta_1+1)^2\epsilon}\nonumber\\[3pt]
&\quad+\frac{\beta^{\epsilon-1}_1}{a^{2}_{\infty}}\Big(\frac{1-\tau^2\delta^2}{1-\tau^2\mathcal{B}^{(\epsilon)}}\Big)^{\frac{1}{2}}
\bigg(1+\frac{\big(\mathcal{B}^{(\epsilon)}-\delta^2\big)\tau^2}{2(1-\tau^2\delta^2)(1-\tau^2\mathcal{B}^{(\epsilon)})}\bigg)
\Big((\delta+\varphi^{(\boldsymbol{\mu})}_{1})\delta+\frac{2\beta_1\big(\beta^{\epsilon}_1-1\big)}{a^{2}_{\infty}(\beta_1+1)\epsilon}\Big)\nonumber\\[3pt]
&=O(1)(\delta+\epsilon+\tau^2)\delta.
\end{align}
Then we obtain from estimates \eqref{eq:4.65}--\eqref{eq:4.70} that
\begin{align*}
\frac{\partial^2\varphi^{(\boldsymbol{\mu})}_{1}}{\partial \beta_1 \partial \epsilon}
&=-\frac{\Big(\frac{\partial^2\mathcal{H}^{(\boldsymbol{\mu})}_{S}}{\partial\varphi^{(\boldsymbol{\mu})^{2}}_{1}}
\frac{\partial\varphi^{(\boldsymbol{\mu})}_{1}}{\partial \epsilon}
+\frac{\partial^2 \mathcal{H}^{(\boldsymbol{\mu})}_{S}}{\partial \varphi^{(\boldsymbol{\mu})}_{1}\partial\epsilon}\Big)\frac{\partial\varphi^{(\boldsymbol{\mu})}_{1}}{\partial\beta_1}
+\frac{\partial^2 \mathcal{H}^{(\boldsymbol{\mu})}_{S}}{\partial \beta_1 \partial \varphi^{(\boldsymbol{\mu})}_{1}}\frac{\partial\varphi^{(\boldsymbol{\mu})}_{1}}{\partial\epsilon}
+\frac{\partial^2 \mathcal{H}^{(\boldsymbol{\mu})}_{S}}{\partial \beta_{1}\partial \epsilon}}
{\frac{\partial\mathcal{H}^{(\boldsymbol{\mu})}_{S}}{\partial \varphi^{(\boldsymbol{\mu})}_{1}}}\\[3pt]
&=\Big(\frac{1}{a_{\infty}}+O(1)(\delta+\epsilon+\tau^2)\Big)\nonumber\\[3pt]
&\qquad\times \frac{\Big(\big(2+O(1)(\delta+\epsilon+\tau^2)\delta\big)\big(-\frac{a_{\infty}}{2}\delta^2+O(1)(\delta+\epsilon+\tau^2)\delta^2\big)
+O(1)\tau^2\delta^3\Big)}
{-2\delta+O(1)(\delta+\epsilon+\tau^2)\delta}\\[3pt]
&\quad-\frac{\Big(\frac{a_{\infty}}{2}\delta^2+O(1)(\delta+\epsilon+\tau^2)\delta^2\Big)O(1)\tau^2\delta}{2\delta+O(1)(\delta+\epsilon+\tau^2)\delta}
-\frac{\frac{1}{4a_{\infty}}\delta+O(1)(\delta+\epsilon+\tau^2)\delta}{2\delta+O(1)(\delta+\epsilon+\tau^2)\delta}\\[3pt]
&=-\frac{1}{8a_{\infty}}+O(1)\big(\delta+\epsilon+\tau^2\big),
\end{align*}
and
\begin{align*}
\frac{\partial^2\varphi^{(\boldsymbol{\mu})}_{1}}{\partial \beta_1 \partial \tau^2}
&=-\frac{\Big(\frac{\partial^2\mathcal{H}^{(\boldsymbol{\mu})}_{S}}{\partial\varphi^{(\boldsymbol{\mu})^{2}}_{1}}
\frac{\partial\varphi^{(\boldsymbol{\mu})}_{1}}{\partial \tau^2}
+\frac{\partial^2 \mathcal{H}^{(\boldsymbol{\mu})}_{S}}{\partial \varphi^{(\boldsymbol{\mu})}_{1}\partial\tau^2}\Big)\frac{\partial\varphi^{(\boldsymbol{\mu})}_{1}}{\partial\beta_1}
+\frac{\partial^2 \mathcal{H}^{(\boldsymbol{\mu})}_{S}}{\partial \beta_1 \partial \varphi^{(\boldsymbol{\mu})}_{1}}\frac{\partial\varphi^{(\boldsymbol{\mu})}_{1}}{\partial\tau^2}
+\frac{\partial^2 \mathcal{H}^{(\boldsymbol{\mu})}_{S}}{\partial \beta_{1}\partial \tau^2}}
{\frac{\partial\mathcal{H}^{(\boldsymbol{\mu})}_{S}}{\partial \varphi^{(\boldsymbol{\mu})}_{1}}}\\[3pt]
&=-\Big(\frac{1}{a_{\infty}}+O(1)(\delta+\epsilon+\tau^2)\Big)\nonumber\\[3pt]
&\qquad\,\,\times \frac{\Big(\big(2+O(1)(\delta+\epsilon+\tau^2)\delta\big)\big(\frac{1}{2a^2_{\infty}}+O(1)(\delta+\epsilon+\tau^2)\delta\big)
+O(1)(\delta+\epsilon+\tau^2)\delta\Big)}
{2\delta+O(1)(\delta+\epsilon+\tau^2)\delta}\\[3pt]
&\quad +\frac{O(1)(\delta+\epsilon+\tau^2)\delta}{2\delta+O(1)(\delta+\epsilon+\tau^2)\delta}\\[3pt]
&=-\frac{1}{2a^3_{\infty}}+O(1)(\delta+\epsilon+\tau^2),
\end{align*}
which show that
\begin{eqnarray}\label{eq:4.71}
\frac{\partial^2\varphi^{(\boldsymbol{\mu})}_{1}}{\partial \beta_1 \partial \epsilon}\Big|_{\alpha_1=1,\epsilon=\tau=0}=-\frac{1}{8a_{\infty}},\qquad
\frac{\partial^2\varphi^{(\boldsymbol{\mu})}_{1}}{\partial \beta_1 \partial \tau^2}\Big|_{\alpha_1=1,\epsilon=\tau=0}=-\frac{1}{2a^3_{\infty}}.
\end{eqnarray}

Using Lemma \ref{lem:2.6}, \eqref{eq:4.64}, and \eqref{eq:4.71}, we thus obtain
\begin{eqnarray}\label{eq:4.72}
\begin{split}
\frac{\partial^2 \beta_1}{\partial \alpha_1\partial\epsilon}\Big|_{\alpha_1=1,\epsilon=\tau=0}
&=-\frac{\Big(\frac{\partial^2\varphi^{(\boldsymbol{\mu})}_{1}}{\partial \beta^2_1}\frac{\partial \beta_1}{\partial\epsilon}
+\frac{\partial^2\varphi^{(\boldsymbol{\mu})}_{1}}{\partial \beta_1\partial \epsilon}\Big)\frac{\partial \beta_1}{\partial \alpha_1}\Big|_{\alpha_1=1,\epsilon=\tau=0}}
{\frac{\partial\varphi^{(\boldsymbol{\mu})}_{1}}{\partial \beta_1}\Big|_{\alpha_1=1,\epsilon=\tau=0}}
=-\frac{1}{8},\\[5pt]
\frac{\partial^2 \beta_1}{\partial \alpha_1\partial\tau^2}\Big|_{\alpha_1=1,\epsilon=\tau=0}
&=-\frac{\Big(\frac{\partial^2\varphi^{(\boldsymbol{\mu})}_{1}}{\partial \beta^2_1}\frac{\partial \beta_1}{\partial\tau^2}
+\frac{\partial^2\varphi^{(\boldsymbol{\mu})}_{1}}{\partial \beta_1\partial \tau^2}\Big)
\frac{\partial \beta_1}{\partial \alpha_1}\Big|_{\alpha_1=1,\epsilon=\tau=0}}{\frac{\partial\varphi^{(\boldsymbol{\mu})}_{1}}{\partial \beta_1}\Big|_{\alpha_1=1,\epsilon=\tau=0}}
=-\frac{1}{2a^2_{\infty}}.
\end{split}
\end{eqnarray}

Finally, combining the Taylor formula again with \eqref{eq:4.72}, we arrive at
\smallskip
\begin{align}\label{eq:4.73}
\beta_{1}(\alpha_1,\epsilon, \tau^2)&=\beta_1(\alpha_1,0,\tau^2)+\beta_1(1,\epsilon, \tau^2)-\beta_1(1,0,\tau^2)\nonumber\\[3pt]
&\quad + (\alpha_1-1)\epsilon\int^1_0\int^1_0\frac{\partial^2 \beta_1}{\partial \alpha_1\partial \epsilon}(\xi_1 (\alpha_1-1)+1, \xi_2\epsilon, \tau^2)\,{\rm d}\xi_1{\rm d}\xi_2\nonumber\\[3pt]
&=\beta_1(\alpha_1,0,0)+\beta_1(1,0,\tau^2)-\beta_1(1,0,0)\nonumber\\[3pt]
&\quad +(\alpha_1-1)\tau^2\int^1_0\int^1_0\frac{\partial^2 \beta_1}{\partial \alpha_1\partial \epsilon}(\xi_1 (\alpha_1-1)+1,0, \xi_3\tau^2)\,{\rm d}\xi_1{\rm d}\xi_3\nonumber\\[3pt]
&\quad +\beta_1(1,\epsilon, \tau^2)-\beta_1(1,0,\tau^2)\nonumber\\[3pt]
&\quad + (\alpha_1-1)\epsilon\int^1_0\int^1_0\frac{\partial^2 \beta_1}{\partial \alpha_1\partial \epsilon}(\xi_1 (\alpha_1-1)+1, \xi_2\epsilon, \tau^2)\,{\rm d}\xi_1{\rm d}\xi_2\nonumber\\[3pt]
&=\alpha_1+\frac{\partial^2 \beta_1}{\partial \alpha_1\partial \epsilon}\Big|_{\alpha_1=1,\epsilon=\tau^2=0}(\alpha_1-1)\epsilon+\frac{\partial^2 \beta_1}{\partial \alpha_1\partial \tau^2}\Big|_{\alpha_1=1,\epsilon=\tau^2=0}(\alpha_1-1)\tau^2\nonumber\\[3pt]
&\quad +O(1)\big(|\alpha_1-1|+\epsilon+\tau^2\big)|\alpha_1-1|(\epsilon+\tau^2)\nonumber\\[3pt]
&=\alpha_1-\frac{1}{8}(\alpha_1-1)\epsilon-\frac{1}{2a^2_{\infty}}(\alpha_1-1)\tau^2
+O(1)\big(|\alpha_1-1|+\epsilon+\tau^2\big)|\alpha_1-1|(\epsilon+\tau^2),
\end{align}
where the bounds of $O(1)$ depend only on $\underline{U}$.

Denoted by $\sigma_{1}(\alpha_1)$ and $\sigma^{(\boldsymbol{\mu})}_{1}(\beta_1)$ the speeds of $\alpha_1$ and $\beta_1$, respectively.
Then, for $\delta$, $\epsilon$, and $\tau$ are sufficiently small, by \eqref{eq:4.49}, we have
\begin{eqnarray}
&&\sigma_{1}(\alpha_1)=\frac{\rho_{b}v_{b}-\rho_{l}v_{l}}{\rho_{b}-\rho_{l}}=-\frac{\delta}{\alpha_1-1}=-\frac{1}{a_{\infty}}+O(1)\delta,\label{eq:4.74}\\[4pt]
&&\sigma^{(\tau)}_{1}(\beta_1)=\frac{\rho^{(\boldsymbol{\mu})}_{b}v^{(\boldsymbol{\mu})}_{b}-\rho_{l}v_{l}}{\rho^{(\boldsymbol{\mu})}_{b}\sqrt{1-\tau^2\mathcal{B}^{(\epsilon)}}-\rho_{l}\sqrt{1-\tau^2\delta}}
=-\frac{\delta}{\beta_1\sqrt{1-\tau^2\mathcal{B}^{(\epsilon)}}-\sqrt{1-\tau^2\delta}}\nonumber\\[3pt]
&&\qquad\qquad =-\frac{\delta}{\beta_1-1}+O(1)\tau^2\delta.\label{eq:4.75}
\end{eqnarray}
Notice from \eqref{eq:4.73} that
\begin{eqnarray*}
\beta_1-1&=(\alpha_1-1)\Big(1-\frac{1}{8}\epsilon-\frac{1}{2a^2_{\infty}}\tau^2
+O(1)\big(|\alpha_1-1|+\epsilon+\tau^2\big)(\epsilon+\tau^2)\Big).
\end{eqnarray*}
Therefore, we can further deduce from \eqref{eq:4.75} that
\begin{align}\label{eq:4.76}
\sigma^{(\tau)}_{1}(\beta_1)&=-\frac{\delta}{(\alpha_1-1)\Big(1-\frac{1}{8}\epsilon-\frac{1}{2a^2_{\infty}}\tau^2
+O(1)\big(|\alpha_1-1|+\epsilon+\tau^2\big)(\epsilon+\tau^2)\Big)}+O(1)\tau^2\delta\nonumber\\[3pt]
&=-\frac{\delta}{\alpha_1-1}\Big(1+\frac{1}{8}\epsilon+\frac{1}{2a^2_{\infty}}\tau^2+O(1)\big(\delta+\epsilon+\tau^2\big)(\epsilon+\tau^2)\Big)+O(1)\tau^2\delta.
\end{align}
Thus, by \eqref{eq:4.74} and \eqref{eq:4.76}, we have
\begin{align}\label{eq:4.77}
\sigma_{1}(\alpha_1)-\sigma^{(\tau)}_{1}(\beta_1)&=\frac{\delta}{\alpha_1-1}\Big(1+\frac{1}{8}\epsilon+\frac{1}{2a^2_{\infty}}\tau^2+O(1)\big(\delta+\epsilon+\tau^2\big)(\epsilon+\tau^2)\Big)
-\frac{\delta}{\alpha_1-1}+O(1)\tau^2\delta\nonumber\\[3pt]
&=\frac{\delta}{\alpha_1-1}\Big(\frac{3}{8}\epsilon+\frac{1}{2a^2_{\infty}}\tau^2\Big)+O(1)\big(\delta+\epsilon+\tau^2\big)(\epsilon+\tau^2)+O(1)\delta\tau^2\nonumber\\[3pt]
&=\frac{1}{8a_{\infty}}\epsilon+\frac{1}{2a^3_{\infty}}\tau^2+O(1)\big(\delta+\epsilon+\tau^2\big)(\epsilon+\tau^2).
\end{align}

On the other hand, using estimates \eqref{eq:4.49} and \eqref{eq:4.54}, we have
\begin{align}
|\rho^{(\boldsymbol{\mu})}_{{b}}-\rho_{l}|&=|\beta_1-1\big|\nonumber\\[3pt]
&=\Big|(\alpha_1-1)\big(1-\frac{1}{8}\epsilon-\frac{1}{2a^2_{\infty}}\tau^2
+O(1)(|\alpha_1-1|+\epsilon+\tau^2)(\epsilon+\tau^2)\big)\Big|\nonumber\\[3pt]
&=a_{\infty}\delta+O(1)\big(\delta+\epsilon+\tau^2\big)\delta,\label{eq:4.78}\\[4pt]
|\rho^{(\boldsymbol{\mu})}_{{b}}-\rho_{b}|&=|\beta_1-\alpha_1|\nonumber\\[3pt]
&=\Big|(\alpha_1-1)\big(-\frac{3}{8}\epsilon-\frac{1}{2a^2_{\infty}}\tau^2
+O(1)(|\alpha_1-1|+\epsilon+\tau^2)(\epsilon+\tau^2)\big)\Big|\nonumber\\[3pt]
&=\Big(\frac{1}{8}\epsilon+\frac{1}{2a^2_{\infty}}\tau^2\Big)a_{\infty}\delta+O(1)\big(\delta+\epsilon+\tau^2\big)(\epsilon+\tau^2)\delta.\label{eq:4.79}
\end{align}

With estimates \eqref{eq:4.74} and \eqref{eq:4.77}--\eqref{eq:4.79} in hand, we thus have
\begin{align}
&\big(|\rho^{(\boldsymbol{\mu})}_{{b}}-\rho_{l}|+|v^{(\boldsymbol{\mu})}_{b}-v_{l}|\big)\,
 \big(\sigma_1(\alpha_1)-\sigma^{(\boldsymbol{\mu})}_1(\beta_1)\big)\nonumber\\[3pt]
&\,\, =\Big(a_{\infty}\delta+O(1)\big(\delta+\epsilon+\tau^2\big)\delta+\delta\Big)
\Big(\frac{3}{8a_{\infty}}\epsilon+\frac{1}{2a^3_{\infty}}\tau^2+O(1)\big(\delta+\epsilon+\tau^2\big)(\epsilon+\tau^2)\Big)\nonumber\\[3pt]
&\,\, =\frac{a_{\infty}+1}{8a_{\infty}}\epsilon\delta +\frac{a_{\infty}+1}{2a^3_{\infty}}\tau^2\delta +O(1)\big(\delta+\epsilon+\tau^2\big)(\epsilon+\tau^2)\delta,\label{eq:4.80}\\[4pt]
&\big(|\rho_{b}-\rho^{(\boldsymbol{\mu})}_{b}|+|v_{b}-v^{(\boldsymbol{\mu})}_{b}|\big)\,\big(-\sigma_1(\alpha_1)\big)\nonumber\\[3pt]
&\,\,=\Big(\big(\frac{3}{8}\epsilon+\frac{1}{2a^2_{\infty}}\tau^2\big)a_{\infty}\delta+O(1)\big(\delta+\epsilon+\tau^2\big)(\epsilon+\tau^2)\delta\Big)
\Big(a_{\infty}\delta+O(1)\big(\delta+\epsilon+\tau^2\big)\delta\Big)\nonumber\\[3pt]
&\,\,=\frac{1}{8}\epsilon\delta +\frac{1}{2a^2_{\infty}}\tau^2\delta +O(1)\big(\delta+\epsilon+\tau^2\big)(\epsilon+\tau^2)\delta.\label{eq:4.81}
\end{align}

Finally, combining estimate \eqref{eq:4.80} with estimate \eqref{eq:4.81}, we conclude
\begin{align}\label{eq:4.82}
\big\|U^{(\boldsymbol{\mu})}-U\big\|_{L^1}
&=\int^{\sigma_1(\alpha_1)x}_{\sigma^{(\boldsymbol{\mu})}_1(\beta_1)x}\big|U^{(\boldsymbol{\mu})}-U\big|\,{\rm d}y
  +\int^{0}_{\sigma_1(\alpha_1)x}\big|U^{(\boldsymbol{\mu})}-U\big|\,{\rm d}y\nonumber\\[3pt]
&=\big(|\rho^{(\boldsymbol{\mu})}_{{b}}-\rho_{l}|+|v^{(\boldsymbol{\mu})}_{b}-v_{l}|\big)\,
\big(\sigma_1(\alpha_1)-\sigma^{(\boldsymbol{\mu})}_1(\beta_1)\big)x\nonumber\\[3pt]
&\quad +\big(|\rho_{b}-\rho^{(\boldsymbol{\mu})}_{b}|+|v_{b}-v^{(\boldsymbol{\mu})}_{b}|\big)\,\big(-\sigma_1(\alpha_1)\big)x\nonumber\\[3pt]
&=\frac{2a_{\infty}+1}{8a_{\infty}}\epsilon\delta x+\frac{2a_{\infty}+1}{2a^3_{\infty}}\tau^2\delta x +O(1)\big(\delta+\epsilon+\tau^2\big)(\epsilon+\tau^2)\delta x.
\end{align}
This implies that the convergence rate we have obtained in Theorem \ref{thm:1.1} is optimal.

\smallskip
\section{ Proof of Theorem \ref{coro:1.1}}

In this section, we give the proof of Theorem \ref{coro:1.1} to establish the convergence rate between
the entropy solutions $(\rho^{(\boldsymbol{\mu})}, u^{(\boldsymbol{\mu})}, v^{(\boldsymbol{\mu})})$
of problems \eqref{eq:1.6}--\eqref{eq:1.9} and the entropy solution $(\rho, u, v)$ of problem \eqref{eq:1.10}--\eqref{eq:1.12}.

For $\rho>0$, define
\begin{eqnarray}\label{eq-5.1}
\Psi(U, \boldsymbol{\mu})=\frac{B^{(\epsilon)}(\rho, v,\epsilon)}{\sqrt{1-\tau^2 B^{(\epsilon)}(\rho, v,\epsilon)}+1},\qquad
\Psi(U)=\frac{1}{2}v^2+\frac{\ln \rho}{a^2_{\infty}},
\end{eqnarray}
where $U=(\rho, v)^{\tau}$ and $B^{(\epsilon)}$ are given by \eqref{eq:1.14}.
Clearly, for fixed $U$, we have
\begin{eqnarray}\label{eq-5.2}
\Psi(U, \boldsymbol{0})=\Psi(U).
\end{eqnarray}

We first introduce two lemmas which are useful to the proof of Theorem \ref{coro:1.1}.
\begin{lemma}\label{lem-5.1}
For $\rho>0$,
\begin{eqnarray}\label{eq-5.3}
\partial_{\rho}B^{(\epsilon)}=\frac{2\rho^{\epsilon-1}}{a^2_{\infty}},\quad\,\, \partial_{v}B^{(\epsilon)}=2v,\quad\,\,
\partial_{\epsilon}B^{(\epsilon)}=\frac{2(\epsilon \rho^{\epsilon}\ln \rho-\rho^{\epsilon}+1)}{a^2_{\infty} \epsilon^2}.
\end{eqnarray}
\end{lemma}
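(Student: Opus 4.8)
The plan is to verify the three identities by direct differentiation of the explicit formula \eqref{eq:1.14} for $B^{(\epsilon)}$, namely $B^{(\epsilon)}(\rho,v,\epsilon)=\frac{2(\rho^{\epsilon}-1)}{a_{\infty}^{2}\epsilon}+v^{2}$, treating $\rho>0$, $v$, and $\epsilon>0$ as independent variables. No auxiliary result is needed; this is a purely computational statement and the proof amounts to applying elementary differentiation rules term by term.

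First I would compute $\partial_{\rho}B^{(\epsilon)}$: only the first term of $B^{(\epsilon)}$ depends on $\rho$, and since $\partial_{\rho}\rho^{\epsilon}=\epsilon\rho^{\epsilon-1}$, the factor $\epsilon$ produced by differentiation cancels the $\epsilon$ in the denominator, yielding $\partial_{\rho}B^{(\epsilon)}=\frac{2\rho^{\epsilon-1}}{a_{\infty}^{2}}$. The derivative in $v$ is immediate from the term $v^{2}$, giving $\partial_{v}B^{(\epsilon)}=2v$.

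The only step requiring a little care is $\partial_{\epsilon}B^{(\epsilon)}$. Writing the first term as $\frac{2}{a_{\infty}^{2}}\cdot\frac{\rho^{\epsilon}-1}{\epsilon}$ and using $\partial_{\epsilon}\rho^{\epsilon}=\rho^{\epsilon}\ln\rho$ together with the quotient rule gives $\partial_{\epsilon}\!\big(\tfrac{\rho^{\epsilon}-1}{\epsilon}\big)=\frac{\epsilon\rho^{\epsilon}\ln\rho-(\rho^{\epsilon}-1)}{\epsilon^{2}}$, hence $\partial_{\epsilon}B^{(\epsilon)}=\frac{2(\epsilon\rho^{\epsilon}\ln\rho-\rho^{\epsilon}+1)}{a_{\infty}^{2}\epsilon^{2}}$, which is exactly the claimed expression. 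This completes the three identities.

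I do not anticipate any genuine obstacle; the main point is simply organizing the quotient-rule computation for the $\epsilon$-derivative cleanly. If desired, one may additionally remark that all three formulas extend continuously as $\epsilon\to 0$ (via $\frac{\rho^{\epsilon}-1}{\epsilon}=\ln\rho+O(\epsilon)$, and similarly for the numerator of $\partial_{\epsilon}B^{(\epsilon)}$), consistent with the convention $\boldsymbol{\mu}=\boldsymbol{0}$ used elsewhere, but this is not required for the lemma as stated.
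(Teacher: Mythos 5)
Your computation is correct and is exactly the direct differentiation that the paper invokes (the paper states the lemma follows "by direct calculation" and omits the details). Nothing further is needed.
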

Lemma \ref{lem-5.1} is obtained by direct calculation, so we omit the details.

\begin{lemma}\label{lem-5.2}
For $\rho>0$,
\begin{eqnarray}\label{eq-5.4}
\partial_{\rho}\Psi(U,\boldsymbol{\mu})=\frac{\rho^{\epsilon-1}}{a^2_{\infty}\sqrt{1-\tau^2B^{(\epsilon)}}},\quad\,\,
\partial_{v}\Psi(U,\boldsymbol{\mu})=\frac{v}{a^2_{\infty}\sqrt{1-\tau^2B^{(\epsilon)}}},
\end{eqnarray}
and
\begin{eqnarray}\label{eq-5.5}
\partial_{\epsilon}\Psi(U,\boldsymbol{\mu})=\frac{\epsilon\rho^{\epsilon}\ln \rho-\rho^{\epsilon}+1}{a^2_{\infty}\sqrt{1-\tau^2B^{(\epsilon)}}},\quad\,\,
\partial_{\tau^2}\Psi(U,\boldsymbol{\mu})=\frac{B^{(\epsilon)^2}}{2\sqrt{1-\tau^2B^{(\epsilon)}}\big(\sqrt{1-\tau^2B^{(\epsilon)}}+1\big)^2}.
\end{eqnarray}
\end{lemma}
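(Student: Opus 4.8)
The plan is to first rewrite $\Psi(U,\boldsymbol{\mu})$ in a form that is convenient for differentiation. Rationalizing the denominator,
\[
\Psi(U,\boldsymbol{\mu})=\frac{B^{(\epsilon)}}{\sqrt{1-\tau^2B^{(\epsilon)}}+1}
=\frac{B^{(\epsilon)}\big(\sqrt{1-\tau^2B^{(\epsilon)}}-1\big)}{\big(1-\tau^2B^{(\epsilon)}\big)-1}
=\frac{1-\sqrt{1-\tau^2B^{(\epsilon)}}}{\tau^2},
\]
which is exactly $-u^{(\boldsymbol{\mu})}$ in the notation of \eqref{eq:1.13}. All four partial derivatives will be computed from this representation by the chain rule, using the formulas for $\partial_\rho B^{(\epsilon)}$, $\partial_v B^{(\epsilon)}$, and $\partial_\epsilon B^{(\epsilon)}$ recorded in Lemma \ref{lem-5.1}, together with the fact that $B^{(\epsilon)}$, as defined in \eqref{eq:1.14}, does not depend on $\tau^2$.

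For $s\in\{\rho,v,\epsilon\}$, the explicit $\tau^{-2}$ prefactor cancels against the $\tau^2$ produced by differentiating the square root:
\[
\partial_s\Psi(U,\boldsymbol{\mu})
=\frac{1}{\tau^2}\cdot\frac{\tau^2\,\partial_s B^{(\epsilon)}}{2\sqrt{1-\tau^2B^{(\epsilon)}}}
=\frac{\partial_s B^{(\epsilon)}}{2\sqrt{1-\tau^2B^{(\epsilon)}}},
\]
and substituting the three identities of Lemma \ref{lem-5.1} and simplifying yields \eqref{eq-5.4} together with the formula for $\partial_\epsilon\Psi$ in \eqref{eq-5.5}. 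For the $\tau^2$-derivative I would set $t=\tau^2$ and $w=\sqrt{1-tB^{(\epsilon)}}$ (with $B^{(\epsilon)}$ independent of $t$), so that $\Psi=(1-w)/t$, and apply the quotient rule:
\begin{align*}
\partial_t\Psi
&=\frac{1}{t^2}\Big(\frac{tB^{(\epsilon)}}{2w}-(1-w)\Big)
=\frac{1-w^2-2w+2w^2}{2wt^2}
=\frac{(1-w)^2}{2wt^2},
\end{align*}
where I used $tB^{(\epsilon)}=1-w^2$. Finally, rewriting $(1-w)^2=\big(tB^{(\epsilon)}\big)^2/(1+w)^2$ via the same identity and cancelling $t^2$ gives the stated expression for $\partial_{\tau^2}\Psi(U,\boldsymbol{\mu})$.

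There is no serious obstacle here: the lemma is a bookkeeping step whose only role is to feed the Taylor expansions used in the proof of Theorem \ref{coro:1.1}. The one place that requires a little care is the $\tau^2$-derivative, where $t$ appears both in the overall $t^{-1}$ prefactor and inside $w$, and where the passage from $(1-w)^2/(2wt^2)$ to the final form relies on applying the identity $tB^{(\epsilon)}=1-w^2$ a second time; everything else is a mechanical consequence of Lemma \ref{lem-5.1} and the chain rule.
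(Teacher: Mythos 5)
Your computation is correct and follows essentially the same route as the paper: both arguments reduce to the identity $\partial_s\Psi=\partial_s B^{(\epsilon)}\big/\big(2\sqrt{1-\tau^2B^{(\epsilon)}}\big)$ for $s\in\{\rho,v,\epsilon\}$ via the chain rule and Lemma \ref{lem-5.1}, and obtain $\partial_{\tau^2}\Psi$ by direct differentiation using that $B^{(\epsilon)}$ does not depend on $\tau^2$. The only cosmetic difference is that you first rationalize $\Psi$ to $\big(1-\sqrt{1-\tau^2B^{(\epsilon)}}\big)/\tau^2$ (valid for $\tau^2>0$, with the $\tau^2=0$ case recovered by continuity), whereas the paper applies the quotient rule to the original expression and simplifies.
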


\begin{proof}
By direct computation, we have
\begin{eqnarray*}
\begin{split}
\partial_{\rho}\Psi(U,\boldsymbol{\mu})&=\frac{\big(\sqrt{1-\tau^2B^{(\epsilon)}}+1\big)\partial_{\rho}B^{(\epsilon)}
+\frac{1}{2}\tau^2B^{(\epsilon)}(1-\tau^2B^{\epsilon})^{-\frac{1}{2}}\partial_{\rho}B^{(\epsilon)}}{\big(\sqrt{1-\tau^2B^{(\epsilon)}}+1\big)^2}\\[2pt]
&=\frac{\big(2+2\sqrt{1-\tau^2B^{(\epsilon)}}-\tau^2B^{(\epsilon)}\big)\partial_{\rho}B^{(\epsilon)}}{2\sqrt{1-\tau^2B^{(\epsilon)}}\big(\sqrt{1-\tau^2B^{(\epsilon)}}+1\big)^2}\\[2pt]
&=\frac{\partial_{\rho}B^{(\epsilon)}}{2\sqrt{1-\tau^2B^{(\epsilon)}}}.
\end{split}
\end{eqnarray*}
Then the expression of $\partial_{\rho}\Psi(U,\boldsymbol{\mu})$ follows from Lemma \ref{lem-5.1}.
The expressions of  $\partial_{v}\Psi(U,\boldsymbol{\mu})$ and $\partial_{\epsilon}\Psi(U,\boldsymbol{\mu})$ can be obtained
by similar arguments from Lemma \ref{lem-5.1}.

Finally, for $\partial_{\tau^2}\Psi(U,\boldsymbol{\mu})$, by \eqref{eq-5.1} and direct calculations, we have
\begin{eqnarray*}
\begin{split}
\partial_{\tau^2}\Psi(U,\boldsymbol{\mu})&=\frac{-B^{(\epsilon)}}{\big(\sqrt{1-\tau^2B^{(\epsilon)}}+1\big)^2}
\frac{-B^{(\epsilon)}}{2}\big(1-\tau^2B^{(\epsilon)}\big)^{-\frac{1}{2}}\\[2pt]
&=\frac{B^{(\epsilon)^2}}{2\sqrt{1-\tau^2B^{(\epsilon)}}\big(\sqrt{1-\tau^2B^{(\epsilon)}}+1\big)^2}.
\end{split}
\end{eqnarray*}

This completes the proof of the lemma.
\end{proof}

Now, we are ready to prove Theorem \ref{coro:1.1}.

\begin{proof}[Proof of {\rm Theorem \ref{coro:1.1}}]
$\,$ Let $(\rho^{(\boldsymbol{\mu})}, v^{(\boldsymbol{\mu})})$
be the entropy solution of problem \eqref{eq:1.15}--\eqref{eq:1.17} obtained by Proposition \ref{prop:3.1},
and $(\rho, v)$ be the entropy solution of problem \eqref{eq:1.19}--\eqref{eq:1.21} as given by Proposition \ref{prop:3.2}.
Then, by relations \eqref{eq:1.13} and \eqref{eq:1.18}, we obtain $u^{(\boldsymbol{\mu})}$ and $u$ from the solutions of problem \eqref{eq:1.6}--\eqref{eq:1.9}
and problem \eqref{eq:1.10}--\eqref{eq:1.12}, respectively, as
\begin{eqnarray}\label{eq-5.6}
u^{(\boldsymbol{\mu})}=\frac{1}{\tau^2}\Big(\sqrt{1-\tau^2 B^{(\epsilon)}(\rho^{(\boldsymbol{\mu})},v^{(\boldsymbol{\mu})},\epsilon)}-1\Big),\quad\,\,
u=-\frac{1}{2}v^2-\frac{\ln \rho}{a^2_{\infty}}.
\end{eqnarray}
Then
\begin{align}
u^{(\boldsymbol{\mu})}-u&=\frac{1}{\tau^2}\Big(\sqrt{1-\tau^2 B^{(\epsilon)}(\rho^{(\boldsymbol{\mu})},v^{(\boldsymbol{\mu})},\epsilon)}-1\Big)
  -\Big(-\frac{1}{2}v^2-\frac{\ln \rho}{a^2_{\infty}}\Big)\nonumber\\[2pt]
&=-\bigg(\frac{B^{(\epsilon)}(\rho^{(\boldsymbol{\mu})}, v^{(\boldsymbol{\mu})},\epsilon)}{\sqrt{1-\tau^2 B^{(\epsilon)}(\rho^{(\boldsymbol{\mu})}, v^{(\boldsymbol{\mu})},\epsilon)}+1}
   -\frac{1}{2}v^2-\frac{\ln \rho}{a^2_{\infty}}\bigg)\nonumber\\[2pt]
&=-\big(\Psi(U^{(\boldsymbol{\mu})}, \boldsymbol{\mu})-\Psi(U)\big)\nonumber\\[2pt]
&=-\big(\Psi(U^{(\boldsymbol{\mu})}, \boldsymbol{\mu})-\Psi(U,\boldsymbol{\mu})\big)
-\big(\Psi(U, \boldsymbol{\mu})-\Psi(U)\big),\label{eq-5.7}
\end{align}
where $U^{(\boldsymbol{\mu})}=(\rho^{(\boldsymbol{\mu})}, v^{(\boldsymbol{\mu})})^{\top}$ and $U=(\rho, v)^{\top}$.

Next, we estimate the two terms $\Psi(U^{(\boldsymbol{\mu})}, \boldsymbol{\mu})-\Psi(U,\boldsymbol{\mu})$ and $\Psi(U, \boldsymbol{\mu})-\Psi(U)$ one by one.
By Lemma \ref{lem-5.2}, we have
\begin{align}
&\big\|\Psi(U^{(\boldsymbol{\mu})}, \boldsymbol{\mu})-\Psi(U,\boldsymbol{\mu})\big\|_{L^{1}((-\infty, b_0 x))}\nonumber\\[2pt]
&=\bigg\|\int^{1}_{0}\nabla_{U}\Psi(U+t(U^{(\boldsymbol{\mu})}-U), \boldsymbol{\mu})\, \dd t \cdot\big(U^{(\boldsymbol{\mu})}-U\big) \bigg\|_{L^{1}((-\infty, b_0 x))}\nonumber\\[2pt]
&\leq \bigg\|\int^{1}_{0}\frac{\big(\rho+t(\rho^{(\boldsymbol{\mu})}-\rho)\big)^{\epsilon+1} \, \dd t}{a^2_{\infty}\sqrt{1-\tau^2 B^{(\epsilon)}(\rho+t(\rho^{(\boldsymbol{\mu})}-\rho), v+t(v^{(\boldsymbol{\mu})}-v),\epsilon)}+1} \bigg\|_{L^{\infty}(\Omega_{\mathrm{w}})}\, \big\|\rho^{(\boldsymbol{\mu})}-\rho\big\|_{L^{1}((-\infty, b_0 x))}\nonumber\\[2pt]
&\quad +\bigg\|\int^{1}_{0}\frac{\big(v+t(v^{(\boldsymbol{\mu})}-v)\big)^{\epsilon+1} \, \dd t}{a^2_{\infty}\sqrt{1-\tau^2 B^{(\epsilon)}(\rho+t(\rho^{(\boldsymbol{\mu})}-\rho), v+t(v^{(\boldsymbol{\mu})}-v),\epsilon)}+1} \bigg\|_{L^{\infty}(\Omega_{\mathrm{w}})}\, \big\|v^{(\boldsymbol{\mu})}-v\big\|_{L^{1}((-\infty, b_0 x))}.\label{eq-5.8}
\end{align}

Note that, by Propositions \ref{prop:3.1}--\ref{prop:3.2}, for $\tau>0$ and $\epsilon>0$ sufficiently small,
we can choose a constant $C_2>0$ depending only on
$a_{\infty}$, $\rho^*$, and $\rho_*$ such that
\begin{align}
&\bigg\|\int^{1}_{0}\frac{\big(\rho+t(\rho^{(\boldsymbol{\mu})}-\rho)\big)^{\epsilon+1} \, \dd t}{a^2_{\infty}\sqrt{1-\tau^2 B^{(\epsilon)}(\rho+t(\rho^{(\boldsymbol{\mu})}-\rho), v+t(v^{(\boldsymbol{\mu})}-v),\epsilon)}+1} \bigg\|_{L^{\infty}(\Omega_{\mathrm{w}})}\nonumber\\[2pt]
&\,\, +\bigg\|\int^{1}_{0}\frac{\big(v+t(v^{(\boldsymbol{\mu})}-v)\big)^{\epsilon+1} \, \dd t}{a^2_{\infty}\sqrt{1-\tau^2 B^{(\epsilon)}(\rho+t(\rho^{(\boldsymbol{\mu})}-\rho), v+t(v^{(\boldsymbol{\mu})}-v),\epsilon)}+1} \bigg\|_{L^{\infty}(\Omega_{\mathrm{w}})}<C_2.\label{eq-5.9}
\end{align}

Thus, using Theorem \ref{thm:1.1} and  estimates \eqref{eq-5.8}--\eqref{eq-5.9}, we conclude
\begin{eqnarray}\label{eq-5.10}
\big\|\Psi(U^{(\boldsymbol{\mu})}, \boldsymbol{\mu})-\Psi(U,\boldsymbol{\mu})\big\|_{L^{1}((-\infty, b_0 x))}\leq C_{3}x\|\boldsymbol{\mu}\|,
\end{eqnarray}
where $C_3>0$ only depends on $a_{\infty}$, $\rho^*$, and $\rho_*$.

Furthermore, by \eqref{eq-5.2} and Lemma \ref{lem-5.2}, we have
\begin{align}\label{eq-5.11}
&\big\|\Psi(U, \boldsymbol{\mu})-\Psi(U)\big\|_{L^{1}((-\infty, b_0 x))}=\big\|\Psi(U, \boldsymbol{\mu})-\Psi(U,0)\big\|_{L^{1}((-\infty, b_0 x))}\nonumber\\[3pt]
&=\bigg\|\int^{1}_{0}\nabla_{\boldsymbol{\mu}}\Psi(U, \theta\boldsymbol{\mu})\, \dd \theta \cdot\boldsymbol{\mu} \bigg\|_{L^{1}((-\infty, b_0 x))}\nonumber\\[2pt]
&\leq \bigg\|\int^{1}_{0}\frac{\big(\theta \epsilon \rho^{\theta\epsilon}\ln\rho-\rho^{\theta\epsilon}+1\big) \, \dd \theta}
{a^2_{\infty}\sqrt{1-\theta\tau^2 B^{(\epsilon)}(\rho, v,\theta\epsilon)}+1} \bigg\|_{L^{1}((-\infty,b_0 x))}\, \epsilon \nonumber\\[2pt]
&\quad\ +\bigg\|\int^{1}_{0}\frac{B^{(\epsilon)}(\rho, v,\theta\epsilon)\, \dd \theta}{2\sqrt{1-\theta\tau^2 B^{(\epsilon)}(\rho, v,\theta\epsilon)}
\big(\sqrt{1-\theta\tau^2 B^{(\epsilon)}(\rho, v,\theta\epsilon)}+1\big)^2} \bigg\|_{L^{1}((-\infty, b_0 x))}\,\tau^2.
\end{align}

For $\tau>0$ and $\epsilon>0$ sufficiently small, we can deduce from Proposition \ref{prop:3.2} that
\begin{align}\label{eq-5.12}
& \bigg\|\int^{1}_{0}\frac{\big(\theta \epsilon \rho^{\theta\epsilon}\ln\rho-\rho^{\theta\epsilon}+1\big) \, \dd \theta}
{a^2_{\infty}\sqrt{1-\theta\tau^2 B^{(\epsilon)}(\rho, v,\theta\epsilon)}+1} \bigg\|_{L^{1}((-\infty,b_0 x))}\nonumber\\[2pt]
&\,\,\, +\bigg\|\int^{1}_{0}\frac{B^{(\epsilon)}(\rho, v,\theta\epsilon)\, \dd \theta}{2\sqrt{1-\theta\tau^2 B^{(\epsilon)}(\rho, v,\theta\epsilon)}
\big(\sqrt{1-\theta\tau^2 B^{(\epsilon)}(\rho, v,\theta\epsilon)}+1\big)^2} \bigg\|_{L^{1}((-\infty, b_0 x))}\nonumber\\[2pt]
&\leq C_{4}\|(\rho-1, v)\|_{L^{1}((-\infty, b_0 x))},
\end{align}
where $C_4>0$ depends only on $a_{\infty}$, $\rho^*$, and $\rho_*$.

It follows from \eqref{eq-5.11}--\eqref{eq-5.12} that a constant $C_5>0$ can be chosen, depending only on $a_{\infty}$, $\rho^*$, and $\rho_*$, so that
\begin{align}\label{eq-5.13}
\begin{split}
\big\|\Psi(U, \boldsymbol{\mu})-\Psi(U)\big\|_{L^{1}((-\infty, b_0 x))}\leq C_5\|\boldsymbol{\mu}\|.
\end{split}
\end{align}

Then, combining estimates \eqref{eq-5.10} and \eqref{eq-5.13} altogether and employing equality \eqref{eq-5.7}, we obtain
\begin{eqnarray}\label{eq-5.14}
\|u^{(\boldsymbol{\mu})}-u\|_{L^{1}((-\infty,b_0 x))}\leq C_{6}(1+x)\|\boldsymbol{\mu}\|,
\end{eqnarray}
where $C_6>0$ depends only on $a_{\infty}$, $\rho^*$, and $\rho_*$.

Finally, combining estimate \eqref{eq-5.14} with  estimate \eqref{eq:1.22} in Theorem \ref{thm:1.1},
we conclude \eqref{eq:1.23}. This completes the proof of Theorem \ref{coro:1.1}.
\end{proof}

\appendix
\section{}

In this appendix, we give some basic estimates of the terms obtained from the derivatives of $\mathcal{H}^{(\boldsymbol{\mu})}$
which are used in proving the optimal convergence rate as stated in \S 4.3.

\begin{lemma}\label{lem:A1}
Let $\beta_1$ be given in \eqref{eq:4.51} which satisfies \eqref{eq:4.54}.
Then, for $\delta>0$, $\epsilon>0$, and $\tau>0$ sufficiently small,
the following estimates hold{\rm :}
\medskip
\begin{enumerate}
\item[(i)]~~ $\frac{\beta^{\epsilon}_{1}-1}{\epsilon}=a_{\infty}\delta+O(1)(\delta+\epsilon+\tau^2)\delta$,

\smallskip
\item[(ii)]~~ $\frac{((\beta_1+1)\epsilon+1)\beta^{\epsilon}_{1}-1}{\epsilon}=2(a_{\infty}+1)\delta+O(1)(\delta+\epsilon+\tau^2)\delta$,

\smallskip
\item[(iii)]~~ $\frac{(\epsilon\ln\beta_1-1)\beta^{\epsilon}_{1}+1}{\epsilon^2}=\frac{a^2_{\infty}}{2}\delta^2+O(1)(\delta+\epsilon+\tau^2)\delta^2$,

\smallskip
\item[(iv)]~~ $\frac{(\epsilon(\epsilon+1)\ln\beta_1-1)\beta^{\epsilon}_1+\epsilon^2\beta^{\epsilon-1}\ln\beta_1+1}{\epsilon^2}
=\frac{3 a_{\infty}}{2}\delta+O(1)(\delta+\epsilon+\tau^2)\delta$,

\smallskip
\item[(v)]~~$\frac{((\beta_1+1)\epsilon+1)\epsilon\beta^{\epsilon}_1\ln\beta_1-\beta^{\epsilon}+1}{\epsilon^2}
=2a_{\infty}\delta+O(1)(\delta+\epsilon+\tau^2)\delta$,
\end{enumerate}
\smallskip
where the bounds of $O(1)$ depend only on $\underline{U}$.
\end{lemma}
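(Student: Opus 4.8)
The plan is to reduce each of the five identities to an elementary Taylor expansion in the two small quantities $\delta$ and $s:=\epsilon\ln\beta_1$, anchored by a single first-order estimate for the shock strength $\beta_1$. That estimate is already available before Lemma \ref{lem:A1} is needed: combining \eqref{eq:4.49} with \eqref{eq:4.53} gives $\beta_1 = 1 + a_\infty\delta + O(1)(\delta+\epsilon+\tau^2)\delta$, whence
\begin{equation*}
\beta_1 - 1 = a_\infty\delta + O(1)(\delta+\epsilon+\tau^2)\delta ,
\end{equation*}
and therefore $\ln\beta_1 = (\beta_1-1) + O(1)(\beta_1-1)^2 = a_\infty\delta + O(1)(\delta+\epsilon+\tau^2)\delta$, with all $O(1)$ bounds depending only on $\underline{U}$. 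Together with the trivial inequalities $\delta^2,\ \epsilon\delta,\ \tau^2\delta \le (\delta+\epsilon+\tau^2)\delta$, these are the only inputs; the rest is bookkeeping.

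First I would write $\beta_1^\epsilon = e^{\epsilon\ln\beta_1} = e^{s}$ with $s = \epsilon\ln\beta_1 = O(\epsilon\delta)$, and record the two elementary expansions, valid with absolute constants for $|s|$ small:
\begin{equation*}
\frac{e^{s}-1}{s} = 1 + \tfrac12 s + O(s^2), \qquad s e^{s} - (e^{s}-1) = \tfrac12 s^2 + O(s^3).
\end{equation*}
After multiplying out, each left-hand side in (i)--(v) is a finite combination of $e^{s}$, $s e^{s}$, $\beta_1^{\pm 1}$, $\ln\beta_1$ and $\epsilon$ divided by $\epsilon$ or $\epsilon^2$; substituting the two expansions turns the numerator into a leading monomial (a power of $s$, or $\epsilon^2\ln\beta_1$) plus a higher-order remainder, and the division by $\epsilon$ or $\epsilon^2$ is exact because the leading monomial always carries the matching power of $\epsilon$ --- this is the content of the cancellations $\beta_1^\epsilon - 1 = O(\epsilon)$ in (i)--(ii) and $(\epsilon\ln\beta_1 - 1)\beta_1^\epsilon + 1 = O(\epsilon^2)$ in (iii)--(v). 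Finally, replacing $\ln\beta_1$ by $a_\infty\delta + O(1)(\delta+\epsilon+\tau^2)\delta$ produces the stated normal forms. For (iii), for example, $\frac{1}{\epsilon^2}\big(s e^{s}-(e^{s}-1)\big) = \tfrac12(\ln\beta_1)^2 + O\big(\epsilon(\ln\beta_1)^3\big) = \tfrac{a_\infty^2}{2}\delta^2 + O(1)(\delta+\epsilon+\tau^2)\delta^2$; and (i), (ii), (iv), (v) follow by the same short computation with the appropriate monomial retained, producing the coefficients $a_\infty$, $2(a_\infty+1)$, $\tfrac{3a_\infty}{2}$ and $2a_\infty$ respectively.

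The step I expect to require the most care is the uniform control of the remainders: one must check, term by term, that every error is genuinely $O(1)(\delta+\epsilon+\tau^2)\delta$ in (i), (ii), (iv), (v) and $O(1)(\delta+\epsilon+\tau^2)\delta^2$ in (iii), rather than merely $O(1)(\delta+\epsilon+\tau^2)$ or a remainder with too low a power of $\delta$. This works precisely because $\ln\beta_1 = O(\delta)$ forces each factor of $s = \epsilon\ln\beta_1$ and each factor of $\beta_1 - 1$ to contribute an extra power of $\delta$; the only subtlety is to keep $\beta_1$, $\beta_1^{-1}$, $e^{s}$ and, where they occur in the derivatives of $\mathcal{H}^{(\boldsymbol{\mu})}_S$, the factors $(1-\tau^2\delta^2)^{1/2}$ and $(1-\tau^2\mathcal{B}^{(\epsilon)})^{1/2}$ uniformly away from their singularities, which is guaranteed by $\rho_l = 1$ together with the smallness of $\delta$, $\epsilon$, and $\tau$ assumed throughout. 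Once these orders are verified, the explicit leading coefficients drop out of the leading monomials and (i)--(v) follow.
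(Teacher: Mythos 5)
Your route is the same as the paper's: anchor on $\beta_1-1=a_{\infty}\delta+O(1)(\delta+\epsilon+\tau^2)\delta$, deduce $\ln\beta_1=a_{\infty}\delta+O(1)(\delta+\epsilon+\tau^2)\delta$, and Taylor-expand $\beta_1^{\epsilon}=e^{s}$ with $s=\epsilon\ln\beta_1$. Your identity $se^{s}-(e^{s}-1)=\tfrac12 s^{2}+O(s^{3})$ is exactly the paper's computation with $\psi(\epsilon)=(\epsilon\ln\beta_1-1)\beta_1^{\epsilon}+1$, $\psi(0)=\psi'(0)=0$, $\psi''(0)=(\ln\beta_1)^{2}$, written with an integral remainder, so items (i) and (iii) are fine; (v) also comes out as stated, with leading term $2a_{\infty}\delta$, by the same bookkeeping.

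The gap is precisely in the step you delegate to "the same short computation": it does not produce the coefficients you claim for (ii) and (iv). For (ii), $\big((\beta_1+1)\epsilon+1\big)\beta_1^{\epsilon}-1=(\beta_1+1)\epsilon\beta_1^{\epsilon}+(\beta_1^{\epsilon}-1)$, so dividing by $\epsilon$ and using (i) gives $(\beta_1+1)\beta_1^{\epsilon}+\frac{\beta_1^{\epsilon}-1}{\epsilon}=2+2a_{\infty}\delta+O(1)(\delta+\epsilon+\tau^2)\delta$; the left-hand side tends to $2$, not $0$, as $\delta\to0$, and the constant $2$ cannot be absorbed into $O(1)(\delta+\epsilon+\tau^2)\delta$, so the claimed leading term $2(a_{\infty}+1)\delta$ is unreachable. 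For (iv), splitting the numerator as $\big[(\epsilon\ln\beta_1-1)\beta_1^{\epsilon}+1\big]+\epsilon^{2}\ln\beta_1\,(\beta_1^{\epsilon}+\beta_1^{\epsilon-1})$ and using (iii) yields $\ln\beta_1\,(\beta_1^{\epsilon}+\beta_1^{\epsilon-1})+\tfrac12(\ln\beta_1)^{2}+O(1)\epsilon(\ln\beta_1)^{3}=2a_{\infty}\delta+O(1)(\delta+\epsilon+\tau^2)\delta$, i.e.\ leading coefficient $2a_{\infty}$, not $\tfrac{3a_{\infty}}{2}$. So your assertion that the expansion "produces the coefficients $2(a_{\infty}+1)$ and $\tfrac{3a_{\infty}}{2}$" is exactly what a correct execution of your own method refutes for the expressions as printed: had you carried these two cases out, you would have had to either flag (ii) and (iv) as misstated (the paper's proof only says they follow "in the same way", and these expressions are then fed into \eqref{eq:4.69}--\eqref{eq:4.70}) or correct the expressions/coefficients. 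As written, the proposal verifies (i), (iii), (v) and hand-waves precisely the two items that fail as stated, which is a genuine gap rather than routine bookkeeping.
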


\begin{proof}
First, using estimate \eqref{eq:4.54} and the Taylor formula, for $\delta>0$ sufficiently small, we have
\begin{align}\label{eq:A2}
\ln\beta_1=\beta_1-1+O(1)(\beta_1-1)^{2}=a_{\infty}\delta+O(1)(\delta+\epsilon+\tau^2)\delta.
\end{align}
Then, using estimate \eqref{eq:A2} and applying the Taylor formula again, we obtain
\begin{align}\label{eq:A3}
\beta^{\epsilon}_1&=1+\epsilon\ln\beta_1+\epsilon^2(\ln\beta_1)^2\int^{1}_{0}(1-t)\beta^{\epsilon t}{\rm d}t\\[3pt]
&=1+a_{\infty}\delta+O(1)(\delta+\epsilon+\tau^2)\delta\epsilon.
\end{align}
Therefore, estimate (i) can be obtained from \eqref{eq:A3}. In the similar way, we can also show estimate (ii) with the help of \eqref{eq:A2}.

Next, we turn to consider estimate (iii). To this end, we set
\begin{eqnarray*}
\psi(\epsilon)=(\epsilon\ln\beta_1-1)\beta^{\epsilon}_{1}+1.
\end{eqnarray*}
Then a direct calculation shows that $\psi(0)=0$ and
\begin{eqnarray*}
\psi'(\epsilon)=(\ln\beta_1)^2\epsilon\beta^{\epsilon}_1, \quad\,\, \psi''(\epsilon)=(\ln\beta_1)^2\big(1+\epsilon\ln\beta_1\big)\beta^{\epsilon}_1,
\quad\,\, \psi'''(\epsilon)=(\ln\beta_1)^3\big(2+\epsilon\ln\beta_1\big)\beta^{\epsilon}_1,
\end{eqnarray*}
which satisfy
\begin{eqnarray*}
\psi'(0)=0,\qquad \psi''(0)=(\ln\beta_1)^2.
\end{eqnarray*}
Thus, by \eqref{eq:A3} and the Taylor formula, we obtain
\begin{align*}
\psi(\epsilon)&=\frac{1}{2}(\ln\beta_1)^2\epsilon^2+\frac{1}{2}(\ln\beta_1)^3\epsilon^3\int^1_{0}(1-t)^2(2+t\epsilon\ln\beta_1)\beta^{t\epsilon}_{1}\,{\rm d}t\\[3pt]
&=\frac{1}{2}a^2_{\infty}\delta^2\epsilon^2+O(1)(\delta+\epsilon+\tau^2)\delta^2\epsilon^2,
\end{align*}
which leads to estimate (iii). In the same way, we can also show estimates (iv)--(v).
\end{proof}

\begin{lemma}\label{lem:A2}
Let $\mathcal{B}^{(\epsilon)}$ be defined by \eqref{eq:4.59} with $\beta_1$ and $\varphi^{(\boldsymbol{\mu})}_1$ giving in \eqref{eq:4.51}
and satisfying \eqref{eq:4.54} for $\beta_1$.
Then, for $\delta>0$, $\epsilon>0$, and $\tau>0$ sufficiently small,
\begin{eqnarray}\label{eq:A4}
\mathcal{B}^{(\epsilon)}=\frac{2}{a_{\infty}}\delta+O(1)(\delta+\epsilon+\tau^2)\delta,
\end{eqnarray}
where the bounds of $O(1)$ depend only on $\underline{U}$.
\end{lemma}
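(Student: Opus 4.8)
The plan is to reduce $\mathcal{B}^{(\epsilon)}$ to its first summand and then invoke Lemma~\ref{lem:A1}(i). Recall from \eqref{eq:4.59} that $\mathcal{B}^{(\epsilon)}=\frac{2(\beta^{\epsilon}_1-1)}{a^{2}_{\infty}\epsilon}+(\delta+\varphi^{(\boldsymbol{\mu})}_{1})^2$, so the first step is to observe that in the Riemann-problem setting of \S4.3 one has $b_0=0$, whence the boundary condition \eqref{eq:4.27} forces $v^{(\boldsymbol{\mu})}_b\equiv0$; combining this with the identity $v^{(\boldsymbol{\mu})}_b=v_l+\varphi^{(\boldsymbol{\mu})}_{1}=\delta+\varphi^{(\boldsymbol{\mu})}_{1}$ coming from \eqref{eq:4.51} (with $v_l=\delta$) yields $\delta+\varphi^{(\boldsymbol{\mu})}_{1}=0$. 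Hence the second summand in \eqref{eq:4.59} vanishes identically and $\mathcal{B}^{(\epsilon)}=\frac{2(\beta^{\epsilon}_1-1)}{a^{2}_{\infty}\epsilon}$.

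Next I would apply Lemma~\ref{lem:A1}(i), which gives $\frac{\beta^{\epsilon}_1-1}{\epsilon}=a_{\infty}\delta+O(1)(\delta+\epsilon+\tau^2)\delta$ with the bound of $O(1)$ depending only on $\underline{U}$. Multiplying by $2a_{\infty}^{-2}$ then gives
$$\mathcal{B}^{(\epsilon)}=\frac{2}{a^{2}_{\infty}}\Big(a_{\infty}\delta+O(1)(\delta+\epsilon+\tau^2)\delta\Big)=\frac{2}{a_{\infty}}\delta+O(1)(\delta+\epsilon+\tau^2)\delta,$$
which is exactly \eqref{eq:A4}, the $O(1)$ bound inheriting its dependence only on $\underline{U}$ from Lemma~\ref{lem:A1}.

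There is no genuine obstacle in this argument; the only point that merits a line of justification is that $\delta+\varphi^{(\boldsymbol{\mu})}_{1}$ vanishes exactly rather than merely being small. If one prefers to avoid invoking the boundary identity directly, the same conclusion follows from \eqref{4.53} together with a Taylor expansion of $\varphi^{(\boldsymbol{\mu})}_{1}$ about $\beta_1=1$ (using $\varphi^{(\boldsymbol{\mu})}_{1}|_{\beta_1=1}=0$ and Lemma~\ref{lem:2.6}), which shows $\delta+\varphi^{(\boldsymbol{\mu})}_{1}=O(1)\|\boldsymbol{\mu}\|\delta$; its square is then $O(1)(\delta+\epsilon+\tau^2)\delta$ and is absorbed into the error term. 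Thus the proof is a single substitution followed by an appeal to Lemma~\ref{lem:A1}(i).
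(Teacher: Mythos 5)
Your proof is correct, and your primary route handles the key term differently from the paper. The paper never uses the exact boundary identity: its proof of Lemma \ref{lem:A2} Taylor-expands $\varphi^{(\boldsymbol{\mu})}_1$ about $\beta_1=1$ (using $\varphi^{(\boldsymbol{\mu})}_1\big|_{\beta_1=1}=0$, Lemma \ref{lem:2.6}, and the expansion \eqref{4.53} of $\beta_1$) to obtain $\delta+\varphi^{(\boldsymbol{\mu})}_1=O(1)(\epsilon+\tau^2)\delta$ (its \eqref{eq:A5}--\eqref{eq:A6}), absorbs the square of this into the error, and then invokes Lemma \ref{lem:A1}(i) --- which is precisely your fallback argument. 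Your main argument instead observes that in the setting of \S 4.3 one has $b_0=0$, so \eqref{eq:4.27} forces $v^{(\boldsymbol{\mu})}_b\equiv 0$, and then \eqref{eq:4.51} (equivalently \eqref{eq:4.47} together with \eqref{eq:4.52}) gives $\delta+\varphi^{(\boldsymbol{\mu})}_1=0$ exactly, so the second summand of \eqref{eq:4.59} vanishes identically and Lemma \ref{lem:A1}(i) finishes the proof; this is valid at the boundary-constrained value of $\beta_1$, which is what the lemma as stated concerns, and it is in fact sharper there. What the paper's route (your fallback) buys is robustness: the estimate $\delta+\varphi^{(\boldsymbol{\mu})}_1=O(1)(\delta+\epsilon+\tau^2)\delta$ persists along the shock curve near the constrained point, which is the form implicitly relied upon when the appendix estimates are inserted into the Taylor expansions of \S 4.3 such as \eqref{eq:4.73}, whereas the exact vanishing is special to the single constrained point. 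One cosmetic remark: the Taylor remainder contributes $O(1)(\beta_1-1)^2=O(1)\delta^2$, so the honest conclusion of the fallback is $\delta+\varphi^{(\boldsymbol{\mu})}_1=O(1)(\delta+\epsilon+\tau^2)\delta$ rather than $O(1)\|\boldsymbol{\mu}\|\delta$ (the paper's \eqref{eq:A5}--\eqref{eq:A6} carry the same slight overstatement); this is immaterial for \eqref{eq:A4}, since the squared term is absorbed into the error either way.
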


\begin{proof}
Using the Taylor formula, Lemma \ref{lem:2.6}, and estimate \eqref{eq:4.54}, for $\delta>0$, $\epsilon>0$, and $\tau>0$ sufficiently small,
we have
\begin{align}\label{eq:A5}
\varphi^{(\boldsymbol{\mu})}_1&=\varphi^{(\boldsymbol{\mu})}_1\Big|_{\beta_1=1}
+\frac{\partial\varphi^{(\boldsymbol{\mu})}_1}{\partial \beta_1}\Big|_{\beta_1=1}(\beta_1-1)+O(1)(\beta_1-1)^{2}\nonumber\\[3pt]
&=\Big(\frac{\partial\varphi^{(\boldsymbol{\mu})}_1}{\partial \beta_1}\Big|_{\beta_1=1, \epsilon=\tau=0}+O(1)(\epsilon+\tau^2)\Big)
\big(a_{\infty}\delta+O(1)(\epsilon+\tau^2)\delta\big)\nonumber\\[3pt]
&\quad +O(1)\big(a_{\infty}\delta+O(1)(\epsilon+\tau^2)\delta\big)^{2}\nonumber\\[3pt]
&=-\delta+O(1)(\epsilon+\tau^2)\delta,
\end{align}
which implies
\begin{eqnarray}\label{eq:A6}
\varphi^{(\boldsymbol{\mu})}_1+\delta=O(1)(\epsilon+\tau^2)\delta.
\end{eqnarray}
Then combining the \eqref{eq:A6} with estimate (i) in Lemma \eqref{lem:A1} leads to estimate \eqref{eq:A4}.
\end{proof}

\bigskip
\noindent
{\bf Acknowledgements}.
The research of Gui-Qiang G. Chen was supported in part by the UK Engineering and Physical Sciences Research
Council Award EP/L015811/1, EP/V008854, and EP/V051121/1.
The research of Jie Kuang was supported in part by the NSFC Project 11801549, NSFC Project 11971024, 
NSFC Project 12271507 and the Multidisciplinary Interdisciplinary Cultivation Project No.S21S6401 from
Innovation Academy for Precision Measurement Science and Technology, Chinese Academy of Sciences.
The research of Wei Xiang was supported in part by the Research Grants Council of the HKSAR, China (Project No. CityU 11304820, CityU 11300021, CityU 11311722, and CityU 11305523).
The research of Yongqian Zhang was supported in part by the NSFC Project 12271507, NSFC Project 11421061, NSFC Project 11031001, NSFC Project 11121101,
the 111 Project B08018(China) and the Shanghai Natural Science Foundation 15ZR1403900.

\end{document}